\let\oldmarginpar\marginpar
\renewcommand\marginpar[1]{\-\oldmarginpar[\raggedleft\footnotesize #1]%
{\raggedright\footnotesize #1}}
\theoremstyle{plain}
\newtheorem{theorem}{Theorem}[section]
\newtheorem{lemma}[theorem]{Lemma}
\newtheorem{proposition}[theorem]{Proposition}
\newtheorem{corollary}[theorem]{Corollary}
\theoremstyle{definition}
\newtheorem{definition}[theorem]{Definition}
\newtheorem{remark}[theorem]{Remark}
\theoremstyle{remark}
\numberwithin{equation}{section}
\newcommand{\Diff}{\operatorname{Diff}}
\newcommand{\Symp}{\operatorname{Symp}}
\newcommand{\Ham}{\operatorname{Ham}}
\newcommand{\Homeo}{\operatorname{Homeo}}
\newcommand{\id}{\text{id}}
\newcommand{\Tw}{\text{Tw}}
\newcommand{\anti}{\text{anti}}
\newcommand{\inv}{\text{inv}}
\newcommand{\calE}{\mathcal{E}}
\newcommand{\calS}{\mathcal{S}}
\newcommand{\cP}{\mathcal{P}}
\newcommand{\scrX}{\EuScript{X}}
\newcommand{\scrC}{\EuScript{C}}
\newcommand{\scrD}{\EuScript{D}}
\newcommand{\scrW}{\EuScript{W}}
\newcommand{\scrF}{\EuScript{F}}
\newcommand{\cW}{\EuScript{W}}
\newcommand{\scrL}{\EuScript{L}}
\newcommand{\calO}{\mathcal{O}}
\newcommand{\bF}{\mathbb{F}}
\newcommand{\bR}{\mathbb{R}}
\newcommand{\bZ}{\mathbb{Z}}
\newcommand{\bC}{\mathbb{C}}
\newcommand{\bP}{\mathbb{P}}
\newcommand{\Coh}{\operatorname{Coh}}
\newcommand{\Ext}{\operatorname{Ext}}
\newcommand{\Tor}{\operatorname{Tor}}
\newcommand{\Pic}{\operatorname{Pic}}
\newcommand{\Stab}{\operatorname{Stab}}
\newcommand{\Spec}{\operatorname{Spec}}
\newcommand{\Supp}{\operatorname{Supp}}
\newcommand{\Ann}{\operatorname{Ann}}
\newcommand{\Conf}{\operatorname{Conf}}
\newcommand{\Hom}{\operatorname{Hom}}
\newcommand{\supp}{\operatorname{supp}}
\newcommand{\Fix}{\operatorname{Fix}}
\newcommand{\Lag}{\operatorname{Lag}}
\newcommand{\Int}{\operatorname{Int}}
\newcommand{\cO}{\mathcal{O}}
\newcommand{\Auteq}{\operatorname{Auteq}}
\newcommand{\Aut}{\operatorname{Aut}}
\newcommand{\Ob}{\operatorname{Ob}}
\newcommand{\HF}{\operatorname{HF}}
\newcommand{\CF}{\operatorname{CF}}
\newcommand{\PBr}{\operatorname{PBr}}
\newcommand{\ch}{\operatorname{ch}}
\newcommand{\cdbar}{\overline{\partial}}
\newcommand{\ex}{\operatorname{ex}}
\newcommand{\pr}{\operatorname{pr}}
\newcommand{\gr}{\operatorname{gr}}
\newcommand{\pt}{\operatorname{pt}}
\renewcommand{\Re}{\operatorname{Re}}
\renewcommand{\Im}{\operatorname{Im}}
\begin{document}
\begin{abstract}
Let $X$ denote the `conifold smoothing', the symplectic Weinstein manifold which is the complement of a smooth conic in $T^*S^3$, or equivalently the plumbing of two copies of $T^*S^3$ along a Hopf link. Let $Y$ denote the `conifold resolution', by which we mean the complement of a smooth divisor in $\mathcal{O}(-1) \oplus \mathcal{O}(-1) \to \bP^1$.  We prove that the compactly supported symplectic mapping class group of $X$ splits off a copy of an infinite rank free group, in particular is infinitely generated; and we classify spherical objects in the bounded derived category $D(Y)$ (the three-dimensional `affine $A_1$-case'). Our results build on work of Chan-Pomerleano-Ueda and Toda, and both theorems make essential use of working on the `other side' of the mirror. 
\end{abstract}

\title{Symplectomorphisms and spherical objects in the conifold smoothing}

\subjclass[2010]{53D37, 14J33, 53D40, 14F08}
\keywords{Homological mirror symmetry, symplectic mapping class group, small resolution, spherical object, Lagrangian sphere}

\author{Ailsa Keating} 
\address{Centre for Mathematical Sciences, University of Cambridge, Wilberforce Road, CB3 0WB, United Kingdom}
\email{amk50@cam.ac.uk}

\author{Ivan Smith} 
\address{Centre for Mathematical Sciences, University of Cambridge, Wilberforce Road, CB3 0WB, United Kingdom}
\email{is200@cam.ac.uk}

\maketitle

\tableofcontents

\section{Introduction}\label{sec:intro}

\subsection{Results}

Consider the open symplectic manifold
$$
X = \{ u_1 v_1 = z-1, \, u_2 v_2 = z+1 \} \subset \bC^4 \times \bC^\ast
$$
where $u_i, v_i \in \bC$ and $z \in \bC^\ast$, with the restriction $\omega$ of the standard exact K\"ahler form from $\bC^4 \times \bC^\ast$.  Projection to $z \in \bC^\ast$ defines a Morse-Bott-Lefschetz fibration on $X$, from which one sees that $X$ is also a plumbing of two copies of $T^*S^3$ along a Hopf link.  

\begin{theorem} \label{thm:main}
There is a split injection $\bZ^{\ast \infty} \to \pi_0\Symp_{c}(X,\omega)$, where $\bZ^{\ast \infty}$ denotes the free group on countably infinitely many generators. In particular, the symplectic mapping class group $\pi_0\Symp_{c}(X,\omega)$ is infinitely generated.
\end{theorem}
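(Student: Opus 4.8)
The plan is to cross to the mirror $Y$, realise the relevant compactly supported symplectomorphisms of $X$ as standard autoequivalences of $D^b(Y)$, and then read off the free group from the structure of $\Auteq(D^b(Y))$ supplied by Toda's work.

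\emph{Step 1: passage to the mirror.} First I would invoke the homological mirror equivalence of Chan--Pomerleano--Ueda between a model for the Fukaya category of $X$ and $D^b(Y)$, so that the action of $\Symp_c$ on this category gives a homomorphism $\Phi\colon \pi_0\Symp_c(X,\omega)\to\Auteq(D^b(Y))$. I also want to pin down, at least up to shift, where the geometrically natural maps go. The plumbing description exhibits a zero-section Lagrangian sphere $L\cong S^3$ (one of the two cores), and by the usual identification of Dehn twists with spherical twists its twist $\tau_L\in\pi_0\Symp_c(X,\omega)$ should satisfy $\Phi(\tau_L)=T_E$ for a $3$-spherical object $E\in D^b(Y)$ of the form $\mathcal{O}_C(k)$. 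The $\bC^{\ast}$-fibration on $X$ makes the Picard lattice of $Y$ visible on the A-side, providing a symplectomorphism $\lambda$ mirror to the line-bundle twist $(-)\otimes\mathcal{O}_Y(1)$; $\lambda$ is \emph{not} compactly supported, but $\sigma_k:=\lambda^{k}\tau_L\lambda^{-k}$ is, for every $k\in\bZ$, and $\Phi(\sigma_k)=T_{E\otimes\mathcal{O}_Y(k)}$ is again an honest spherical twist (no shift).

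\emph{Step 2: the free group on the B-side.} Next I would use Toda's analysis of $D^b(Y)$ for the resolved conifold --- the ``affine $A_1$'' picture --- to see that $\langle (-)\otimes\mathcal{O}_Y(1),\,T_E\rangle\le\Auteq(D^b(Y))$ is free of rank two. Concretely, the affine braid group of type $A_1$, which is the free product $\bZ\ast\bZ$, acts on $D^b(Y)$ with one generator the spherical twist $T_E$ and a second generator whose product with $T_E$ is the lattice translation $(-)\otimes\mathcal{O}_Y(1)$; Toda's description of a component of the space of Bridgeland stability conditions as covering a hyperplane complement with this fundamental group shows the action is faithful up to shifts. Hence $\langle (-)\otimes\mathcal{O}_Y(1),\,T_E\rangle$ is free of rank two, and the kernel of the homomorphism to $\bZ$ killing $T_E$ and sending $(-)\otimes\mathcal{O}_Y(1)$ to a generator --- which is exactly the subgroup generated by the conjugates $\{T_{\mathcal{O}_C(j)}:j\in\bZ\}$ --- is free of countably infinite rank. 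Since the $\Phi(\sigma_k)$ therefore satisfy no relations, neither do the $\sigma_k$, so they generate a copy of $\bZ^{\ast\infty}$ inside $\pi_0\Symp_c(X,\omega)$.

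\emph{Step 3: the splitting.} Finally I want a retraction $r\colon\pi_0\Symp_c(X,\omega)\to\bZ^{\ast\infty}$ restricting to the identity on this subgroup; the natural candidate is $\Phi$ itself, followed by the projection that kills shift functors, provided one knows that $\Phi$ lands in $\langle T_{\mathcal{O}_C(j)}:j\in\bZ\rangle\times(\text{shifts})$. Granting that, $r\circ(\text{inclusion of }\bZ^{\ast\infty})$ is the identity because each $\Phi(\sigma_k)=T_{\mathcal{O}_C(k-1)}$ is a pure spherical twist. The main obstacle is precisely establishing that $\Phi(\pi_0\Symp_c(X,\omega))$ is ``compactly supported on the mirror'', i.e.\ contained in the subgroup generated by the standard spherical twists: one wants either that $\pi_0\Symp_c(X,\omega)$ is generated by Dehn twists along Lagrangian spheres together with a classification of the spherical objects of $D^b(Y)$ (so that every spherical twist is, up to shift, a product of the $T_{\mathcal{O}_C(j)}$), or else a direct flux/boundary argument that a compactly supported symplectomorphism cannot be mirror to an autoequivalence, such as $(-)\otimes\mathcal{O}_Y(1)$, which is nontrivial near the divisor at infinity. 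Separating this ``translation'' direction, which is not compactly supported, from the ``spherical'' directions, which are, is the delicate point; everything else is either cited from Chan--Pomerleano--Ueda and Toda or is routine group theory. Once the split injection $\bZ^{\ast\infty}\hookrightarrow\pi_0\Symp_c(X,\omega)$ is in hand, infinite generation is immediate, since a split summand of a finitely generated group is finitely generated whereas $\bZ^{\ast\infty}$ is not.
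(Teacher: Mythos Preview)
Your overall architecture matches the paper's: pass to the mirror via Chan--Pomerleano--Ueda, identify Dehn twists with spherical twists and the non-compactly-supported $\lambda$ with tensoring by a line bundle, and use Toda's description of $\Auteq(\scrD)$ to extract the free group. Steps 1 and 2 are right in outline.

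The gap is exactly where you flag it in Step 3, and neither of your proposed fixes is what the paper does. Option (a) is circular in this paper's logic: the classification of sphericals is proved \emph{after} and \emph{using} the autoequivalence computation, and there is no independent result that $\pi_0\Symp_c(X)$ is generated by Dehn twists. Option (b) is in the right spirit but is not a proof. The paper's actual argument has three concrete ingredients you are missing. First, compactly supported homeomorphisms act trivially on $H_3(X;\bZ)$ because the map $H_3(X)\to H_3(X,\partial_\infty X)$ is zero; since $H_3(X)\cong K_{\mathrm{num}}(\scrF(X))\cong K(\scrD)$, this already forces the image to avoid the $\otimes\scrL$ direction, which acts nontrivially on $K(\scrD)$. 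Second, symplectomorphisms act $SH^0(X)$-linearly, hence (under HMS) $R$-linearly on $D(Y)$, and $R$-linear autoequivalences automatically preserve $\scrD$ and fix skyscraper sheaves $\calO_x$ for $x\notin C$ up to shift. Third, there is a specific exact Maslov-zero Lagrangian torus $T\subset X$ whose mirror is such a skyscraper; a compactly supported map with its canonical grading can be conjugated, by a symplectic flow shrinking towards the origin in the base $\bC^*$, so that its support misses $T$, hence it fixes the graded object $(T,\zeta)$ with \emph{no} shift. These three facts, fed into Toda's theorem, pin the image of $\pi_0\Symp_c(X)$ inside $\bZ^{\ast\infty}\times 2\bZ$, and projecting off the even-shift factor gives the retraction.
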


The generators of the image $\bZ^{\ast \infty}$ in $\pi_0\Symp_{c}(X,\omega)$ will be Dehn twists in certain Lagrangian spheres in $X$. 

\begin{corollary}
There is a 3-dimensional Stein domain $(X',\partial X')$, with both $X'$ and $\partial X'$ simply connected,  and with $\pi_0\Symp_c(X')$ infinitely generated.
\end{corollary}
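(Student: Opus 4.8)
The plan is to obtain $X'$ by attaching a single subcritical handle to a compact piece of $X$, killing the fundamental group, and then to check that this surgery does not destroy the infinite-rank free subgroup.

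\textbf{Passing to a subdomain.} Being a smooth affine variety, $X$ is Stein; fix a plurisubharmonic Morse exhaustion adapted to the Liouville flow of $\omega$ and let $W=\{\phi\le c\}$ be a regular sublevel set large enough to contain (neighbourhoods of) the Lagrangian spheres $L_1, L_2, \dots$ whose Dehn twists generate the image of $\bZ^{\ast\infty}$ in Theorem~\ref{thm:main}. Then $W$ is a Stein domain with completion $X$, and the standard cutoff argument gives $\pi_0\Symp_c(W)\cong\pi_0\Symp_c(X)$; in particular $\pi_0\Symp_c(W)$ splits off $\bZ^{\ast\infty}$, with retraction $r$ inherited from Theorem~\ref{thm:main}.

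\textbf{The fundamental group and how to kill it.} Writing $X=\bar X\setminus Q$ with $\bar X\cong T^*S^3$ the affine quadric threefold and $Q\cong(\bC^\ast)^2$ the smooth connected conic divisor, the Gysin sequence gives $H_1(X)\cong H_0(Q)\cong\bZ$, so $X$ — and hence $W$, and $\partial W$ (as $\pi_1(\partial W)\cong\pi_1(W)$, the Weinstein skeleton having codimension $\ge 3$) — is \emph{not} simply connected; removing this is the whole point of the construction. Moreover $\pi_1(X)$ is normally generated by a single meridian loop $\mu$ of $Q$. Represent $\mu$ by a Legendrian loop in $\partial W$ and attach a Weinstein $2$-handle $H$ along it: in complex dimension $3$ this handle has index $2<3$, so is subcritical, and $X':=W\cup H$ is a Stein domain of complex dimension $3$ (concretely $H$ is a neighbourhood of a meridian disk of $Q$). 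Then $\pi_1(X')=\pi_1(W)/\langle\langle\mu\rangle\rangle=1$, and $\partial X'$ is obtained from $\partial W$ by surgery on the codimension-$4$ loop $\mu$; writing $\partial X'=(\partial W\setminus\nu(\mu))\cup_{S^1\times S^3}(D^2\times S^3)$ and applying van Kampen — the class of $\mu$ generates $\pi_1$ of the first piece and dies in the second — gives $\pi_1(\partial X')=1$. So $X'$ and $\partial X'$ are both simply connected.

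\textbf{Persistence of $\bZ^{\ast\infty}$.} The spheres $L_i$, and all the holomorphic curves used in proving Theorem~\ref{thm:main}, lie in $W$ and are thus disjoint from $H$; since $H$ is subcritical, the Fukaya-categorical data underlying $r$ — the full subcategory generated by the $L_i$ in the (wrapped) Fukaya category, with its $\pi_0\Symp_c$-action — is unchanged on passing from $W$ to $X'$. Hence $r$ extends to a retraction $r'\colon\pi_0\Symp_c(X')\to\bZ^{\ast\infty}$ with $r'\circ e=r$, where $e\colon\pi_0\Symp_c(W)\to\pi_0\Symp_c(X')$ extends symplectomorphisms by the identity over $H$; composing $e$ with the section from Theorem~\ref{thm:main} therefore gives a split injection $\bZ^{\ast\infty}\hookrightarrow\pi_0\Symp_c(X')$, realised by the same Dehn twists. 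In particular $\pi_0\Symp_c(X')$ is infinitely generated.

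\textbf{Main obstacle.} The one genuinely nontrivial step is the last: checking that the detection of $\bZ^{\ast\infty}$ survives the subcritical surgery, i.e.\ invariance of the relevant Fukaya-categorical invariants under attaching a subcritical handle. This is expected — everything can be localised away from the handle — but it uses the structure of the proof of Theorem~\ref{thm:main}; the first two steps are routine once the description $X=\bar X\setminus Q$ is available.
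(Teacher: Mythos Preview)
Your construction is the same as the paper's (attach a subcritical $2$-handle to kill $\pi_1$; see Corollary~\ref{cor:infinite-generation-with-handles} and the surrounding discussion), and the topological checks on $\pi_1(X')$ and $\pi_1(\partial X')$ are fine. The gap is in the persistence step.

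The retraction $r$ from Theorem~\ref{thm:main} is \emph{not} defined by counting holomorphic curves that happen to sit inside $W$; it is the composite
\[
\pi_0\Symp_c(X)\;\longrightarrow\;\Auteq_{SH^0}\bigl(D\cW(X)\bigr)\;\cong\;\Auteq\scrD\;\cong\;\PBr_3,
\]
followed by the observation that the image lands in the Torelli subgroup $\bZ^{\ast\infty}\subset\PBr_3$ because compactly supported maps act trivially on $H_3(X)\cong K_{\mathrm{num}}(\scrF(X))$. Subcritical invariance of the wrapped category does give you the analogous map $\pi_0\Symp_c(X')\to\PBr_3$, but that alone does not show the image lies in $\bZ^{\ast\infty}$: elements of $\pi_0\Symp_c(X')$ need not come from $\pi_0\Symp_c(W)$ via your extension $e$, so ``disjoint from $H$'' is not the relevant issue. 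What you must check is that compactly supported homeomorphisms of $X'$ still act trivially on $H_3(X')$. The paper does this as a separate lemma (Lemma~\ref{lem:add-handles-fix-homology}): since $H_3(X')\cong H_3(X)$ is spanned by $S_0,S_1$ and their pairwise intersection numbers vanish, the map $H_3(X')\to H_3(X',\partial X')$ is zero, and the long exact sequence of the pair then forces any compactly supported map to be the identity on $H_3$. This is the missing ingredient in your sketch.

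A minor point: you cannot choose $W$ to literally contain all the matching spheres $S_i$ as drawn --- they escape to infinity as $|i|\to\infty$ (see the Remark in Section~\ref{sec:non-exact-deformations}). This is harmless, since each $S_i$ is Hamiltonian isotopic into any neighbourhood of the skeleton, but the phrasing should be adjusted.
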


The domain $X'$ is obtained by adding one subcritical Weinstein two-handle to $X$.
By contrast, for a simply-connected compact six-manifold $\widetilde{X}$ with simply connected boundary, $\pi_0\Diff_c(\widetilde{X})$ is always of finite type, cf.  \cite{Kupers} and Lemma \ref{lem:finite_rank}.  

The Milnor fibres of  the $A_k$-singularities for $k>1$ contain infinitely many exact Lagrangian spheres up to Lagrangian isotopy, but these are obtained from a single sphere under the action of the symplectic mapping class group. We show, in Corollary \ref{cor:infinite-sphere-orbits}, that the action of $\pi_0\Symp_c(X')$ on the set of isotopy classes of Lagrangian $3$-spheres in  $X'$ has infinitely many orbits, answering a folk question sometimes attributed to Fukaya.

The mirror $Y$ to $X$ is obtained from the small resolution $\mathcal{O}(-1) \oplus \mathcal{O}(-1) \to \bP^1$ of the 3-fold ordinary double point $xy=(1+w)(1+t)\subset\bC^4$ by removing the pullback of the divisor $\{wt=0\}$.  Our methods also resolve the long-standing open problem of classifying all the spherical objects in $D(Y)$ (i.e. classifying sphericals in the `three-dimensional affine $A_1$-case'). One consequence of this classification is:

\begin{theorem} \label{thm:main2}
The spherical objects in $D(Y)$ form a single orbit under a natural action of the pure braid group $\PBr_3$.
\end{theorem}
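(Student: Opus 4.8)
\emph{Strategy.} The plan is to transport the statement through homological mirror symmetry to the Weinstein manifold $X$, where it becomes a classification of exact Lagrangian $3$-spheres, and then to exploit the Morse--Bott--Lefschetz fibration $X \to \bC^\ast$. Building on Chan--Pomerleano--Ueda, one has a homological mirror equivalence between $D(Y)$ and the wrapped Fukaya category $\mathcal{W}(X)$, compatible with gradings and Calabi--Yau structures; since being spherical is an intrinsically categorical condition, this induces a bijection between the spherical objects of $D(Y)$ and those of $\mathcal{W}(X)$. Every spherical object is proper, hence lies in the subcategory $\mathcal{F}(X)$ generated by compact Lagrangian branes, and conversely every exact Lagrangian $3$-sphere in $X$ is automatically a spherical object, its self-Floer cohomology being $H^\ast(S^3)$. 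In the fibration picture these spheres appear as matching cycles: an embedded arc in $\bC^\ast$ joining the two critical values $z = 1$ and $z = -1$ supports a Lagrangian obtained by gluing the two thimbles $D^2 \times S^1$ and $S^1 \times D^2$ along their common vanishing torus in the fibre $\bC^\ast \times \bC^\ast$, which is exactly the genus-one Heegaard $S^3$. The two zero-section spheres $L_1, L_2$ of the plumbing description are the matching cycles over the two ``short'' arcs, and under the equivalence the $\PBr_3$-action corresponds to the group of autoequivalences generated by the Dehn twists $T_{L_1}, T_{L_2}$ --- whose Floer pairing is the affine $\widehat{A}_1$ Cartan datum, so that $\langle T_{L_1}, T_{L_2}\rangle$ is free of rank two --- together with a central autoequivalence acting by shift, the whole group being $\PBr_3 \cong F_2 \times \bZ$ (the $F_2$ acting geometrically, through the mapping class group of $\bP^1$ marked at $\{0, \infty, 1, -1\}$, and the $\bZ$ by shift). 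Theorem~\ref{thm:main2} thus reduces to the assertion that every spherical object of $\mathcal{F}(X)$ lies in the $\PBr_3$-orbit of $L_1$.

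\emph{The two halves of the argument.} \emph{(i) Geometricity.} The first task is to show that every spherical object of $\mathcal{F}(X)$ is quasi-represented by an embedded Lagrangian $3$-sphere. Using the fibration, $\mathcal{F}(X)$ is generated by the vanishing thimbles together with one further Lagrangian recording the $\bC^\ast$-direction of the base, so it is quasi-equivalent to the perfect modules over a small, explicit $A_\infty$-algebra --- in effect the conifold quiver with its superpotential, enriched by the combinatorial data of the divisor $\{wt = 0\}$ removed to form $Y$. One then classifies spherical modules over this algebra: rigidity of spherical objects, together with the additional $\bZ$- or torus-grading that survives on $Y$ (from $H^1(Y)$ and from the residual $\bC^\ast$-symmetry), forces the numerical class of a spherical object to be a root of the affine $A_1$ root system and the object itself to be a twist of a simple-type module, hence geometric. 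Toda's analysis of the bounded derived category of the resolved conifold, and of its space of stability conditions, enters precisely here, confining the abelian category in which a spherical object can live. \emph{(ii) Transitivity.} With the problem reduced to matching cycles, the fibration is used again: a refinement of the Lefschetz-fibration analysis shows every exact Lagrangian $3$-sphere is Hamiltonian isotopic to the matching cycle over an arc joining $z = 1$ to $z = -1$; such arcs, up to isotopy, form a countable family (the two ``short'' arcs on either side of the puncture $0$, together with winding around the ends of the annulus $\bC^\ast$); and $T_{L_1}, T_{L_2}$ act on this family through the quotient onto the infinite dihedral group --- the affine Weyl group of $\widehat{A}_1$ --- which acts transitively. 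Hence all matching cycles lie in a single $\langle T_{L_1}, T_{L_2}\rangle$-orbit, and combined with (i) this yields a single $\PBr_3$-orbit of spherical objects, the central $\bZ$ absorbing shifts.

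\emph{The main obstacle.} The substantive difficulty is half (i): ruling out ``exotic'' spherical objects that admit no Lagrangian representative. This is the long-standing open problem referred to above, and it is not formal --- it is exactly where working on the other side of the mirror is essential. On $Y$, Toda's machinery governs the stability manifold but gives no a priori finiteness; on $X$, the Lefschetz fibration yields a concrete finite $A_\infty$-model but one still owes a classification of its spherical modules. The proof has to play the two descriptions against one another: the symplectic geometry of $X$ (exactness, the fibration, the $\bC^\ast$-symmetry surviving the removal of $\{wt=0\}$) is what makes the module classification tractable, and Toda's theorems are what pin it down to a single orbit. A secondary but genuine subtlety is the bookkeeping of the symmetry group: one must check that it is precisely the \emph{pure} braid group $\PBr_3 \cong F_2 \times \bZ$ --- the free factor generated by the two Lagrangian twists, the centre acting by a shift --- and that no autoequivalence realising the $S_3$-permutation of the four marked points is available, so that one does not obtain an action of the full braid group $\Br_3$.
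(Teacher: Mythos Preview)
Your proposal correctly identifies the overall architecture --- transport the problem through the Chan--Pomerleano--Ueda equivalence and work on the symplectic side --- but the mechanism you propose for the hard step is not the one the paper uses, and as written it has a genuine gap.

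The crux is your step (i). You propose to ``classify spherical modules'' over an explicit $A_\infty$-model of $\scrF(X)$, using rigidity, an extra grading, and Toda's analysis to pin down the numerical class and then the object. But Toda's results give the autoequivalence group and the stability manifold, not a classification of spherical objects; knowing the numerical class of a spherical (which is indeed forced to be a root) does not determine it; and no direct module-by-module classification is carried out. This is precisely the long-standing open problem the paper claims to resolve, so invoking it as a step begs the question. The paper's actual argument is entirely different and bypasses any direct classification: given an unknown spherical $S$, one studies the \emph{twist functor} $T_S$ rather than $S$ itself. The exact triangle for $T_S$ forces $\mathrm{rk}\,\HF(S_i,T_S^n S_j)$ to grow at most linearly in $n$. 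A localisation argument (via the $\bZ/2\times\bZ/2$ action on $X$, with equivariant transversality and stable normal trivialisations) identifies $\mathrm{rk}\,\HF(S_a,S_b;\bZ/2)$ with a geometric intersection number of curves on a surface. Nielsen--Thurston theory then says that any element of $\PBr_3$ which is not a power of a twist is pseudo-Anosov and produces exponential growth of such intersection numbers. Hence $T_S$ must coincide, as an autoequivalence, with some power $\tau_{S_\gamma}^{\ell}[k]$ of a known Dehn twist. A short argument with the non-compact generator $L_0$ forces $k=0$ and $\HF^*(L_0,S)=0$, which on the $Y$ side means $R\pi_*\calE_S=0$, so $\calE_S$ lies in the \emph{small} category $\scrC$, where the classification of sphericals is already known (a single object up to shift). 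None of this dynamical input --- the localisation formula for Floer ranks, or the Nielsen--Thurston trichotomy --- appears in your proposal.

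A few smaller points. Your step (ii) asserts that every exact Lagrangian $S^3$ in $X$ is Hamiltonian isotopic to a matching cycle; the paper neither proves nor needs this (the argument is categorical throughout, never passing through a geometric representative for $S$). Your description of the $\PBr_3$-action is also off: the free factor is generated by one Dehn twist $\tau_{S_0}$ and the line-bundle tensor $\otimes\scrL$ (equivalently the non-compactly-supported monodromy $\lambda$), not by two Dehn twists; and the action on isotopy classes of matching paths does not factor through the infinite dihedral group --- the subgroup generated by all the $\tau_{S_i}$ is a free group $\bZ^{*\infty}$, and transitivity on matching paths comes from the full $\PBr_3$-action on arcs in $\bC^*$, not from a Weyl-group quotient.
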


It may be worth highlighting that Theorem \ref{thm:main}, which is a purely symplectic statement, relies essentially on passing to the other side of mirror symmetry, to exploit a computation of a space of stability conditions accessible only via algebraic geometry; whilst Theorem \ref{thm:main2}, which is a purely algebro-geometric statement, relies essentially on passing back to the symplectic setting, to exploit constraints arising from Nielsen-Thurston theory and symplectic dynamics on surfaces.
Also, the dynamical analysis of Propositions \ref{prop:generalised_Khovanov_Seidel} and \ref{prop:growth} is closely related to ideas around categorical entropy. See Remark \ref{rmk:entropy} for a brief discussion.

\subsection{Context}

Understanding the symplectic mapping class group is a classical and long-standing problem in symplectic topology.  Theorem \ref{thm:main} is the first example of a (finite type) symplectic manifold for which the compactly supported symplectic mapping class group is known to be infinitely generated. It complements a result of \cite{Sheridan-Smith}, see also \cite{Smirnov}, which constructed compact symplectic manifolds for which the `symplectic Torelli group' (of mapping classes acting trivially on cohomology) was infinitely generated, and a result of \cite{Auroux-Smith} which gave a compact smooth manifold with a family of symplectic forms $\omega_t$ for which the rank of $\pi_0\Symp(X,\omega_t)$ grew unbounded with $t$. In contrast, we expect the symplectic mapping class group of a $K3$ surface, or of (the mirror to) a log Calabi-Yau surface, to be finitely generated.

The classification of spherical objects in either the derived category of an algebraic variety or the Fukaya category of a symplectic manifold is also a well-known and active area of research.  In particular, for the Milnor fibres of type $A$ surface singularities a large body of work yielded a complete classification \cite{KS,  IU, WeiweiWu}.  Much less is known on threefolds.  For `small' categories associated to flopping contractions, namely the full subcategory of objects supported on the exceptional locus and with trivial push-forward, a classification was recently obtained in \cite{HaraW}, see also \cite{SW}.  Theorem \ref{thm:main2} is the first example which treats the `big' category of all complexes supported on the exceptional locus, where one encounters an affine hyperplane arrangement for the associated chamber of flops (or stability conditions) rather than a finite hyperplane arrangement.

\subsection{Remarks on the proofs}
 Homological mirror symmetry for the conifold has been established by Chan, Pomerleano and Ueda in \cite{CPU}, both for the wrapped Fukaya category $\cW(X)$ and the compact subcategory generated by the `compact core' of $X$.  In particular, there is a $\bC$-linear equivalence $\cW(X) \simeq D(Y)$. Crucially, both categories are also linear over a larger ring $R = SH^0(X) \simeq \Gamma(\mathcal{O}_Y)$. The group of stability conditions on $D(Y)$, and the group of autoequivalences of that category,  was analysed by Toda \cite{Toda1, Toda2} (see also \cite{HW} for closely related results in a more general setting).  

The heart of the proof of Theorem \ref{thm:main} is to show that the part of the autoequivalence group of $\cW(X)$ coming from compactly supported symplectomorphisms surjects to the  subgroup of $R$-linear autoequivalences of $D(Y)$ which moreover act trivially on $K$-theory.  This uses in an essential way the detailed knowledge of $\Auteq D(Y)$ arising from the computation of the space of stability conditions.  By contrast, the key ingredient in the proof of Theorem \ref{thm:main2} is a growth rate estimate for ranks of Floer cohomology under autoequivalences, which is proved using a combination of localisation-type theorems (to reduce Floer theory computations to a two-dimensional surface inside the six-dimensional $X$) together with classical results of Nielsen and Thurston on dynamics of pseudo-Anosov maps.  

The symplectic topology of two-dimensional log Calabi-Yau surfaces has been fruitfully illuminated through mirror symmetry \cite{GHK, HK,HK2}. This paper  treats one simple  three-dimensional example, but many of the techniques and results seem likely to generalise. 

\subsection*{Acknowledgements} The authors are grateful to Spencer Dowdall, Paul Hacking, Daniel Pomerleano, Oscar Randal--Williams, Paul Seidel and Michael Wemyss for helpful conversations and correspondence. We thank the anonymous referee for their careful reading and detailed feedback on the first version of this paper.

This material is based upon work supported by the National Science Foundation under Grant No.~1440140, while the authors were in residence at the Mathematical Sciences Research Institute in Berkeley, California, during the Fall semester 2022.   A.K.~was partially supported by EPSRC Open Fellowship EP/W001780/1 and ERC Starting Grant 101041249. I.S.~is grateful to the Clay Foundation for support at MSRI as a Clay Senior Scholar, and to EPSRC  for support through Frontier Research grant EP/X030660/1 (in lieu of an ERC Advanced Grant).

\section{Wrapped Fukaya categories of Liouville manifolds}

\subsection{Preliminary wrapped Fukaya categories}

Let $(W,\theta)$ denote a finite type complete Liouville manifold (i.e.~the completion of a Liouville domain $\mathring{W}$ by cylindrical ends). A Liouville domain $(\mathring{W},\theta)$ canonically determines a finite type complete Liouville manifold $(W,\theta)$. A Lagrangian is cylindrical in $(W,\theta)$ , resp.~$(\mathring{W},\theta)$, if it is invariant under the Liouville vector field outside a compact set, resp.~in a collar neighbourhood of the boundary. Note that any exact cylindrical Lagrangian for $(W, \theta)$ is exact Lagrangian isotopic, via the inverse Liouville flow, to a Lagrangian which is the completion of a cylindrical Lagrangian in $(\mathring{W}, \theta)$.

\begin{definition}\label{def:preliminary-wrapped}

The preliminary wrapped Fukaya  category $\cW_{\pr} (W, \theta)$ is defined as the wrapped Fukaya category set-up as in \cite[Section 2.4]{GPS2}, with the following choices: the category is ungraded, with coefficient ring $\bZ/2$; and Lagrangians are exact and cylindrical (no further brane data is required as there are no gradings). We will write $\cW_{\pr}(\mathring{W},\theta)$ for $\cW_{\pr}(W,\theta)$ when we wish to emphasise the choice of domain.

\end{definition}

`Preliminary' is intended as in \cite{Seidel-FCPLT}. We will later restrict to Lagrangians which admit brane structures including a choice of $Spin$-structure and grading, yielding a category $\cW(W, \theta)$ which is defined over $\bZ$ and $\bZ$-graded.

\subsection{Actions of  symplectomorphisms}\label{sec:symplecto-actions}

An exact symplectomorphism of $(\mathring{W}, \theta)$ is a diffeomorphism $\phi: \mathring{W} \to \mathring{W}$ such that $\phi^\ast \theta = \theta + df$, where $f$ needn't have support in $\Int(\mathring{W})$ (and is formally defined on a small open neighbourhood of $\mathring{W}$ in $W$). 
Let $\Symp (\mathring{W})$  be the group of all symplectomorphisms of $\mathring{W}$,  equipped with the $C^\infty$ topology. 
Let $\Symp_{\ex} (\mathring{W}, \theta)$ denote the subgroup of exact symplectomorphisms; the connected component of the identity in $\Symp_{\ex} (\mathring{W}, \theta)$ is $\Ham (\mathring{W}, \theta)$ the subgroup of Hamiltonian diffeomorphisms of $(\mathring{W}, \theta)$.

\begin{lemma}\label{lem:exact-action}
The group $ \pi_0 \Symp_{\ex} (\mathring{W}, \theta)$ acts on $\cW_{\pr}(W, \theta)$. 

\end{lemma}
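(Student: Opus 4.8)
The plan is to exhibit an action of $\pi_0\Symp_{\ex}(\mathring W,\theta)$ on the preliminary wrapped Fukaya category $\cW_{\pr}(W,\theta)$ by $A_\infty$-autoequivalences, well-defined up to quasi-isomorphism, and in particular functorial. The basic mechanism is classical (cf. Seidel): a symplectomorphism acts on objects by push-forward of Lagrangians and on morphism spaces and higher products by transporting the pseudoholomorphic curve count via a compatible change of almost complex structures and perturbation data. The two things that need care here are (i) that $\cW_{\pr}$ is set up in the Ground--Pardon--Shende formalism of \cite{GPS2v3} rather than Seidel's, so one must check the action is compatible with the localisation/quotient construction used there; and (ii) that we are working with exact (not merely compactly supported) symplectomorphisms, so one must track the primitive $f$ with $\phi^*\theta=\theta+df$ and its effect on action filtrations and cylindrical ends.

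First I would recall that any object of $\cW_{\pr}(W,\theta)$ is represented by a cylindrical exact Lagrangian $L$, and by the remark preceding Definition~\ref{def:preliminary-wrapped} we may take $L$ to be the completion of a cylindrical Lagrangian in $\mathring W$. Given $\phi\in\Symp_{\ex}(\mathring W,\theta)$, extend it to the completion $W$ (using that $\phi$ is cylindrical-at-infinity after an exact isotopy, or more simply conjugating by the Liouville flow); then $\phi(L)$ is again exact and cylindrical, with the primitive shifted by the pullback of $f$. Next I would observe that $\phi$ acts on the Floer data: if $(J_t, H_t)$ is a Floer datum computing $\cW_{\pr}$-morphisms between $L_0,\dots,L_k$, then $(\phi_* J_t, (H_t\circ\phi^{-1}))$ is an admissible Floer datum for $\phi(L_0),\dots,\phi(L_k)$, and $\phi$ induces a bijection between the relevant moduli spaces of perturbed holomorphic discs, preserving the Maslov/action data up to the uniform shift by $f$, which cancels in the boundary count. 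This gives, for each choice of coherent perturbation data on the source, a strict isomorphism of $A_\infty$-structures from the category built with one datum to the category built with the pushed-forward datum; composing with the (quasi-isomorphism) comparison functor between different choices of perturbation data produces the desired autoequivalence $\phi_*$ of $\cW_{\pr}(W,\theta)$, canonical up to quasi-isomorphism.

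Finally I would check the two formal properties needed to get an action of $\pi_0$. Functoriality $(\phi\circ\psi)_*\simeq\phi_*\circ\psi_*$ follows because the two composite functors are both induced by pushing forward Floer data, hence agree up to the canonical comparison quasi-isomorphisms; and $\phi_*\simeq\id$ whenever $\phi\in\Ham(\mathring W,\theta)$, indeed whenever $\phi$ is exact-isotopic to the identity, because a path of exact symplectomorphisms induces a continuation-map quasi-isomorphism between $\phi_*$ and the identity functor (the isotopy can be absorbed into a homotopy of Floer data, using that $\cW_{\pr}$ is invariant under such homotopies). Since $\Symp_{\ex}(\mathring W,\theta)$ is a topological group and $\phi\mapsto\phi_*$ descends through path-components, this yields the asserted action of $\pi_0\Symp_{\ex}(\mathring W,\theta)$.

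The main obstacle is the bookkeeping in step two: verifying that the GPS wrapped category, with its localisation-of-a-directed-category presentation and its choices of stops/cylindrical ends at infinity, genuinely carries a strictly functorial pushforward by arbitrary exact symplectomorphisms, rather than merely compactly supported ones. One must ensure $\phi$ can be normalised to respect the cylindrical structure at infinity without changing its class in $\pi_0\Symp_{\ex}$, and that the shift in the Liouville primitive $f$ does not obstruct the comparison with the standard Floer data (it only rescales the weights on the Hamiltonian term, which is harmless). Once that normalisation is in place, the rest is a routine, if lengthy, transport-of-structure argument of the kind established in \cite{Seidel-FCPLT}.
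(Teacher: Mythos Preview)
Your proposal has a genuine gap at the extension step. You write ``extend $\phi$ to the completion $W$ (using that $\phi$ is cylindrical-at-infinity after an exact isotopy, or more simply conjugating by the Liouville flow)'', but neither of these is justified. An exact symplectomorphism $\phi$ of $(\mathring W,\theta)$ intertwines the Liouville flow of $\theta$ with the Liouville flow of $\phi^*\theta=\theta+df$; these are different vector fields on $\mathring W$ unless $df=0$, so $\phi$ does \emph{not} extend to a self-map of the completion $(W,\theta)$. It extends tautologically to a map from the completion of $(\mathring W,\theta)$ to the completion of $(\mathring W,\phi^*\theta)$, and a $\theta$-cylindrical Lagrangian is carried to a $\phi^*\theta$-cylindrical Lagrangian, not a $\theta$-cylindrical one. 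Conjugating by the Liouville flow does nothing useful here (it shrinks supports of compactly supported maps, but $\phi$ is not assumed compactly supported), and the claim that $\phi$ becomes cylindrical-at-infinity after an exact isotopy is precisely the nontrivial content you have deferred to the last paragraph without proof.

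The paper's argument confronts this head-on and takes a different route: it treats $\phi$ as an equivalence $\cW_{\pr}(\mathring W,\theta)\to\cW_{\pr}(\mathring W,\phi^*\theta)$ between categories attached to two different Liouville structures on the same underlying manifold, and then produces a separate equivalence $\cW_{\pr}(\mathring W,\theta)\simeq\cW_{\pr}(\mathring W,\phi^*\theta)$ to close the loop. The second equivalence comes from showing that $(\mathring W,\theta+tdf)$ is a Liouville domain for every $t\in[0,1]$ (a convexity/interpolation check on the boundary), invoking \cite[Lemma~5]{Seidel-Smith} to turn this one-parameter family of completions into a trivial Liouville inclusion, and then applying \cite[Lemma~3.41]{GPS1} to get the induced equivalence of wrapped categories. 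Functoriality and independence of path-component then follow from compatibility of these Liouville inclusions with concatenation. This deformation-of-Liouville-form argument is the missing idea in your proposal; once you have it, the transport-of-structure part you describe is indeed routine.
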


\begin{proof}

Fix $\phi \in \Symp_{\ex} (\mathring{W}, \theta) $.  
Cylindrical Lagrangians for $(\mathring{W}, \theta)$ get mapped  under $\phi$ to cylindrical Lagrangians for $(\mathring{W}, \phi^\ast \theta)$. 
This induces a map $\phi: \cW_{\pr}(\mathring{W}, \theta) \to \cW_{\pr}(\mathring{W}, \phi^\ast \theta)$ taking a Lagrangian $L$ to $\phi (L)$ and pulling back all Floer data. 

On the other hand, as $\phi$ is an exact symplectomorphism of $\mathring{W}$, $\phi^\ast \theta = \theta + df$, for some smooth function $f$. We claim that $(\mathring{W}, \theta+ tdf)$ is a  Liouville domain for all $t \in [0,1]$. 
For $t=1$, this is true tautologically via pullback, with the Liouville flow $Z_{\theta+df}$ for $(\mathring{W}, \theta + df)$ being the pushforward under $\phi$ of the Liouville flow $Z_\theta$ for $(\mathring{W}, \theta)$.
Now notice, first, that 
$(\partial \mathring{W}, \ker (\theta + t df))$ is a contact manifold for all $t \in [0,1]$: the positivity one needs comes from interpolating between two volume forms of the same sign.
And second, notice that the vector field $\omega$-dual to $\theta + t df$ is a convex combination of the two outward pointing vectors fields $Z_\theta$ and $Z_{\theta+df}$, and so is also outward pointing. Thus $\{ (\mathring{W}, \theta+ tdf) \}_{t \in [0,1]}$ is a smooth family of Liouville domains.

The cylindrical completions of $\{ (\mathring{W}, \theta+ tdf) \}_{t \in [0,1]}$ give a smooth family of finite type complete Liouville manifolds, say $\{ (W, \theta_t) \}_{t \in [0,1]}$. 
This is automatically a convex symplectic deformation in the sense of \cite[Section 2]{Seidel-Smith}. 
By \cite[Lemma 5]{Seidel-Smith}, there is a smooth family of diffeomorphisms $h_t: W \to W$, with $h_0 = \id$, such that $h_t^\ast \theta_t = \theta_0 + dg_t$, where $(t,x) \mapsto g_t(x)$ is a compactly supported function on $[0,1]\times W$. Inspection of the proof shows that the family $h_t$ is canonical up to isotopy through diffeomorphisms with the same properties.

This implies that we have a trivial inclusion of Liouville manifolds $(W, \theta) \hookrightarrow (W, \theta_1)$ in the sense of \cite[Definition 2.4]{GPS1}. By \cite[Lemma 3.41]{GPS1} (see also \cite[Lemma 3.4]{GPS2}), there is an induced equivalence $c: \cW_{\pr}(W, \theta) \to \cW_{\pr}(W, \theta_1)$. 
Postcomposing $\phi: \cW_{\pr}(\mathring{W}, \theta) \to \cW_{\pr}(\mathring{W}, \phi^\ast \theta)$ with the inverse to $c$ gives an autoequivalence of $\cW_{\pr}(\mathring{W}, \theta)$, which we will also denote $\phi$. 

To ensure that we obtain a group homomorphism, we require a consistency of continuation maps. This  follows from the compatibility of the Liouville inclusions obtained from \cite[Section 2]{Seidel-Smith} with concatenation of families of finite type complete Liouville manifolds.

Finally,
if $\phi$ and $\phi'$ are in the same connected component of $\Symp_{\ex} (\mathring{W}, \theta)$, they are related by a Hamiltonian isotopy. Furthermore, if $\phi$ is Hamiltonian, the maps constructed in the two steps of the argument above are evidently quasi-inverse. This completes the proof.
\end{proof}

\begin{lemma}cf. \cite[Lemma 1.1]{BEE}. \label{lem:retraction-to-exact-symplectos}
The inclusion $\Symp_{\ex} (\mathring{W}, \theta) \hookrightarrow \Symp (\mathring{W})$ is a weak homotopy equivalence. 

\end{lemma}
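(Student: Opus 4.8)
The plan is to exhibit $\Symp_{\ex}(\mathring W,\theta)$ as a strong deformation retract of $\Symp(\mathring W)$, the retraction ``subtracting off the obstruction to exactness''. For any $\phi \in \Symp(\mathring W)$ the one-form $\phi^\ast\theta - \theta$ is automatically closed (since $\phi^\ast\omega = \omega = d\theta$), and $\phi$ is exact precisely when its de Rham class $c(\phi) := [\phi^\ast\theta - \theta] \in H^1(\mathring W;\bR)$ vanishes. This class is well behaved: $c(\phi \circ g) = c(\phi) + c(g)$ whenever $g \in \Symp_0(\mathring W)$ (as $g$ then acts trivially on $H^1$), so in particular $c|_{\Symp_0(\mathring W)}$ is the usual flux homomorphism, with kernel $\Ham(\mathring W,\theta) = \Symp_{\ex}(\mathring W,\theta) \cap \Symp_0(\mathring W)$.

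Next I would fix, once and for all, a continuous linear map $\sigma$ from $H^1(\mathring W;\bR)$ to the space of closed one-forms on $\mathring W$ such that $[\sigma(c)] = c$ for every $c$ and the $\omega$-dual vector field $X_{\sigma(c)}$ is tangent to $\partial\mathring W$; the existence of $\sigma$ is the one real point, discussed below. Given $\phi$, let $X_\phi := X_{\sigma(c(\phi))}$; as it is tangent to $\partial\mathring W$ its flow $\{\psi^\phi_t\}_{t \in \bR}$ is complete on $\mathring W$, and a Cartan-formula computation (differentiate $(\psi^\phi_t)^\ast\theta$ and use that the flow preserves $X_\phi$ and $\omega$) gives $[(\psi^\phi_t)^\ast\theta - \theta] = t\, c(\phi)$. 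Define $r_t(\phi) := \phi \circ (\psi^\phi_t)^{-1}$ for $t \in [0,1]$. Then $r_t(\phi) \in \Symp(\mathring W)$, $r_0 = \id$, and, using the identity above with $(\psi^\phi_1)^{-1} \in \Symp_0(\mathring W)$, $c(r_1(\phi)) = c(\phi) - c(\psi^\phi_1) = c(\phi) - c(\phi) = 0$, so $r_1$ maps $\Symp(\mathring W)$ into $\Symp_{\ex}(\mathring W,\theta)$; moreover if $\phi$ is already exact then $c(\phi) = 0$, hence $\sigma(c(\phi)) = 0$, hence $\psi^\phi_t \equiv \id$ and $r_t(\phi) = \phi$. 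Since $\phi \mapsto c(\phi)$ is continuous in the $C^\infty$-topology, $\sigma$ is linear, and flows depend smoothly on their generators, $(t,\phi) \mapsto r_t(\phi)$ is continuous. Thus $r_t$ is a strong deformation retraction of $\Symp(\mathring W)$ onto $\Symp_{\ex}(\mathring W,\theta)$, which certainly gives the asserted weak homotopy equivalence.

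The point requiring care, and the one I expect to be the main obstacle, is the existence of $\sigma$: one needs every class in $H^1(\mathring W;\bR)$ represented by a closed one-form whose $\omega$-dual is tangent to $\partial\mathring W$ (so the correcting isotopies are complete on $\mathring W$), and chosen linearly in the class. Near $\partial\mathring W$ this comes down to modifying a given representative by an exact one-form so that it is compatible with the Liouville flow / the induced contact structure on $\partial\mathring W$, which is essentially the content of \cite[Lemma 1.1]{BEE}, whose method I would follow. I would also remark that in every application of the lemma in this paper $\mathring W$ is simply connected, so $H^1(\mathring W;\bR) = 0$ and the issue is vacuous --- there $\Symp_{\ex}(\mathring W,\theta) = \Symp(\mathring W)$ outright. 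One can alternatively avoid flux entirely and argue via a parametrised Moser argument, interpolating $\theta$ and $\phi^\ast\theta$ through Liouville forms using the ``convex combination of outward-pointing Liouville vector fields'' trick already used in the proof of Lemma \ref{lem:exact-action}; this again reduces to controlling de Rham classes near $\partial\mathring W$.
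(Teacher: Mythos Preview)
Your flux-correction strategy is natural, but the step you flag as ``requiring care'' is a genuine obstruction, not a technicality deferable to \cite{BEE}. The condition you need --- a closed one-form in every class whose $\omega$-dual is tangent to $\partial\mathring W$ --- already fails for $D^*T^n$: the dual of $dq_1$ is $\partial_{p_1}$, and correcting by $df$ forces $\partial_{q_1}f(q,e_1)=-1$ along the boundary point $p=e_1$, contradicting periodicity in $q_1$. More generally, no nonzero class admits such a representative there. This is not what \cite[Lemma 1.1]{BEE} provides: their isotopy lives on the \emph{completion} $W$, where the dual vector field need only be complete; one achieves this by taking the representative $s$-independent on the cylindrical end, so the dual decays like $e^{-s}$. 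Such a vector field crosses every $s$-level and is certainly not tangent to $\partial\mathring W$.

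Your escape hatch also fails in this paper: $X$ is the plumbing of two copies of $T^*S^3$ along a Hopf link and has $\pi_1(X)=\bZ$, so $H^1(X;\bR)\neq 0$ and the lemma is not vacuous when applied to $X$.

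The paper follows your closing alternative rather than your main line. One extends $\psi$ to a symplectomorphism between the completions $(W,\theta)\to(W',\theta')$ and invokes \cite{BEE} there to get a symplectic isotopy $\sigma_t$ of $W$ with $\psi\circ\sigma_1$ exact. The new work is truncation back to the domain: since the generating field on the end is $e^{-s}\tilde V$, the level $\{s=s_0\}$ is displaced only $C^\infty$-small for $s_0$ large; the contact and outward-pointing conditions being $C^\infty$-open, the displaced hypersurfaces still bound Liouville subdomains, all canonically identified (via $\sigma_t$ and the Liouville flow) with $\mathring W$. This produces the desired deformation on the domain, and the construction is uniform over compact families, giving the weak equivalence.
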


\begin{proof}
An analogous statement for complete Liouville manifolds is contained in \cite[Lemma 1.1]{BEE}. This shows that given an arbitrary symplectomorphism $\psi: (W, \theta) \to (W', \theta')$ between finite type complete Liouville manifolds, there exists a symplectic isotopy $\sigma_t: (W, \theta) \to (W, \theta)$, $t \in [0,1]$, $\sigma_0 = \id$, such that $\psi \circ \sigma_1$ is exact. 
Inspecting the proof, we see that the isotopy is canonical up to deformation within the class with the same properties, and that the argument can be carried out for families of symplectomorphisms parametrised by arbitrary compact cell complexes. 

We now want to upgrade this to a statement for Liouville domains. We start with the argument for a single map.
Suppose $\psi: \mathring{W} \to \mathring{W}$ is an arbitrary symplectomorphism. 
Tautologically, this induces a symplectomorphism from the conical extension of $(\mathring{W}, \theta)$, say $(W, \theta)$, to the conical extension of $(\mathring{W}, \psi^\ast \theta)$, say $(W', \theta')$. Let us also denote this symplectomorphism by $\phi$. By construction, the cylindrical end $E = [0,\infty) \times \partial \mathring{W}$ for $(W, \theta)$ is taken to the one for  $(W, \psi^\ast \theta)$, with $\psi$ preserving $s$-level contact shells, $s \in [0, \infty)$, and $\psi \circ \rho_s = \rho'_s \circ \psi$, where $\rho_s$ is the Liouville flow for $(W, \theta)$ and $\rho_s'$ the one for $(W', \theta')$. 

We now apply \cite[Lemma 1.1]{BEE} to get a symplectic isotopy $\sigma_t$, $t \in [0,1]$, as above. 
Inspecting their proof, we see that outside a compact set, $\sigma_t$ is given by integrating a vector field of the form $e^{-s} \tilde{V}$, where $\tilde{V}$ is independent of $s$. In particular, for sufficiently large $s$, the contact level-set $\{ s \} \times \partial \mathring{W}$ is displaced by $C^\infty$-small amounts under $\sigma_t$, for any $t \in [0,1]$. (The statement of their lemma only spells this out for $C^0$.) On the other hand, under $C^\infty$ small perturbations, both the contact condition and the condition that a vector field be outward pointing are open. 
Fix such a sufficiently large $s$. For any $t \in [0,1]$, let 
$\mathring{W}_t := W \backslash \sigma^{-1}_t ( [s, \infty) \times \partial \mathring{W})$.
By construction, $(\mathring{W}_t, \theta)$ is a Liouville domain with conical completion $W$, and is canonically Liouville isomorphic (via $\sigma_t$) to $(\mathring{W} \cup ([0,s] \times \partial\mathring{W}), \theta)$. 

Altogether, we have deformed $\psi$ to an exact symplectomorphism of  $(\mathring{W} \cup ([0,s] \times \partial\mathring{W}), \theta)$. Conjugating by the time $s$ Liouville flow now gives the desired map for $\psi$. 
Finally, for a family of maps $\{ \psi_b \}_{b \in B}$ indexed by a compact cell complex $B$, one can choose the parameter $s$ uniformly over $B$. Together with the observation that the argument in \cite{BEE} works in families, this completes the proof.
\end{proof}

\begin{corollary}\label{cor:symplectos-act}
The group
$\pi_0 \Symp (\mathring{W})$ acts on $\cW_{\pr}(W, \theta)$. 
\end{corollary}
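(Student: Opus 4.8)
The plan is to deduce this as a formal consequence of the two preceding lemmas. By Lemma~\ref{lem:retraction-to-exact-symplectos}, the inclusion $\Symp_{\ex}(\mathring{W},\theta)\hookrightarrow\Symp(\mathring{W})$ is a weak homotopy equivalence; since it is moreover a continuous group homomorphism, the induced bijection on $\pi_0$ is an isomorphism of groups, $\pi_0\Symp_{\ex}(\mathring{W},\theta)\xrightarrow{\ \sim\ }\pi_0\Symp(\mathring{W})$. On the other side, Lemma~\ref{lem:exact-action} furnishes a homomorphism from $\pi_0\Symp_{\ex}(\mathring{W},\theta)$ to the group of autoequivalences of $\cW_{\pr}(W,\theta)$ (up to natural equivalence). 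Precomposing the latter with the inverse of the former isomorphism produces the desired action of $\pi_0\Symp(\mathring{W})$ on $\cW_{\pr}(W,\theta)$, and this completes the proof.

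Concretely, given $\phi\in\Symp(\mathring{W})$ one first runs the deformation from the proof of Lemma~\ref{lem:retraction-to-exact-symplectos} to isotope $\phi$ to an exact symplectomorphism $\phi'$ of $(\mathring{W},\theta)$, then applies the action of the class of $\phi'$ constructed in Lemma~\ref{lem:exact-action}; well-definedness (independence of the choices made) is guaranteed by the fact that the isotopy is canonical up to deformation within the class of isotopies with the same properties, as recorded in the proof of Lemma~\ref{lem:retraction-to-exact-symplectos}, together with the final paragraph of the proof of Lemma~\ref{lem:exact-action}.

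The only point requiring even a little care is the observation that the identification $\pi_0\Symp_{\ex}(\mathring{W},\theta)\cong\pi_0\Symp(\mathring{W})$ is one of groups rather than merely of pointed sets, so that the composite is again a group homomorphism and hence a genuine group action — but this is automatic because the map realising the identification is a topological group homomorphism. There is no further substantive obstacle here: all of the analytic content (deforming an arbitrary symplectomorphism of the Liouville domain to an exact one in a controlled, family-wise manner) has already been absorbed into Lemma~\ref{lem:retraction-to-exact-symplectos}, and the categorical content into Lemma~\ref{lem:exact-action}.
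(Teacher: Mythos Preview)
Your proposal is correct and follows exactly the same approach as the paper: the paper's proof simply says ``This now follows immediately from combining Lemmas \ref{lem:exact-action} and \ref{lem:retraction-to-exact-symplectos},'' and you have just unpacked that combination in detail.
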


\begin{proof}
This now follows immediately from combining Lemmas \ref{lem:exact-action} and \ref{lem:retraction-to-exact-symplectos}.
\end{proof}

\subsection{Gradings and orientations}\label{sec:gradings-and-orientations}

We now assume that $2c_1(W, d \theta) = 0 \in H^2 (W; \bZ)$, i.e.~the bundle $(\Lambda^{\dim{W}} T^\ast W)^{\otimes 2}$ is trivial (here $\dim$ denotes the complex dimension). Then $W$ admits a grading, i.e.~a choice of fibrewise universal cover for the Lagrangian Grassmanian bundle $\Lag (W) \to W$, say $\widetilde{\Lag} (W) \to W$.  The isomorphism classes of such covers are in one-to-one correspondence with trivialisations of $(\Lambda^{\dim{W}} T^\ast W)^{\otimes 2} \simeq W \times \bC$, and form an affine space over $H^1(W; \bZ)$ (see \cite[Lemma 2.2]{Seidel_graded}). 

Fix a grading on $W$. Given a Lagrangian $L \subset W$, the trivialisation $(\Lambda^{\dim{W}} T^\ast W)^{\otimes 2} \simeq W \times \bC$ determines a (homotopy class of) map $L \to \bC^\ast$, and so a class in $H^1(L; \bZ)$, called the Maslov class of $L$. There exists a lift of $TL \subset \Lag (W)$ to $\widetilde{\Lag} (W)$ precisely when $L$ has Maslov class zero; a choice of such a lift is called a grading on $L$. 

We can now upgrade to the `full' version of the wrapped Fukaya category, which will be used for the rest of this article.

\begin{definition}
The wrapped Fukaya category $\cW(W, \theta)$ is defined using the set-up in \cite[Section 2.4]{GPS2}, with the following choices: coefficient ring $\bZ$; Lagrangians are exact, cylindrical, Maslov zero and spin, and, as objects of $\cW(W, \theta)$, equipped with  brane data comprising a $Spin$-structure and a  grading.  

The compact Fukaya category $\scrF(W, \theta)$ is the subcategory of compact Lagrangian submanifolds. 
\end{definition}
We will often suppress $\theta$ from the notation when it is clear from the context which primitive we are referring to. 

\begin{remark} For homological mirror symmetry statements, we'll base change the ground ring for $\cW(W)$ and $\scrF(W)$ from $\bZ$ to $\bC$; as this will be clear from context, we will also denote those categories by $\cW(W)$ and $\scrF(W)$. 
\end{remark}

Suppose that we are given $\phi \in \Symp (W)$. We say that $\phi$ is gradeable if under pullback by $\phi$, our choice of trivialisation of $(\Lambda^{\dim{W}} T^\ast W)^{\otimes 2}$ is preserved; a grading for $\phi$ is then a choice of lift of the pullback map to a bundle automorphism of $\widetilde{\Lag}(W)$. We let $\Symp^{\gr} W$ denote the group of graded symplectomorphisms of $W$, and $\Symp^{\gr}_{\ex} (W, \theta)$ the subgroup  of exact graded symplectomorphisms. From the discussion above there's an exact sequence\footnote{The final map is not a group homomorphism, but its kernel is nonetheless a subgroup of $\Symp(W)$. } (see \cite[Lemma 2.4]{Seidel_graded})
\begin{equation} \label{eqn:gradings-exact-seq}
1 \to \bZ \to \Symp^{\gr} (W) \to \Symp (W) \to H^1(W; \bZ).
\end{equation}

If $\phi  \in \Symp(W)$ is gradeable and has compact support, it comes with a preferred grading: the one which fixes $\widetilde{\Lag}(W)$ outside of the support of $\phi$.
This implies that there is a splitting of the forgetful map from $\Symp_c^{\gr}(W)$ to the group of gradeable compactly supported symplectomorphisms. 

The results of the previous subsection readily generalise as follows.

\begin{lemma}\label{lem:exact-action-graded}
The group $\pi_0 \Symp_{\ex}^{\gr} (\mathring{W}, \theta)$ acts on $\cW(W, \theta)$. 
\end{lemma}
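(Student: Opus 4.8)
The plan is to run the proof of Lemma \ref{lem:exact-action} again, now carrying along the extra brane data; the underlying ungraded, $\bZ/2$-linear autoequivalence is already produced by that lemma (after forgetting gradings and $Spin$-structures), so the task is only to promote it. Fix $(\phi, \widetilde{\phi}) \in \Symp_{\ex}^{\gr}(\mathring{W},\theta)$, so that $\phi^*\theta = \theta + df$ and $\widetilde\phi$ is a chosen lift of the pullback action to a bundle automorphism of $\widetilde{\Lag}(W)$. First I would observe that $\phi$ sends a Lagrangian brane $(L, \alpha_L, \mathfrak{s}_L)$ --- with $\alpha_L$ a grading and $\mathfrak{s}_L$ a $Spin$-structure --- to a Lagrangian brane supported on $\phi(L)$: the Maslov class of $\phi(L)$ vanishes since $\phi$ is gradeable and $L$ has Maslov class zero; a grading $\alpha_{\phi(L)}$ is obtained by transporting $\alpha_L$ through $\widetilde{\phi}$; and $\mathfrak{s}_{\phi(L)} := \phi_*\mathfrak{s}_L$ since $\phi|_L \colon L \to \phi(L)$ is a diffeomorphism. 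Pulling back all Floer data as in Lemma \ref{lem:exact-action} gives a functor $\phi \colon \cW(\mathring{W},\theta) \to \cW(\mathring{W}, \phi^*\theta)$ refining the one there.

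Next I would re-examine the convex symplectic deformation. As in Lemma \ref{lem:exact-action}, $\{(\mathring{W}, \theta + t\,df)\}_{t\in[0,1]}$ is a smooth family of Liouville domains with completions $\{(W,\theta_t)\}$, and \cite[Lemma 5]{Seidel-Smith} supplies diffeomorphisms $h_t \colon W\to W$, $h_0 = \id$, with $h_t^*\theta_t = \theta_0 + dg_t$ and $g_t$ compactly supported; in particular $h_t$ is a symplectomorphism $(W,d\theta_0)\to(W,d\theta_t)$. The new point is that we may transport the brane data along this family: pushing forward the chosen grading and the $Spin$-structures on objects by $h_t$ gives, for each $t$, compatible choices for $(W,\theta_t)$, canonically (since $h_0 = \id$ and the family is smooth and connected). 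With these choices, the equivalence $c \colon \cW(W,\theta) \to \cW(W,\theta_1)$ coming from the trivial Liouville inclusion --- constructed via \cite[Lemma 3.41]{GPS1}, see also \cite[Lemma 3.4]{GPS2v3} --- upgrades to an equivalence of the graded, $Spin$-decorated wrapped categories, because the GPS continuation functors are defined at the level of branes (continuation strips carry canonical grading and orientation data). Post-composing $\phi$ with $c^{-1}$, and identifying $\cW(\mathring W, \theta) \simeq \cW(W,\theta)$ by completion, yields the sought graded autoequivalence of $\cW(W,\theta)$.

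The homomorphism property follows exactly as in Lemma \ref{lem:exact-action}, from the compatibility of the Seidel--Smith inclusions with concatenation, now with branes along for the ride; and if $(\phi, \widetilde\phi)$ and $(\phi',\widetilde\phi')$ lie in the same component of $\Symp_{\ex}^{\gr}(\mathring{W},\theta)$ they are joined by a path of graded exact symplectomorphisms, hence by a graded Hamiltonian isotopy once we account for the only discrete ambiguity, namely the central $\bZ$ in \eqref{eqn:gradings-exact-seq}; a graded Hamiltonian isotopy makes the two functors constructed above quasi-inverse by the usual argument, while the generator of that $\bZ$ --- represented by $(\id, \text{shift by } 1)$ --- acts by the shift functor $[1]$, which is again an autoequivalence of $\cW(W,\theta)$. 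Hence the action is genuinely by $\pi_0\Symp_{\ex}^{\gr}(\mathring{W},\theta)$, with no further quotient. The one step that is not a verbatim copy of Lemma \ref{lem:exact-action}, and where I would be most careful, is checking that $c$ promotes to the brane-decorated categories compatibly with the canonical transport of gradings and $Spin$-structures over the deformation; but this is forced by the way gradings and orientations propagate over a connected family, and everything else is bookkeeping.
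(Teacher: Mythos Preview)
Your proposal is correct and follows essentially the same approach as the paper: define the functor on branes by transporting gradings via $\widetilde{\phi}$ and pushing forward $Spin$-structures, then rerun the proof of Lemma~\ref{lem:exact-action}. The paper's own proof is considerably terser---it simply records how the brane data is carried and then invokes Lemma~\ref{lem:exact-action} together with the observation that gradeability is constant on connected components of $\Symp_{\ex}(\mathring{W},\theta)$---whereas you spell out the transport of brane data along the convex deformation and the role of the central $\bZ$; but this is elaboration rather than a different argument.
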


\begin{proof}
Given $\phi \in \Symp_{\ex}^{\gr} (\mathring{W}, \theta)$, we get a map $\cW(\mathring{W}, \theta) \to \cW(\mathring{W}, \phi^\ast \theta)$ by pulling back all Floer data, taking a Lagrangian $L$ to $\phi(L)$, and using the choice of grading for $\phi$ to determine the grading for $\phi(L)$ (the spin structure is just pulled back). 

We can now follow the proof of Lemma \ref{lem:exact-action}, noting that in any given connected component of $\Symp_{\ex} (\mathring{W}, \theta)$, either all maps are gradeable or none of them are. 
\end{proof}

\begin{lemma}\label{lem:retraction-to-graded-exact-symplectos}
The inclusion $\Symp_{\ex}^{\gr} (\mathring{W}, \theta) \hookrightarrow \Symp^{\gr}(\mathring{W})$ is a weak homotopy equivalence. 
\end{lemma}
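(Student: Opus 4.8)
The plan is to reduce this to the ungraded statement of Lemma~\ref{lem:retraction-to-exact-symplectos} by exploiting that forgetting the grading defines a covering map. First I would record the commutative square
\[
\begin{CD}
\Symp^{\gr}_{\ex}(\mathring{W},\theta) @>>> \Symp^{\gr}(\mathring{W}) \\
@VVpV @VVqV \\
\Symp_{\ex}(\mathring{W},\theta) @>>> \Symp(\mathring{W})
\end{CD}
\]
in which the horizontal maps are the inclusions and $p,q$ forget the grading. This square is Cartesian: by definition a graded exact symplectomorphism is a graded symplectomorphism whose underlying symplectomorphism happens to be exact, so the top-left corner is literally $q^{-1}\big(\Symp_{\ex}(\mathring{W},\theta)\big)$.

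Next I would argue that $q$ (and hence, by the Cartesian property, $p$) is a covering map onto the union of those path-components of $\Symp(\mathring{W})$ consisting of gradeable symplectomorphisms, a subset which is open and closed. Indeed, by the discussion around \eqref{eqn:gradings-exact-seq} (cf.~\cite[Lemma 2.4]{Seidel_graded}) a symplectomorphism lies in the image of $q$ precisely when it is gradeable, a condition depending only on its path-component, and over a gradeable component the set of gradings of a given $\phi$ is an affine space over $\bZ$ varying continuously with $\phi$; this exhibits $q$ over that component as a principal $\bZ$-bundle, hence a covering. Over the non-gradeable components $p$ and $q$ have empty source and contribute nothing. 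Since Lemma~\ref{lem:retraction-to-exact-symplectos} says the bottom inclusion is a weak homotopy equivalence, in particular a bijection on $\pi_0$, it identifies the gradeable components on the two sides.

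Finally I would invoke the standard fact that pulling back a covering map (more generally a Serre fibration) along a weak homotopy equivalence yields a weak homotopy equivalence: $p$ and $q$ are fibrations with identical discrete fibre (a $\bZ$-torsor) over matching basepoints, so comparing the long exact sequences of homotopy groups and applying the five lemma shows the top horizontal map is a weak homotopy equivalence, the low-degree end causing no trouble since the fibre is discrete and the bottom map is already a $\pi_0$-bijection. Equivalently, one can lift the isotopies $\sigma_t$ from the proof of Lemma~\ref{lem:retraction-to-exact-symplectos} through $q$ by the homotopy lifting property, starting from the canonical trivial grading on $\sigma_0 = \id$; this produces the required deformation directly and runs in families over a compact cell complex. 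I expect the only genuinely non-formal point to be checking that the gradings assemble into an honest principal $\bZ$-bundle over each gradeable component, rather than merely a pointwise $\bZ$-torsor; this is immediate from the continuity of Seidel's construction of gradings, and is the one input beyond abstract homotopy theory.
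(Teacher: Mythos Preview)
Your proof is correct and follows essentially the same approach as the paper's: both reduce to Lemma~\ref{lem:retraction-to-exact-symplectos} via the observation that the forgetful map to the ungraded group is a $\bZ$-covering over the gradeable components. The paper's proof is a two-sentence sketch recording only the $\pi_0$ count (each gradeable component downstairs has $\bZ$ components upstairs), whereas you have spelled out the Cartesian-square and five-lemma argument in full.
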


\begin{proof}
This readily follows from Lemma \ref{lem:retraction-to-exact-symplectos}, by noticing, first, that for any given connected component of $\Symp(\mathring{W})$, resp.~$\Symp_{\ex} (\mathring{W}, \theta)$, either all maps in it are gradeable or none of them are; and that for each such gradeable component there are $\bZ$ components in  $\Symp^{\gr}(\mathring{W})$, resp.~$\Symp_{\ex}^{\gr} (\mathring{W}, \theta)$.
\end{proof}

We then immediately get the following:

\begin{corollary}\label{cor:graded-symplectos-act}
The group $\pi_0 \Symp^{\gr} (\mathring{W})$ acts on $\cW(W, \theta)$. 
\end{corollary}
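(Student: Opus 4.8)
The statement to prove is Corollary \ref{cor:graded-symplectos-act}: the group $\pi_0 \Symp^{\gr}(\mathring{W})$ acts on $\cW(W,\theta)$.

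The plan is to assemble this directly from the two preceding lemmas, exactly in the way Corollary \ref{cor:symplectos-act} was assembled from Lemmas \ref{lem:exact-action} and \ref{lem:retraction-to-exact-symplectos}. First I would invoke Lemma \ref{lem:retraction-to-graded-exact-symplectos}, which says the inclusion $\Symp_{\ex}^{\gr}(\mathring{W},\theta) \hookrightarrow \Symp^{\gr}(\mathring{W})$ is a weak homotopy equivalence; applying $\pi_0$ gives an isomorphism $\pi_0 \Symp_{\ex}^{\gr}(\mathring{W},\theta) \xrightarrow{\ \sim\ } \pi_0 \Symp^{\gr}(\mathring{W})$. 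Next I would invoke Lemma \ref{lem:exact-action-graded}, which provides an action of $\pi_0 \Symp_{\ex}^{\gr}(\mathring{W},\theta)$ on $\cW(W,\theta)$. Transporting this action across the isomorphism of the first step yields the desired action of $\pi_0 \Symp^{\gr}(\mathring{W})$ on $\cW(W,\theta)$.

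The one point that deserves a sentence of care — and which I expect is the only real content beyond formal nonsense — is checking that the action on $\pi_0$ really is well-defined under the weak equivalence, i.e.\ that a graded symplectomorphism of $\mathring{W}$ that is not a priori exact can be connected, within $\Symp^{\gr}(\mathring{W})$, to an exact graded one, and that different such connections give isotopic (hence equal in $\pi_0$, hence equal as autoequivalences up to the relevant notion of equivalence) exact graded representatives. But this is precisely what a weak homotopy equivalence on $\pi_0$ (together with the family version already noted in the proof of Lemma \ref{lem:retraction-to-graded-exact-symplectos}) guarantees, so there is nothing further to verify.

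Concretely the proof is a single line: the statement follows immediately from combining Lemmas \ref{lem:exact-action-graded} and \ref{lem:retraction-to-graded-exact-symplectos}, just as Corollary \ref{cor:symplectos-act} followed from Lemmas \ref{lem:exact-action} and \ref{lem:retraction-to-exact-symplectos}. I do not anticipate any genuine obstacle here; the work was done in establishing the two lemmas, and in particular in the graded bookkeeping (the observation, used in both Lemmas \ref{lem:exact-action-graded} and \ref{lem:retraction-to-graded-exact-symplectos}, that gradeability is constant on connected components and that each gradeable component lifts to $\bZ$-many components upstairs).
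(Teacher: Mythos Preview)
Your proposal is correct and matches the paper's approach exactly: the paper states this corollary with no proof, introducing it with ``We then immediately get the following,'' which is precisely the one-line combination of Lemmas \ref{lem:exact-action-graded} and \ref{lem:retraction-to-graded-exact-symplectos} that you describe.
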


The natural module action of symplectic cohomology on wrapped Floer cohomology shows that $\cW(W)$ admits the structure of a category linear over $SH^0(W)$. From the discussion above, we immediately get that graded symplectomorphisms act on $\cW(X)$ by $SH^0(W)$-linear equivalences.

\section{The conifold smoothing} \label{sec:symplectic-side}

\subsection{Descriptions of the conifold smoothing $X$}

Consider $\bC^2\times\bC^2\times\bC^*$ with co-ordinates $(u_1,v_1,u_2,v_2,z)$, equipped with the standard product symplectic form $\omega = d\theta$. Here $\bC^\ast$ has the symplectic form identifying it with $T^\ast S^1$. For concreteness, take the K\"ahler form with potential $|u_1|^2+ |u_2|^2+ |v_1|^2+|v_2|^2+(|z|^2 + 1/|z|^2)$. (The last term can be thought of as the restriction of the standard potential on  $\bC^2$ to $\{ zw =1\}$.)



The \emph{conifold smoothing} $X$ is the symplectic submanifold defined by the equations
\begin{equation} \label{eqn:affine}
X = \{u_1v_1 = z-1, \ u_2v_2 = z+1\}.
\end{equation}
It is the fibre $X=X_{1,-1}$ over $(+1,-1) \in \ (\bC^* \times \bC^*) \backslash \Delta$ of a family of symplectic submanifolds
\begin{equation}
X_{a,b} = \{u_1v_1 = z-a, \ u_2v_2 = z-b\}. \label{eq:X_{a,b}}
\end{equation}
Let $\pi: X_{a,b} \to \bC^*$ denote the projection to the $z$ co-ordinate. This is a Morse-Bott-Lefschetz fibration, meaning that $d\pi$ has transversely non-degenerate critical submanifolds.  The smooth fibres are isomorphic to $\bC^*\times\bC^* = T^*T^2$, and there are singular fibres $(\bC\vee\bC) \times \bC^*$ respectively $\bC^* \times (\bC\vee\bC)$ over $a$ respectively $b$.  

\begin{lemma}\label{lem:parallel-transport} 
For fixed $a,b$, symplectic parallel transport for the fibration $\pi: X_{a,b} \to \bC^*$ is globally well-defined.
\end{lemma}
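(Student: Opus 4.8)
The plan is to show that the symplectic parallel transport vector field for $\pi:X_{a,b}\to\bC^*$ has controlled behaviour near infinity in each fibre, so that its flow is complete and transport is globally defined over any path in $\bC^*$. First I would write down the horizontal lift explicitly. Away from the singular fibres, a smooth fibre is $\{u_1v_1=z-a,\ u_2v_2=z-b\}\cong\bC^*\times\bC^*$, coordinatised by $(u_1,u_2)$ with $u_i\neq 0$; the total space carries the restriction of the flat K\"ahler form from $\bC^2\times\bC^2\times\bC^*$. Given a tangent vector $\partial_z$ on the base, its symplectic horizontal lift $\widetilde{\partial_z}$ is the unique vector in $\ker d\pi$ that is $\omega$-dual, within $TX_{a,b}$, to the pullback of the $1$-form dual to $\partial_z$; concretely one solves the linear algebra coming from differentiating the defining equations $u_iv_i=z-a_i$ together with the symplectic-orthogonality condition. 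I expect the lift to take the schematic form $\widetilde{\partial_z}=\sum_i \lambda_i(u_i,v_i)(\text{combination of }\partial_{u_i},\partial_{v_i})$ with $\lambda_i$ a rational function whose denominator is $|u_i|^2+|v_i|^2$, which is bounded away from $0$ on the smooth fibre (it vanishes only at the nodes of the singular fibres, which are not being crossed).

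The key point is then an a priori bound: along a flow line, I would estimate the growth of a proper exhaustion function on the fibre — for instance $\rho=|u_1|^2+|v_1|^2+|u_2|^2+|v_2|^2$ — and show $\frac{d}{dt}\rho$ grows at most linearly in $\rho$ (Gronwall), so no trajectory escapes to infinity in finite time. Because the constraint $u_iv_i=z-a_i$ forces $|u_i||v_i|=|z-a_i|$ to stay bounded on a bounded base-path, the "size" of a fibre point is governed by $\max(|u_i|,|v_i|)$ in each factor, and the horizontal lift, being $\omega$-dual to something of bounded norm, has norm controlled by $\rho$; this yields the completeness of the flow. Completeness of the horizontal flow over every compact path, together with the fact that the fibres over a fixed $(a,b)$ are all diffeomorphic (no critical values are crossed since we fix $a,b$ and move only in $z\in\bC^*$, but the singular fibres sit over $z=a$ and $z=b$ — so strictly I should note that parallel transport is along paths in $\bC^*$ avoiding $\{a,b\}$, or remark that the Morse-Bott-Lefschetz structure still allows well-defined transport through, which is the content being asserted), gives the global well-definedness.

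The main obstacle I anticipate is controlling the horizontal lift uniformly near the \emph{singular} fibres: unlike an ordinary Lefschetz fibration, here $d\pi$ degenerates along positive-dimensional critical submanifolds, and one must check that the parallel transport vector field extends continuously (indeed smoothly enough to integrate) across the corresponding loci, or else restrict to the statement over $\bC^*\setminus\{a,b\}$ and handle the singular fibres by the local Morse-Bott normal form. Concretely, near the node of $(\bC\vee\bC)\times\bC^*$ one should put $\pi$ in the standard form $z\mapsto u_1v_1+a$ and verify the transport vector field decays like the distance to the critical submanifold, so its flow is still complete. Once this local model is in hand, patching with the estimate on the smooth part finishes the argument; I would present the global completeness as the substantive step and treat the linear-algebra computation of $\widetilde{\partial_z}$ as routine.
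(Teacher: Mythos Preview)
Your direct approach --- compute the horizontal lift explicitly and bound the growth of the exhaustion $\rho=\sum_i(|u_i|^2+|v_i|^2)$ along the flow --- is correct in outline (indeed the lift in each conic factor is $\frac{\bar v_i\,\partial_{u_i}+\bar u_i\,\partial_{v_i}}{|u_i|^2+|v_i|^2}$, and one finds $|\dot\rho|$ is uniformly bounded by a constant times the base speed, so no Gronwall is even needed). The paper takes a cleaner and genuinely different route: the fibrewise Hamiltonian $T^2$-action rotating the $(u_i,v_i)$-planes has moment map $\mu=\tfrac12(|u_1|^2-|v_1|^2,\ |u_2|^2-|v_2|^2)$; since the infinitesimal generators of the action are vertical and the horizontal distribution is by definition the symplectic orthogonal of the vertical, each component of $\mu$ is \emph{exactly} preserved by parallel transport. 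As $\mu$ is fibrewise proper, trajectories are confined to compact level sets and the flow is complete. This argument is shorter, avoids any explicit computation of the lift, handles the Morse--Bott singular fibres uniformly (properness of $\mu$ persists there, so your separate local-model analysis is unnecessary), and generalises immediately to any symplectic fibration carrying a fibrewise Hamiltonian torus action with proper moment map. Your approach is more hands-on and self-contained, at the cost of the extra bookkeeping near the critical loci; note also that the exact conserved quantities $|u_i|^2-|v_i|^2$ are precisely what would sharpen your $\dot\rho$ estimate to an equality.
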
 

\begin{proof}
There is a Hamiltonian $T^2$-action on $X_{a,b}$, rotating the $(u_i,v_i)$-planes, and $\pi$ is a $T^2$-equivariant map. Given any symplectic fibration with a fibrewise Hamiltonian action of a Lie group $G$, each component of the moment map for the $G$-action is preserved by symplectic parallel transport (cf. \cite[Section 5]{Auroux:Gokova} or \cite[Lemma 4.1]{SW}). In the case at hand, the moment map is fibrewise proper; the result follows.
\end{proof}

\begin{lemma}\label{lem:matching-spheres}
Any matching path $\gamma$ between the two critical values in the base  $\bC^*$ of the Morse-Bott-Lefschetz fibration defines a Lagrangian sphere $S_\gamma$ in $X$.
\end{lemma}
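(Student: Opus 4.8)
The plan is to realise $S_\gamma$ as a fibre product of Lefschetz thimbles, one drawn in each of the two conic fibrations out of which $X$ is built, and then to check directly that it is a smoothly embedded Lagrangian $3$-sphere.

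First I would write $X = E_1 \times_{\bC^*} E_2$, the fibre product over the $z$-coordinate of the conic fibrations $E_i = \{u_i v_i = z - c_i\} \to \bC^*_z$, with $c_1 = 1$ and $c_2 = -1$; each $E_i$ is an exact Lefschetz fibration with a single critical value $z = c_i$, smooth fibre $\bC^*$, and a standard $A_1$ model $(u_i,v_i)\mapsto u_iv_i+c_i$ near its critical point. Parametrise $\gamma \colon [-1,1] \to \bC^*$ with $\gamma(-1) = c_2$, $\gamma(1) = c_1$, and $\gamma(t)$ regular for $t \in (-1,1)$. Over $\gamma$ the fibration $E_1$ carries a Lefschetz thimble $\Delta_1$: an embedded Lagrangian $2$-disk meeting the fibre over $\gamma(t)$ in the vanishing circle $\delta^{(1)}_t$ of the node at $c_1$, which shrinks to the critical point as $t \to 1$, with $\partial \Delta_1 = \delta^{(1)}_{-1} \subset (E_1)_{\gamma(-1)}$; symmetrically $E_2$ carries a thimble $\Delta_2$ over $\gamma$ with critical point over $t = -1$ and $\partial \Delta_2 = \delta^{(2)}_1 \subset (E_2)_{\gamma(1)}$ (both families are coherent under the parallel transport of Lemma~\ref{lem:parallel-transport} and its evident analogue on each $E_i$). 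I would then set $S_\gamma := \Delta_1 \times_{[-1,1]} \Delta_2 \subset E_1 \times_{\bC^*} E_2 = X$.

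The smoothness of $S_\gamma$ follows because near $t = 1$ the map $\Delta_2 \to \gamma$ is a submersion, near $t = -1$ the map $\Delta_1 \to \gamma$ is a submersion, and elsewhere both are; the fibre product of a smooth map with a submersion is a smooth closed submanifold, here of dimension $2 + 2 - 1 = 3$. To identify the diffeomorphism type I would note that over $t \in (-1,1)$ the fibre of $S_\gamma$ in $X_{\gamma(t)} = (E_1)_{\gamma(t)} \times (E_2)_{\gamma(t)}$ is the product torus $\delta^{(1)}_t \times \delta^{(2)}_t$, with the first circle collapsing as $t \to 1$ and the second as $t \to -1$. Cutting at $t = 0$ then exhibits $S_\gamma$ as a union of two solid tori — $\Delta_1|_{[0,1]} \times \delta^{(2)}_0 \cong D^2 \times S^1$ over $[0,1]$, and $\delta^{(1)}_0 \times \Delta_2|_{[-1,0]} \cong S^1 \times D^2$ over $[-1,0]$ — glued along the torus $\delta^{(1)}_0 \times \delta^{(2)}_0 \subset X_{\gamma(0)}$ so that the meridian of each solid torus is the core of the other. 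This is the genus-one Heegaard splitting, so $S_\gamma \cong S^3$.

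For the Lagrangian condition, writing $p_i \colon X \to E_i$ for the two projections and $\pi \colon X \to \bC^*$ for the $z$-coordinate, one checks that $\omega|_X = p_1^*\omega_{E_1} + p_2^*\omega_{E_2} - \pi^*\omega_{\bC^*}$; restricting to $S_\gamma$, the first two terms vanish because $p_i(S_\gamma) \subset \Delta_i$ and thimbles are Lagrangian in the four-dimensional $E_i$, and the last vanishes because $\pi(S_\gamma) \subset \gamma$ is one-dimensional, so $\omega|_{S_\gamma} = 0$ and the dimension count finishes the argument. The step I expect to demand the most care is the smoothness and topology at the two ends where a circle collapses, handled via the standard $A_1$ local model together with the submersion-versus-thimble observation above; I would also emphasise that, unlike in the classical matching-cycle construction, no Hamiltonian-isotopy matching condition on vanishing cycles is required here, precisely because the two degenerations occur in the two different factors $E_1$ and $E_2$, so the two boundary tori produced in the central fibre automatically coincide.
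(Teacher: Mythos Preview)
Your proof is correct and follows essentially the same route as the paper: both build $S_\gamma$ as two solid tori glued along a $T^2$ in a regular fibre, giving the genus-one Heegaard splitting of $S^3$. The only cosmetic difference is packaging---the paper describes the solid tori as Morse--Bott thimbles directly in $X$, whereas you realise them as fibre products $\Delta_i \times_\gamma \delta^{(j)}$ of ordinary Lefschetz thimbles in the factors $E_i$; your version is more explicit about smoothness at the endpoints and about the Lagrangian condition (via the identity $\omega|_X = p_1^*\omega_{E_1} + p_2^*\omega_{E_2} - \pi^*\omega_{\bC^*}$), which the paper leaves implicit in its appeal to parallel transport.
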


\begin{proof}
Paths emanating from $+ 1\in \bC^*$, resp.~$-1 \in \bC^\ast$ have associated Morse-Bott thimbles $S^1\times D^2$, respectively $D^2\times S^1$. These restrict to Lagrangian $T^2$s in nearby fibres. 
Let $\mu$ be the moment map generating the $T^2$ action. Then each thimble is the intersection of $\mu^{-1}(0)$ with the preimage of an open subset of $\gamma$, and we immediately see that the thimbles match without any corrections. We get 
 $$S_\gamma  = \mu^{-1}(0) \cap \pi^{-1} (\gamma). $$
To see that the $S_\gamma$ are all spheres,  first notice if $\gamma$ is the upper or lower unit circle, then $S_\gamma$ is presented via the standard genus one Heegaard splitting of the sphere: the vanishing cycles give the standard generators of $\bZ^2 = H_1(T^2;\bZ)$. Now for a general matching path, the Morse-Bott monodromies act trivially on the homology of the fibre $T^*T^2$, hence preserve the fact that the two vanishing cycles at the ends of the path span $\bZ^2 = H_1(T^2;\bZ)$.
\end{proof}

Recall that the compact core (or `skeleton') of a Liouville manifold $W$ with Liouville vector field $Z$ is the locus $\cup_{K\subset W} \cap_{t>0} \phi_Z^{-t}(K)$, where we take the union over relatively compact codimension zero subdomains with smooth boundary transverse to $Z$. When $W$ is finite type, so the completion of a fixed subdomain $K$,  this is just  $\bigcap_{t>0} \phi_Z^{-t}(K)$, and can be understood as the locus of points which do not flow to infinity under the Liouville vector field.

\begin{lemma} \label{lem:hopf}
$X_{a,b}$ is naturally a Weinstein manifold, with compact core $S^3 \cup_{\mathrm{Hopf}} S^3$.
\end{lemma}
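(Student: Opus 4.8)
The plan is to exhibit an explicit Weinstein structure on $X_{a,b}$ compatible with the Morse-Bott-Lefschetz fibration $\pi: X_{a,b}\to\bC^*$, and then read off the compact core from the critical points of a suitably chosen Weinstein Morse function. First I would note that the restriction $\theta$ of the standard Liouville form on $\bC^2\times\bC^2\times\bC^*$ is already a Liouville form on $X_{a,b}$: the Liouville vector field $Z$ is complete after a mild deformation and points outward along the ends, so $(X_{a,b},\theta)$ is a Liouville manifold of finite type. To upgrade this to a Weinstein structure, I would build a Morse function $h$ whose gradient-like vector field agrees with $Z$ at infinity. The natural choice is to take $h$ to be (a perturbation of) the sum of $|z|^2 + |z|^{-2}$ pulled back from the base and the fibrewise plurisubharmonic functions $|u_i|^2+|v_i|^2$; equivalently one can use the Morse-Bott-Lefschetz structure directly, building $h$ on the base out of a Morse function on $\bC^*$ with a single minimum located on the matching path between $a$ and $b$, and adding the fibrewise Weinstein functions on the $T^*T^2$-fibres.

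Next I would identify the critical points. By Lemma \ref{lem:parallel-transport}, parallel transport is globally defined, so the total space is built from the central fibre over a point of the matching path by attaching handles corresponding (i) to the critical points of the base Morse function away from the minimum, which are cancelled against the Lefschetz-type vanishing cycles, and (ii) to the two Morse-Bott singular fibres over $a$ and $b$, each of which contributes a thimble $S^1\times D^2$, resp.\ $D^2\times S^1$, as in the proof of Lemma \ref{lem:matching-spheres}. The upshot is that the only surviving critical points of $h$ assemble the vanishing cycle data along the matching path: gluing the two solid-torus thimbles along the common $T^2$ in a central fibre via the genus-one Heegaard splitting produces a Lagrangian $3$-sphere (this is precisely $S_\gamma$ of Lemma \ref{lem:matching-spheres}), and there are two such matching paths between $a$ and $b$ in $\bC^*$ (the two arcs of a loop separating $a$ from $b$), yielding two Lagrangian $S^3$'s. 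Finally I would compute their intersection: the two matching paths can be arranged to meet transversely at two points of $\bC^*$, and over each such point the corresponding fibre $T^2$'s vanishing-cycle circles meet once, so the two spheres meet transversely in two points with local models making the plumbing the Hopf-plumbing $S^3\cup_{\mathrm{Hopf}}S^3$; this matches the alternative description of $X$ as the plumbing of two copies of $T^*S^3$ along a Hopf link recorded in the introduction.

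The main obstacle is the bookkeeping in the second step: verifying that the Weinstein handle decomposition coming from the Morse-Bott-Lefschetz fibration has \emph{exactly} the claimed critical points, i.e.\ that all the auxiliary critical points introduced on the base (and in making the Morse-Bott fibres honestly Morse) cancel in pairs and leave only the cocore data of the two Lagrangian spheres. Concretely one must check that the attaching spheres are as expected and that the Weinstein homotopy class is insensitive to the choices; here I would lean on the standard theory of Lefschetz fibrations on Weinstein manifolds and on the explicit torus symmetry of $X_{a,b}$ (which makes the thimbles and their boundaries completely explicit), together with the observation that the complement of a smooth conic in $T^*S^3$ is manifestly Weinstein with core $S^3$ and that plumbing preserves the Weinstein condition, giving an independent confirmation of the homotopy type of the core.
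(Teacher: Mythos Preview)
Your overall strategy---identify two Lagrangian matching spheres over the two arcs of a loop in the base encircling the puncture---lands on the right answer, but the argument is both more complicated than necessary and contains a genuine error in the intersection analysis.

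The paper's proof is three lines and bypasses all of the Morse-function bookkeeping you flag as the main obstacle. It simply observes that the (reverse) Liouville flow is compatible with the fibration: on each $T^*T^2$ fibre it retracts to the zero-section $\{|u_i|=|v_i|\}$, and on the base $\bC^*$ it retracts to the unit circle. Hence the skeleton is the zero-section locus sitting over the unit circle, which is exactly $S_0\cup S_1$ where $\gamma_0,\gamma_1$ are the lower and upper arcs of that circle. No explicit Weinstein Morse function, no handle cancellation, no auxiliary critical points to track. The Weinstein structure is inherited directly from the ambient Liouville structure, and the skeleton is read off from where the flow fails to escape.

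Your description of the intersection of the two spheres is wrong, and this matters because it is precisely what ``$\cup_{\mathrm{Hopf}}$'' encodes. The two matching paths cannot be ``arranged to meet transversely at two points'': they are arcs with common \emph{endpoints} at the critical values $a,b$, and over each critical value both spheres share the same vanishing circle (the core $S^1$ of the Morse--Bott thimble). Thus $S_0$ and $S_1$ intersect \emph{cleanly along two circles}, one over each critical value, and these two circles form a Hopf link inside each $S^3$ (they are the $S^1\times\{0\}$ and $\{0\}\times S^1$ in the genus-one Heegaard splitting $S^3 = S^1\times D^2 \cup D^2\times S^1$). That is the meaning of the Hopf-link plumbing; it is not a transverse double-point intersection.
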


\begin{proof}
It suffices to consider $X=X_{1,-1}$.  Recall that for any K\"ahler potential $\rho$, the Liouville vector field dual to the Liouville form $-d^c \rho$  is the gradient vector field of $\rho$ with respect to the K\"ahler metric.  We will use  the K\"ahler potential $\rho = |u_1|^2+ |u_2|^2+ |v_1|^2+|v_2|^2+2(|z|^2 + 1/|z|^2)$, where the factor of 2 is benign but simplifies some calculations. We have Liouville form $\theta=-d^c \rho$; let $Z$ be the Liouville vector field dual to $\theta$. 

At any point away from the singular locus of $\pi$, the tangent space splits as a direct sum of the tangent space to the fibre $\pi^{-1}(\pt)$ and its symplectic orthogonal. 
This induces a decomposition of the Liouville vector field, say $Z = Z_F + Z_B$. 

Let $ i: F \hookrightarrow X$ be the inclusion of a fibre. At any smooth point of $i(F)$, the vector field $Z_F$ is the dual (using $i^\ast \omega$) to $i^\ast \theta$. Also, we have that $i^\ast \theta = i^\ast (-d^c(|u_1|^2+ |u_2|^2+ |v_1|^2+|v_2|^2))$. In particular, a standard calculation shows that $Z_F$ vanishes whenever $|u_i| = |v_i|$ for both $i$, and that its positive flow (on a fixed fibre) scales $\log (|u_i| / |v_i|)$ for both $i$. 

Symplectic parallel transport intertwines the fibrewise $T^2$ action, preserving the ratios $|u_1| / |v_1|$ and $|u_2|/ |v_2|$. Thus the flow of $Z$ itself, while not fibre-preserving, acts by scaling $\log (|u_i| / |v_i|)$ for both $i$. In particular, any point in $X$ which does not satisfy $|u_i| =|v_i|$ for both $i$ flows off to infinity, and so the compact core is contained in the subset $K = \{ |u_i| = |v_i|, \, i=1, 2 \} \subset X$.  
(Note this also applies to points in the annuli of critical points with $|u_i| \neq|v_i|$ for one $i$.)

It remains to analyse the Liouville flow on $K$.  Using $T^2$-equivariance, one sees that the degenerate 2-form $\omega|_K$ is 
pulled back from a symplectic form on the base, say $\omega_B$.  
Restricted to $K$, the K\"ahler potential is
$$
\rho (z) = 2|z-1| + 2|z+1| + 2(|z|^2 + 1/|z|^2).
$$
The Liouville flow $Z=Z_B$ is the gradient vector field of $\rho(z)$ with respect to the restriction of the K\"ahler metric to $K$. This is the same as the pullback of the gradient flow of $\rho(z)$ on $\bC^\ast$, using the metric associated to $\omega_B$.  

Irrespective of the metric used for geodesic flow, standard calculus techniques show that the real function $\rho(z)$ has two minima, for $z = \pm 1$, and two saddles at opposite points on the imaginary axis, with modulus the real root of $x^3+x^2-1$ (approx.~0.75).  

Possibly after a small perturbation of $\omega_B$ (and so of the metric on the base), there is a unique trajectory (arc) from each saddle to each minimum, which can be parametrised to have strictly monotone angular coordinate. Put together, the two arcs emanating from one saddle will give the matching path for one core, and the other two arcs will give the matching path for the other core.  (The constructed vector field is not necessarily Morse; that can either be achieved by a further small perturbation of the K\"ahler potential, alternatively one can note that the definition of the skeleton or compact core doesn't rely on the Morse condition, see e.g. \cite{Starkston} for a general discussion of skeleta in a Morse-Bott setting.)
\end{proof}

It follows that $X$ is the Weinstein completion of a Weinstein domain obtained by plumbing two copies of the disc cotangent bundle $D^*S^3$ along a Hopf link.

An affine variety determines a (complete) Weinstein symplectic manifold canonically up to exact symplectomorphism \cite[Section 4b]{Seidel:biased}.  We will give two further descriptions of $X$ by giving two further descriptions of the affine variety \eqref{eqn:affine}.

Recall that if $Z \subset \bC^2$ is a Zariski open subset and $f: Z \to \bC$ is a regular function, the \emph{spinning} of $f$ is the 3-fold
\begin{equation} \label{eqn:spinning}
\{(x,y,u,v) \in Z \times \bC^2 \, | \, f(x,y) = uv\}.
\end{equation}
This is a fibration over $Z$ by affine conics $\bC^*$, with singular fibres $\bC\vee\bC$ along $f^{-1}(0) \subset Z$.  

\begin{lemma}
$X$ can be obtained by spinning $Z = \bC^2 \backslash \{xy=1\}$ along the map $f(x,y) = xy-2$.
\end{lemma}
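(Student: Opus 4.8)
The plan is to produce an explicit isomorphism of affine varieties between the spinning \eqref{eqn:spinning} of $f(x,y) = xy-2$ over $Z = \bC^2 \setminus \{xy=1\}$, namely
$$
S_f := \{(x,y,u,v) \in Z \times \bC^2 \mid xy - 2 = uv\},
$$
and the affine variety \eqref{eqn:affine}. By \cite[Section 4b]{Seidel:biased}, recalled above, this suffices: isomorphic affine varieties have exact-symplectomorphic Weinstein completions.

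The key observation is that the regular function $z := xy - 1$ is nowhere vanishing on $Z$ --- this is precisely what deleting $\{xy=1\}$ achieves --- so it defines a regular map $z \colon S_f \to \bC^\ast$, and along $S_f$ one has the identities $xy = z+1$ and $uv = xy - 2 = z-1$. I would then check that
$$
(x,y,u,v) \longmapsto (u_1, v_1, u_2, v_2, z) := (u,\, v,\, x,\, y,\, xy-1)
$$
defines a morphism $S_f \to X$; this is immediate from $u_1 v_1 = uv = z-1$, $u_2 v_2 = xy = z+1$ and $z \in \bC^\ast$. For the inverse, starting from a point $(u_1,v_1,u_2,v_2,z) \in X$, I would set $(x,y,u,v) := (u_2, v_2, u_1, v_1)$; then $xy = u_2 v_2 = z+1 \neq 1$ because $z \neq 0$, so $(x,y) \in Z$, while $uv = u_1 v_1 = z-1 = xy - 2$, so $(x,y,u,v) \in S_f$. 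The two assignments are visibly mutually inverse, which gives the required isomorphism of affine varieties.

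I do not expect any genuine obstacle: the argument is pure bookkeeping once one spots the substitution $z = xy - 1$, which simultaneously converts the $\pm 1$ shifts in \eqref{eqn:affine} into the $-2$ shift of the spinning function and matches the puncture $z \in \bC^\ast$ of $X$ with the deleted locus $\{xy=1\} \subset \bC^2$. The one point worth flagging is exactly this last matching: spinning over $Z$ rather than over all of $\bC^2$ is essential, since spinning over $\bC^2$ would enlarge the base of the Morse-Bott-Lefschetz fibration from $\bC^\ast$ to $\bC$ and change the manifold. It is also worth recording in passing that $S_f$ is smooth --- the differential of $xy - 2 - uv$ vanishes only at the origin, where this function equals $-2 \neq 0$ --- consistent with the smoothness of \eqref{eqn:affine}.
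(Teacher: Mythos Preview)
Your proof is correct and takes essentially the same approach as the paper: both introduce the substitution $z = xy - 1$ and identify the spinning with the subvariety of $\bC^2 \times \bC^2 \times \bC^*$ cut out by $xy = z+1$ and $uv = z-1$. Your version is slightly more explicit in writing out the isomorphism and its inverse, but the content is identical.
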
  

\begin{proof}
Let $g: \bC^2 \to \bC$ be given by $g(x,y) = xy-1=z$, so $Z = \bC^2 \backslash g^{-1}(0)$ and $g: Z \to \bC^*$ has a Lefschetz singular fibre over $z=-1$. The spinning is then a $\bC^*$-fibration over $\bC^2\backslash \{xy=1\}$ with singular fibres over $\{xy-1=1\}$.  It is globally cut out of $\bC^2\times\bC^2\times\bC^*$, with co-ordinates $(x,y,u,v,z)$, by the equations $xy=z+1$ and $uv=z-1$.
\end{proof}

$Z$ contains an immersed Lagrangian $2$-sphere, obtained from parallel transport around the unit circle in the $z$-plane, cf.~\cite[Section 11]{Seidel:categorical_dynamics}. Viewing this as a union of two Lefschetz thimbles fibred over the upper and lower half-circles with common boundary in the fibre $z=1$, the immersed $2$-sphere naturally `spins' to the compact core $S_0 \cup_{\mathrm{Hopf}} S_1$ of $X$ described above, i.e. the matching $3$-spheres are obtained as $S^1$-fibrations over the thimbles with fibres degenerating over the boundary of the $2$-disc. 

Our final description of $X$ is as a log Calabi-Yau 3-fold. We won't use this directly, but it may be of independent interest. Consider $\bP^1 \times \bP^1 \times \bP^1$, which we view as the trivial $\bP^1\times \bP^1$-bundle over the base $\bP^1$. We blow up this variety along the (disjoint) curves $\bP^1 \times \{pt\} \times \{1\}$ and $\{pt\} \times \bP^1 \times \{-1\}$ to obtain a variety $\overline{X}$ with an induced map $\overline{X} \to X \to \bP^1$. Let $D$ denote the divisor given by the union of two smooth fibres of $\overline{X} \to \bP^1$ together with the subset of the toric boundary comprising its four components isomorphic to the first Hirzebruch surface $\bF_1$, see Figure \ref{fig:compactification-of-X}.

\begin{figure}[ht]
\begin{center}

\begin{tikzpicture}

\newcommand{\Depth}{2}
\newcommand{\Height}{2}
\newcommand{\Width}{2}
\coordinate (O) at (0,0,0);
\coordinate (A) at (0,\Width,0);
\coordinate (B) at (0,\Width,\Height);
\coordinate (C) at (0,0,\Height);
\coordinate (D) at (\Depth,0,0);
\coordinate (E) at (\Depth,\Width,0);
\coordinate (F) at (\Depth,\Width,\Height);
\coordinate (G) at (\Depth,0,\Height);

\draw[semithick] (O) -- (C) -- (G) -- (D) -- cycle;
\draw[semithick] (O) -- (A) -- (E) -- (D) -- cycle;
\draw[semithick] (O) -- (A) -- (B) -- (C) -- cycle;
\draw[semithick] (D) -- (E) -- (F) -- (G) -- cycle;
\draw[semithick] (C) -- (B) -- (F) -- (G) -- cycle;
\draw[semithick] (A) -- (B) -- (F) -- (E) -- cycle;
\draw[ultra thick] (A) -- (B);
\draw[ultra thick] (F) -- (G);

\coordinate (P) at (\Depth-0.4,\Width,\Height);
\coordinate (Q) at (\Depth,\Width,\Height-0.6);
\coordinate (R) at (\Depth-0.4,0,\Height);
\coordinate (S) at (\Depth,0,\Height-0.6);
\coordinate (U) at (0.4,\Width,0);
\coordinate (V) at (0.4,\Width,\Height);
\coordinate (W) at (0, \Width-0.6,\Height);
\coordinate (Z) at (0,\Width-0.6, 0);

\draw[fill=green!20, opacity=0.8] (P) -- (Q) -- (E) -- (U) -- (V) --(P);
\draw[fill=red!20, opacity=0.8] (V) -- (W) -- (C) -- (R) -- (P);
\draw[gray,semithick] (Q) -- (S);
\draw[gray,semithick] (R) -- (S);
\draw[gray,semithick, dotted] (W) -- (Z);
\draw[gray,semithick, dotted] (U) -- (Z);

\end{tikzpicture}
\end{center}
\caption{The toric compactification of $X$; blow up the thickened black edges on the cube for $\bP^1\times \bP^1 \times \bP^1$. This slices off wedges of the corresponding moment polytope. Two of the four $\bF_1$-boundary components of the result have been shaded.}

\label{fig:compactification-of-X}

\end{figure}
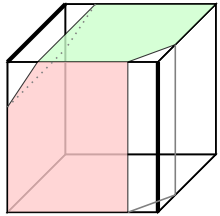

\begin{lemma}
$X$ is symplectically equivalent to the symplectic completion of $\overline{X}\backslash D$.\end{lemma}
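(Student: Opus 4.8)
The plan is to deduce the statement from the principle recalled above, that a smooth affine variety determines a complete Weinstein manifold canonically up to exact symplectomorphism \cite[Section 4b]{Seidel:biased}. Concretely, I would identify $\overline{X}\setminus D$ with the affine variety $X$ of \eqref{eqn:affine} as complex varieties; since the Liouville structure on $\overline{X}\setminus D$ implicit in the phrase ``symplectic completion of $\overline{X}\setminus D$'' --- whether one builds it from a plurisubharmonic exhaustion of the affine variety, or from a projective K\"ahler form on $\overline{X}$ together with a logarithmic pole along the (anticanonical) divisor $D$, as in the log Calabi--Yau picture --- is of precisely the type to which that principle applies, its completion is then exact symplectomorphic to $X$.

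First I would record that $X$ is genuinely affine: \eqref{eqn:affine} exhibits it as a closed subvariety of the affine variety $\bC^4\times\bC^*$. Equivalently, in the language of the spinning construction above, $X\cong\{xy-uv=2\}\setminus\{xy=1\}\subset\bC^4$ is the affine quadric threefold (the conifold smoothing) with the smooth hyperplane section $\{xy=1\}\cong\bC^*\times\bC^*$ removed. In particular, once $\overline{X}\setminus D$ is identified with $X$ it is automatically affine, so that the discussion of the previous paragraph applies.

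The substance of the argument is this identification. I would use the evident birational parametrisation of $X$ by $\bP^1\times\bP^1\times\bP^1$: projecting a point of $X$ to two of its ``torus'' coordinates, one from each pair $(u_i,v_i)$, together with the base coordinate $z$, is a birational map from $X$ to $\bP^1\times\bP^1\times\bP^1$, whose rational inverse recovers the two remaining coordinates from the relations $v_1=(z-1)/u_1$ and $v_2=(z+1)/u_2$. Viewed as maps to the target $\bP^1$-factors, these two formulas are indeterminate precisely along the two disjoint rational curves $\{u_1=0,\ z=1\}$ and $\{u_2=0,\ z=-1\}$, which --- for the appropriate labelling of the three factors --- are exactly the curves $\bP^1\times\{\mathrm{pt}\}\times\{1\}$ and $\{\mathrm{pt}\}\times\bP^1\times\{-1\}$ blown up to produce $\overline{X}$. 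Blowing these up resolves the indeterminacy and realises $X$ as a Zariski-open subset of $\overline{X}$. One then checks that the complement $\overline{X}\setminus X$ is exactly the divisor $D$ of the statement: $D$ is the proper transform of the toric boundary of $\bP^1\times\bP^1\times\bP^1$, whose six components split into the four meeting the two blow-up centres (these acquire the Hirzebruch-type structure asserted in the statement) and the two fibres of $\overline{X}\to\bP^1$ over $z=0$ and $z=\infty$ (removed because $z\in\bC^*$), while the remaining exceptional and proper-transform divisors lie inside $X$ and carry the degenerate conic fibres over $z=\pm1$. That $\overline{X}$ is smooth is automatic, being a blow-up of a smooth variety along smooth centres, and a local computation at the finitely many intersection points confirms that $D$ is simple normal crossings (and that $K_{\overline{X}}+D=0$, if one wishes to phrase matters via the log Calabi--Yau structure).

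With $\overline{X}\setminus D\cong X$ an affine variety in hand, \cite[Section 4b]{Seidel:biased} gives that the completed Liouville manifold of $\overline{X}\setminus D$ is exact symplectomorphic to that of $X$, which is the claim. The one step that is neither the cited principle nor a routine verification is the birational bookkeeping in the previous paragraph --- matching the two indeterminacy curves with the stated blow-up centres, and checking that $\overline{X}\setminus X$ assembles exactly into $D$ with the claimed components --- and this is the part I would expect to demand genuine, if elementary, care.
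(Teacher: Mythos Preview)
Your approach is correct and essentially the same as the paper's: both arguments identify $\overline{X}\setminus D$ with the affine variety $X$ of \eqref{eqn:affine} and then invoke the principle from \cite{Seidel:biased}. The paper runs the identification in the opposite direction—starting from the blow-up $\overline{X}$ in an affine chart $\bC_p\times\bC_q\times\bC^*_z$, writing the blow-up relations $(p-1)/(z-1)=\lambda/\mu$, $(q-1)/(z+1)=\tilde\lambda/\tilde\mu$, and then substituting $u_i=p-1,\,q-1$ and $v_i=\mu/\lambda,\,\tilde\mu/\tilde\lambda$ to recover the defining equations of $X$—but this is exactly your birational parametrisation read backwards (with the harmless coordinate shift $u_i\leftrightarrow u_i+1$ absorbing the choice of $pt$).
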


\begin{proof}
In an affine chart $\bC_p \times \bC_q \times \bC^*_z$ we blow up $\{p=1=z\}$ and $\{q=1=-z\}$. Taking $[\lambda,\mu] \in \bP^1$ and $[\tilde{\lambda}, \tilde{\mu}] \in \bP^1$ to be homogeneous co-ordinates on two different copies of $\bP^1$, the blow-up is given by 
\[
(p-1)/(z-1) = \lambda / \mu; \ (q-1)/(z+1) = \tilde{\lambda} / \tilde{\mu}.
\] 
Letting $u_1 = q-1$, $u_2 = p-1$, $v_1 = \tilde{\mu}/\tilde{\lambda}$ and $v_2 = \mu/\lambda$ gives
\[
u_1v_1 = z+1, \ u_2v_2 = z-1;
\]
since $z\in\bC^*$ none of $u_i, v_j$ can be infinite. Now remove the irreducible  divisor from the projective blow-up where any of the $u_i, v_j$ co-ordinates do become infinite. This meets the general fibre in its toric boundary; each special fibre is again toric, isomorphic to $(\bP^1\times \bP^1) \cup_{\bP^1 \times \{pt\}} (\bP^1\times \bP^1)$, and the divisor meets this in a hexagon of lines (one does not remove the entire line of intersection of the two components, but only its intersection with the rest of the boundary). 
\end{proof}

\subsection{Exact Lagrangian submanifolds\label{Subsec:exact-lagrangian-submanifolds}}

As remarked previously in Lemma \ref{lem:matching-spheres}, any matching path $\gamma$ between the two critical values in the base of the Morse-Bott-Lefschetz fibration $X \to \bC^*$ defines a Lagrangian sphere $S_\gamma$ in the total space $X$.  
We will later be particularly interested in the collection of  matching spheres $\{S_i\}_{i\in\bZ}$ corresponding to the matching paths given in Figure \ref{fig:spherical-conventions}.

\begin{figure}[htb]
\begin{center}
\includegraphics[scale=0.25]{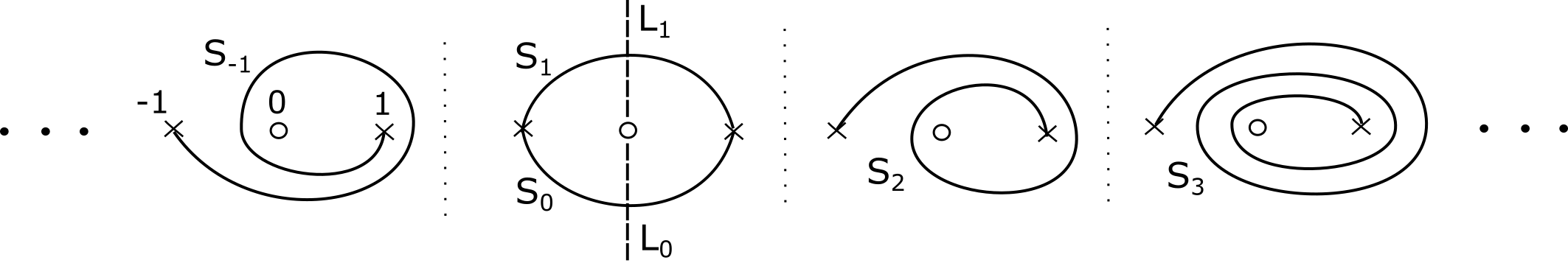}
\caption{Lagrangian matching spheres $S_i$ (with matching paths $\gamma_i$) and Lagrangian discs $L_i$ (see Theorem \ref{thm:hms-equivalence}).}
\label{fig:spherical-conventions}
\end{center}
\end{figure}

Suppose we are given an embedded $S^1 \subset \bC^\ast \backslash \{ -1, 1 \}$ in the base of the Morse-Bott-Lefschetz fibration, avoiding the critical values. 
By similar considerations to those in  Lemmas \ref{lem:parallel-transport} and \ref{lem:matching-spheres}, parallel transporting the $T^2$ vanishing cycle about the $S^1$ gives a Lagrangian $T^3$. This $T^3$ will be exact precisely when the $S^1$ in the base is Hamiltonian isotopic to the unit circle in $\bC^\ast$. (Recall here that we are using the standard K\"ahler form for $\bC^\ast$ as an affine manifold.) For any choice of such circle containing all of $\{ -1, 0 , 1 \}$, call the associated exact torus $T$.

\subsection{Gradings\label{Subsec:gradings}}

As $X$ is an affine complete intersection in Euclidean space, we have that $2c_1(X) = 0$, and the isomorphism classes of gradings on $X$ form an affine space over $H^1(X; \bZ) \cong \bZ$. 
Our convention is that throughout this paper we work with a fixed, distinguished grading for $X$: the unique one for which the exact Lagrangian torus $T$ defined above has Maslov class zero (this will also be true of any other tori fibred over $S^1$s containing $0$ in the base of the Morse-Bott-Lefschetz fibration).

We now want to fix gradings for the $S_i$.
Fix an arbitrary grading for $S_0$. For $i \neq 0$, we want to grade $S_i$ as follows: first, note that we can make fibrewise Morse perturbations such that $S_0$ and $S_i$ intersect transversally in $4|i|$ points. One can check that there is a choice of grading for $S_i$ such that: for $i>0$, as generators for the Floer complex $CF^\ast (S_0, S_{-i})$, there are $i+1$ intersection points with grading zero; $2i$ points with grading one; and $i-1$ points with grading two; and for $i<0$, the same is true for $CF^\ast (S_i, S_0)$.

\begin{remark}
These gradings are chosen with mirror symmetry in mind: $S_i$ will be mirror to $\calO_C(-i)$ for a fixed $(-1,-1)$ curve $C$ in a threefold $Y$, see Section \ref{sec:main-theorem}.
\end{remark} 

We fix an arbitrary grading on the Lagrangian torus $T$.

\subsection{Symplectomorphisms of $X$: first considerations}

Let $(\mathring{X}, \theta)$ be a Liouville domain such that $(X, \theta)$ is its associated completion. 
Following the notation in Section \ref{sec:symplecto-actions}, let $\Symp(\mathring{X})$ denote the group of all symplectomorphisms of $\mathring{X}$ equipped with the $C^\infty$ topology, and let  $\Symp_{\ex}(\mathring{X}, \theta)$ be the subgroup of exact symplectomorphisms. We define graded versions $\Symp^{\gr}(\mathring{X})$ and $\Symp^{\gr}_{\ex} (\mathring{X})$ similarly.

By Lemma \ref{lem:retraction-to-graded-exact-symplectos}, the inclusion $\Symp_{\ex}^{\gr}(\mathring{X}, \theta) \hookrightarrow \Symp^{\gr}(\mathring{X})$ is a weak homotopy equivalence. We define  $\Symp^{\gr}(X)$ to mean $\Symp^{\gr} (\mathring{X})$, and $\Symp_{\ex}^{\gr} (X)$ to mean $\Symp_{\ex}^{\gr} (\mathring{X}, \theta)$. (Similarly for ungraded versions.) 
For any other Liouville domain $(\mathring{X}', \theta)$ with completion $(X, \theta)$, there is a natural weak homotopy equivalence $ \Symp^{\gr}( \mathring{X}) \to \Symp^{\gr}(\mathring{X}')$, and similarly for exact maps. In particular, the group $\pi_0 \Symp^{\gr}(X)$ is independent of choices. By Corollary \ref{cor:graded-symplectos-act}, $\pi_0 \Symp^{\gr}(X)$ acts on the Fukaya category $\cW(X)$; this readily restricts to an action on $\scrF(X)$. 

Let $\Symp_c (X)$ denote the group of compactly supported symplectomorphisms of $X$. 
 
\begin{lemma}
Any element $\phi \in \Symp_c (X)$ is  strongly exact: there exists a compactly supported function $f$ such that $\phi^\ast \theta = \theta + df$, where $\theta$ is the Liouville form on $X$. 
\end{lemma}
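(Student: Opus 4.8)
The plan is to observe that $\alpha := \phi^\ast\theta - \theta$ is automatically a \emph{closed} one-form: since $\phi$ is a symplectomorphism, $d\alpha = \phi^\ast\omega - \omega = 0$. Moreover $\alpha$ is compactly supported: if $K$ denotes the support of $\phi$, then $\phi$ restricts to the identity, with trivial differential, on $X\setminus K$, so $\phi^\ast\theta = \theta$ there. Hence $\alpha$ represents a class $[\alpha]\in H^1_c(X;\bR)$ in compactly supported de Rham cohomology, and the assertion that $\phi$ is strongly exact is precisely the statement that $[\alpha]$ vanishes. So it suffices to prove $H^1_c(X;\bR) = 0$.

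For this I would use that $X$ is a symplectic, hence canonically oriented, open six-manifold which, being the Weinstein completion described in Lemma \ref{lem:hopf}, deformation retracts onto its compact core $S_0\cup_{\mathrm{Hopf}}S_1$, a CW complex of dimension three (indeed any Weinstein six-manifold is homotopy equivalent to a three-complex). Therefore $H_j(X;\bR) = 0$ for all $j\geq 4$, and in particular $H_5(X;\bR) = 0$. Poincar\'e duality for the oriented manifold $X$ gives $H^1_c(X;\bR)\cong H_5(X;\bR)$, which is thus zero (equivalently $H^1_c(X;\bR)\cong H^5(X;\bR)^\ast = 0$). Consequently the closed, compactly supported one-form $\alpha$ equals $df$ for some $f\in C^\infty_c(X)$, i.e.\ $\phi^\ast\theta = \theta + df$ with $f$ of compact support, as claimed.

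I do not expect any real obstacle here; the argument is a short cohomological bookkeeping once the reduction to $H^1_c(X;\bR)=0$ is made. The one point worth emphasising is that the conclusion genuinely depends on the topology of $X$, not merely on $\phi$ being a symplectomorphism: on a general Liouville manifold a closed compactly supported one-form need not admit a compactly supported primitive — for example on $T^\ast S^1$, where $H^1_c\neq 0$, a compactly supported fibrewise shear is compactly supported but not strongly exact — so the vanishing of the homology of the compact core of $X$ in degrees $\geq 4$ is exactly the input that makes the statement true here.
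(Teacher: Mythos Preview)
Your argument is correct and essentially identical to the paper's: both reduce to the vanishing of $H^1_c(X;\bR)\cong H_5(X;\bR)$, using that $X$ has the homotopy type of a $3$-complex. The only cosmetic difference is that the paper phrases the last step as ``$f$ is locally constant at infinity, and the end of $X$ is connected,'' whereas you invoke the compactly supported de Rham complex directly to get $f\in C^\infty_c(X)$; these are equivalent.
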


\begin{proof}
As $\phi$ is a symplectomorphism, $\phi^\ast \theta - \theta$ is a closed one-form; moreover, away from a compact set, $\phi$ is the identity, and so $\phi^\ast \theta - \theta$ vanishes there. This means it defines a class in $H^1_c(X; \bR)$. By Poincar\'e duality, this is isomorphic to $H_5(X; \bR)$, which vanishes as $X$ has the homotopy type of a 3-cell complex. Thus $\phi^\ast \theta - \theta = df$ for some function $f$, locally constant outside the domain of $\phi$. Finally, as the boundary at infinity of $X$ is connected, we see that we can assume that $f$ vanishes outside a compact set. 
\end{proof}

\begin{lemma} \label{lem:compact-support-implies-graded}
Any element $\phi \in \Symp_c (X)$ is gradeable. Morever, it admits a preferred grading; this determines a splitting of the forgetful map $\Symp^{\gr}_c(X) \to \Symp_c(X)$. 
\end{lemma}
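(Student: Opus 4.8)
The plan is to show that the obstruction to gradeability for $\phi \in \Symp_c(X)$ vanishes, and then extract the preferred grading from compact support. Recall from the exact sequence \eqref{eqn:gradings-exact-seq} that $\phi$ is gradeable precisely when the image of $\phi$ in $H^1(X;\bZ)$ under the final (non-homomorphism) map vanishes; concretely, this image measures the failure of $\phi^\ast$ to preserve the chosen trivialisation of $(\Lambda^{\dim X} T^\ast X)^{\otimes 2}$, i.e.\ it is the class in $H^1(X;\bZ)$ represented by the map $X \to \bC^\ast$ comparing the trivialisation with its pullback. First I would observe that since $\phi$ is the identity outside a compact set, this comparison map $X \to \bC^\ast$ is constant (equal to $1$) near infinity, and hence represents a class in the image of $H^1_c(X;\bZ) \to H^1(X;\bZ)$. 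By Poincar\'e--Lefschetz duality $H^1_c(X;\bZ) \cong H_5(X;\bZ)$, which vanishes because $X$ has the homotopy type of a $3$-dimensional cell complex (Lemma \ref{lem:hopf}). Therefore the obstruction class is zero and $\phi$ is gradeable; this is exactly the same Poincar\'e-duality mechanism already used in the preceding lemma on strong exactness.

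Next I would construct the preferred grading. Once $\phi$ is known to be gradeable, the set of gradings for $\phi$ is a torsor over $\bZ$ (the kernel of $\Symp^{\gr}(X) \to \Symp(X)$ in \eqref{eqn:gradings-exact-seq}), corresponding to deck transformations of the fibrewise universal cover $\widetilde{\Lag}(X) \to X$. Among these there is a distinguished one: since $\phi = \id$ outside a compact set $K$, the pullback bundle automorphism of $\Lag(X)$ is the identity over $X \setminus K$, and the connected set $X \setminus K$ (using that the boundary at infinity of $X$ is connected, as in the previous lemma) admits a unique lift to a bundle automorphism of $\widetilde{\Lag}(X)$ fixing $\widetilde{\Lag}(X)|_{X\setminus K}$ pointwise. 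This is the preferred grading. This is precisely the general recipe already recorded in the excerpt: ``If $\phi \in \Symp(W)$ is gradeable and has compact support, it comes with a preferred grading: the one which fixes $\widetilde{\Lag}(W)$ outside of the support of $\phi$.''

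Finally, to get the splitting, I would check that $\phi \mapsto (\phi, \text{preferred grading})$ defines a group homomorphism $\Symp_c(X) \to \Symp^{\gr}_c(X)$ splitting the forgetful map. The forgetful map clearly sends the preferred grading of $\phi$ back to $\phi$, so it is a section of sets; that it is a homomorphism follows because the composite of two maps each equal to the identity outside compact sets is again the identity outside a compact set, and the preferred lift is characterised by fixing $\widetilde{\Lag}(X)$ there, a condition that is manifestly multiplicative. Passing to $\pi_0$ (or noting the construction is continuous in families) gives the splitting on the level of groups, as claimed. The main obstacle is really just the first step — identifying the gradeability obstruction with a class in the image of $H^1_c$ and invoking the duality vanishing — but this is essentially a repeat of the argument in the immediately preceding lemma, so the proof should be short.
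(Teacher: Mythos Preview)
Your proposal is correct and follows essentially the same approach as the paper: identify the gradeability obstruction as a class in $H^1_c(X;\bZ)$, kill it via Poincar\'e--Lefschetz duality $H^1_c(X;\bZ)\cong H_5(X;\bZ)=0$, and take the preferred grading to be the one fixing $\widetilde{\Lag}(X)$ outside the support. The paper's proof is terser (it just cites Seidel and states ``as this vanishes''), while you spell out the duality step and the verification that the preferred-grading assignment is a group homomorphism; but there is no substantive difference.
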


\begin{proof} See \cite[Remark 2.5]{Seidel_graded}. 
As $\phi$ has compact support, the obstruction to grading it is a map $h \in [X, \bC^\ast]$ such that $h \equiv 1 $ outside a compact set, i.e.~a class in $H^1_c(X; \bZ)$. As this vanishes, any such  $\phi$ is gradeable. 
Finally, as noted in Section \ref{sec:gradings-and-orientations}, any gradeable compactly supported symplectomorphism has a distinguished grading: the one which is the identity on $\widetilde{\Lag}(X)$ outside a compact set. 
\end{proof}

It is then immediate that $\pi_0 \Symp_c(X)$ acts on $\cW(X)$ and $\scrF(X)$. 

There is a well-known source of compactly supported symplectomorphisms:  for any matching path $\gamma$ in the base of the Morse-Bott-Lefschetz fibration, the Dehn twist  $\tau_{S_\gamma} \in \pi_0 \Symp_c X$. 
By  \cite[Lemma 4.13]{SW}, $\tau_{S_\gamma}$ is the monodromy associated to a full right-handed twist in the path $\gamma \subset \bC^\ast$. More precisely, suppose we parametrise the full right-handed twist as a one-parameter family of compactly supported diffeomorphisms of $\bC^\ast$, giving a map 
$\rho: [0,1] \times \bC^\ast \to \bC^\ast$.
See Figure \ref{fig:fullRHtwist} for an illustration.
\begin{figure}[htb]
\begin{center}
\includegraphics[scale=0.30]{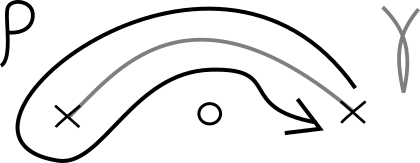}
\caption{For a choice of $\gamma$, visualisation of the associated full right-handed twist $\rho$, as an isotopy of $\bC^\ast$.}
\label{fig:fullRHtwist}
\end{center}
\end{figure}

Then $\tau_{S_\gamma}$ can be interpreted as the monodromy of the family $X_{\rho(t,-1), \rho(t,1)}$, $t \in [0,1]$, with the notation from Equation \ref{eq:X_{a,b}}. We will revisit this in Lemma \ref{lem:PBr_3-action-ungraded}.

\subsection{The $\PBr_3$ action}

We start with some generalities on the pure braid group $\PBr_3$. We will usually think of it as a subgroup of the fundamental group of the space of configurations of two points in $\bC^*$, with basepoint $\{ -1, 1 \} \subset \bC^\ast$; equivalently, it's naturally isomorphic to $\pi_1 ((\bC^\ast)^2 \backslash \Delta )$. 

By  \cite[Theorem 2.3]{Margalit-McCammond}, we have a presentation for $\PBr_3$ with  three generators $R_1, R_2$ and $R_3$,  given by the full twists described in Figure \ref{fig:PBr_3-generators}, and the sole relation
$$
R_1 R_2 R_3 = R_2 R_3 R_1 = R_3 R_1 R_2.
$$
The element $R_1 R_2 R_3$ generates the centre $Z(\PBr_3) \cong \bZ$. Geometrically, it corresponds to an inverse Dehn twist in a boundary $S^1$.

\begin{figure}[htb]
\begin{center}
\includegraphics[scale=0.25]{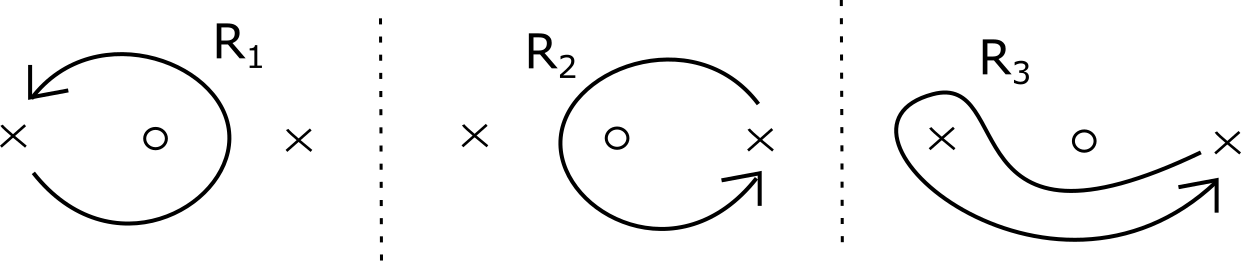}
\caption{The generators for $\PBr_3$}
\label{fig:PBr_3-generators}
\end{center}
\end{figure}

Quotienting by the center, we get isomorphisms 
$$\PBr_3 \simeq \PBr_3 / Z(\PBr_3) \times Z(\PBr_3) \simeq (\bZ \ast \bZ) \times \bZ.
$$
This decomposition as a direct product of groups is of course not unique: for instance we could have $(\bZ\langle R_i \rangle \ast \bZ \langle R_j \rangle) \times \bZ \langle R_1 R_2 R_3 \rangle$ for any $i \neq j$.

Given any matching path $\gamma$ between $-1$ and $+1$ in $\bC^\ast$, consider the full twist $t_\gamma \in \PBr_3$. We want to characterise the subgroup generated by these twists. 

\begin{proposition} \label{prop:compact-braid-action}
Let $\PBr_3^c \vartriangleleft \PBr_3$ be the subgroup of $\PBr_3$ generated by the twists $t_\gamma$. 
Then:

(i) Fix $\psi \in \PBr_3$. Let $b_{-1}$, $b_0$ and $b_1$ be the strands based at $-1$, $0$ and $1$ respectively. Then $\psi$ lies in $\PBr_3^c$ if and only if the winding number of $b_{-1}$ about $b_0$ and the winding number of $b_1$ about $b_0$ are both zero. In other words, 
$$
\PBr_3^c = \ker \{ \PBr_3 \xrightarrow{(\text{forget $b_{-1}$, forget $b_1$}) } \PBr_2 \times \PBr_2 \}
$$
where $\PBr_2$ is thought of as the fundamental group of the configuration space of one point in $\bC^\ast$, based at $\{ 1\}$ for the first factor and at $\{ -1\}$ for the second. 

(ii) We have an isomorphism $\PBr_3^c \cong \bZ^{\ast \infty}$, with generators the full twists $t_i$ in the matching paths $\gamma_i$ for $S_i$ from Figure \ref{fig:spherical-conventions}.

\end{proposition}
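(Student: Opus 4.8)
The plan is to identify $\PBr_3$ with $\pi_1$ of the configuration space of two marked points in $\bC^\ast$, use the long exact sequence / fibration structure of the forgetful maps to points, and then recognise the kernel of the "forget one point" maps as a free group whose Nielsen generators are exactly the twists $t_\gamma$.

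For part (i), first observe that each generating twist $t_\gamma$ manifestly lies in the displayed kernel: a full twist supported in a disc bounding the matching path $\gamma$ moves neither $1$ nor $-1$ around $0$, since $\gamma$ (hence the supporting disc) can be taken disjoint from $0$. For the reverse inclusion, consider the forgetful fibration $\Conf_2(\bC^\ast) \to \Conf_1(\bC^\ast) = \bC^\ast \backslash \{0\}$ obtained by remembering only the first point, whose fibre over $1$ is $\bC^\ast \backslash \{0,1\}$, a thrice-punctured plane (punctures at $0$, $1$, $\infty$ if one likes, but concretely $\bC \backslash \{0,1\}$). The associated long exact sequence of homotopy groups gives a short exact sequence $1 \to \pi_1(\bC \backslash \{0,1\}) \to \PBr_3 \to \PBr_2 \to 1$, where $\PBr_2 = \pi_1(\bC^\ast) \cong \bZ$ records the winding of $1$ about $0$. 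Thus an element of $\PBr_3$ with trivial winding of $1$ about $0$ lies in the normal free subgroup $F = \pi_1(\bC \backslash \{0,1\}, -1) = \langle a, b\rangle$, where we may take $a$ = loop of $-1$ around $0$ and $b$ = loop of $-1$ around $1$. Now impose the second condition, that the winding of $-1$ about $0$ also vanishes: the class of our element in $H_1(F)$ under the "winding of $-1$ about $0$" functional is $0$, so it lies in $\ker(F \to \bZ)$ where $F \to \bZ$ kills $b$ and sends $a \mapsto 1$ — wait, one must be careful that "winding of $-1$ about $0$" is genuinely the abelianised $a$-coordinate, which it is since in $H_1$ a loop of $-1$ around $0$ is $a$ and a loop of $-1$ around $1$ contributes nothing to winding about $0$. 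The kernel of $F \to \bZ$, $a \mapsto 1, b \mapsto 0$, is freely generated by $\{a^k b a^{-k}\}_{k \in \bZ}$ (Reidemeister–Schreier), and each of these conjugates is visibly a full twist in a matching path between $-1$ and $1$ going $k$ times around $0$ first — these are exactly the $t_\gamma$ for the $\gamma_i$ of Figure \ref{fig:spherical-conventions}, after checking orientation/normalisation conventions. Hence $\PBr_3^c = \langle\!\langle t_\gamma\rangle\!\rangle$ coincides with this kernel, proving (i) and simultaneously giving (ii): $\PBr_3^c \cong \langle a^k b a^{-k} : k \in \bZ\rangle \cong \bZ^{\ast \infty}$, with the stated generators $t_i$.

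One alternative packaging of the same computation, perhaps cleaner to write, is: $\PBr_3^c$ is by definition the intersection $\ker(\text{forget }{-1}) \cap \ker(\text{forget }1)$ of the normal closures of the two "delete a point" kernels, or rather it should be checked that the subgroup generated by the $t_\gamma$ is precisely that intersection; the inclusion $\subseteq$ is the easy direction above, and $\supseteq$ follows because an element in the intersection is, by the first fibration, a word in $a,b$ with zero total $a$-exponent (from vanishing of winding of $-1$ about $0$), and such words are generated by the conjugates $a^k b a^{-k}$.

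The main obstacle is bookkeeping: matching conventions between the topological picture (loops in $\Conf_2(\bC^\ast)$, basepoint $\{-1,1\}$) and the algebraic picture (the Margalit–McCammond presentation with generators $R_1, R_2, R_3$), and in particular verifying that the Reidemeister–Schreier free generators $a^k b a^{-k}$ really are the twists $t_i$ in the specific matching paths $\gamma_i$ of Figure \ref{fig:spherical-conventions} with the correct handedness — i.e.\ that "the full twist in the path that loops $k$ times around $0$" equals the conjugate of the elementary twist $t_0$ by the $k$-th power of the loop winding a point once around $0$. This is a picture-chasing verification rather than a conceptual difficulty, but it is where care is needed; everything else is the standard short exact sequence of a surface-bundle and the Nielsen–Schreier description of a kernel of a map to $\bZ$ from a free group.
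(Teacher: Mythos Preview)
Your proof is correct and follows essentially the same route as the paper's: both use the fibration obtained by forgetting one marked point to identify the first kernel with a free group on two generators, then impose the remaining winding condition as the kernel of a map to $\bZ$, and finish with the Reidemeister--Schreier / infinite-cyclic-cover description of that kernel as $\bZ^{\ast\infty}$ with conjugate generators. The only cosmetic difference is that the paper works with the Margalit--McCammond generators $R_2, R_3$ while you use the geometric loops $a, b$ in the fibre; your own closing remark about bookkeeping (matching $a^k b a^{-k}$ to the specific $t_i$) is exactly the verification the paper leaves implicit.
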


\begin{proof}

Given any $\gamma$, it is immediate that $t_\gamma \in \ker \{ \PBr_3 \xrightarrow{(\text{forget $-1$, forget 1})}  \PBr_2 \times \PBr_2 \}$, and so the inclusion one way is clear. 

Now suppose that we're given a pure braid 
$$
\psi \in K = \ker \{ \PBr_3 \xrightarrow{(\text{forget $b_{-1}$, forget $b_1$)}}  \PBr_2 \times \PBr_2 \}. 
$$

To analyse this, first consider simply forgetting $b_1$. 
For any element in the kernel, $b_{-1}$ and $b_0$ can be simultaneously pulled taut, so that all the braiding is done by $b_1$. 
This gives a short exact sequence of groups:
$$
0 \to \bZ \ast \bZ \to \PBr_3 \xrightarrow{\text{forget $b_1$}} \bZ \to 0
$$
where the kernel is identified with $\pi_1 (\bC \backslash \{ -1, 0 \} ) \simeq \bZ \ast \bZ$. In terms of braids, in the notation  of Figure \ref{fig:PBr_3-generators}, this is naturally generated by $R_2$ and $R_3$.

Putting things together, we have 
$$ K = \ker \{ \bZ\langle R_2 \rangle \ast \bZ \langle R_3 \rangle \xrightarrow{\text{forget $b_{-1}$}} \PBr_2 \}$$
A pure braid $\psi  \in \bZ\langle R_2 \rangle \ast \bZ \langle R_3 \rangle$ is in $K$ if and only if the power of $R_2$ in the abelianisation of $\bZ\langle R_2 \rangle \ast \bZ \langle R_3 \rangle$ vanishes. 
To conclude, consider the infinite cyclic cover  of the wedge of two circles with group of deck transformations $\bZ \langle R_2 \rangle$. The image of $\pi_1$ of this cover in $\bZ\langle R_2 \rangle \ast \bZ \langle R_3 \rangle$ is $K$; we now see that it is a   $\bZ^{\ast \infty}$ with generators $R_2^{-i} R_3 R_2^{i} = t_i$,  $i \in \bZ$.
\end{proof}

Mapping $R^{-i}_2 R_3 R^{i}_2$ to the Dehn twist $\tau_{S_i}$ gives a map  $\PBr_3^c \to \pi_0 \Symp_c X$; this is compatible with the description of $\tau_{S_i}$ as the monodromy of the full twist in $\gamma_i$ from \cite[Lemma 4.13]{SW}. We will now upgrade this to a representation of the whole of $\PBr_3$ as symplectomorphisms; in order to do this, we have to enlarge the group we are working in to include non-compactly supported exact symplectomorphisms.

\begin{lemma}\label{lem:PBr_3-action-ungraded}

There is a map 
\begin{equation} \label{eqn:pure-braid-monodromy}
\PBr_3 \to \pi_0 \Symp_{\ex} X
\end{equation}
extending the composition $\PBr_3^c \to \pi_0 \Symp_c X \to \pi_0 \Symp_{\ex} X$. This is compatible with the action of $\PBr_3$ on the collection of matching paths: if $\gamma$ is a matching path, $R \in \PBr_3$, and $\gamma' = R \cdot \gamma$, then the image of $R$ in $ \pi_0 \Symp_{\ex} X$ takes $S_\gamma$ to $S_{\gamma'}$ (up to Hamiltonian isotopy).

\end{lemma}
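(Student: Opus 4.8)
The plan is to build the representation geometrically, by realising elements of $\PBr_3$ as monodromies of families of fibrations of the type that already appeared in the discussion of $\tau_{S_\gamma}$. Recall from the remarks preceding the lemma that a full right-handed twist in a matching path $\gamma$ was interpreted as the monodromy of the family $X_{\rho(t,-1),\rho(t,1)}$, $t\in[0,1]$, where $\rho$ parametrises the twist as a loop of compactly supported diffeomorphisms of $\bC^\ast$. More generally, any loop $\beta$ in $\Conf(\bC^\ast,2)$ based at $\{-1,1\}$ lifts to a loop of compactly supported diffeomorphisms $\rho_t$ of $\bC^\ast$ with $\rho_0=\rho_1=\id$ and $\{\rho_t(-1),\rho_t(1)\}$ tracing out $\beta$; the associated family $X_{\rho_t(-1),\rho_t(1)}$, $t\in[0,1]$, of symplectic submanifolds of $\bC^4\times\bC^\ast$ has a well-defined symplectic monodromy by Lemma \ref{lem:parallel-transport} (parallel transport for each $X_{a,b}$ is globally defined, and the same $T^2$-equivariance argument makes parallel transport in the total space of the family well-defined). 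This monodromy is a symplectomorphism of $X=X_{1,-1}$; it need not be compactly supported because $\rho_t$ need not be the identity near $z=0,\infty$, but it is exact (the ambient form is exact and the construction is through exact submanifolds), giving the desired element of $\pi_0\Symp_{\ex}X$.

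The steps, in order, are: (1) Check that the assignment $\beta\mapsto [\text{monodromy of }X_{\rho_t(-1),\rho_t(1)}]$ is well-defined on $\pi_0$ — i.e. independent of the choice of lift $\rho_t$ and of the representative of the homotopy class of $\beta$ rel basepoint. Independence of the lift follows because any two lifts differ by a loop of compactly supported diffeomorphisms of $\bC^\ast\setminus\{-1,1\}$-fixing nothing essential, equivalently a loop in $\Diff_c(\bC^\ast,\{-1,1\})$ based at the identity, which acts by a Hamiltonian isotopy on the family; independence of the homotopy class of $\beta$ is the analogous statement one dimension up, using that a homotopy of loops in $\Conf(\bC^\ast,2)$ lifts to a homotopy of loops of diffeomorphisms. (2) Check multiplicativity: concatenation of loops in $\Conf(\bC^\ast,2)$ corresponds to composition of monodromies, which is a standard property of monodromy representations of families. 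This gives the homomorphism $\PBr_3=\pi_1(\Conf(\bC^\ast,2),\{-1,1\})\to\pi_0\Symp_{\ex}X$. (3) Check that it extends the given map on $\PBr_3^c$: for $\beta=t_\gamma$ a full twist in a matching path, the construction literally specialises to the family described before the lemma, whose monodromy is $\tau_{S_\gamma}$ by \cite[Lemma 4.13]{SW}; and under the identification of Proposition \ref{prop:compact-braid-action}(ii) the generators $R_2^{-i}R_3R_2^i$ are exactly the full twists $t_i$ in the $\gamma_i$, which map to $\tau_{S_i}$ by construction. (4) Check compatibility with the action on matching paths: if $\gamma'=R\cdot\gamma$, pick the lift $\rho_t$ so that $\rho_1$ (which is isotopic to the identity rel nothing, but carries $\gamma$ to $\gamma'$ as arcs, up to the ambient isotopy) drags $\gamma$ to $\gamma'$; since the monodromy is obtained by parallel transport over $t\in[0,1]$ and the Lagrangian sphere $S_\gamma$ is built fibrewise over $\gamma$ (Lemma \ref{lem:matching-spheres}), the image sphere is the one built over $\rho_1(\gamma)=\gamma'$, i.e. $S_{\gamma'}$, up to Hamiltonian isotopy — the ambiguity being precisely the Hamiltonian isotopy absorbing the difference between $\rho_1$ and the identity.

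I expect the main obstacle to be Step (1), making the monodromy genuinely well-defined at the level of $\pi_0\Symp_{\ex}$ rather than merely up to symplectic isotopy: one must be careful that the non-compactly-supported tails of the $\rho_t$ near $z=0$ and $z=\infty$ do not introduce ambiguity, and that the completion procedure (passing from the Liouville domain to $X$) is compatible with the family. This is where one should invoke the convex-deformation machinery already used in the proof of Lemma \ref{lem:exact-action} — the family $\{(W,\theta_t)\}$ there and the canonical-up-to-isotopy family of trivialising diffeomorphisms $h_t$ from \cite[Lemma 5]{Seidel-Smith} — to see that the various choices wash out in $\pi_0$. A secondary subtlety is the phrase ``up to Hamiltonian isotopy'' in the statement: strictly, the monodromy of a family is only defined up to the choice of trivialisation at the endpoint, so one should fix once and for all the convention (e.g. the one coming from the $T^2$-equivariant parallel transport of Lemma \ref{lem:parallel-transport}) and note that any two such choices differ by an element of $\Ham$, which is exactly what the statement allows. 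Everything else — multiplicativity, the extension property, the action on matching paths — is then formal given the setup already in place.
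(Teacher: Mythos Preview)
Your approach is essentially the paper's: build the representation as symplectic monodromy of the family $\{X_{a,b}\}$ over $(\bC^\ast)^2\backslash\Delta \simeq \Conf_2(\bC^\ast)$. The paper packages this more cleanly by working directly with the total space
\[
\scrX = \{u_1v_1 = z-a,\ u_2v_2 = z-b\} \subset \bC^4 \times \bC^\ast \times (\bC^\ast)^2
\]
as a smooth affine variety with its restricted K\"ahler form, and taking monodromy of the projection $\scrX \to (\bC^\ast)^2$. Your lift of the loop $\beta$ to diffeomorphisms $\rho_t$ of $\bC^\ast$ is an unnecessary detour: the family $X_{\rho_t(-1),\rho_t(1)}$ depends only on the positions of the two marked points, not on $\rho_t$ itself, so your Step~(1) worry about independence of the lift evaporates once you drop it. Well-definedness on $\pi_0$ and multiplicativity are then just the usual properties of a monodromy representation.

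There is one genuine gap. Your appeal to ``the same $T^2$-equivariance argument'' from Lemma~\ref{lem:parallel-transport} does not establish global well-definedness of parallel transport here. In that lemma the fibres of $X_{a,b}\to\bC^\ast_z$ are copies of $(\bC^\ast)^2$, on which the $T^2$-moment map is proper; but the fibres of $\scrX\to(\bC^\ast)^2_{a,b}$ are the entire threefolds $X_{a,b}$, on which the moment map is \emph{not} proper (its level sets are non-compact $T^2$-bundles over $\bC^\ast_z$). So the moment-map trick alone does not give completeness of the horizontal vector field. The paper instead invokes \cite[Lemma~2.2]{Keating-tori} to produce parallel transport maps over paths in the smooth locus which are globally defined and in fact strongly exact; this simultaneously handles the exactness you want, without any need for the convex-deformation machinery of Lemma~\ref{lem:exact-action}. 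With that input in place, the extension property (monodromy around a meridian to $\Delta$ is a Dehn twist, since fibres over $\Delta$ acquire a single ordinary double point) and the compatibility with matching paths go through exactly as you sketch, and the paper simply cites \cite{KS} for the latter.
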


\begin{proof}

Consider $\bC^4 \times \bC^\ast \times (\bC^\ast)^2$ with coordinates $(u_1, v_1, u_2, v_2, z, a, b)$ and its standard K\"ahler form. Consider the smooth affine subvariety 
\begin{equation} \label{eqn:family-over-configuration-space}
\scrX = \{ u_1 v_1 = z-a, u_2 v_2 = z-b \}.  
\end{equation}
Projection to the final two coordinates $(a,b)$ defines a symplectic fibre bundle $\scrX \to (\bC^\ast)^2$. The fibre over $(a, b) \in (\bC^\ast)^2 \backslash \Delta$ is the space $X_{a,b}$ from Equation \ref{eq:X_{a,b}}, and the family has three-fold ordinary double points along $\Delta = \{ a=b\}$. 

Now on the one hand, $\pi_1 ((\bC^\ast)^2 \backslash \Delta)$ is naturally the pure braid group $\PBr_3$. On the other hand, given any path $\gamma: [0,1] \to (\bC^\ast)^2 \backslash \Delta$ in the smooth locus,  
we can construct parallel transport maps over $\gamma$ which are globally defined and strongly exact, by the argument of 
\cite[Lemma 2.2]{Keating-tori}.  Symplectic monodromy then defines the representation \eqref{eqn:pure-braid-monodromy}.  The fact that the fibres over points of $\Delta$ have a single ordinary double point implies that the monodromy around a path which is a  meridian to $\Delta$ (oriented anticlockwise as the boundary of a disc positively transverse to $\Delta$ at one point) is a Dehn twist (see also \cite[Lemma 4.13]{SW}).  This implies that \eqref{eqn:pure-braid-monodromy} extends the previously defined $\PBr_3^c \to \pi_0 \Symp_c X$. It is then standard, cf.~\cite{KS}, that the action is compatible with the natural action on matching paths.
\end{proof}

\begin{remark}
The proof readily gives a stronger result: we have a natural map $\PBr_3 \to \pi_0 \Symp_\partial X$, where 
 $\Symp_\partial (X)$ denotes the group of strongly exact symplectomorphisms, i.e.~diffeomorphisms $\phi$ of $X$ such that $\phi^\ast \theta = \theta + df$ for a compactly supported function $f$, where $\theta$ is the Liouville form on $X$. 
We will not need this for our purposes. 
\end{remark}

\begin{remark}
Let us justify the notation $\PBr_3^c$. Using the Morse-Bott-Lefschetz perspective, given any element of $\PBr_3$, we could try to build a symplectomorphism of $X$ by first getting fibrewise symplectomorphisms via parallel transport, and then upgrading to a symplectomorphism of the total space. (For technical reasons we use a different set-up in the present work.) 
For what elements $\psi \in \PBr_3$ does parallel transport induce the identity for fibres near infinity? Consider some arc $\tilde{\gamma}$ from $+ \infty$ to $0$, avoiding the marked points. We want to know when the monodromy induced by the concatenated path $\tilde{\gamma} \# (- \psi \tilde{\gamma})$ is the identity. Now notice that this is equivalent to asking that $\tilde{\gamma} \# (- \psi \tilde{\gamma})$
have winding number 0 about both $-1$ and $+1$, which is in turn equivalent to $\psi$ being in the kernel of the two forgetful maps to $\PBr_2 \times \PBr_2$. 
\end{remark}

Let $W^{2n}$ be a symplectic manifold with $c_1(W)=0$, and fix a compatible almost complex structure $J$. A choice of holomorphic volume form $\Theta_J \in \Gamma(\Lambda_{\bC}^n T^*W)$  defines a distinguished  fibrewise universal cover $\widetilde{\Lag}(W) \to \Lag(W)$ of the fibre bundle $\Lag(W) \to W$ whose fibre is the Lagrangian Grassmannian $Gr_{Lag}(T_wW) \cong U(n)/O(n)$. Explicitly, we have a map
\[
\Theta_J : \Lag(W) \to S^1, \qquad (v_1,\ldots,v_n) \mapsto \Theta_J(v_1\wedge \cdots \wedge v_n) / | \Theta_J(v_1\wedge \cdots \wedge v_n)|
\]
and we consider the bundle $\widetilde{\Lag}(W) \to \Lag(W)$ with fibre the pairs $(\Lambda,t) \in \Lag(W) \times \bR$ with $\Theta_J(\Lambda) = e^{i\pi t}$.

\begin{lemma} \label{lem:gradings-exist}
The map $\PBr_3 \to \pi_0\Symp_{\ex}(X)$ can be lifted to graded symplectomorphisms.

\end{lemma}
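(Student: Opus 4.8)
The plan is to lift the monodromy representation fibrewise-then-globally, using that each $X_{a,b}$ carries a distinguished grading (the one pinned down in Section~\ref{Subsec:gradings} by requiring the torus $T$ fibred over a circle enclosing $0$ to have Maslov class zero), and to check that this choice varies continuously over the base $(\bC^\ast)^2 \backslash \Delta$. Concretely, first I would observe that the total space $\scrX = \{ u_1 v_1 = z-a,\ u_2 v_2 = z-b\}$ from Equation~\eqref{eqn:family-over-configuration-space} is itself an affine complete intersection in $\bC^4 \times \bC^\ast \times (\bC^\ast)^2$, hence has $2c_1(\scrX) = 0$; moreover $\Lambda^{top}$ of its canonical bundle is trivialised by a global algebraic holomorphic volume form $\Theta$ (a residue of the standard form on $\bC^4 \times \bC^\ast \times (\bC^\ast)^2$, exactly as $X$ is handled in Section~\ref{Subsec:gradings}). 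Restricting $\Theta$ to each fibre $X_{a,b}$ gives a holomorphic volume form, and hence a grading on $X_{a,b}$, depending continuously (indeed algebraically) on $(a,b)$; I would check this restricted grading agrees with the distinguished one by verifying the Maslov-zero property for the torus $T$, which is preserved under the symplectic parallel transport of Lemma~\ref{lem:parallel-transport} and varies continuously in the family.

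Next, given $R \in \PBr_3$ represented by a loop $\gamma\colon [0,1]\to (\bC^\ast)^2\backslash\Delta$ based at $(1,-1)$, the parallel transport maps of Lemma~\ref{lem:PBr_3-action-ungraded} give a path of symplectomorphisms $X_{\gamma(0)} \to X_{\gamma(t)}$; composing with the (continuously varying) gradings $\widetilde{\Lag}(X_{\gamma(t)})$ produces a lift of this path to graded symplectomorphisms, unique once we fix the lift at $t=0$. At $t=1$ we are back at $X_{1,-1}$ with the same distinguished grading, so the endpoint is a well-defined element of $\Symp^{\gr}_{\ex}(X)$, giving the desired lift $\PBr_3 \to \pi_0\Symp^{\gr}_{\ex}(X)$ of $\PBr_3 \to \pi_0\Symp_{\ex}(X)$. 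I would also check compatibility with the splitting of Lemma~\ref{lem:compact-support-implies-graded}: for $R \in \PBr_3^c$ the resulting graded symplectomorphism is compactly supported and its preferred grading is the one that is the identity on $\widetilde{\Lag}(X)$ near infinity, so the lift restricts to the canonical one on $\PBr_3^c$.

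The one point needing genuine care, and the main obstacle, is well-definedness: the construction a priori depends on the homotopy class of the path $\gamma$, so I must check that a null-homotopic loop in $(\bC^\ast)^2\backslash\Delta$ yields a graded symplectomorphism that is graded-isotopic to the identity — not merely symplectically isotopic. This is where the exact sequence \eqref{eqn:gradings-exact-seq} and the footnoted subtlety (the obstruction to grading lives in $H^1(X;\bZ)\cong\bZ$) must be controlled: a loop in the base could in principle act on gradings by a nonzero element of $H^1(X;\bZ)$, i.e.\ the "shift by $H^1$" ambiguity, spoiling the homomorphism property. The resolution is that the global volume form $\Theta$ on $\scrX$ provides a \emph{coherent} family of trivialisations over all of $(\bC^\ast)^2\backslash\Delta$, so the induced action on the set of gradings is constant along any loop; equivalently, the $\bZ$-ambiguity is killed because we are not choosing gradings fibrewise by hand but pulling back a single grading from the total space. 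Packaging this, together with the fact (from the proof of Lemma~\ref{lem:PBr_3-action-ungraded}) that concatenation of paths corresponds to composition of monodromies, gives that $\PBr_3 \to \pi_0\Symp^{\gr}_{\ex}(X)$ is a well-defined group homomorphism lifting \eqref{eqn:pure-braid-monodromy}, as required.
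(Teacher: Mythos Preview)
Your approach is essentially the same as the paper's: produce a relative holomorphic volume form on the family $\scrX \to \Conf_2(\bC^*)$ (the paper writes down the explicit form $d\log z \wedge d\log u_1 \wedge d\log u_2$), use it to build the bundle $\widetilde{\Lag}$ coherently over the whole family, and then lift parallel transport---the paper phrases this last step as choosing an Ehresmann connection on the resulting $\widetilde{U(n)/O(n)}$-bundle over $\scrX$. One small imprecision worth fixing: a top-degree form $\Theta$ on the five-dimensional total space $\scrX$ does not literally ``restrict'' to a nonzero $3$-form on the fibres; what you need (and what your residue construction and the paper's explicit formula actually give) is a \emph{relative} holomorphic volume form, i.e.\ a trivialisation of $K_{\scrX/\Conf_2(\bC^*)}$.
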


\begin{proof}
Recall the space $X_{a,b}$ from \eqref{eq:X_{a,b}} is given by $ \{u_1v_1=z-a, u_2v_2=z-b\} \subset \bC^4\times \bC^*$.  Write $f_1=u_1v_1-z+a$ and $f_2 = u_2v_2-z+b$. 
Following Griffiths, there is a holomorphic volume form on  $X_{a,b}$, sometimes symbolically written as
$$
\frac{d \log z \wedge d u_1  \wedge   d u_2 \wedge d v_1 \wedge d v_2} {d f_1 \wedge d f_2}. 
$$
We spell this out on charts. On the chart $\{ u_1 \neq 0 \neq u_2 \}$, the holomorphic volume form is given by 
$$
d \log z \wedge d u_1 \wedge d u_2  \cdot \frac{1}{ \partial f_1 / \partial v_1} \cdot \frac{1}{\partial  f_2 / \partial v_2} = d\log(z) \wedge d\log(u_1) \wedge d\log(u_2).
$$
On the charts $\{ v_1 \neq 0 \neq u_2 \}$, $\{ u_1 \neq 0 \neq v_2 \}$ and  $\{ v_1 \neq 0 \neq v_2 \}$, we get similar expressions (up to signs) by replacing $u_1$ with $v_1$, and / or $u_2$ with $v_2$.
Together these charts cover all of $X$ except the locus where  $u_1 = v_1 = 0$ or  $u_2 = v_2 = 0$.
 If $u_1 = v_1 = 0$, we must have $z=a \in \bC^\ast $ and $u_2v_2\neq 0$. Working in the chart  $\{ z \neq 0 \neq u_2 \}$, the holomorphic volume form is given by 
$$
- \frac{1}{z} \cdot \ d u_1 \wedge d u_2  \wedge dv_1  \cdot \frac{1}{ \partial f_1 / \partial z} \cdot \frac{1}{\partial  f_2 / \partial v_2} =
\frac{1}{z} \cdot \frac{1}{u_2} d u_1 \wedge d u_2  \wedge dv_1.
$$
Up to signs, it has a similar expression on the chart $\{  z \neq 0 \neq u_1 \}$. Altogether, our charts now cover $X$.

This existence of our holomorphic volume form implies that 
 the family $\scrX \to \Conf_2(\bC^*)$ over configuration space from \eqref{eqn:family-over-configuration-space} admits a relative holomorphic volume form.  Via the preceding construction, this defines a fibration $\scrX^{\infty} \to \scrX$ with fibre the universal cover $\widetilde{U(n)/O(n)}$.  A choice of Ehresmann connection for this fibration enables one to  lift (not necessarily uniquely) the parallel transport maps for $\scrX \to \Conf_2(\bC^*)$ to parallel transport maps for $\scrX^{\infty} \to \Conf_2(\bC^*)$, which in particular shows that these symplectomorphisms admit coherent gradings.
 \end{proof}

\emph{Choice of gradings for the action of $PBr_3$.} We now fix a preferred lift $\PBr_3 \to  \pi_0 \Symp_{\ex}^{\gr}(X)$, by specifying the lifts of each of the three generators $R_1, R_2$ and $R_3$.
 First, the image of $R_3$ is the Dehn twist $\tau_{0} = \tau_{S_0}$; we give it the preferred grading for a compactly supported symplectomorphism.
 Second, we grade the image of $R_2$, say $\lambda$, by asking for $\lambda^i S_0 = S_{-i}$  as graded objects. 
 Finally, to choose the grading for the image of $R_1$, say $\rho$, recall that $R_1 R_2 R_3$ generates the center of $\PBr_3$, and corresponds to the inverse Dehn twist in a boundary parallel circle; in particular, the product $\rho  \circ \lambda \circ \tau_{S_0}$ takes $S_0$ to itself as an ungraded object. Fixing a grading on $\rho$ is the same as fixing the shift of $S_0$ under $\rho \circ \lambda \circ \tau_{S_0}$; we choose this to be $[1]$.

\begin{remark}

The central map $\rho \circ \lambda \circ \tau_{S_0}$ is isotopic in $\pi_0 \Symp (X)$ to the identity (this follows from the proof of Lemma \ref{lem:PBr_3-action-ungraded}), though  through maps acting non-trivially on the boundary.
From a purely symplectic perspective, the grading of the central element $\rho \circ \lambda \circ \tau_{S_0}$ could have been chosen to be any power of a shift. (In particular, there isn't a `preferred' shift as we are not in $\pi_0 \Symp_c (X)$.) We will later see that from an HMS perspective it is natural to choose the shift $[1]$. 
\end{remark}

\subsection{$K$-theoretic actions of symplectomorphism groups}

We have that $K(\scrW (X)) = \bZ^2$ generated by two cotangent fibres $F_0$ and $F_1$, by for instance \cite[Theorem 1.13]{GPS2} or \cite{CDGG}.
This is naturally identified with $H_3(X, \partial_\infty X; \bZ)$, where $\partial_\infty X$ denotes the conical end of $X$.
Moreover, there is a pairing 
\begin{equation}\label{eq:pairing}
K(\scrF(X)) \times K(\scrW(X)) \to \bZ, \qquad (K, L) \mapsto \chi(HF^\ast (L,K)).
\end{equation}
Evaluating this on fibres and zero-sections shows that $\bZ^2  = \langle S_0, S_1 \rangle \to K(\scrF(X)) $ is injective.  
(Note we don't have an explicit description for $K(\scrF(X))$.)

\begin{definition}
Let the numerical Grothendieck group of $\scrF(X)$, $K_{num} (\scrF (X))$, be the group $K(\scrF(X)) / \ker$, where $\ker$ is the subgroup generated by compact Lagrangians $L$ s.t. $\chi(HF^\ast (L,-)) = 0$ for every object $-$ in $\cW(X)$. 
\end{definition}

The pairing \ref{eq:pairing} descends to a non-degenerate pairing $K_{num} (\scrF(X)) \times K(\scrW(X)) \to \bZ$. This implies that $K_{num}(\scrF(X)) = \bZ^2 = \langle S_0,S_1 \rangle. $ This is naturally identified with $H_3 (X; \bZ)$, and the non-degenerate pairing 
$$
K_{num} (\scrF (X)) \times K ( \cW(X)) \to \bZ
$$
is identified with the intersection pairing
$$
H_3 (X; \bZ) \times H_3 (X, \partial_\infty X; \bZ) \to \bZ.
$$

Both $\Symp^{\gr}(X) $ and $\Symp_c(X)$ (with preferred gradings) act on $K(\scrF(X))$ and $K(\scrW(X))$ 
compatibly with this pairing. This implies that there is an induced action on the numerical Grothendieck group. On the other hand, as the intersection pairing of $S_0$ with both $S_0$ and $S_1$ vanishes, and similarly for $S_1$, we see that the standard map $H_3 (X; \bZ) \to H_3(X, \partial_\infty X; \bZ)$ has zero image. 
The following is now immediate:
\begin{corollary} \label{cor:K-thy-actions}

Suppose that $f \in \pi_0 \Symp^{\gr} (X)$. Then $f_\ast$ acts on $K_{\text {num}} (\scrF)$. Moreover, $f$ induces the identity on  $K(\scrW)$ if and only if it induces the identity on $K_{\text {num}} (\scrF)$.

\end{corollary}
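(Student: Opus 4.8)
The plan is to deduce the statement formally from the non-degeneracy of the pairing $P\colon K_{num}(\scrF(X))\times K(\scrW(X))\to\bZ$ of \eqref{eq:pairing}, together with the fact that an autoequivalence preserves Euler characteristics of morphism complexes. Note first that $P$, being identified with the Poincar\'e--Lefschetz intersection pairing $H_3(X;\bZ)\times H_3(X,\partial_\infty X;\bZ)\to\bZ$ between finitely generated free abelian groups, is non-degenerate in \emph{each} variable separately; that is, the induced maps $K_{num}(\scrF(X))\to\Hom(K(\scrW(X)),\bZ)$ and $K(\scrW(X))\to\Hom(K_{num}(\scrF(X)),\bZ)$ are both injective.

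First I would check that $f_\ast$ genuinely descends to $K_{num}(\scrF(X))$: since $f$ restricts to an autoequivalence of $\scrF(X)\subset\scrW(X)$, it preserves the defining subgroup, because if a compact Lagrangian $L$ satisfies $\chi(HF^\ast(M,L))=0$ for all $M\in\Ob\scrW(X)$ then $\chi(HF^\ast(M,f(L)))=\chi(HF^\ast(f^{-1}(M),L))=0$ for all such $M$. (This is the observation recorded just before the statement.) The key compatibility is then that, for any compact Lagrangian $K$ and any $L\in\Ob\scrW(X)$, there is a graded isomorphism $HF^\ast(f(L),f(K))\cong HF^\ast(L,K)$, so that $P(f_\ast k,f_\ast\ell)=P(k,\ell)$ for all $k\in K_{num}(\scrF(X))$ and $\ell\in K(\scrW(X))$: the two induced automorphisms of these rank-two lattices are mutually adjoint with respect to $P$.

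The equivalence now drops out. If $f$ acts as the identity on $K(\scrW(X))$, then for every $k$ and every $\ell$ we have $P(f_\ast k-k,\ell)=P(f_\ast k,f_\ast\ell)-P(k,\ell)=0$, so non-degeneracy in the first variable forces $f_\ast k=k$, i.e. $f$ acts trivially on $K_{num}(\scrF(X))$. Conversely, if $f$ acts trivially on $K_{num}(\scrF(X))$, then $P(k,f_\ast\ell-\ell)=P(f_\ast k,f_\ast\ell)-P(k,\ell)=0$ for all $k$, and non-degeneracy in the second variable gives $f_\ast\ell=\ell$.

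There is no real obstacle once the non-degenerate intersection pairing is in hand; the only point worth emphasising is that the equivalence is a genuine consequence of that pairing and not of functoriality, since the canonical map $H_3(X;\bZ)\to H_3(X,\partial_\infty X;\bZ)$ vanishes, so the two induced actions cannot be compared via naturality.
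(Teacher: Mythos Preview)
Your argument is correct and follows exactly the approach implicit in the paper. The paper simply declares the corollary ``immediate'' from the preceding observation that the actions on $K(\scrF(X))$ and $K(\scrW(X))$ are compatible with the non-degenerate pairing; you have spelled out precisely the adjointness-plus-non-degeneracy argument that makes this immediate, including the check that $f_\ast$ descends to $K_{num}(\scrF(X))$.
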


For arbitrary compactly supported maps, we have the following.

\begin{lemma}\label{lem:trivial_on_homology}
Compactly supported homeomorphisms act trivially on $H_3(X; \bZ)$.
\end{lemma}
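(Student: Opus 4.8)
The plan is to show that a compactly supported homeomorphism $\phi$ of $X$ induces the identity on $H_3(X;\bZ)$ by exhibiting enough homology classes that must be fixed. We know from the earlier discussion that $H_3(X;\bZ) \cong \bZ^2$, with a basis given by the matching spheres $S_0, S_1$ (equivalently $[S_0],[S_1]$ generate, since $\langle S_0,S_1\rangle \to K_{num}(\scrF(X))$ is an isomorphism onto $\bZ^2$ and this is identified with $H_3(X;\bZ)$). So it suffices to check that $\phi_*[S_0] = [S_0]$ and $\phi_*[S_1] = [S_1]$.

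First I would recall that $X$ has the homotopy type of a $3$-dimensional cell complex, so $H_k(X;\bZ) = 0$ for $k \geq 4$ and $H_3(X;\bZ)$ is free. The key structural input is Poincaré--Lefschetz duality for the manifold-with-boundary picture: $\phi$ being compactly supported means it is the identity near the conical end $\partial_\infty X$, so it acts on the pair $(X, \partial_\infty X)$ fixing the boundary, hence it acts on $H^*_c(X;\bZ)$ and commutes with the duality isomorphism $H_c^k(X;\bZ) \cong H_{6-k}(X;\bZ)$. Now $H_c^3(X;\bZ) \cong H_3(X, \partial_\infty X;\bZ)$, and the excerpt already identifies $H_3(X,\partial_\infty X;\bZ) \cong \bZ^2 = K(\scrW(X))$ with basis the cotangent fibres $F_0, F_1$. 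A compactly supported homeomorphism must fix each $[F_i]$: the fibre $F_i$ can be chosen to agree with a fixed fibre outside the support of $\phi$, so $[\phi(F_i)] - [F_i]$ is represented by a cycle supported in a compact set, and since the relevant relative homology is detected by intersection with boundary data that $\phi$ fixes, the difference vanishes. More cleanly: $\phi$ acts trivially on $H_c^0$ and $H_c^1$ (which vanish or are rank issues already handled in the strong-exactness lemma), and by duality and the long exact sequence of the pair one pins down the action on $H_3(X;\bZ)$ from the action on $H_3(X,\partial_\infty X;\bZ)$, which is trivial.

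Alternatively, and perhaps more transparently, I would argue directly with the intersection pairing. Since $\phi$ is a compactly supported homeomorphism it preserves the intersection form $H_3(X;\bZ) \times H_3(X,\partial_\infty X;\bZ) \to \bZ$, it acts as the identity on the target $H_3(X,\partial_\infty X;\bZ)$ (as just discussed, compactly supported maps fix the relative classes $[F_0],[F_1]$ because these can be represented rel the fixed boundary), and the pairing restricted to the image of $H_3(X;\bZ)$ is — wait, the excerpt notes the natural map $H_3(X;\bZ) \to H_3(X,\partial_\infty X;\bZ)$ is zero, so this pairing does not directly detect $\phi_*$ on $H_3(X;\bZ)$. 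So instead I would use the pairing the \emph{other} way: the non-degenerate pairing $K_{num}(\scrF(X)) \times K(\scrW(X)) \to \bZ$ identified with the intersection form. Since $\phi$ acts on $K(\scrW(X)) = H_3(X,\partial_\infty X;\bZ)$ trivially (again: $\phi$ is the identity near infinity, and the cotangent-fibre basis is pinned by behaviour near infinity — a compactly supported homeomorphism moves $F_i$ by a compactly supported isotopy-class change, which does not change its class in the relative group since that class is determined by the endpoint on $\partial_\infty X$), and the pairing is non-degenerate, $\phi$ acts trivially on $K_{num}(\scrF(X)) = H_3(X;\bZ)$. That is the statement.

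The main obstacle will be making rigorous the claim that a compactly supported homeomorphism acts trivially on $H_3(X,\partial_\infty X;\bZ)$, i.e. fixes the relative classes of the cotangent fibres. One has to be slightly careful because $\phi$ need not be isotopic to the identity through compactly supported maps. The clean way is to invoke Poincaré--Lefschetz duality: $H_3(X,\partial_\infty X;\bZ) \cong H^3_c(X;\bZ)$ via cap product with the fundamental class, and a compactly supported (orientation-preserving) homeomorphism fixes the fundamental class of $(X,\partial_\infty X)$ and induces a map on $H^3_c$ that is dual to its action on $H_3$ of a deformation retract; since $\phi$ is the identity on a neighbourhood of $\partial_\infty X$ and $X$ deformation retracts onto a compact core $S^3 \cup_{\mathrm{Hopf}} S^3$ onto which one may also arrange the action, the induced map on $H_3$ of the core is the identity. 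Actually the cleanest route avoids even this: orientation-preserving homeomorphisms of connected oriented manifolds act trivially on top compactly-supported cohomology, and the Wang/Mayer--Vietoris comparison between $X$, its core, and a neighbourhood of infinity forces triviality on $H_3(X;\bZ)$ as well because $H_3$ is carried by the compact core which $\phi$ can be taken (up to compactly supported isotopy) to preserve. I would present the duality argument as the primary proof and remark that it also follows from the non-degeneracy of the pairing above together with triviality of the action on $K(\scrW(X))$.
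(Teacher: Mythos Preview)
Your proposal circles the right ingredients but never assembles them into a clean proof, and in particular you miss the one-line argument the paper uses. The paper simply invokes the long exact sequence of the pair $(X,\partial_\infty X)$: a compactly supported $\phi$ is the identity near $\partial_\infty X$, hence acts trivially on $H_3(\partial_\infty X)$; since the map $H_3(X)\to H_3(X,\partial_\infty X)$ is zero (a fact you yourself quote), exactness says $H_3(\partial_\infty X)\to H_3(X)$ is \emph{surjective}; commutativity of the ladder then forces $\phi_*=\id$ on $H_3(X)$. No duality, no pairing, no fibres, no core.

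Your detour through triviality of $\phi_*$ on $H_3(X,\partial_\infty X)$ is salvageable, but you do not finish it. You correctly observe that $[\phi(F_i)]-[F_i]$ is represented by an absolute $3$-cycle (the boundaries cancel); what you fail to say is that this means the difference lies in the image of $H_3(X)\to H_3(X,\partial_\infty X)$, which is zero. That single sentence would complete your argument and then the non-degenerate pairing gives the conclusion. Your stated justifications instead (``determined by the endpoint on $\partial_\infty X$'', ``detected by intersection with boundary data'') are not correct as written: a relative class is not determined by its boundary alone. The Poincar\'e--Lefschetz and compact-core paragraphs are red herrings; in particular there is no reason a compactly supported homeomorphism can be arranged to preserve the core, so that line does not go through.

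In short: both routes hinge on the same fact, that $H_3(X)\to H_3(X,\partial_\infty X)$ vanishes. The paper uses it to get surjectivity from the boundary; you should have used it to kill the difference class. Either way is a two-line proof once you see this.
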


\begin{proof}
Let $\phi \in \Homeo_c (X)$ be a compactly supported homeomorphism. Considering the induced action on the long exact sequence for the pair $(X, \partial_\infty X)$, we get a commutative diagram:
\begin{equation}
\xymatrix{
\ldots \ar[r] & H_3( \partial_\infty X) \ar[r] \ar[d]_{\phi_\ast} &  H_3( X) \ar[r] \ar[d]_{\phi_\ast} &  H_3( X, \partial_\infty X) \ar[r] \ar[d]_{\phi_\ast} & \ldots \\
\ldots \ar[r] & H_3( \partial_\infty X) \ar[r] &  H_3( X) \ar[r]  &  H_3( X, \partial_\infty X) \ar[r]  & \ldots
}
\end{equation}
where all coefficient rings are $\bZ$. By assumption, $\phi_\ast: H_3(\partial_\infty X) \to H_3(\partial_\infty X)$ is the identity. On the other hand, recall that in this case the map $H_3(X) \to H_3(X, \partial_\infty X)$ has zero image. It's then immediate that $\phi_\ast: H_3(X) \to H_3(X)$ is the identity. 
\end{proof}

\begin{corollary}\label{cor:aut-tor}
Any $f \in \pi_0 \Symp_c X$ acts trivially on $K_{num} (\scrF)$ and $K(\cW)$. 
\end{corollary}

\begin{proof}
This readily follows from Corollary \ref{cor:K-thy-actions} together with Lemma \ref{lem:trivial_on_homology}.
\end{proof}

Lemma \ref{lem:trivial_on_homology} will also imply that there are infinity many orbits of Lagrangians spheres in $X$; see Corollary \ref{cor:infinite-sphere-orbits} for a more general statement.
On the other hand, by the proof of Proposition \ref{prop:compact-braid-action}, we see that all of the $S_\gamma$ are contained in the orbits of the $S_i$, $i \in \bZ$, under the action of the group generated by Dehn twists in the $S_i$. 

\begin{remark}
We'll  see in Section \ref{sec:main-theorem} that the subgroup $\bZ^{\ast \infty}$ which maps to $\pi_0 \Symp_c X$ actually split injects into it. This will use mirror symmetry. If one only wants to show that the action of $\bZ^{\ast \infty}$ is faithful, one could do this instead by using a generalisation of Khovanov--Seidel's work on faithful braid group actions, see Proposition \ref{prop:generalised_Khovanov_Seidel}. 
\end{remark}

\section{The conifold resolution} \label{sec:resolution-side}

Consider the threefold ordinary double point $\{xy=(1+w)(1+t)\}\subset \bC^4$. This has a small resolution, which is the total space of  $\cO(-1)\oplus\cO(-1) \to \bP^1$.  Let $D$ be the pullback under the small resolution of the divisor $\{wt=0\} \subset \bC^4$. 
Let $Y$ be the open subset of $\cO(-1)\oplus\cO(-1)$ given by the complement of $D$. Let $C \subset \cO(-1)\oplus\cO(-1)$ denote the zero-section, which lies in the complement of the divisor $D$ and hence in $Y$.

A brief calculation shows that $\Pic(Y) \cong \bZ$. This is generated by a line bundle $\scrL$ such that $\calO_C(i) \otimes \scrL \cong \calO_C(i+1)$ (by the push-pull formula). 

Let $R = \Gamma (\calO_Y)$. We have that $\Spec R \cong  \{xy=(1+w)(1+t)\} \backslash \{wt=0\} \subset \bC^4$ is the blow-down of $Y$. 

\begin{remark} Start with $\bP^1 \times \bP^1 \times \bP^1$;  blow up $Z = \bP^1 \times \{ -1 \} \times \{ 0 \}  \cup \{ -1 \} \times \bP^1 \times \{ \infty \}$, and let $F$ be the proper transform of the toric divisor for $(\bP^1)^3$. Then one can check that the variety $Y$ is isomorphic to $\text{Bl}_Z (\bP^1)^3 \backslash Z$. In particular, $Y$ is log Calabi-Yau. 

\end{remark}

We write $D(Y)$ for the bounded derived category of $Y$.

\begin{definition} \label{defn:the-big-category} 
The category  $\scrD \subset D(Y)$ is the full subcategory of $D(Y)$ of complexes whose cohomology sheaves are (set-theoretically) supported on $C\subset Y$. 
\end{definition}

\begin{remark} \label{rem:two-definitions-of-D} Definition \ref{defn:the-big-category} is taken from \cite{HW}. The paper \cite{CPU} defines $\scrD$ as the derived category of the abelian subcategory of $\mathrm{Coh}(Y)$ of sheaves (set-theoretically) supported on $C$. This yields the same category as Definition \ref{defn:the-big-category} by  \cite[\href{https://stacks.math.columbia.edu/tag/0AEF}{Lemma 0AEF}]{stacks-project}. The alternative definition makes it manifest that $\scrD$ is determined by a formal neighbourhood of $C$ in $Y$. On the other hand, the  formal neighbourhood of a length one flopping curve is uniquely determined, and a flopping curve has normal bundle $\calO(-1)\oplus\calO(-1)$ if and only if it has length one  \cite{Reid}. So the category $\scrD$ depends only on $C$ being a $(-1,-1)$-curve. 
\end{remark}

From Remark \ref{rem:two-definitions-of-D}, or directly, one sees that  $\scrD$ is generated by $\cO_C$ and  $\cO_C(-1)$, so 
the Grothendieck group $K(\scrD)$ of $\scrD$ is $\bZ^2$, generated by $[\calO_C], [\calO_C(-1)]$.

\begin{proposition}\label{prop:auteqs-are-FM}
Suppose $\phi$ is an autoequivalence of $D(Y)$. Then $\phi$ is Fourier-Mukai, i.e.~induced by an object $P_\phi \in D(Y \times Y)$. Moreover, $P_\phi$ is unique up to isomorphism.
\end{proposition}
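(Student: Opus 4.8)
The plan is to verify the hypotheses of the standard representability criterion for autoequivalences as Fourier–Mukai functors, due to Orlov (and in its most general form due to Canonaco–Stellari). Recall that this criterion applies to exact functors between bounded derived categories of coherent sheaves on quasi-projective (or more generally, sufficiently nice) schemes, provided the functor is fully faithful, has a left and right adjoint, and commutes with taking direct sums in a suitable sense; in the quasi-projective setting the cleanest reference is Canonaco–Stellari, ``Fourier–Mukai functors: a survey'', or Ballard's extension of Orlov's theorem to quasi-projective schemes. The first thing I would do is therefore to recall precisely which version of the representability theorem is available for $Y$, noting that $Y$ is a smooth quasi-projective threefold (it is an open subvariety of the total space of $\cO(-1)\oplus\cO(-1) \to \bP^1$, which is itself quasi-projective), so the hypotheses are mild.

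First I would check that $D(Y)$ admits a Serre functor or at least that $Y$ is smooth, which is automatic here since $Y$ is a smooth variety; smoothness guarantees that any autoequivalence has both adjoints (the adjoints of an equivalence are its inverse, which is again an equivalence of the bounded derived category, hence trivially exists and is itself an exact functor of the required form). Second, I would invoke the representability theorem in the form: any fully faithful exact functor $D^b(\Coh Y_1) \to D^b(\Coh Y_2)$ between bounded derived categories of smooth quasi-projective varieties admitting a right adjoint is isomorphic to a Fourier–Mukai functor $\Phi_{P}$ for some $P \in D^b(\Coh(Y_1 \times Y_2))$, and moreover $P$ is unique up to isomorphism. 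Applying this with $Y_1 = Y_2 = Y$ and the functor being $\phi$ (which is an equivalence, hence in particular fully faithful with a right adjoint, namely $\phi^{-1}$) gives exactly the statement, including the uniqueness of $P_\phi$.

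The step I expect to require the most care is confirming that the representability theorem genuinely applies in the \emph{non-proper} setting: $Y$ is quasi-projective but not projective, and the classical Orlov argument uses properness in an essential way (e.g. to ensure Hom-finiteness and the existence of Serre functors). The resolution is to cite the appropriate generalisation — Lunts–Orlov or Canonaco–Stellari establish representability for quasi-compact separated schemes with enough structure, and Ballard's paper ``Equivalences of derived categories of sheaves on quasi-projective schemes'' treats exactly the smooth quasi-projective case — and to observe that $Y$, being an open subscheme of a smooth projective-over-affine variety (the total space of $\cO(-1)^{\oplus 2}$ is projective over the affine cone $\Spec R$), satisfies all the needed hypotheses. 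A minor additional point worth recording is that because $\phi$ is an autoequivalence rather than merely fully faithful, one does not need to separately check that $P_\phi$ has ``invertible convolution''; uniqueness up to isomorphism of $P_\phi$ follows from the general uniqueness clause in the representability theorem, which in turn rests on the fact that the Fourier–Mukai kernel of a functor is recovered (up to isomorphism) by applying the functor to structure sheaves of points and assembling, so no separate argument is needed. I would also remark that this proposition is used only to make sense of composition of autoequivalences at the level of kernels and to transport the $R$-linear structure, so the precise form of $P_\phi$ is not needed downstream, only its existence and uniqueness.
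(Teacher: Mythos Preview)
Your proposal is correct and takes essentially the same approach as the paper: both reduce the statement to the known representability theorems for fully faithful exact functors on smooth quasi-projective schemes, citing Lunts--Orlov, Ballard, and Canonaco--Stellari for existence. The only minor difference is that the paper singles out To\"en's result on dg-functors for the uniqueness clause (via \cite[Remark 9.11]{Lunts-Orlov}), whereas you fold uniqueness into the representability package; both are legitimate, but the To\"en citation is the cleaner one in the non-proper setting, where the ``assemble from skyscrapers'' heuristic you mention is not directly available.
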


\begin{proof}
Existence of $P_\phi$ is in \cite[Corollary 2.16]{Lunts-Orlov}. (See also \cite[Theorem 7.13]{Ballard} and \cite[Theorem 1.1]{Canonaco-Stellari}.)  Uniqueness follows from To\"en \cite[Corollary 8.12]{Toen}, see \cite[Remark 9.11]{Lunts-Orlov}. 
\end{proof}

The contraction map $Y \to \Spec(R)$ gives $D(Y)$ the structure of an $R$-linear category, via pullback.  In particular, we can consider those autoequivalences of $D(Y)$ which are $R$-linear.

\begin{lemma}\label{lem:R-linear-fixes-points}
Let $\phi $ be an $R$-linear autoequivalence of $D(Y)$ and $x \in Y \backslash C$. Then $\phi$ preserves the skyscraper sheaf $\calO_x$ up to a shift. 
\end{lemma}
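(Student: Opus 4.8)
The plan is to exploit the fact that for $x \in Y \setminus C$, the skyscraper sheaf $\calO_x$ is pulled back from $\Spec R$, namely $\calO_x = \pi^* \calO_{\pi(x)}$ where $\pi \colon Y \to \Spec R$ is the contraction (which is an isomorphism away from $C$), and then use $R$-linearity of $\phi$ to pin down $\phi(\calO_x)$. First I would recall that $\calO_x$ is a point-like object: it is simple (no nontrivial self-maps in degree $0$) and satisfies $\Hom^{<0}(\calO_x, \calO_x) = 0$, and more to the point $\calO_x \otim_R^{\mathbb L} k(\pi(x)) \simeq \calO_x$, i.e. the action of the maximal ideal $\mathfrak m_{\pi(x)} \subset R$ on $\calO_x$ is zero (the module structure over $R$ factors through the residue field $k(\pi(x))$).

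The key step is: since $\phi$ is $R$-linear, the same holds for $F := \phi(\calO_x)$ — the $R$-module structure on $\bigoplus_i \Hom^i(G, F)$ (for any generator $G$, or just on the hypercohomology) factors through $k(\pi(x))$, because $\phi$ intertwines the $R$-actions and multiplication by $\mathfrak m_{\pi(x)}$ was already zero on $\calO_x$. Hence every cohomology sheaf $\mathcal H^i(F)$ is annihilated by $\mathfrak m_{\pi(x)}$, so is set-theoretically supported on the single closed point $\pi(x)$, which (as $x \notin C$) corresponds to a single reduced point $x \in Y$. Thus $F$ is an object of $D^b_{\{x\}}(Y)$, the derived category of complexes supported at the \emph{reduced} point $x$. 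Now $Y$ is smooth, so a neighbourhood of $x$ is $\operatorname{Spec}$ of a regular local ring of dimension $3$; an object supported at the closed point of a regular local ring with residue field $k$ is a finite complex of finite-length modules. Since $\phi$ is an equivalence, $F$ is indecomposable (as $\calO_x$ is), has $\End^0(F) = k$ and $\Hom^{<0}(F,F) = 0$. The plan is then to invoke a standard rigidity statement: the only indecomposable objects of $D^b_{\{x\}}(Y)$ that are simple with no negative self-extensions are the shifts $\calO_x[n]$. One clean way to see this is via the spectral sequence / Postnikov tower computing $F$ from its cohomology sheaves $\mathcal H^i(F)$, each of which is a finite-length module over the regular local ring $\mathcal O_{Y,x}$, together with the fact that $\Hom^{\leq 0}(\calO_x, \calO_x) = k$ concentrated in degree $0$ while $\calO_x$ generates $D^b_{\{x\}}(Y)$: a nonzero object in this category with vanishing negative self-Homs and one-dimensional degree-zero self-Homs must be a single shift of $\calO_x$ (otherwise the bottom and top nonzero cohomology sheaves contribute a nonzero negative-degree or a $\geq 2$-dimensional degree-zero self-Hom).

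I expect the main obstacle to be this last rigidity argument, i.e.\ ruling out more complicated complexes supported at a smooth point — one must be careful that a priori $F$ could be, say, a two-term complex whose cohomology sheaves are length-$>1$ modules or are nontrivial extensions. The cleanest route is probably to first reduce to $\mathcal O_{Y,x}^{\wedge} \cong k[[x_1,x_2,x_3]]$ and then argue that since $\phi$ is an equivalence it preserves the $t$-structure's heart locally up to a global shift, or more directly to compute $\Hom^*(\calO_x, F) = \Hom^*(\calO_x, \phi(\calO_x)) \cong \Hom^*(\phi^{-1}(\calO_x), \calO_x)$ and play off the symmetry together with Serre duality on the (local) Calabi–Yau $Y$ to force $F$ to be a shift of $\calO_x$; Serre duality is available because $Y$, being log Calabi-Yau, has trivial canonical bundle, so $\Hom^i(\calO_x, F) \cong \Hom^{3-i}(F, \calO_x)^\vee$, and combining this with $\sum_i (-1)^i \dim \Hom^i(\calO_x, F) = \chi(\calO_x, \calO_x) = 0$ and the support/length bookkeeping pins everything down. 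I would present the argument in the order: (1) $\calO_x$ is pulled back from $\Spec R$ and its $R$-module structure factors through $k(\pi(x))$; (2) $R$-linearity forces the same for $\phi(\calO_x)$, so $\phi(\calO_x)$ is supported set-theoretically at the reduced point $x$; (3) local Calabi–Yau + Serre duality + Euler characteristic + indecomposability force $\phi(\calO_x) \simeq \calO_x[n]$ for some $n \in \bZ$.
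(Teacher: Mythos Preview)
Your approach is essentially identical to the paper's: use $R$-linearity to see that $\Ann(\calO_x) = \Ann(\phi(\calO_x))$ is the maximal ideal at $\pi(x)$, forcing $\phi(\calO_x)$ to be supported at the single point $x$, and then invoke point-like rigidity. The paper dispatches your step (3) in one line by citing \cite[Lemma 4.5]{Huybrechts}, whose proof is exactly your top/bottom-cohomology argument; the Serre-duality alternative you sketch is unnecessary.
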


\begin{proof}
For any sheaf $\calE \in D(Y)$, one can consider the ideal
$$
\Ann (\calE) := \{ r \in R \, | \, r \cdot \calE = 0 \}.
$$
By $R$-linearity, $\Ann \calO_x = \Ann  (\phi \calO_x)$. On the other hand, note that $\Ann \calO_x$ is simply the maximal ideal of functions vanishing at $x$. Thus $\phi \calO_x$ is also supported at $x$, and hence by \cite[Lemma 4.5]{Huybrechts} agrees with $\calO_x$ up to a shift. 
\end{proof}

To simplify notation, we will denote $Y \times_{\Spec R} Y$ by $Y \times_R Y$.

\begin{lemma}\label{lem:R-linear-FM-support}
$R$-linear autoequivalences 
of $D(Y)$ preserve $\scrD$
 and are induced by Fourier-Mukai kernels with support on $Y \times_{ R} Y$. 
\end{lemma}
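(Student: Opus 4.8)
The plan is to prove the two assertions in turn, using the Fourier--Mukai description from Proposition \ref{prop:auteqs-are-FM} together with Lemma \ref{lem:R-linear-fixes-points}.

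\textbf{Preservation of $\scrD$.} First I would show that an $R$-linear autoequivalence $\phi$ preserves $\scrD$. By Remark \ref{rem:two-definitions-of-D}, $\scrD$ is precisely the full subcategory of objects whose cohomology sheaves are set-theoretically supported on $C$. I would characterise this support condition intrinsically in $R$-linear terms: for $\calE \in D(Y)$, the cohomology sheaves of $\calE$ are supported on $C$ if and only if $\calE$ is annihilated (in the sense of $r\cdot\calE = 0$ for $r$ acting by the $R$-module structure) by a suitable power of the ideal $I \subset R$ cutting out the image of $C$ in $\Spec R$; more robustly, $\calE \in \scrD$ iff $\calE \otimes_R^{\mathbb{L}} \calO_x = 0$ for every $x \in Y \backslash C$, equivalently $\Hom_{D(Y)}(\calE, \calO_x[n]) = 0$ for all such $x$ and all $n$. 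Since $\phi$ is an $R$-linear equivalence, and by Lemma \ref{lem:R-linear-fixes-points} it sends each $\calO_x$ ($x \notin C$) to $\calO_x$ up to shift, the vanishing $\Hom(\calE, \calO_x[n]) = 0$ for all $n$ is equivalent to $\Hom(\phi\calE, \phi\calO_x[n]) = \Hom(\phi\calE, \calO_x[n']) = 0$ for all $n$. Hence $\calE \in \scrD \iff \phi\calE \in \scrD$, so $\phi(\scrD) = \scrD$.

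\textbf{Support of the kernel.} Next, let $P_\phi \in D(Y \times Y)$ be the Fourier--Mukai kernel of $\phi$, unique by Proposition \ref{prop:auteqs-are-FM}. I want to show $\Supp(P_\phi) \subset Y \times_R Y$. The key point is the compatibility of the $R$-action: $R$-linearity of $\phi$ means that for $r \in R$, multiplication by $\mathrm{pr}_1^\ast r$ and by $\mathrm{pr}_2^\ast r$ induce the same endomorphism of $P_\phi$ in $D(Y\times Y)$ --- concretely, $\phi(r\cdot\calE) = r\cdot\phi(\calE)$ naturally, and unwinding the Fourier--Mukai formula $\phi(-) = \mathrm{pr}_{2\ast}(P_\phi \otimes \mathrm{pr}_1^\ast(-))$ shows $(\mathrm{pr}_1^\ast r - \mathrm{pr}_2^\ast r)$ acts as zero on $P_\phi$ after applying $\mathrm{pr}_{2\ast}(-\otimes\mathrm{pr}_1^\ast \calE)$ for all $\calE$; taking $\calE$ to range over a spanning class (e.g. skyscrapers $\calO_x$, $x \in Y$) forces $(\mathrm{pr}_1^\ast r - \mathrm{pr}_2^\ast r)$ to annihilate $P_\phi$ itself, equivalently $\Supp(P_\phi)$ lies in the closed subscheme where $\mathrm{pr}_1^\ast r = \mathrm{pr}_2^\ast r$ for all $r \in R$, i.e. in $Y \times_R Y$. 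The cleanest route is probably pointwise: for a closed point $(y_1, y_2) \in Y \times Y$ in the support of $P_\phi$, the stalk is nonzero, so by the base-change formula the object $\phi(\calO_{y_1})$ has $y_2$ in its support; but $\phi$ being $R$-linear sends $\calO_{y_1}$ to a complex whose cohomology sheaves are supported over the image of $y_1$ in $\Spec R$ (for $y_1 \notin C$ this is Lemma \ref{lem:R-linear-fixes-points}; for $y_1 \in C$ one uses that $C$ maps to a single point and that $\phi$ preserves $\scrD$, so $\phi(\calO_{y_1}) \in \scrD$ is supported on $C$, which maps to that same point). Either way $y_2$ maps to the same point of $\Spec R$ as $y_1$, so $(y_1,y_2) \in Y \times_R Y$.

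\textbf{Main obstacle.} I expect the delicate step to be the support computation for the kernel --- specifically, passing from ``$(\mathrm{pr}_1^\ast r - \mathrm{pr}_2^\ast r)$ acts as zero after $\mathrm{pr}_{2\ast}(-\otimes\mathrm{pr}_1^\ast\calE)$ for all $\calE$'' to ``it acts as zero on $P_\phi$'', which requires knowing that the Fourier--Mukai functor applied to a spanning class detects the kernel (i.e. $P_\phi$ is recovered, up to the relevant support information, from its images on skyscrapers). The pointwise base-change argument sidesteps the need for this and is the version I would write up; its only subtlety is handling points $y_1 \in C$, which is exactly where preservation of $\scrD$ (established in the first step) is used. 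All the homological-algebra inputs --- base change for Fourier--Mukai kernels, the spanning-class property of skyscrapers, and the support/annihilator dictionary --- are standard and can be cited from Huybrechts.
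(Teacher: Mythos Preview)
Your argument is correct but organised differently from the paper's. The paper establishes the support statement first: from Lemma \ref{lem:R-linear-fixes-points} the fibre of $\Supp P_\phi \to Y$ over any $x \notin C$ is zero-dimensional, and invoking connectedness of support fibres \cite[Lemma 6.11]{Huybrechts} forces it to be the single point $\{x\}$; a second application of connectedness (together with \cite[Lemma 3.29]{Huybrechts}) then yields $\Supp P_\phi \subset \Delta_Y \cup (C\times C) = Y\times_R Y$. Preservation of $\scrD$ is read off as an immediate corollary of this support constraint, with no separate categorical argument. You invert the order: you first characterise $\scrD$ intrinsically as the right-orthogonal to the skyscrapers $\{\calO_x : x \notin C\}$ and deduce preservation directly from Lemma \ref{lem:R-linear-fixes-points}; you then feed this back into the pointwise base-change computation to handle the fibres over $C$. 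The paper's route is slightly more economical---the connectedness lemma dispatches both the off-diagonal part over $Y\setminus C$ and the $C\times (Y\setminus C)$ part in one stroke, avoiding your separate treatment of $y_1 \in C$---while your route makes the preservation of $\scrD$ conceptually independent of the Fourier--Mukai description and would transfer to settings where the kernel is less accessible.
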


\begin{proof}
Let $\phi$ be an $R$-linear autoequivalence of $D(Y)$ with Fourier-Mukai kernel $P_\phi \in D(Y\times Y)$. 
By Lemma \ref{lem:R-linear-fixes-points}, for each $x \in Y \backslash C$, the fibre of $\Supp P_\phi \to Y$ above $x$ is zero dimensional.
On the other hand, the fibre of $\Supp P_\phi \to Y$ above any point is connected \cite[Lemma 6.11]{Huybrechts}. Thus for any $x \in Y \backslash C$, the fibre of  $\Supp P_\phi \to Y$ above $x$ is simply $\{ x \}$. 
By \cite[Lemma 3.29]{Huybrechts}, this means that 
$$
(\Supp P_\phi )|_{Y \backslash C \times Y \backslash C} 
= \Delta_{Y \backslash C}. 
$$
Using \cite[Lemma 6.11]{Huybrechts} again, it follows that 
$$
\Supp P_\phi \subset \Delta_Y \cup C \times C. 
$$
Both parts of the lemma follow. 
\end{proof}

Let $\Stab \scrD$ denote the space of Bridgeland stability conditions on $\scrD$ \cite{Bridgeland}. We use standard notation: we will write $(Z, \{ \cP [\theta] \}_{\theta \in \bR})$ for the stability condition with central charge $Z: K(\scrD) \to \bC$ and  full additive subcategory of phase $\theta$ given by $ \cP[\theta] \subset \scrD$.
Following \cite{Toda1}, let  $\Stab_n \scrD$ denote the subspace of normalised Bridgeland stability conditions, i.e.~stability conditions whose central charge $Z$ satisfies $Z([\calO_c])= -1$ for any $c \in C$ (note that by \cite[Lemma 3.14]{Toda1}, the class $[\cO_c]$ is independent of $c \in C$). 
 $\Stab_n \scrD$ has an open subset consisting of stability conditions with the standard heart $\scrD \cap \Coh Y$ and central charges of the form 
$$
Z_{\beta + i \zeta} (E) := - \int e^{-(\beta+ i\zeta)} \ch E \quad \text{for} \quad \beta+ i\zeta \in A(\scrD)_\bC$$
where $ A(\scrD)_\bC$ denotes the complexified ample cone. (See \cite[Lemma 4.1]{Toda1}; these correspond to the neighbourhood of the large volume limit.) Following \cite[Definition 4.2]{Toda1}, let $\Stab_n^\circ \scrD$ be the connected component of $\Stab_n \scrD$ containing this open subset. 

By  \cite[Theorem 7.1]{Toda2}, 
$\Stab \scrD$ is connected. Moreover, we have a rescaling action of $t \in \bC$ on $\Stab \scrD$:
$$
(Z, \{ \cP[\phi] \}_{\phi \in \bR}) \mapsto (e^{-i\pi t} Z, \{ \cP[\phi + \Re t] \}_{\phi \in \bR})
$$

Assembling all this together, we get a commutative diagram
$$
\xymatrix{
\bC \times \Stab_n^\circ \scrD \ar[rr]^{\simeq} && \Stab \scrD \\
2\bZ \times \Stab_n^\circ \scrD \ar[rr]^{\simeq} \ar@{^{(}->}[u] &&
\Stab_n \scrD \ar@{^{(}->}[u]
}
$$
where the top horizontal isomorphism is given by the rescaling action.

\begin{corollary}
Let $\phi$ be an $R$-linear autoequivalence of $D(Y)$ which preserves $\calO_x \in D(Y)$  for some $x\in Y\backslash C$ (with no shift). Then the action of $\phi$ on $\Stab \scrD$ preserves 
$\Stab^\circ_n \scrD$. 
\end{corollary}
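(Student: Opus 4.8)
The plan is to show that the autoequivalence $\phi$ acts on $\Stab\scrD$ compatibly with the decomposition $\Stab\scrD \simeq \bC \times \Stab_n^\circ\scrD$, and that because $\phi$ fixes $\calO_x$ with no shift, its action preserves the normalisation condition $Z([\calO_x]) = -1$, hence preserves $\Stab_n\scrD \simeq 2\bZ \times \Stab_n^\circ\scrD$; then a connectedness argument pins it down to the single factor $\{0\}\times\Stab_n^\circ\scrD$. Let me spell this out.

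First I would record that any autoequivalence $\phi$ of $D(Y)$ (equivalently of $\scrD$, since by Lemma~\ref{lem:R-linear-FM-support} an $R$-linear autoequivalence preserves $\scrD$) acts on the space of stability conditions $\Stab\scrD$ in the standard way: $\phi$ sends $(Z,\{\cP[\theta]\})$ to $(Z\circ\phi_\ast^{-1}, \{\phi(\cP[\theta])\})$, where $\phi_\ast$ is the induced map on $K(\scrD)=\bZ^2$. This action commutes with the rescaling $\bC$-action, since rescaling only modifies $Z$ by a scalar and shifts phases uniformly, operations that are visibly intertwined by $\phi$. Next, because $\phi$ is $R$-linear and fixes $\calO_x$ \emph{on the nose} (no shift), we have $Z(\phi_\ast[\calO_x]) = Z([\calO_x])$ under the identification; more to the point, the normalisation condition $Z([\calO_x]) = -1$ is preserved under $\sigma \mapsto \phi\cdot\sigma$. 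Hence $\phi$ preserves the subspace $\Stab_n\scrD \subset \Stab\scrD$, and under the isomorphism $\Stab_n\scrD \simeq 2\bZ\times\Stab_n^\circ\scrD$ of the displayed diagram, $\phi$ acts preserving the $\bZ$-grading by connected components: $\phi$ maps the connected component $\Stab_n^\circ\scrD$ to some connected component $\{2k\}\times\Stab_n^\circ\scrD$, i.e.\ to $[2k]\circ\Stab_n^\circ\scrD$ for some $k\in\bZ$ (where $[2k]$ is the shift functor acting by rescaling by $2k$).

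It remains to show $k=0$. For this I would use that $\phi$ fixes $\calO_x$ with \emph{zero} shift, which is precisely the statement that rules out composing with an even shift. Concretely: pick any stability condition $\sigma = (Z,\{\cP[\theta]\}) \in \Stab_n^\circ\scrD$ lying in the large-volume region, where $\calO_x$ is stable of some definite phase $\theta_0$ (one can normalise $\theta_0 \in (0,1]$ using the standard heart $\scrD\cap\Coh Y$, in which $\calO_x$ is a simple object). The shift $[2k]$ changes the phase of $\calO_x$ by $2k$; since $\phi(\calO_x) = \calO_x$ with no shift, the phase of $\calO_x$ in $\phi\cdot\sigma$ differs from that in $\sigma$ by an \emph{even integer} coming from $k$, but it must also be realised by the same object $\calO_x$ inside the heart of $\phi\cdot\sigma$. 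Tracking how the heart transforms, and using that $\phi$ preserves $\scrD\cap\Coh Y$ up to an even shift on the relevant component while fixing the simple object $\calO_x$, forces this even integer to be zero. Thus $\phi\cdot\Stab_n^\circ\scrD = \Stab_n^\circ\scrD$.

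The main obstacle I expect is the last step: making precise the claim that "$\phi$ fixes $\calO_x$ with no shift" descends to "$\phi$ preserves the connected component $\Stab^\circ_n\scrD$" rather than shifting it by an even amount. This requires knowing enough about the structure of $\Stab_n^\circ\scrD$ from Toda's work --- in particular that $\Stab_n^\circ\scrD$ is a single connected component characterised (up to the $2\bZ$-action) by containing the large-volume chamber, and that the $2\bZ$-action is \emph{free} so that distinct components are genuinely distinguished by the phase of $\calO_x$ modulo $2\bZ$. Granting the diagram displayed just before the corollary, this is essentially bookkeeping, but one must be careful that $\phi_\ast$ on $K(\scrD)$, which need not be the identity, does not interfere: since $\calO_x$ and its shifts span a rank-one part of the picture transverse to $\ker(Z|_{\text{generated by }[\calO_C],[\calO_C(-1)]})$-considerations, and $\phi_\ast$ fixes $[\calO_x]$ (being $R$-linear and fixing $\calO_x$), the normalisation and the phase of $\calO_x$ behave well and the argument goes through.
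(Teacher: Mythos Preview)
Your overall strategy matches the paper's: show that $\phi$ preserves the normalised locus $\Stab_n\scrD$ (because $\phi_\ast$ fixes $[\calO_x]$), then use the decomposition $\Stab_n\scrD \simeq 2\bZ \times \Stab_n^\circ\scrD$ together with the identification of the $2\bZ$-action with even shifts to pin down the component. The paper's one-line proof records exactly this, without spelling out the last step.

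However, your argument for the final step $k=0$ has a genuine gap. You invoke the phase of $\calO_x$ in a stability condition $\sigma \in \Stab_n^\circ\scrD$, and call $\calO_x$ a ``simple object'' of the heart $\scrD \cap \Coh Y$. But for $x \in Y \setminus C$ the sheaf $\calO_x$ is supported away from $C$, so $\calO_x \notin \scrD$ at all: it has no phase with respect to any stability condition on $\scrD$, and is certainly not an object of $\scrD\cap\Coh Y$. What \emph{does} make sense is the class $[\calO_x] = [\calO_C]-[\calO_C(-1)] \in K(\scrD)$, which you may represent by $\calO_p$ for $p\in C$; but you have no control over $\phi(\calO_p)$ (for instance spherical twists move it), so you cannot run the phase argument with $\calO_p$ either. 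Thus the hypothesis ``$\phi(\calO_x)=\calO_x$ with no shift'' gives you, at the level of $\scrD$ alone, nothing beyond $\phi_\ast[\calO_x]=[\calO_x]$ --- and that only gets you into $\Stab_n$, not into the distinguished component.

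You correctly flag this as ``the main obstacle''. The actual content that resolves it is Toda's classification $\Auteq\scrD \cong (\bZ\ast\bZ)\times\bZ$ (stated immediately afterwards as Theorem~\ref{thm:Toda-input}): the generators $T_{\calO_C(-1)}$ and $\otimes\scrL$ of the free factor both fix $\calO_x$ \emph{and} preserve $\Stab_n^\circ$, while the remaining $\bZ$ consists of shifts. Since your $\phi$ fixes $\calO_x$ with no shift, its shift component is zero, hence $\phi$ lies in $\bZ\ast\bZ$ and preserves $\Stab_n^\circ$. The paper's terse proof is best read as pointing at this, via the remark that the $\bZ\subset\bC$-rescaling is exactly the shift action.
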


\begin{proof} This follows from the preceding discussion, on noting that the rescaling action of $\bZ \subset \bC$ agrees with the action of shifts. \end{proof}

\begin{definition}
Let  
$\Auteq \scrD$ be the group of $R$-linear autoequivalences of $D(Y)$, and let $\Auteq^\circ \scrD$ be the subgroup of $R$-linear autoequivalences preserving $\calO_x \in D(Y)$ for some point $x \not\in C$. 
\end{definition}

To justify the notation, we'll see shortly that we could have equally well defined these groups in terms of $R$-linear autoequivalences of $\scrD$.

\begin{theorem}\cite{Toda1, Toda2} \label{thm:Toda-input}

(i) We have an isomorphism 
$$\Auteq \scrD \cong (\bZ\ast\bZ) \times \bZ,$$ where the first factor has generators the spherical twist in $\calO_C(-1)$ and $\otimes \scrL$, where $\scrL$ generates $\Pic(Y)$, and the final $\bZ$ factor is the grading shift. 

(ii) We have $\Auteq^\circ \scrD \cong \bZ\ast\bZ$, again with  generators the spherical twist in $\calO_C(-1)$ and $\otimes \scrL$. 
The action of $\Auteq^\circ \scrD$ on $K(\scrD)$ induces a homomorphism to the stabiliser of $[\calO_x]$ in $GL_2(\bZ) = \Aut K(\scrD)$, i.e.~the infinite dihedral group $\bZ \rtimes \bZ/2$. Let $ \Auteq^\circ_{\Tor} \scrD$ be its kernel. Then the image of this homomorphism is $\bZ$, and we then obtain a diagram with exact rows
\begin{equation}\label{eq:auteq_commutative_diagram}
\xymatrix{
0 \ar[r] &  \Auteq^\circ_{\Tor} \scrD \ar[r] & \Auteq^\circ \scrD \ar[r] & \bZ \ar[r] & 0 \\
0  \ar[r]  & \bZ^{\ast \infty} \ar[r] \ar[u]^{\simeq} & \bZ \ast \bZ  \ar[r] \ar[u]^{\simeq} &  \bZ  \ar[r] \ar@{=}[u] &  0
}
\end{equation}
The vertical isomorphism maps the standard generators of $\bZ^{\ast\infty}$ to $T_{\calO_C(i)}$, $i \in \bZ$; and the standard generators of $\bZ \ast \bZ$ to $T_{\calO_C(-1)}$ and $\otimes \scrL$. 

\end{theorem}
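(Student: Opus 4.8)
The plan is to deduce the theorem from the analysis of $\Stab\scrD$ and $\Auteq\scrD$ carried out by Toda in \cite{Toda1, Toda2}; the work left to do is (a) to reconcile Toda's statements with the $R$-linear formulation adopted here, and (b) to extract the $K$-theoretic information recorded in diagram \eqref{eq:auteq_commutative_diagram}. Recall from the discussion preceding the theorem that $\Stab^{\circ}_n\scrD$ is contractible and $\Stab\scrD$ is connected \cite[Theorem 7.1]{Toda2}, so every $R$-linear autoequivalence preserves $\Stab^{\circ}_n\scrD$ after rescaling and acts on it as a deck transformation of the quotient by the autoequivalences that act trivially on $K(\scrD)$. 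Recall further from \cite{Toda1} that this quotient is the complement, inside the one-dimensional (here $\Pic(Y)\cong\bZ$) space of normalised central charges with standard heart, of the $\bZ$-periodic family of walls on which some $\calO_C(i)$ acquires the same phase as $\calO_p$, i.e. the complexified affine hyperplane arrangement of type $\widetilde{A}_1$. Hence the $K$-trivial subgroup is $\pi_1$ of a complement of $\bZ$ points, namely $\bZ^{\ast\infty}$, and quotienting by the residual translation action (identified below with $\otimes\scrL$) gives the full deck group $\pi_1(\bC^{*}\setminus\{\mathrm{pt}\})\cong\bZ\ast\bZ$, the affine braid group of type $\widetilde{A}_1$. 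Together with the rescaling isomorphism $\Stab\scrD\simeq\bC\times\Stab^{\circ}_n\scrD$ and the fact that shifts act as $2\bZ\subset\bC$ in the rescaling direction, Toda's analysis yields $\Auteq\scrD\cong(\bZ\ast\bZ)\times\bZ$, the central $\bZ$ being the shift. All of the genuinely geometric content sits in this step, which I take as a black box.

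Next I would identify the generators and carry out the $R$-linear reconciliation. The three claimed generators are manifestly $R$-linear ($T_{\calO_C(-1)}$ has Fourier--Mukai kernel supported on $\Delta_Y\cup(C\times C)\subset Y\times_R Y$; $\otimes\scrL$ tensors by an $\calO_Y$-module; $[1]$ tautologically). Conversely, Lemma \ref{lem:R-linear-FM-support} shows that every $R$-linear autoequivalence of $D(Y)$ restricts to one of $\scrD$ with kernel supported on $\Delta_Y\cup(C\times C)$, and Lemma \ref{lem:R-linear-fixes-points} shows it fixes each $\calO_x$, $x\notin C$, up to shift; since the kernel off $C$ is then forced to be a shift of $\calO_{\Delta}$, such an autoequivalence is pinned down by its restriction to $\scrD$ together with that shift. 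Thus the $R$-linear autoequivalence group of $D(Y)$ is exactly the group Toda computes, and the three generators generate it, proving (i).

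For (ii) and the $K$-theoretic diagram: both $T_{\calO_C(-1)}$ and $\otimes\scrL$ fix every $\calO_x$, $x\notin C$, on the nose --- the twist because $R\Hom(\calO_C(-1),\calO_x)=0$ by disjointness of supports, and $\otimes\scrL$ because $\scrL\otimes\calO_x\cong\calO_x$ --- whereas $[1]$ shifts $\calO_x$; hence $\Auteq^{\circ}\scrD$ is precisely the complementary free factor $\langle T_{\calO_C(-1)},\otimes\scrL\rangle\cong\bZ\ast\bZ$, which is (ii). On $K(\scrD)=\bZ[\calO_C]\oplus\bZ[\calO_C(-1)]$, from $[\calO_C(i)\otimes\scrL]=[\calO_C(i+1)]$ and the resolutions $0\to\calO_C(i-1)\to\calO_C(i)\to\calO_p\to0$ one computes that $\otimes\scrL$ acts by the matrix $\left(\begin{smallmatrix}2&1\\-1&0\end{smallmatrix}\right)$; this is unipotent of infinite order, fixes the primitive class $[\calO_p]=[\calO_C]-[\calO_C(-1)]$ ($p\in C$), and in fact generates the unipotent $\bZ$ inside the stabiliser $\bZ\rtimes\bZ/2$ of $[\calO_p]$ in $GL_2(\bZ)$. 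The twist $T_{\calO_C(-1)}$ acts trivially on $K(\scrD)$: since $C$ sits in a neighbourhood with trivial canonical bundle, $\scrD$ is a Calabi--Yau-$3$ category, so its Euler pairing is skew-symmetric, hence identically zero on the rank-two lattice $K(\scrD)$, and $T_{\calE}([v])=[v]-\chi(\calE,v)[\calE]=[v]$. Therefore the image of $\Auteq^{\circ}\scrD\to GL_2(\bZ)$ is $\bZ$, and its kernel $\Auteq^{\circ}_{\Tor}\scrD$ is the normal closure of $T_{\calO_C(-1)}$ in the free group $\langle T_{\calO_C(-1)},\otimes\scrL\rangle$, which is free of countably infinite rank with basis $(\otimes\scrL)^{i}\circ T_{\calO_C(-1)}\circ(\otimes\scrL)^{-i}=T_{\calO_C(i-1)}$, $i\in\bZ$. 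This is exactly diagram \eqref{eq:auteq_commutative_diagram}, whose bottom row is the tautological presentation of $\bZ\ast\bZ$ as the extension $0\to\bZ^{\ast\infty}\to\bZ\ast\bZ\to\bZ\to0$.

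The main obstacle is entirely in the first step: Toda's identification of $\Stab^{\circ}_n\scrD$ with the universal cover of the affine-$\widetilde{A}_1$ arrangement complement, together with the connectedness of $\Stab\scrD$, both of which rest on a delicate wall-crossing analysis for the flops of $C$; I would not reprove these. The only other point requiring genuine care is the reconciliation above --- namely, that nothing is lost or gained in passing between ``$R$-linear autoequivalences of $D(Y)$'' and the autoequivalence group of $\scrD$ that Toda computes, i.e. that an $R$-linear autoequivalence of $D(Y)$ is determined by its restriction to $\scrD$ and its (locally constant, hence constant) shift on off-$C$ skyscrapers. This is precisely where Lemmas \ref{lem:R-linear-fixes-points} and \ref{lem:R-linear-FM-support} and the computation $\Pic(Y)\cong\bZ$ do their work.
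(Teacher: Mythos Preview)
Your proposal is correct and follows essentially the same approach as the paper: cite Toda for (i), identify $\Auteq^{\circ}\scrD$ as the $\bZ\ast\bZ$ factor, and deduce the diagram from the $K$-theory action of $\otimes\scrL$ together with the conjugation identity $(\otimes\scrL)^{i}\circ T_{\calO_C(-1)}\circ(\otimes\scrL)^{-i}=T_{\calO_C(i-1)}$.

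Your write-up is considerably more detailed than the paper's. Two elaborations are worth noting. First, you explicitly argue the reconciliation between ``$R$-linear autoequivalences of $D(Y)$'' and Toda's autoequivalence group of $\scrD$, invoking Lemmas \ref{lem:R-linear-fixes-points} and \ref{lem:R-linear-FM-support}; the paper simply cites \cite[Theorem 7.7]{Toda2} and defers this point to Remark \ref{rmk:defs-of-AuteqD}. Second, you justify the triviality of $T_{\calO_C(-1)}$ on $K(\scrD)$ via the CY3 structure and the resulting vanishing of the Euler form on a rank-two lattice, which the paper leaves implicit. Your $K$-theory matrix is computed in the basis $\{[\calO_C],[\calO_C(-1)]\}$ rather than the paper's $\{[\calO_p],[\calO_C]\}$, but the conclusions are of course equivalent.
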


\begin{proof}
For (i), this is \cite[Theorem 7.7]{Toda2}. For (ii), the description of $ \Auteq^\circ \scrD$ is clear. To conclude, consider $\scrL$ a line bundle generating $\Pic (Y) \cong \bZ$. Up to possibly replacing $\scrL$ with its dual line bundle, its action by tensor on $K (\scrD)$, with respect to the basis $\{ [\cO_C] - [\cO_C(-1)], [\cO_C] \} $, is given by the matrix $\begin{pmatrix} 1 & 1 \\ 0 & 1 \end{pmatrix}$. As $\otimes \scrL^{i} \circ T_{\calO_C} \circ \otimes \scrL^{-i} =  T_{\calO_C(i)}$, the rest of the claim is then immediate. 
\end{proof}

The proof shows that the $\bZ$ quotient on the top line of equation \ref{eq:auteq_commutative_diagram} is naturally identified with $\Pic(Y)$.

\begin{remark}\label{rmk:defs-of-AuteqD}
For instance comparing with \cite[Theorem 1.5]{HW}, we see that $\Auteq^\circ_{\Tor} \scrD$ could equivalently have been defined as $R$-linear autoequivalences \emph{of the subcategory $\scrD$} preserving $\Stab^\circ_n \scrD$. 
\end{remark}

We'll use the following immediate corollary.

\begin{corollary}\label{cor:trivial-K-theory}
Suppose that $\phi \in \Auteq \scrD$ induces the identity on $K(\scrD)$. Then $\phi$ must lie in the subgroup $\bZ^{\ast \infty} \times 2\bZ$, where $\bZ^{\ast \infty} \subset \bZ \ast \bZ$ is the subgroup generated by spherical twists in the $\calO_C(i)$; and $2\bZ$ denotes even shifts. 
\end{corollary}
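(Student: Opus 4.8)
The plan is to reduce everything to Theorem~\ref{thm:Toda-input} together with a small amount of bookkeeping on the action on $K(\scrD)\cong\bZ^2$. First I would use the isomorphism $\Auteq\scrD\cong(\bZ\ast\bZ)\times\bZ$ of part (i) to write $\phi=(\psi,[n])$, where $\psi$ lies in the free-product factor generated by $T_{\calO_C(-1)}$ and $\otimes\scrL$ and $[n]$ is the $n$-fold grading shift. The action on $K(\scrD)$ is a group homomorphism $\Auteq\scrD\to\Aut K(\scrD)=GL_2(\bZ)$, so it respects the direct product decomposition: the shift generator $[1]$ acts by $-\operatorname{id}$, while on the free-product factor $\bZ\ast\bZ=\Auteq^\circ\scrD$ it acts through the homomorphism of part (ii), whose image is the infinite cyclic group generated by the unipotent matrix $M=\left(\begin{smallmatrix}1&1\\0&1\end{smallmatrix}\right)$ (the image of $\otimes\scrL$ in the basis $\{[\calO_C]-[\calO_C(-1)],[\calO_C]\}$) and whose kernel is, by the commutative diagram \eqref{eq:auteq_commutative_diagram}, exactly the subgroup $\bZ^{\ast\infty}$ generated by the twists $T_{\calO_C(i)}$, $i\in\bZ$. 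Consequently $\phi_*=(-1)^n\psi_*$ on $K(\scrD)$.

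Next I would impose $\phi_*=\operatorname{id}$, i.e.\ $\psi_*=(-1)^n\operatorname{id}$, and split into cases according to the parity of $n$. If $n$ is even, then $\psi_*=\operatorname{id}$, so $\psi$ lies in the kernel $\bZ^{\ast\infty}$ and $[n]$ lies in $2\bZ$; hence $\phi\in\bZ^{\ast\infty}\times 2\bZ$, as claimed. If $n$ is odd, then one would need $\psi_*=-\operatorname{id}$; but $-\operatorname{id}=\left(\begin{smallmatrix}-1&0\\0&-1\end{smallmatrix}\right)$ is not one of the powers $M^k=\left(\begin{smallmatrix}1&k\\0&1\end{smallmatrix}\right)$, so it does not lie in the image of the free-product factor in $GL_2(\bZ)$ --- a contradiction. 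This rules out the odd case and completes the argument. (The converse inclusion is immediate, so $\bZ^{\ast\infty}\times 2\bZ$ is in fact precisely the kernel of the $K$-theory action on $\Auteq\scrD$, although only the stated direction is needed.)

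Given Theorem~\ref{thm:Toda-input}, the argument carries no real obstacle; the only point demanding care is the exclusion of the odd-shift case, which rests on the observation that the image of the Torelli-free factor $\bZ\ast\bZ=\Auteq^\circ\scrD$ in $GL_2(\bZ)$ consists of unipotent matrices and hence can never equal $-\operatorname{id}$.
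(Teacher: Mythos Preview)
Your proof is correct and is precisely the bookkeeping the paper has in mind: the paper states this as an ``immediate corollary'' of Theorem~\ref{thm:Toda-input} without further argument, and your decomposition $\phi=(\psi,[n])$ together with the observation that the image of the $\bZ\ast\bZ$ factor in $GL_2(\bZ)$ is the unipotent cyclic group $\langle M\rangle$ (hence cannot contain $-\mathrm{id}$) is exactly what makes it immediate.
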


\section{Proof of Theorem \ref{thm:main}}\label{sec:main-theorem}

For an $A_{\infty}$-category $\scrC$ we write $D(\scrC) := H^0(\Tw(\scrC))$ for the cohomological category of the category of twisted complexes; this is a triangulated category in the classical sense.  We remark that the group $\Auteq \scrC$ of $A_{\infty}$-autoequivalences of $\scrC$ admits a natural homomorphism to the group $\Auteq(D(\scrC))$ of triangulated equivalences of $D(\scrC)$.

The primary mirror symmetric  input to the proof of Theorem \ref{thm:main} is the following:

\begin{theorem} \cite[Theorem 4.2]{CPU}  \label{thm:hms-equivalence}
There is a quasi-equivalence of $\bC$-linear triangulated categories
\begin{equation} \label{eqn:hms-equivalence}
\Upsilon: D \cW(X) \stackrel{\sim}{\longrightarrow} D(Y)
\end{equation}
 with $\Upsilon(S_i) = \calO_C(-i)$.  
  \end{theorem}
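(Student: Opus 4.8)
\emph{Proof sketch / plan.} Theorem \ref{thm:hms-equivalence} is \cite[Theorem 4.2]{CPU}; we take the equivalence itself as a black box, and here only indicate the structure of the argument and how one arranges the normalisation $\Upsilon(S_i)=\calO_C(-i)$ in the conventions of Section \ref{Subsec:gradings}.

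The equivalence is proved by matching generating sets on the two sides. On the symplectic side, $X$ is the plumbing of two copies of $T^*S^3$ along a Hopf link (Lemma \ref{lem:hopf}); the cotangent fibre discs $L_0, L_1$ of the two zero-sections — the Lagrangian discs $L_i$ of Figure \ref{fig:spherical-conventions} — are cocores of the Weinstein handle decomposition of $X$, so they generate $\cW(X)$ by the generation results of \cite{GPS1,GPS2v3} (see also \cite{CDGG}). One computes the $A_\infty$-algebra $\bigoplus_{i,j} CW^{\ast}(L_i,L_j)$ from the Morse-Bott-Lefschetz fibration $\pi:X\to\bC^*$ of Section \ref{sec:symplectic-side} (equivalently, via the Chekanov--Eliashberg algebra of the attaching Legendrians) and identifies its cohomology with the path algebra of the conifold quiver — two vertices, two arrows in each direction — modulo the relations induced by the superpotential, localised so as to reflect the removal of the divisor. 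On the algebraic side, $D(Y)$ has a tilting-type generator: the resolved conifold $\cO(-1)\oplus\cO(-1)\to\bP^1$ carries the tilting bundle $\cO\oplus\cO(1)$ with endomorphism algebra the conifold algebra, and restriction to the open subset $Y=(\cO(-1)\oplus\cO(-1))\setminus D$ localises it. Comparing the two algebras — the same quiver with the same relations, the gradings fixed so that the $L_i$ carry the degrees recorded in Section \ref{Subsec:gradings} — and checking that both $A_\infty$-structures have the same minimal model (formality, e.g. via a weight grading coming from the superpotential) produces an equivalence $D\cW(X)\xrightarrow{\sim}D(Y)$ carrying each $L_i$ to the corresponding projective object.

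It then remains to track the compact spheres. By the genus-one Heegaard description of Lemma \ref{lem:matching-spheres}, $S_0$ is an explicit iterated cone on the cocores $L_0, L_1$ in $D\cW(X)$; under the equivalence this becomes the corresponding iterated cone on the projectives, which one computes to be $\calO_C$ up to a shift and up to tensoring by a power of $\scrL$. Post-composing $\Upsilon$ with that shift and with $\otimes\scrL^{k}$ normalises it to $\Upsilon(S_0)=\calO_C$. Now $\otimes\scrL$ sends $\calO_C(-i)$ to $\calO_C(-i+1)$ and is realised on the symplectic side — either by transporting the autoequivalence through the equivalence just constructed, or directly from the monodromy of Lemma \ref{lem:PBr_3-action-ungraded} — by the graded symplectomorphism $\lambda$ with $\lambda^i(S_0)=S_{-i}$, compatibly with $\Upsilon$ by our choice of gradings (Section \ref{Subsec:gradings}); hence $\Upsilon(S_i)=\Upsilon(\lambda^{-i}S_0)=(\otimes\scrL)^{-i}\calO_C=\calO_C(-i)$ for all $i\in\bZ$.

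The step I expect to be the genuine obstacle is the Floer-theoretic content of the second paragraph: computing the wrapped algebra of the cocores with all of its higher products, and establishing the generation and formality statements precisely. This is the substance of \cite[Theorem 4.2]{CPU}, which we do not reproduce here.
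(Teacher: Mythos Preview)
The paper gives no proof of this statement at all: it is simply quoted as \cite[Theorem 4.2]{CPU} and used as input. Your proposal correctly identifies this, and your sketch of the CPU argument (generation by the cocores $L_0,L_1$, identification of their wrapped endomorphism algebra with the localised conifold quiver algebra, and matching with the tilting object $\calO_Y\oplus\calO_Y(1)$ on the $B$-side) is an accurate summary of how that reference proceeds, so there is nothing to correct.

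One small caution on your normalisation paragraph: the identification of the mirror to $\lambda$ with $\otimes\scrL$ is established in the paper \emph{after} Theorem \ref{thm:hms-equivalence}, in Lemma \ref{lem:symplectic-image-PBr_3-revisited}, by invoking Toda's classification of autoequivalences (Theorem \ref{thm:Toda-input}) together with the equivalence $\Upsilon$ itself. So using $\lambda\leftrightarrow\otimes\scrL$ to pin down $\Upsilon(S_i)$ for all $i$ is circular as written. In \cite{CPU} the identification $\Upsilon(S_i)=\calO_C(-i)$ is obtained directly, by expressing each $S_i$ as an explicit twisted complex in the generators $L_0,L_1$ and transporting this through the tilting equivalence; the $\lambda$--$\scrL$ compatibility is then a consequence rather than an input. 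This does not affect the validity of taking the theorem as a black box, but your sketch should not rely on that compatibility to deduce the normalisation.
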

 
It follows that $D(\scrW)$ is in fact split-closed. 
 By construction, the mirror equivalence identifies the subcategory $\langle S_0, S_1 \rangle$ of $D\cW(X)$ generated by $S_0$ and $S_1$ with $\scrD$;  passing to $K$-theory, the numerical Grothendieck group $K_\text{num} \scrF(X)$ is identified with $K(\scrD)$.  Moreover, \cite{CPU} explicitly identify the objects mirror to $\calO_Y$ and $\calO_Y(1)$: these are Lagrangian $\bR^3$s, say $L_0$ and $L_1$, fibred over the base of the Morse-Bott-Lefschetz fibration, see Figure \ref{fig:spherical-conventions}, cf.~\cite[Figure 1.2 and Theorem 1.2]{CPU}.

 \begin{corollary}
 The equivalence $\Upsilon$ of \eqref{eqn:hms-equivalence} entwines the $SH^0(X)$-linear structure of $D\cW(X)$ and the $R$-linear structure of $D(Y)$.
 \end{corollary}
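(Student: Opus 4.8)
The plan is to recognise that both linear structures appearing in the statement are instances of the tautological action of degree-zero Hochschild cohomology (the centre) of a triangulated category on that category, and that such actions are transported along any quasi-equivalence; the only genuine input is then a normalisation statement about the isomorphism $SH^0(X)\cong R$ of \cite{CPU}.

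First I would unwind the two module structures. On the symplectic side, the $SH^0(X)$-linear structure on $D\cW(X)$ is, as recalled at the end of Section~\ref{sec:gradings-and-orientations}, induced by the module action of symplectic cohomology on wrapped Floer cohomology; concretely it is the composite of the ring map $\mathcal{CO}^0\colon SH^0(X)\to HH^0(\cW(X))$ with the canonical action of $HH^0(\cW(X))=\operatorname{Nat}(\operatorname{id}_{\cW(X)})$ on objects. On the algebraic side, since $R=\Gamma(\mathcal{O}_Y)$ by definition and $Y$ is smooth, one has $HH^0(D(Y))=\operatorname{Hom}_{D(Y\times Y)}(\mathcal{O}_{\Delta_Y},\mathcal{O}_{\Delta_Y})=\Gamma(\mathcal{O}_Y)=R$, and the $R$-linear structure obtained by pullback along $Y\to\Spec R$ is exactly the tautological action of $HH^0(D(Y))=R$ on objects.

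Next I would invoke functoriality of Hochschild cohomology: the quasi-equivalence $\Upsilon$ induces a ring isomorphism $\Upsilon_\ast\colon HH^0(\cW(X))\xrightarrow{\ \sim\ }HH^0(D(Y))$, and $\Upsilon$ is automatically linear over it, because it carries natural transformations of $\operatorname{id}_{\cW(X)}$ to natural transformations of $\operatorname{id}_{D(Y)}$. Combining this with the previous paragraph, the Corollary becomes equivalent to the single assertion that the composite
$$ SH^0(X)\ \xrightarrow{\ \mathcal{CO}^0\ }\ HH^0(\cW(X))\ \xrightarrow{\ \Upsilon_\ast\ }\ HH^0(D(Y))=R $$
agrees with the isomorphism $SH^0(X)\cong R$ used in \cite{CPU}.

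This last point is where the (modest) work lies, and is the step I expect to be the main obstacle — not because it is deep, but because it is a matter of tracking exactly which ring isomorphism \cite{CPU} produce. One option is to observe that their identification $SH^0(X)\cong\Gamma(\mathcal{O}_Y)$ is itself built from the closed--open map together with $\Upsilon$ (or a cochain-level model of it), so that the triangle commutes by construction. Alternatively, since $D\cW(X)$ is split-generated by the Lagrangians $L_0,L_1$ mirror to $\mathcal{O}_Y$ and $\mathcal{O}_Y(1)$, one may verify the compatibility after restricting both actions to $\operatorname{End}^0(L_0\oplus L_1)$, where each reduces to multiplication inside the explicitly identified endomorphism algebra $\operatorname{End}(\mathcal{O}_Y\oplus\mathcal{O}_Y(1))$; both $SH^0(X)$ and $R$ act there through the same central subalgebra, by \cite{CPU}. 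Either way the argument is bookkeeping about normalisations in \cite{CPU}, with no new geometric content.
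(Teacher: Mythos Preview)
Your proposal is correct and follows essentially the same route as the paper: both recognise that each linear structure is the tautological $HH^0$-action and that $\Upsilon$ transports one to the other via $\Upsilon_\ast$ on Hochschild cohomology.

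The only real difference is in how the final normalisation is handled. The paper does not try to match the composite $SH^0(X)\to HH^0(\cW(X))\to HH^0(D(Y))=R$ against a pre-existing CPU isomorphism; instead it argues directly that the closed--open map $SH^0(X)\to HH^0(\cW(X))$ is an isomorphism (using that $\cW(X)$ is generated by $L_0,L_1$, hence non-degenerate via \cite{GPS1}, and Calabi--Yau via \cite{Ganatra-thesis}), so the composite \emph{is} a distinguished isomorphism $SH^0(X)\cong R$ and nothing further needs to be checked. Your version instead defers this point to CPU, offering two bookkeeping routes to verify compatibility. That is harmless, but slightly less self-contained: you are implicitly using that $\mathcal{CO}^0$ is an isomorphism without saying why, whereas the paper supplies the reason.
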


\begin{proof}
The existence of an isomorphism of $\bC$-algebras $SH^0(X) \cong \Gamma(\calO_Y)=R$ is proved in \cite[Appendix]{CPU}. 
Both linear structures can be viewed as the general fact that a $k$-linear $dg$-category or $A_{\infty}$-category $\scrC$  is also linear over the Hochschild cohomology $HH^0(\scrC)$. At the cohomological level (which is all that we require) this is classical, and a chain-level version is established in   \cite[Section 5.1]{Pomerleano_intrinsic}. By \cite{GPS1}, the wrapped category $\cW(X)$ is generated by $L_0$ and $L_1$ and is homologically smooth and non-degenerate. This means in particular that the open-closed map $HH_*(\cW(X)) \to SH^*(X)$ is an isomorphism. Since $\cW(X)$  is also a Calabi-Yau category \cite{Ganatra-thesis}, the closed-open map $SH^*(X) \to HH^*(\cW(X))$ is also an isomorphism. The action of $\Upsilon$ on $HH^*$ therefore induces a distinguished isomorphism $SH^0(X) \cong R$.  The result follows. 
\end{proof}

Recall the exact, Maslov zero torus $T$ introduced in Section \ref{Subsec:exact-lagrangian-submanifolds}, which was graded in Section \ref{Subsec:gradings}. 

\begin{lemma}\label{lem:mirror-to-point}
Let $(T, \zeta) \in \Ob \cW(X)$ be any Lagrangian brane associated to $T$. Then under the mirror symmetry isomorphisms above, $(T, \zeta)$ is mirror to $\calO_x[k]$ for some $x \in Y \backslash C$ and shift $k \in \bZ$. 
\end{lemma}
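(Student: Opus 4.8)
The plan is to use the $R$-linear structure on both sides together with Lemma \ref{lem:R-linear-fixes-points}-type reasoning, but applied to the wrapped category rather than to an autoequivalence: the key point is that $T$ is a closed (compact) exact Lagrangian, so its image $\Upsilon(T,\zeta) \in D(Y)$ is an object of $D(Y)$ whose support must be a proper (in fact compact, since $\Spec R$ has no nonconstant regular functions vanishing on it except through the ideal structure) closed subset of $Y$. First I would argue that the support of $\Upsilon(T,\zeta)$ is zero-dimensional: because $T$ is compact, $HW^*(L_j, T)$ is finitely generated over $\bC$ for the cotangent-fibre-like generators $L_0, L_1$ of $\cW(X)$, hence $\mathrm{Hom}^*_{D(Y)}(\Upsilon(L_j), \Upsilon(T,\zeta))$ is finite-dimensional over $\bC$; since $\Upsilon(L_0) = \calO_Y$ and $\Upsilon(L_1) = \calO_Y(1)$ are (up to twist) the structure sheaf, this forces the cohomology sheaves of $\Upsilon(T,\zeta)$ to have finite length, i.e.\ zero-dimensional support.

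Next I would rule out support meeting $C$. Here the idea is to use that $[T] = 0$ in $K_{\mathrm{num}}\scrF(X)$: the torus $T$ is a $T^3$, and one computes via the pairing \eqref{eq:pairing} (evaluating $\chi(HF^*(T, S_i))$ and $\chi(HF^*(T,F_j))$) that $T$ pairs trivially with all of $K(\cW(X))$, so $\Upsilon(T,\zeta)$ has class $0$ in $K(\scrD) = \bZ^2 = \langle [\calO_C], [\calO_C(-1)]\rangle$. But a nonzero object of $D(Y)$ supported on a zero-dimensional subscheme that meets $C$ — a coherent sheaf supported at finitely many points, some on $C$ — cannot have $K$-class zero in $K(\scrD)$, since $[\calO_p]$ for $p \in C$ is a nonzero (indeed primitive up to the relation) class expressible in terms of $[\calO_C], [\calO_C(-1)]$, while $[\calO_x] = 0$ in $K(\scrD)$ for $x \notin C$ precisely because $\calO_x$ is not an object of $\scrD$ at all. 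So any component of the support lying on $C$ would contribute a nonzero summand to the $K$-class; hence $\Supp \Upsilon(T,\zeta) \subset Y \backslash C$, a finite set of points. Finally, since $\Upsilon(T,\zeta)$ is then a complex supported at finitely many points of $Y\backslash C$, I would invoke \cite[Lemma 4.5]{Huybrechts} (as in the proof of Lemma \ref{lem:R-linear-fixes-points}) to conclude that if the support is a single point it must be $\calO_x[k]$; and connectedness of $T$ (hence indecomposability of $\Upsilon(T,\zeta)$, as $\Upsilon$ is an equivalence and $T$ is connected so $HF^0(T,T)$ is local/has no nontrivial idempotents) forces the support to be a single point.

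The main obstacle I anticipate is the step asserting that $\Upsilon(T,\zeta)$ is a \emph{sheaf} (up to shift), or at least that its support is genuinely zero-dimensional rather than merely that it has finite-dimensional total morphism space with $\calO_Y$ — one has to be careful that $\calO_Y$ and $\calO_Y(1)$ generate and that finite-dimensionality of Hom's with a generator controls the dimension of support (this is where homological smoothness/properness of the relevant subcategory and the explicit $K$-theory computation genuinely enter). A cleaner route, which I would pursue in parallel, is purely $K$-theoretic plus support-theoretic: show directly that $\Supp\Upsilon(T,\zeta)$ is disjoint from $C$ using the $R$-linear structure (the function ring $R$ acts, and one shows the annihilator ideal of $\Upsilon(T,\zeta)$ is the maximal ideal of a point of $\Spec R$ lying off the image of $C$, by matching it with the $SH^0(X)$-action on $HW^*(T,T)$, which should be a residue-field-type quotient since $T$ is an exact torus — analogous to how tori in cotangent bundles are mirror to skyscrapers), and then that off $C$ the category is just $D(Y\backslash C) \simeq D(\mathrm{Coh}$ on an open of $\Spec R)$, where objects with one-point support are shifts of skyscrapers by \cite[Lemma 4.5]{Huybrechts}. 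I expect the $SH^0$-action computation on $HW^*(T,T) \cong H^*(T^3)$ — identifying it with the structure sheaf of a point — to be the technical heart.
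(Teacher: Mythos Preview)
Your overall architecture is right and matches the paper's: show the mirror $\calE$ has proper support by pairing with the tilting object $\calO_Y\oplus\calO_Y(1)$ (the paper cites \cite[Theorem 7.2.1]{BCZ} for exactly this), then show the support avoids $C$, then invoke \cite[Lemma 4.5]{Huybrechts} together with the self-Ext ring $H^*(T^3)$ to pin down $\calE=\calO_x[k]$. The difference, and the gap, is in the middle step.

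Your $K$-theoretic argument for $\Supp\calE\cap C=\varnothing$ does not go through. Knowing $[T]=0$ in $K_{\mathrm{num}}\scrF(X)$ only gives $\chi(\HF^*(T,S_i))=0$, not $\HF^*(T,S_i)=0$; and $[\calO_p]=[\calO_x]$ in $K(D(Y))$ for any two closed points, so a zero $K$-class cannot distinguish points on $C$ from points off $C$. Your fallback via the $SH^0$-annihilator is plausible in spirit but, as you say, is the ``technical heart'' and is not carried out. The paper bypasses all of this with a one-line geometric observation: the torus $T$ can be Hamiltonian-isotoped to be \emph{disjoint} from each of $S_0$ and $S_1$ (just look at Figure \ref{fig:spherical-conventions}). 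Hence $\HF^*(T,S_i)=0$ on the nose, so $\calE$ is orthogonal to $\calO_C$ and $\calO_C(-1)$, hence to all of $\scrD$, hence to every $\calO_p$ with $p\in C$; this forces $\Supp\calE\subset Y\backslash C$. Once you have that, proper support means finitely many points of $Y\backslash C$, and the self-Ext computation plus \cite[Lemma 4.5]{Huybrechts} finishes as you describe.
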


\begin{proof}
Let $\calE$ be the mirror to $(T, \zeta)$. 
By inspection, we can find Hamiltonian isotopies so that the image of $T$ is disjoint from the sphere $S_0$, respectively $S_1$; and the Floer theory of $T$ with each of $L_0$ and $L_1$ has a single generator, see Figure \ref{fig:spherical-conventions}.
By considering Floer cohomology with $S_0$ and $S_1$,  $\calE$ is orthogonal to $\calO_C$ and $\calO_C(-1)$, and hence the whole of $\scrD$. 
Since $\calE$ is orthogonal to $\calO_p$ for any $p \in C$, $\calE$ has support on $Y \backslash C$. 

\cite[Theorem 5.3]{CPU} shows that $\calO_Y \oplus \calO_Y(1)$ is a tilting object in $D(Y)$. The torus $(T, \zeta)$ is compact, and so has finite rank Floer cohomology with $L_0$ and $L_1$. This implies that its mirror $\calE$ has finite rank total $\Ext$ groups with the tilting object. On the other hand, it is shown in the proof of \cite[Theorem 7.2.1]{BCZ} that whenever a smooth variety $Z \to A$ is projective over $A$, where $A$ is affine, and $\calE_Z$ is a tilting object for $Z$, and $F \in D(Z)$ has non-proper support, then $\Ext^\ell(\calE_Z, F)$ has infinite rank for some $\ell$. 
This implies that in our case $\calE$ must have proper support. 

Finally, the only proper subvarieties of $Y \backslash C$ are finite unions of points. Since $(T, \zeta)$ has the self-Floer cohomology of a three-torus, the mirror $\calE$ is simple and has no negative self-Exts. By \cite[Lemma 4.5]{Huybrechts}, $\calE = \calO_x[k]$, some $k \in \bZ$. 
\end{proof}

\begin{remark}
Varying the local system $\zeta$ on $T$, one expects the family of branes $(T, \zeta)$ in $X$ to correspond  to a cluster chart $(\bC^\ast)^3$ on the mirror $Y$ (suitably interpreted,  to toric SYZ fibres). 
There are other exact Lagrangian tori in $X$ with vanishing Maslov class, for instance fibred over an $S^1$ in the base $\bC^*$ of the Morse-Bott-Lefschetz fibration which encloses one critical fibre and the puncture. Appropriate such tori do have non-trivial Floer theory with $S_0$ or $S_1$, for certain choices of local systems. In that sense Lemma \ref{lem:mirror-to-point} is saying that among all fibred exact tori, our choice of $T$ is the ``correct'' one to get a cluster chart which avoids (the structure sheaves of) all points of $C$. Compare to the closely related two-dimensional situation studied in \cite{Auroux:Gokova}.
\end{remark}

Lemma \ref{lem:R-linear-fixes-points} then implies that for any $\phi \in \pi_0 \Symp^{\gr} X$, $\phi (T, \zeta)$ is equal to $(T,\zeta)$ up to a shift  in $D \cW(X)$.

\begin{lemma}

 For any $\phi \in \pi_0 \Symp_c X$, equipped with the preferred grading, $\phi (T, \zeta)$ is equal to $(T,\zeta)$ in $D \cW(X)$.

\end{lemma}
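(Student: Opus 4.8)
The plan is to upgrade the previous lemma, which only controls $\phi(T,\zeta)$ up to shift, to a statement with no shift, using the fact that a compactly supported symplectomorphism with its preferred grading acts trivially on $K$-theory together with an appropriate numerical invariant that detects the shift.

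First I would recall that by the previous lemma, $\phi(T,\zeta) \cong (T,\zeta)[k]$ in $D\cW(X)$ for some $k \in \bZ$ (equivalently, on the mirror, $\phi$ preserves $\calO_x$ up to the shift $[k]$). So the entire content is to show $k = 0$. The idea is to pair $(T,\zeta)$ against a fixed object whose Floer/morphism complex with $(T,\zeta)$ is concentrated in a single degree, so that a nonzero shift would be visible. Natural candidates are the Lagrangian branes $L_0, L_1$ mirror to $\calO_Y, \calO_Y(1)$: as noted in the proof of Lemma \ref{lem:mirror-to-point} (and visible in Figure \ref{fig:spherical-conventions}), after a Hamiltonian isotopy $\Hom^*((T,\zeta), L_j)$ has a single generator, sitting in one degree, say degree $d_j$. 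Since $\phi$ is compactly supported, it acts trivially on $K$-theory of $\cW(X)$ (Corollary \ref{cor:K-thy-actions} combined with Lemma \ref{lem:trivial_on_homology}), hence $[\phi(T,\zeta)] = [(T,\zeta)]$ in $K(\cW(X)) = \bZ^2$; but $[(T,\zeta)[k]] = (-1)^k [(T,\zeta)]$, so already $k$ is even. To pin it down exactly I would instead argue directly with the graded morphism spaces.

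The cleaner route: apply $\phi$ (with preferred grading) to the equation $\Hom^*((T,\zeta), L_j)$ lives in degree $d_j$. Since $\phi$ has compact support, $\phi$ fixes $L_0$ and $L_1$ \emph{as graded objects} up to Hamiltonian isotopy — here one uses that $\phi$ is the identity outside a compact set, that $L_0, L_1$ are (after isotopy) cylindrical and eventually leave every compact set, and that the preferred grading of $\phi$ is trivial outside the support, so no shift is introduced; more carefully, one invokes that $\phi_* L_j \simeq L_j$ in $\cW(X)$ with its grading because $L_j$ can be isotoped off the support of $\phi$, or uses that $K_{\mathrm{num}}\scrF$ together with the precise grading data determine $L_j$. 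Then
\[
\Hom^*((T,\zeta)[k], L_j) \;=\; \Hom^*(\phi(T,\zeta), \phi L_j) \;=\; \Hom^*(\phi(T,\zeta), L_j),
\]
and the left side is the degree-$(d_j + k)$ shift of a rank-one space while the right side, being $\Hom^*(\phi(T,\zeta), L_j)$, must equal $\Hom^*((T,\zeta),L_j)$ up to the same global considerations — in particular it is supported in degree $d_j$. Comparing forces $k=0$.

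The main obstacle I anticipate is justifying rigorously that $\phi$ fixes $L_0$ and $L_1$ \emph{with their gradings} and not merely up to shift: $L_j$ is noncompact, so one cannot simply isotope it entirely out of the (compact) support of $\phi$ while keeping it cylindrical and exact. The honest fix is to note that $L_j$ can be Hamiltonian-isotoped so that its intersection with the support of $\phi$ is contained in an arbitrarily small region, and then $\phi(L_j)$ is exact-Lagrangian isotopic to $L_j$ by an isotopy supported near that region; since $\phi$ carries the preferred (identity-outside-support) grading, this isotopy is through graded Lagrangians and introduces no shift. Alternatively, and perhaps more robustly, one observes that $L_0 \oplus L_1$ generates $\cW(X)$, so $\phi$ is determined by its effect on $L_0, L_1$; the preferred grading is precisely defined to make $\phi$ act as the identity on these generators on the nose (outside a compact set), and one traces through the HMS dictionary of Theorem \ref{thm:hms-equivalence} to confirm this is compatible with $\phi$ acting trivially on $K(\cW(X))$ and on $R = SH^0(X)$. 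Once the shift on $L_0, L_1$ is known to vanish, the degree count above closes the argument cleanly.
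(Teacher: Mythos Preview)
Your approach diverges from the paper's and contains a genuine error rather than just a gap. The paper does not pair $(T,\zeta)$ against $L_j$ at all: instead it conjugates $\phi$ by the flow $\sigma_t$ of the symplectic vector field dual to $d\alpha$ (the angular form on the base $\bC^*$). For large $t$ the conjugate $\phi' = \sigma_t\phi\sigma_t^{-1}$ has support contained in a compact set $V$ disjoint from $T$, with $X\setminus V$ connected; hence $\phi'$ with its preferred grading fixes $(T,\zeta)$ on the nose. Since $t\mapsto\sigma_t\phi\sigma_t^{-1}$ is a path in $\Symp_c(X)$ (each term is compactly supported) and the preferred grading varies continuously along it, $\phi$ and $\phi'$ lie in the same component of graded compactly supported maps, so $\phi$ fixes the grading of $(T,\zeta)$ too.

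The fatal problem with your route is that the claim ``$\phi$ fixes $L_0$ and $L_1$ as graded objects'' is \emph{false} for general $\phi\in\pi_0\Symp_c X$, not merely hard to justify. Take $\phi=\tau_{S_0}$: this is compactly supported, but $S_0$ and $L_0$ meet in a single point (they are the matching sphere and its associated cocore/thimble), so the twist triangle gives $\tau_{S_0}(L_0)\not\simeq L_0$ in $\cW(X)$. Thus neither of your proposed fixes can succeed: you cannot Hamiltonian-isotope $L_0$ off the support of an arbitrary $\phi$, and the preferred grading being the identity outside $\operatorname{supp}(\phi)$ tells you nothing about what $\phi$ does to $L_0$ \emph{inside} its support. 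Your degree-comparison displayed equation silently uses $\phi L_j=L_j$ on both ends, so the argument collapses. The paper's trick sidesteps this entirely by working with the \emph{compact} Lagrangian $T$ and moving the support of $\phi$ rather than the Lagrangian.
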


\begin{proof}
Recall from Lemma \ref{lem:compact-support-implies-graded} that any compactly supported symplectomorphism of $X$ is gradeable and canonically graded. Such a map therefore acts on the set of graded Lagrangian submanifolds in $X$. The action of a symplectomorphism on the set of objects $\mathrm{Ob}\,\cW(X)$ of the wrapped category is compatible with the action on graded Lagrangians.  

 Let $z$ be the complex coordinate on $X$ pulled back from the $\bC^\ast$ factor of $\bC^2 \times \bC^2 \times \bC^\ast$ and let $\alpha$ be the angle coordinate for $z$.  
For all $t \in \bR_{\geq 0}$, let $\sigma_t$ be the time $t$ flow of the symplectic vector field $Z_\alpha$ which is $\omega$-dual to the closed one-form $d \alpha$. (Our convention is that this is inward-pointing with respect to the base $\bC^\ast$.) We consider the path $\{ \sigma_t \circ \phi \circ \sigma_t^{-1}\}_{t \in \bR}$ of graded symplectomorphisms. For sufficiently large $t$, the map $\phi' := \sigma_t \circ \phi \circ \sigma_t^{-1}$ has support in some compact set $V$ disjoint from $T$; moreover, we can assume that $X \backslash V$ is path connected (and so contains both $T$ and a conical end of $X$). As we are using the canonical grading for the map $\phi'$, 
we see that it must therefore fix the grading on $(T,\zeta)$.  
On the other hand, recall that we already know  that $\phi$ fixes $(T,\zeta)$ up to shift. The action of compactly supported symplectomorphisms on the set of graded Lagrangians only depends on the path-component in the group $\Symp_c(X)$ with its $C^{\infty}$-topology.  Since $\phi$ lies in the same component of the space of graded compactly supported symplectomorphisms as $\phi'$, it must also fix the grading of $(T,\zeta)$.
\end{proof}

\begin{definition}
Define $\Symp^\circ (X) \subset \Symp^{\gr} (X)$ to be the subgroup of graded symplectomorphisms which take $(T, \zeta)$ to itself in $D\cW(X)$. 
\end{definition}

\begin{lemma} \label{lem:symplectic-image-PBr_3-revisited}
Consider the map $\PBr_3 \to \pi_0 \Symp^{\gr} X$ defined earlier. Then the intersection of the image with $\pi_0 \Symp^\circ  X$ is precisely the subgroup generated by $\tau_{S_0}$ and $\lambda$, i.e.~the image of the subgroup $\bZ \ast \bZ$ generated by $R_2$ and $R_3$. 
\end{lemma}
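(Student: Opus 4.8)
The plan is to show that among the generators $R_1, R_2, R_3$ of $\PBr_3$, the ones whose images in $\pi_0 \Symp^{\gr} X$ land in $\pi_0 \Symp^\circ X$ are exactly $R_2$ and $R_3$, and then leverage the algebraic structure of $\PBr_3$ to conclude that the full preimage of $\pi_0 \Symp^\circ X$ is the subgroup $\langle R_2, R_3 \rangle \cong \bZ \ast \bZ$. First I would recall that $\tau_{S_0}$ and $\lambda$ (the images of $R_3$ and $R_2$) are \emph{compactly supported} symplectomorphisms up to isotopy: $\tau_{S_0}$ is literally a Dehn twist, and $\lambda^i S_0 = S_{-i}$ shows $\lambda$ preserves the family of $S_i$; more to the point, by the previous two lemmas any compactly supported symplectomorphism fixes $(T,\zeta)$ in $D\cW(X)$, so the image of $\langle R_2, R_3\rangle = \PBr_3^c$ (by Proposition \ref{prop:compact-braid-action}, this is the subgroup generated by the $t_\gamma$, which maps into $\pi_0 \Symp_c X$) lies in $\pi_0 \Symp^\circ X$. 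Hence one inclusion is immediate.

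For the reverse inclusion, the key is to use that $\PBr_3 / \langle R_2, R_3\rangle \cong \PBr_2 \times \PBr_2 / (\text{something})$ — more precisely, from the discussion after Theorem \ref{thm:Toda-input} and the structure $\PBr_3 \cong (\bZ \ast \bZ) \times \bZ$ with the $\bZ \ast \bZ$ being $\langle R_2, R_3 \rangle$ modulo center and the central $\bZ$ generated by $R_1 R_2 R_3$, I would analyze the quotient $\PBr_3 / \PBr_3^c \cong \PBr_2 \times \PBr_2 \cong \bZ^2$, recording the winding numbers of $-1$ and $+1$ about $0$. The element $R_1$ has nonzero winding number of (say) $+1$ about $0$ — this is clear from Figure \ref{fig:PBr_3-generators}, where $R_1$ is the twist encircling the puncture together with one marked point. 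So it suffices to show: if $\psi \in \PBr_3$ has image in $\pi_0 \Symp^\circ X$, then $\psi \in \PBr_3^c$. Equivalently, passing to the quotient, I must show that the action of $R_1$ on $(T,\zeta) \in D\cW(X)$ is \emph{not} by a grading shift that can be cancelled — i.e., that $R_1$ moves $(T,\zeta)$. Using Lemma \ref{lem:mirror-to-point}, $(T,\zeta)$ is mirror to a skyscraper $\calO_x$ with $x \notin C$, and under $\Upsilon$ the image of $R_1$ acts $R$-linearly on $D(Y)$; by Lemma \ref{lem:R-linear-fixes-points} it fixes $\calO_x$ up to shift. The point is then to compute the \emph{shift}: I expect that $\rho$ (the image of $R_1$) sends $(T,\zeta)$ to $(T,\zeta)[k]$ with $k \neq 0$, using the normalization from the ``Choice of gradings for the action of $\PBr_3$'' paragraph (where $\rho \circ \lambda \circ \tau_{S_0}$ shifts $S_0$ by $[1]$, and $\lambda, \tau_{S_0}$ individually fix $(T,\zeta)$). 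Since $\rho \circ \lambda \circ \tau_{S_0}$ is the central element, it acts on $D\cW(X)$ by a shift $[m]$ for some fixed $m$ — and tracking it on $(T,\zeta)$ versus on $S_0$ pins down $m$, hence pins down the shift by which $\rho$ moves $(T,\zeta)$.

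The main obstacle is the last computation: determining precisely the grading shift by which the central element $\rho \circ \lambda \circ \tau_{S_0}$ acts on $(T,\zeta)$, and confirming it is nonzero (so that $\rho$, and more generally any $\psi$ with nonzero winding number, fails to preserve $(T,\zeta)$ on the nose). I would handle this by working on the mirror side: the central element corresponds to an inverse boundary Dehn twist, which under HMS should act on $D(Y)$ by a specific twist related to $-\otimes\omega_Y$ or a shift; combined with the Serre functor of $D\cW(X)$ (a Calabi-Yau-$3$ category, so Serre functor is $[3]$), one computes its action on both a compact object like $S_0$ and on $\calO_x \simeq (T,\zeta)$. Since the grading on $\rho$ was \emph{chosen} to make $\rho\circ\lambda\circ\tau_{S_0}$ shift $S_0$ by $[1]$, the corresponding shift on $\calO_x$ will differ by the difference in ``degree'' between a spherical object and a skyscraper under the CY-$3$ Serre duality, which is nonzero. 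Once that shift is shown nonzero, the fact that $\langle R_2, R_3 \rangle$ has index giving quotient $\bZ^2$ detecting winding numbers, together with the observation that the shift-on-$(T,\zeta)$ is a homomorphism $\PBr_3 \to \bZ$ nontrivial exactly off $\PBr_3^c$, completes the argument.
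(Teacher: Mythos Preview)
There is a genuine gap in the forward inclusion. You assert that $\lambda$ (the image of $R_2$) is compactly supported and that $\langle R_2, R_3\rangle = \PBr_3^c$; both are false. The map $\lambda$ is explicitly \emph{not} compactly supported (see the discussion after Lemma~\ref{lem:PBr_3-action-ungraded}), and $\PBr_3^c \cong \bZ^{\ast\infty}$ is a \emph{proper} subgroup of $\langle R_2, R_3\rangle \cong \bZ\ast\bZ$: indeed $R_2$ itself has nonzero image under one of the two forgetful maps to $\PBr_2$, so $R_2 \notin \PBr_3^c$. Thus your argument does not establish $\lambda \in \pi_0\Symp^\circ X$. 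The paper handles this by passing to the mirror: $\lambda$ induces an $R$-linear autoequivalence taking $\calO_C(-i)$ to $\calO_C(-i-1)$, and by Toda's classification (Theorem~\ref{thm:Toda-input}) the only such autoequivalence is $\otimes\,\scrL$, which fixes $\calO_x$; hence $\lambda \in \pi_0\Symp^\circ X$ via Lemma~\ref{lem:mirror-to-point}.

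For the reverse inclusion you overcomplicate matters by invoking Serre functors and CY-3 structure. The paper's route is much shorter: use the direct product splitting $\PBr_3 \cong \langle R_2, R_3\rangle \times Z(\PBr_3)$ recorded earlier, together with the fact that the central generator $R_1R_2R_3$ was \emph{chosen} (in the grading convention immediately preceding the lemma) to act as the shift $[1]$. Since the ungraded central element is isotopic to the identity in $\Symp(X)$ (see the Remark following the grading choice), its graded lift \emph{is} globally the shift $[1]$, so it shifts $(T,\zeta)$ by $[1]$ as well---no separate computation is required. As $[1]\notin\pi_0\Symp^\circ X$, the intersection with $\pi_0\Symp^\circ X$ is exactly the image of the complementary factor $\langle R_2, R_3\rangle$.
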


\begin{proof}
By construction,   the image of $R_3$ is  $\tau_{S_0}$, with its preferred grading as a compactly supported map; and the image of $R_2$ is $\lambda$, a non-compactly supported map which maps $S_i$ to $S_{i+1}$ for all $i \in \bZ$. 
Now $\lambda$ induces an $R$-linear autoequivalence of $\cW(X)$, and the mirror autoequivalence in $\Auteq \scrD$ must take $\calO_C(-i)$ to $\calO_C(-i-1)$. By Theorem \ref{thm:Toda-input}, the only possibility is for this map to be $\otimes \scrL$, which fixes $\calO_x$. By Lemma \ref{lem:mirror-to-point}, we then have $\lambda \in \pi_0 \Symp^\circ X$. 

Finally, recall that we choose gradings on the image of $\PBr_3$ so that the generator of the center, i.e.~$R_1 R_2 R_3$, maps to a shift by one. The claim then follows.
\end{proof}

Any element of $\pi_0 \Symp^{\gr} (X)$ induces an $SH^0(X)$-linear autoequivalence of $\cW(X)$.
Lemma \ref{lem:R-linear-FM-support}, together with the HMS equivalence $\Auteq_{SH^0}(\scrW (X)) \simeq \Auteq_R(D(Y)) = \Auteq \scrD$, then implies that the image of 
$\pi_0\Symp^{\gr}(X)$ in $ \Auteq_{SH^0}(\scrW(X))$
preserves the subcategory $\langle S_0,S_1 \rangle$ (equivalent under mirror symmetry to $\scrD$). Moreover, from our definitions and Corollary \ref{cor:aut-tor}, any element of $\pi_0\Symp^\circ(X)$ lands not only in $ \Auteq(\scrD) $ but in fact in the subgroup  $\Auteq^{\circ}(\scrD)$. Altogether, this gives a diagram:

\begin{equation}
\xymatrix{
\pi_0\Symp_c (X) \ar[r] \ar[d] & \pi_0\Symp^{\circ} (X)  \ar[d]\\
\Auteq_{\Tor}( \langle S_0,S_1 \rangle)  \ar[r] \ar[d]_{\simeq} & \Auteq(\langle S_0, S_1 \rangle) \ar[d]_{\simeq} \\
\Auteq^{\circ}_{\Tor}(\scrD) \ar[r] & \Auteq^{\circ}(\scrD)
}
\end{equation}
with the bottom vertical arrows the HMS equivalences.  

Recall from Lemma \ref{cor:K-thy-actions} that $\pi_0 \Symp^{\gr}(X)$, and in particular $\pi_0 \Symp^\circ(X)$, acts on the numerical $K$-theory $K_{\text{num}} (\scrF (X))$. 

Passing to the mirror side, consider the image of $\pi_0 \Symp^{\gr}(X)$ in $\Auteq^\circ \scrD$. As the image of $\Auteq^\circ \scrD$ in $GL(K(\scrD))$ lies in a $\bZ$ subgroup, it follows that the image of $\pi_0 \Symp^{\gr}(X)$ in $GL(K_\text{num} \scrF(X))$ lies in the mirror $\bZ$ subgroup. We are now ready to state a categorical version of our main theorem.

\begin{theorem}\label{thm:main_categorical}

(i) For graded symplectomorphisms, we have
\begin{equation}
\PBr_3  \hookrightarrow \pi_0 \Symp^{\gr} (X) \twoheadrightarrow \Auteq_{SH^0} D\cW(X) \cong \Auteq \scrD \cong \PBr_3,
\end{equation}
where under the decomposition 
$\PBr_3 \cong (\bZ   \ast \bZ  ) \times \bZ  $ with generators $R_2$, $R_3$ and $R_1 R_2R_3$,
$R_2$ maps to $\lambda$, $R_3$ maps to $\tau_{S_0}$, and $R_1 R_3 R_3$ maps to the shift. Moreover, these group homomorphisms compose to give the identity.

(ii) Exact symplectomorphisms which preserve $(T, \zeta) \in \Ob D \cW(X)$ fit into the following commutative diagram:
\begin{equation}
\xymatrix{
 \bZ^{\ast \infty} \ar[r] \ar@{^{(}->}[d]& \bZ\ast\bZ  \ar[r] \ar@{^{(}->}[d] &  \bZ  \ar@{=}[d]  \\
  \pi_0 \Symp_c (X) \ar[r] \ar@{->>}[d] & \pi_0 \Symp^\circ (X) \ar@{->>}[d] \ar[r] & \bZ  \ar@{=}[d]  \\
  \Auteq^\circ_\text{Tor} \scrD \ar[d]^{\simeq} \ar[r] & \Auteq^\circ \scrD \ar[r] \ar[d]^{\simeq} & \bZ \ar@{=}[d]  \\
 \bZ^{\ast \infty} \ar[r] & \bZ\ast\bZ  \ar[r]  &  \bZ  
}
\end{equation}
where the vertical compositions from the top to bottom lines are the identity. The map $\pi_0 \Symp^\circ (X) \to \bZ$ is given by taking the action on $K_\text{num} (\scrF(X))$, itself naturally identified with $K(\scrD)$, and noting that the image lies in the $\bZ$ subgroup.

\end{theorem}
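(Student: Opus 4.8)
The plan is to assemble components that are already in hand: the $\PBr_3$-action on $X$ by graded symplectomorphisms, with the grading normalizations fixed above; the mirror equivalence $\Upsilon$ of Theorem~\ref{thm:hms-equivalence} together with the Corollary entwining the $SH^0(X)$- and $R$-linear structures; Toda's computation $\Auteq\scrD\cong(\bZ\ast\bZ)\times\bZ\cong\PBr_3$ (Theorem~\ref{thm:Toda-input}); and the recognition statements Lemmas~\ref{lem:mirror-to-point}, \ref{lem:symplectic-image-PBr_3-revisited} and \ref{lem:R-linear-FM-support}. The diagram drawn just before the theorem already supplies the vertical arrows; what remains is to identify the horizontal composites on generators and check commutativity.

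For (i): a graded symplectomorphism acts $SH^0(X)$-linearly on $\cW(X)$, hence defines an element of $\Auteq_{SH^0}D\cW(X)$, which $\Upsilon$ identifies with $\Auteq_R D(Y)=\Auteq\scrD\cong(\bZ\ast\bZ)\times\bZ\cong\PBr_3$. I would evaluate the composite $\PBr_3\to\Auteq\scrD$ on the generators $R_2,R_3,R_1R_2R_3$ of $\PBr_3\cong(\bZ\ast\bZ)\times\bZ$. By construction $R_3\mapsto\tau_{S_0}$, which acts on $\cW(X)$ as the algebraic spherical twist in the spherical object $S_0$, so under $\Upsilon$ it corresponds to $T_{\Upsilon(S_0)}=T_{\calO_C}$; $R_2\mapsto\lambda$, whose mirror sends $\calO_C(-i)$ to $\calO_C(-i-1)$ for all $i$ and therefore, by the argument in the proof of Lemma~\ref{lem:symplectic-image-PBr_3-revisited}, equals $\otimes\scrL$; and $R_1R_2R_3\mapsto[1]$ by the chosen grading of the central element, which generates the shift factor of $\Auteq\scrD$. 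Since $\{\otimes\scrL,T_{\calO_C}\}$ is a free generating set for the $\bZ\ast\bZ$ factor (an elementary change from Toda's $\{\otimes\scrL,T_{\calO_C(-1)}\}$ using $T_{\calO_C}=(\otimes\scrL)T_{\calO_C(-1)}(\otimes\scrL)^{-1}$), the composite carries a generating set of $\PBr_3$ bijectively to a generating set, so is the identity under these identifications. In particular $\PBr_3\hookrightarrow\pi_0\Symp^{\gr}(X)$ is injective, and the image of $\pi_0\Symp^{\gr}(X)$ in $\Auteq_{SH^0}D\cW(X)$ contains the image of $\PBr_3$, hence is everything.

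For (ii): the inner vertical maps are defined as follows. Compactly supported symplectomorphisms are canonically graded (Lemma~\ref{lem:compact-support-implies-graded}) and fix $(T,\zeta)$, so $\pi_0\Symp_c(X)\to\pi_0\Symp^\circ(X)$ is well defined; a graded symplectomorphism fixing $(T,\zeta)$ induces an $R$-linear autoequivalence of $D(Y)$ preserving $\scrD$ (Lemma~\ref{lem:R-linear-FM-support}) and preserving $\calO_x$ (Lemma~\ref{lem:mirror-to-point}, since $(T,\zeta)$ is mirror to $\calO_x[k]$), so lands in $\Auteq^\circ\scrD$; if moreover it is compactly supported then it acts trivially on $K(\scrD)$ (Lemma~\ref{lem:trivial_on_homology} and Corollary~\ref{cor:K-thy-actions}), hence lands in $\Auteq^\circ_{\text{Tor}}\scrD$. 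The bottom two rows, and the vertical isomorphisms to them, are Toda's diagram~\eqref{eq:auteq_commutative_diagram}; the right-hand column is $\bZ$ throughout, with $\pi_0\Symp^\circ(X)\to\bZ$ and $\Auteq^\circ\scrD\to\bZ$ both given by the action on $K(\scrD)=K_{\text{num}}(\scrF(X))$, which agree because $\Upsilon$ intertwines the $K$-theory actions, and have image $\bZ$ by Theorem~\ref{thm:Toda-input}(ii). Commutativity of the remaining squares follows by tracing generators: $t_i=R_2^{-i}R_3R_2^{i}\mapsto\lambda^{-i}\tau_{S_0}\lambda^{i}=\tau_{\lambda^{-i}(S_0)}=\tau_{S_i}$ gives the top-left square, while $R_2\mapsto\lambda\mapsto\otimes\scrL$ and $R_3\mapsto\tau_{S_0}\mapsto T_{\calO_C}$, together with (i), give the rest. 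The top-to-bottom vertical composites then send $t_i\mapsto\tau_{S_i}\mapsto T_{\calO_C(-i)}$ and $R_2\mapsto\otimes\scrL$, $R_3\mapsto T_{\calO_C}$, carrying free generating sets bijectively to free generating sets of the bottom rows, hence are isomorphisms; identifying the top and bottom rows along these, the composites are the identity. Consequently $\bZ^{\ast\infty}\to\pi_0\Symp_c(X)$ and $\bZ\ast\bZ\to\pi_0\Symp^\circ(X)$ are split injective (a splitting being the composite down to the bottom row), which in particular yields Theorem~\ref{thm:main}.

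The conceptual content is carried by the cited results---homological mirror symmetry, the ring isomorphism $SH^0(X)\cong R$, Toda's determination of $\Auteq\scrD$, and the recognition of $(T,\zeta)$ as mirror to a skyscraper---so the residual difficulty is bookkeeping, and the delicate point, on which the split injection genuinely rests, is that the $\PBr_3$-action already realizes a full free generating set $\{\lambda,\tau_{S_0}\}$ of $\Auteq^\circ\scrD$. This needs, on one side, Toda's theorem that there are no $R$-linear autoequivalences beyond $\bZ\ast\bZ$, so that $\pi_0\Symp^\circ(X)\to\Auteq^\circ\scrD$ is onto; and, on the other, a careful matching of the grading conventions---the preferred grading on the compactly supported $\tau_{S_0}$, the normalization $\lambda^iS_0=S_{-i}$, and the choice $[1]$ for the central element---so that the composite $\PBr_3\to\Auteq\scrD\cong\PBr_3$ is literally the identity rather than merely an automorphism.
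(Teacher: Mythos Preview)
Your proposal is correct and follows essentially the same route as the paper's proof: both assemble the $\PBr_3$-action, the mirror equivalence, and Toda's computation, then verify the composite on generators. Your write-up is in places more explicit than the paper's---for instance, you spell out the change of free generators $\{T_{\calO_C(-1)},\otimes\scrL\}\leadsto\{T_{\calO_C},\otimes\scrL\}$ and the computation $t_i=R_2^{-i}R_3R_2^i\mapsto\tau_{S_i}\mapsto T_{\calO_C(-i)}$---whereas the paper handles the surjectivity in the second column slightly more structurally, first observing that $\lambda$ surjects onto the $\bZ$ quotient and then invoking exactness of $\Auteq^\circ_{\mathrm{Tor}}\scrD\to\Auteq^\circ\scrD\to\bZ$; but this is a cosmetic difference, not a mathematical one.
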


\begin{proof}
Part (i) is immediate from Theorem \ref{thm:Toda-input} and the preceeding construction and discussion of the pure braid group action $\PBr_3 \to \pi_0 \Symp^{\gr} X$. 

For part (ii), the map from the first to the second line is defined by combining the Section \ref{sec:symplectic-side} constructions with Lemma \ref{lem:symplectic-image-PBr_3-revisited}. 
Lemma \ref{lem:trivial_on_homology} implies that the image of $\pi_0 \Symp_c(X)$ lies in the subgroup $\Auteq^\circ_{\Tor} \scrD$ of $\Auteq^\circ \scrD$, which allows us to go from the second line to the third. 
The isomorphism of the final two rows is given by Theorem \ref{thm:Toda-input}. 

To check that composition from the first to the final row is an isomorphism, we proceed as follows. For the first column, this follows from having identified the twists $\tau_{S_i}$ and $T_{\calO_C(-i)}$, $i \in \bZ$ in the homological mirror symmetry isomorphism of Theorem \ref{thm:hms-equivalence}. 

For the second column, recall we have a preferred non-compactly supported symplectomorphism $\lambda$, together with a grading choice such that it lives in $\pi_0 \Symp^\circ (X)$. With that choice of grading, 
$\lambda$ takes $S_i$ to $S_{i+1}$, for all $i \in \bZ$ (with no shifts). 
At the level of $K$-theory, this is the same action as the tensor with our fixed generator $\scrL$ for $\Pic(Y)$.
This implies that $\pi_0 \Symp^\circ (X)$ surjects onto $\bZ$. 
Surjectivity on the second column then follows from the  exactness of the sequence
$$
 \Auteq^\circ_\text{Tor} \scrD \to \Auteq^\circ \scrD \to \bZ.
$$

Given that the composition from the first to the final row is an isomorphism, injectivity from the first to the second row, and surjectivity from the second to the third row, are then automatic. 
\end{proof}

 Theorem \ref{thm:main} immediately follows. 

\section{Classification of sphericals}\label{sec:sphericals}

Let $\calS$ be the collection of Lagrangian spheres which we know of in $X$: the images of the $S_i$, $i \in \bZ$, under the action of the group $\bZ^{\ast \infty}$ generated by the Dehn twists in the $S_i$. In other words, these are all the spheres associated to matching paths between the two critical values in the base of our Morse-Bott-Lefschetz fibration, with all possible choices of gradings.

\begin{definition}\label{def:spherical}\cite[Section (3a)]{Seidel-Am-Milnor}
We say an object $S$ of $D\cW(X)$ is spherical if 
$\HF^\ast (S,S) \cong H^\ast (S^3; \bZ)$, and   for any object $Z$ of $D \cW(X)$, 
$\HF^\ast(S, Z)$  and $\HF^\ast(Z,S)$ are of finite total rank
and the composition
$$
\HF^{3-\ast} (Z,S) \otimes 
\HF^\ast (S,Z)   \to \HF^3(S,S) \cong \bZ
$$
is a non-degenerate pairing. 
\end{definition}

\begin{theorem}\label{thm:classification_sphericals}
Let $S$ be a spherical object in $D\cW(X)$. Then $S$ is quasi-isomorphic to an element of $\calS$ (as a Lagrangian brane, i.e.~together with a choice of grading).
\end{theorem}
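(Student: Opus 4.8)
The plan is to transport the question across the homological mirror equivalence $\Upsilon$, classify spherical objects in $\scrD \subset D(Y)$, and read the answer back through $\Upsilon$.

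\emph{Reduction to $\scrD$.} Set $\calE = \Upsilon(S) \in D(Y)$, a spherical object. First I would check $\calE$ has proper support. Following the proof of Lemma~\ref{lem:mirror-to-point}, $\calO_Y\oplus\calO_Y(1)$ is tilting in $D(Y)$, and by \cite[Theorem 7.2.1]{BCZ} an object of non-proper support has $\Ext^\ell(\calO_Y\oplus\calO_Y(1),-)$ of infinite rank for some $\ell$; since $\calO_Y,\calO_Y(1)$ are mirror to $L_0,L_1\in D\cW(X)$ and $S$ is spherical, $\HF^*(L_i,S)$ has finite total rank, so $\calE$ has proper support. Hence $\Supp\calE \subseteq C\cup P$ with $P$ a finite set of closed points off $C$, and, the two closed sets being disjoint, $\calE \cong \calE_C\oplus\calE_P$. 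Sphericality forces $\Ext^0(\calE,\calE)=\bC$, so at most one summand is nonzero; and a nonzero complex set-theoretically supported at smooth points of the Calabi--Yau threefold $Y$ is never spherical (its derived endomorphism algebra is that of a perfect complex over a three-dimensional regular local ring, and its Betti numbers, together with the Calabi--Yau duality $\Ext^i\cong(\Ext^{3-i})^\vee$, cannot reproduce $H^*(S^3;\bC)$). Therefore $\calE=\calE_C\in\scrD$.

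\emph{Landing in the free group.} As $\calE\in\scrD$ is spherical, $T_\calE\in\Auteq(D(Y))$. For $x\notin C$ we have $\Ext^*(\calE,\calO_x)=0$, so $T_\calE$ fixes $\calO_x$; in particular $T_\calE$ is $R$-linear and lies in $\Auteq^\circ\scrD$. Moreover $T_\calE$ is Torelli: the Euler form on $K(\scrD)=\langle[\calO_C],[\calO_C(-1)]\rangle$ is identically zero — a direct computation with the local-to-global $\Ext$ spectral sequence and $N_{C/Y}=\calO(-1)^{\oplus 2}$ gives $\chi(\calO_C(a),\calO_C(b))=0$ for all $a,b$ — so $T_\calE$ acts trivially on $K(\scrD)$. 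By Theorem~\ref{thm:Toda-input}(ii), $T_\calE$ lies in $\Auteq^\circ_{\Tor}\scrD\cong\bZ^{\ast\infty}$, the free group on the generators $T_{\calO_C(i)}=T_{\Upsilon(S_{-i})}$, $i\in\bZ$. It now suffices to show $T_\calE$ is conjugate in $\bZ^{\ast\infty}$ to one of these generators: for then $T_\calE\cong T_{\calE_1}$ with $\calE_1=\Upsilon(S_1')$, $S_1'\in\calS$, and since a spherical twist determines its spherical object up to shift, $S\cong S_1'[k]\in\calS$.

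\emph{Surface dynamics.} Here I would invoke Proposition~\ref{prop:generalised_Khovanov_Seidel}: a faithful representation $\rho\colon\bZ^{\ast\infty}\hookrightarrow\mathrm{MCG}(\Sigma)$ onto a subgroup of the mapping class group of a surface $\Sigma$, with $T_{\calO_C(i)}\mapsto\tau_{c_i}$ for simple closed curves $c_i$, under which simple closed curves on $\Sigma$ correspond to matching spheres and $\dim\Ext^*$ between objects of the orbit equals the geometric intersection number of the corresponding curves. A spherical twist has subexponential growth (categorical entropy zero; this is part of the analysis behind Proposition~\ref{prop:growth}), so $\rho(T_\calE)$ has subexponential growth of geometric intersection numbers, hence is not pseudo-Anosov, nor pseudo-Anosov on any component of a reduction; it is also not periodic, being of infinite order with $\rho$ faithful. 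By the Nielsen--Thurston classification and torsion-freeness of $\bZ^{\ast\infty}$, $\rho(T_\calE)$ is a nontrivial multitwist $\prod_j\tau_{d_j}^{m_j}$ along disjoint simple closed curves. Writing $d_j$ as the curve of a matching sphere $\calE_j$ and using faithfulness of $\rho$, $T_\calE=\prod_j T_{\calE_j}^{m_j}$, a product of commuting spherical twists. Applying $T_\calE$ to $\calE_1$ and using that a spherical twist sends an object to a shift of itself only if that object is (a shift of) its own centre, a short bookkeeping argument forces a single factor with $m_j=1$, i.e.\ $T_\calE=T_{\calE_1}$ with $\calE_1\in\Upsilon(\calS)$, as required.

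\emph{Main obstacle.} The substance lies entirely in Proposition~\ref{prop:generalised_Khovanov_Seidel}: producing the two-dimensional surface $\Sigma$ and the localization-type theorem identifying morphism ranks in the six-dimensional geometry of $X$ (equivalently $\Ext$-ranks in $\scrD$) with geometric intersection numbers of curves, together with the Khovanov--Seidel faithfulness. The remaining surface-dynamics bookkeeping — excluding pseudo-Anosov components, cutting a general multitwist relation down to a single Dehn twist, and matching the curve in question with one of the $c_i$ via the structure of the free subgroup $\langle\tau_{c_i}\rangle\le\mathrm{MCG}(\Sigma)$ — and the input that spherical twists grow subexponentially are the delicate points; by contrast the reduction to $\scrD$ and the appeal to Toda's computation are routine given the results already in hand.
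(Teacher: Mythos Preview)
Your overall architecture matches the paper's: transport to $D(Y)$, land $\calE=\Upsilon(S)$ in $\scrD$, show $T_\calE$ is Torelli and hence lies in $\bZ^{\ast\infty}$, then use Proposition~\ref{prop:generalised_Khovanov_Seidel} together with Nielsen--Thurston (Proposition~\ref{prop:growth}) and the linear growth of a spherical twist to force $T_\calE$ to be, up to conjugation, a power $T_{\calO_C(-1)}^{\,l}$.

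Where you diverge is the endgame. The paper does \emph{not} try to pin down $l=1$ or invoke ``a spherical twist determines its object up to shift''. Instead it uses the non-compact Lagrangian $L_0$ (mirror to $\calO_Y$): since $T_S=\tau_{S_1}^{\,l}$ and $S_1\cap L_0=\emptyset$, one has $T_S(L_0)=L_0$; the twist triangle then gives $HF^*(S,L_0)\cong HF^{*+2n}(S,L_0)$ for all $n$, so $HF^*(S,L_0)=0$. Under $\Upsilon$ this says $R\pi_*\calE=0$, i.e.\ $\calE$ lies in the \emph{small} category $\scrC=\{F\in\scrD : R\pi_*F=0\}$, where spherical objects are already classified (there is exactly one up to shift, namely $\calO_C(-1)$). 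The power $l$ is never explicitly resolved; it becomes irrelevant once one is in $\scrC$.

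Your alternative---apply $T_\calE$ to $\calE_1$, split the twist triangle because $\Hom^{-2m_1}(\calE_1,\calE_1)=0$, and read off $\calE\cong\calE_1[k]$---can be made to work, but as written it is incomplete. The statement ``a spherical twist sends an object to a nontrivial shift of itself only if that object is a shift of its centre'' is not a general fact; what you actually need is that $\Hom^*(\calE,\calE_1)\otimes\calE\cong\calE_1\oplus\calE_1[-2m_1-1]$ and then Krull--Schmidt in $\scrD$ (which holds because $\scrD$ is $\bC$-linear, Hom-finite and idempotent-complete, but you should say so). Likewise ``a spherical twist determines its spherical object up to shift'' needs the same Krull--Schmidt argument rather than being quoted as a black box. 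Finally, your general multitwist framing is harmless here only because in $\PBr_3$ reducible elements are already powers of a \emph{single} full twist in a matching path (this is exactly what Proposition~\ref{prop:growth} asserts); a genuine multitwist would not occur. In short: your route is viable, but the ``short bookkeeping'' hides a Krull--Schmidt step and a parity check that must be spelled out, whereas the paper's detour through $L_0$ and $\scrC$ avoids all of this by appealing to a known classification.
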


We will break the proof into a number of stages.

\subsection{Twists act trivially on numerical $K$-theory}

Fix a spherical object $S \in D\cW(X)$ and consider the associated spherical twist $T_S$.  By \cite[Lemma 5.46]{Pomerleano_intrinsic} we can assume that the category $D\scrW(X)$ of perfect modules over $\scrW(X)$ has a strictly $SH^0(X)$-linear chain model. The total evaluation morphism $\Hom(S,\bullet)\otimes S \to \bullet$ is then $SH^0$-linear, and the cone  inherits an $SH^0(X)$-linear structure. This means that $T_S$ is linear as an autoequivalence.  By Theorem \ref{thm:main_categorical}, we know that $T_S$ is an element of $\Auteq_{SH^0} D\cW(X) \cong \Auteq \scrD \cong \PBr_3$. 

\begin{lemma} \label{lem:sphericals-act-trivially}
$T_S$ acts trivially on $K(\scrD) \cong K_{num}(\scrF(X))$.  
\end{lemma}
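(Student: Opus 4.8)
The plan is to exploit the fact that any spherical twist $T_S$ acts on $K(\scrD)$ by a reflection, combined with the constraints on the image of $\Auteq_{SH^0} D\cW(X)$ inside $GL(K(\scrD))$ that we have already established. First I would recall the standard formula for the action of a spherical twist on $K$-theory: for $v \in K(\scrD)$ one has
\[
[T_S](v) = v - \chi(S, v)\,[S],
\]
where $\chi(S,-)$ is the Euler pairing; in particular $[T_S]$ is the identity exactly when $\chi(S,-) \equiv 0$ on $K(\scrD)$, and otherwise it is a (possibly non-orthogonal) reflection fixing the hyperplane $\{\chi(S,-) = 0\}$ and sending $[S] \mapsto -[S]$.

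Next I would bring in the structural input. By Theorem \ref{thm:main_categorical}(i), $T_S$ lies in the image of $\Auteq_{SH^0} D\cW(X) \cong \Auteq\scrD \cong \PBr_3$; and by the discussion preceding Theorem \ref{thm:main_categorical}, the image of $\pi_0\Symp^{\gr}(X)$ — equivalently of $\Auteq\scrD$ — in $GL(K_\text{num}\scrF(X)) = GL(K(\scrD))$ lands in a single infinite cyclic subgroup $\bZ$ (the one mirror to $\otimes\scrL$, i.e. the group generated by the unipotent matrix $\left(\begin{smallmatrix} 1 & 1 \\ 0 & 1\end{smallmatrix}\right)$ in Theorem \ref{thm:Toda-input}(ii)). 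So $[T_S]$ must simultaneously be a reflection (or the identity) and an element of this torsion-free abelian group. The only reflection in a torsion-free group is forced to be trivial: if $[T_S]^2 = \id$ and $[T_S]$ has infinite order or order $1$, then $[T_S] = \id$. Hence $\chi(S,-) \equiv 0$, i.e. $T_S$ acts trivially on $K(\scrD) \cong K_{num}(\scrF(X))$.

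The main point requiring care — the step I expect to be the genuine obstacle rather than a formality — is justifying that $[T_S]$ really does land in the abelian $\bZ$ subgroup: this needs the $SH^0(X)$-linearity of $T_S$ (established in the paragraph preceding the lemma via \cite[Lemma 5.46]{Pomerleano_intrinsic} and the strictly linear chain model), so that $T_S$ corresponds under HMS to an $R$-linear autoequivalence of $D(Y)$ and its $K$-theoretic action is constrained by Theorem \ref{thm:Toda-input}(ii); and it also needs the compatibility of the mirror equivalence $\Upsilon$ with $K$-theory, identifying $K_{num}\scrF(X)$ with $K(\scrD)$ equivariantly. Once these identifications are in place, the argument is the short reflection-versus-torsion-free dichotomy above. (An alternative, more self-contained route would avoid the reflection formula entirely: since $T_S$ has a well-defined image in $\bZ \subset GL(K(\scrD))$, write that image as $\left(\begin{smallmatrix}1 & n \\ 0 & 1\end{smallmatrix}\right)$ for some $n \in \bZ$; then $[S]$, being fixed up to sign by $[T_S]$ while also $[T_S]$ is unipotent, forces $[S]$ to be an eigenvector with eigenvalue $\pm 1$, and unipotence rules out $-1$, so $[S]$ spans the fixed line, but then $\chi(S,-)$ vanishes on that line, and a direct check on the complementary generator shows $n = \chi(S, \bullet) = 0$ — I would use whichever version reads more cleanly against the paper's existing conventions.)
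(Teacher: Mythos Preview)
Your main argument contains a genuine error: the Picard--Lefschetz formula $[T_S](v) = v - \chi(S,v)[S]$ gives $[T_S]([S]) = (1 - \chi(S,S))[S]$, and for a $3$-spherical object $\chi(S,S) = 1 + (-1)^3 = 0$, so $[S]$ is \emph{fixed}, not sent to $-[S]$. In odd dimensions the spherical twist acts on $K$-theory as a transvection (unipotent), not a reflection; in particular $[T_S]^2 \neq \id$ in general, and your torsion-free dichotomy collapses. Since the $\bZ$ subgroup of $GL(K(\scrD))$ you are trying to land in is itself generated by a unipotent matrix, a nontrivial transvection is perfectly compatible with it --- there is no contradiction to extract. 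Your alternative route has the same problem: once you know $[S]$ lies on the fixed line and $\chi(S,-)$ vanishes there, nothing forces $\chi(S,[\cO_C]) = 0$, so $n$ need not be zero.

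There is also a more structural gap: your use of the formula $[T_S](v) = v - \chi(S,v)[S]$ on $K(\scrD)$ presupposes that $[S]$ defines a class there, i.e.\ that $\calE_S \in \scrD$ (equivalently $S \in \langle S_0, S_1\rangle$). This is not given --- $S$ is only assumed spherical in $D\cW(X)$, and Definition~\ref{def:spherical} does not place it in the compact category. The paper's proof addresses exactly this point first: it shows (via the tilting object and the proper-support argument from \cite{BCZ}) that $\calE_S$ has proper support, hence lies in $\scrD$. Only then does it invoke the Picard--Lefschetz formula, and it concludes immediately by observing that the Euler pairing $\chi$ on $K_{num}(\scrF(X)) = H_3(X)$ is the intersection form, which vanishes identically --- so $\chi(S,A) = 0$ for all $A$ and $[T_S] = \id$. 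No appeal to the structure of $\Auteq\scrD$ is needed at this stage. You should replace the reflection/torsion argument with this direct vanishing of the Euler form, after first establishing $\calE_S \in \scrD$.
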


\begin{proof}
Under the equivalence $\Upsilon: D\cW(X) \to D(Y)$ of Theorem \ref{thm:hms-equivalence}, $S$ corresponds to some complex $\calE_S$ of sheaves on $Y$. As in the proof of Lemma \ref{lem:mirror-to-point},  the argument in the proof of \cite[Theorem 7.2.1]{BCZ} shows that $\calE_S$ must have proper support in $Y$ (in the sense of \cite[Definition 3.8]{Huybrechts}, meaning the support is the union of the supports of the cohomology sheaves): otherwise it would have infinite rank morphisms in some degree with the tilting object in $D(Y)$, which violates the definition of being spherical.   As $\calE_S$ is also indecomposable, it must have connected support, which by the classification of proper subvarieties of $Y$ implies that the support is either $C$ or a single point. By \cite[Lemma 4.5]{Huybrechts}, the support must be $C$, and so in fact $\calE_S$ belongs to the category $ \scrD \subset D(Y)$.  It follows that $S$ belongs to the subcategory of $\cW(X)$ generated by $S_0$ and $S_1$ and in particular belongs to the compact Fukaya category $\scrF(X) \subset \cW(X)$.

The spherical twist $T_S$ acting on $\scrF(X)$  fits into an exact triangle 
\begin{equation} \label{eqn:twist-triangle}
\xymatrix{
\Hom_{\scrF}(S,\bullet)\otimes S \ar[r] &  \id \ar[r] &  T_S \ar@/^1pc/[ll]
}
\end{equation}
and hence acts on (numerical) $K$-theory by the Picard-Lefschetz type transformation
\[
A \mapsto A - \chi(S,A)[S]
\]
where $\chi(S,A) \in \bZ$ is the Euler characteristic of $\Hom_{\scrF}(S,A)$.  Since the intersection pairing on $K_{num}(\scrF(X)) = H_3(X)$ vanishes, $\chi$ vanishes identically.
\end{proof}

Lemma \ref{lem:sphericals-act-trivially} combined with 
Corollary \ref{cor:trivial-K-theory} then implies that $T_S$ lies in $\bZ^{\ast \infty} \times 2\bZ \subset (\bZ \ast \bZ) \times \bZ \cong \PBr_3$. 

\begin{remark} \label{rmk:coeff_fields}
Since we have always defined the category $\scrD$  over $\bC$, we have implicitly begun with a spherical object $S \in D\scrW(X;\bC)$ above.
On the other hand, the proof of Theorem 1.1 shows that the twist $T_S \in \Auteq_{SH^0}(D\scrW(X;\bC)) = PBr_3$ lies in a subgroup that we have identified as coming from $\pi_0\Symp_c(X)$. As $\pi_0\Symp_c(X)$ acts on $D\scrW(X;\bZ)$, we see that $T_S$ must also actually be defined over $\bZ$.  In particular, it makes sense below to consider $HF(L, T_S^k(L’);\bZ/2)$ for any objects $L, L’ \in \scrW(X;\bZ)$.
\end{remark}

\subsection{A localisation argument}\label{Subsec:localisation}

The key input  for Theorem \ref{thm:classification_sphericals} is the following analogue, in our setting, of a well-known theorem \cite{KS} due to Khovanov and Seidel, which localises the computation of Floer cohomology to the fixed locus of a finite group action.  

For matching paths $\gamma_a, \gamma_b$ in the base $\bC^*$ of the Morse-Bott-Lefschetz fibration $\pi: X\to \bC^*$, we let $I(\gamma_a, \gamma_b)$ denote the minimal (unsigned, i.e.~geometric) interior intersection number between $\gamma_a$ and $\gamma_b$, relative to the their end points. Here isotopies of paths are not allowed to cross the puncture; and `interior' means that we're not including the end points in our count.

\begin{proposition}\label{prop:generalised_Khovanov_Seidel}
Suppose $S_a, S_b \in \calS$ correspond to matching paths $\gamma_a, \gamma_b \subset \bC^*$.  
Then the Floer homology of $S_a$ and $S_b$ with coefficients in $\bZ/2$ satisfies
$$
\dim \HF(S_a, S_b;\bZ/2) = 4 + 4 I(\gamma_a, \gamma_b)
$$
\end{proposition}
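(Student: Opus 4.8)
The plan is to localise the Floer-theoretic computation from the six-dimensional $X$ to a two-dimensional surface, exploiting the Hamiltonian $T^2$-symmetry of the Morse-Bott-Lefschetz fibration $\pi\colon X \to \bC^\ast$. First I would observe that every matching sphere $S_\gamma$ in $\calS$ is built $T^2$-equivariantly: it is the union of two Morse-Bott thimbles fibred over the two halves of the matching path $\gamma$, each thimble being an $S^1$-subbundle of a $T^2$ vanishing cycle in the fibre $T^\ast T^2$. Concretely, a generic $S_\gamma$ is invariant under the $S^1 \subset T^2$ whose fixed locus meets each fibre in the standard $T^\ast S^1 \subset T^\ast T^2$; the fixed locus $X^{S^1}$ of this circle action is then itself a Morse-Bott-Lefschetz fibration over $\bC^\ast$ with fibre $T^\ast S^1 = \bC^\ast$, i.e. a copy of the two-dimensional "double pair-of-pants" surface $\Sigma$ obtained by spinning $\bC^\ast_z$ with the two critical points at $z=\pm 1$ removed to nodal fibres. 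The intersection $S_\gamma \cap X^{S^1}$ is precisely the Lagrangian circle $c_\gamma \subset \Sigma$ lying over the matching path $\gamma$; so what needs proving is a localisation isomorphism relating $\HF(S_a, S_b;\bZ/2)$ on $X$ to the Floer cohomology of the curves $c_{\gamma_a}, c_{\gamma_b}$ on $\Sigma$, together with an $S^1$-equivariant contribution accounting for the normal directions.

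The core of the argument is an equivariant/localisation theorem in the style of \cite{KS}: for the circle action $S^1 \curvearrowright X$ with fixed locus the surface $\Sigma = X^{S^1}$, and for $S^1$-invariant Lagrangians $S_a, S_b$ meeting transversally (after an equivariant Morse perturbation) with intersection contained in $\Sigma$, every holomorphic strip contributing to $\CF(S_a,S_b)$ with respect to an $S^1$-invariant almost complex structure has image in $\Sigma$; hence the differential is computed entirely inside $\Sigma$. This forces a chain-level identification of $\CF(S_a,S_b;\bZ/2)$ with $\CF(c_{\gamma_a}, c_{\gamma_b};\bZ/2) \otimes H^\ast(S^1;\bZ/2)$ where the $H^\ast(S^1)$ factor records that each transverse intersection point of $c_{\gamma_a}$ and $c_{\gamma_b}$ in $\Sigma$ lifts to an $S^1$-worth of intersection points of $S_a$ and $S_b$ in $X$, contributing a degree-$0$ and a degree-$1$ generator. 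I would establish the no-escape statement by the maximum principle / convexity argument applied to the moment map of the ambient $T^2$-action as in Lemma \ref{lem:parallel-transport}, together with an index computation showing that the relevant moduli of strips in $\Sigma$ are already cut out transversally so that no normal bubbling occurs; the $\bZ/2$-coefficient hypothesis is there precisely to sidestep orientation subtleties in this comparison.

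Granting the localisation isomorphism $\dim \HF(S_a,S_b;\bZ/2) = 2\dim \HF(c_{\gamma_a}, c_{\gamma_b};\bZ/2)$, the rest is two-dimensional surface topology. The surface $\Sigma$ is a four-punctured sphere (two punctures from $z = 0, \infty$ and two from the nodal fibres over $z = \pm 1$ — or the appropriate genus-zero surface with the matching circle conventions of Figure \ref{fig:spherical-conventions}), and $c_{\gamma_a}, c_{\gamma_b}$ are the simple closed curves supported on the matching paths $\gamma_a, \gamma_b$. On such a surface the Floer cohomology of two simple closed curves, when they are not isotopic, has rank equal to the geometric intersection number, while $\HF$ of a curve with itself has rank $2$; and one checks from the explicit picture (the pair-of-pants computation, cf. the $S_0$–$S_i$ grading computation in Section \ref{Subsec:gradings}) that each matching path $\gamma$ gives a curve $c_\gamma$ with $\dim \HF(c_\gamma, c_\gamma;\bZ/2) = 2$ and that distinct matching paths realise their minimal geometric intersection number $I(\gamma_a,\gamma_b)$, with an extra $2$ coming from the two shared endpoints which each contribute a transverse intersection of the thickened curves. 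Assembling: $\dim \HF(c_{\gamma_a},c_{\gamma_b};\bZ/2) = 2 + 2I(\gamma_a,\gamma_b)$, and doubling via localisation gives $4 + 4I(\gamma_a,\gamma_b)$, including the case $a = b$ where $I = 0$.

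The main obstacle is the localisation theorem itself: making rigorous the claim that for an $S^1$-invariant $J$ all contributing holomorphic strips lie in the fixed surface, and that this gives a genuine chain-level quasi-isomorphism rather than merely a spectral-sequence bound. This requires care with (i) transversality — one needs the invariant $J$ to be regular, which is not automatic, so I would either invoke an equivariant transversality package or perturb only within the $S^1$-invariant locus using the surface's own generic data and then argue the ambient moduli are automatically cut out transversally by a normal-direction index/linearised-operator splitting argument — and (ii) excluding strips that leave $\Sigma$, via a convexity estimate for the (fibrewise proper) moment map restricted to the complementary $S^1$-direction, exactly in the spirit of the parallel-transport argument of Lemma \ref{lem:parallel-transport}. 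Everything downstream — the surface intersection-number computation and the bookkeeping of endpoints — is elementary by comparison.
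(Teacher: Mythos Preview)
Your approach has a genuine gap at the outset: the $S^1 \subset T^2$ you invoke does not have the fixed locus you claim. The torus acts by $(e^{i\theta_1},e^{i\theta_2})\cdot(u_1,v_1,u_2,v_2,z) = (e^{i\theta_1}u_1,e^{-i\theta_1}v_1,e^{i\theta_2}u_2,e^{-i\theta_2}v_2,z)$, and each circle factor acts \emph{freely} on every smooth fibre $\bC^\ast\times\bC^\ast$; the fixed set of the first $S^1$ is the single $\bC^\ast$ sitting inside the singular fibre over $z=1$, and for a generic circle in $T^2$ the fixed set is empty. There is no $S^1\subset T^2$ whose fixed locus is a surface fibred over the whole base, so the picture of $\Sigma = X^{S^1}$ containing the matching circles $c_\gamma$ never materialises. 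Even setting this aside, the localisation mechanism you sketch---that for $S^1$-invariant $J$ every contributing strip is confined to the fixed surface---is not how continuous-group equivariance works: an invariant $J$ produces $S^1$-\emph{families} of strips, not strips trapped in the fixed set, and no moment-map convexity argument changes this.

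The paper instead uses the \emph{discrete} group $G=\bZ/2\times\bZ/2$ generated by the involutions $\iota_j$ swapping $u_j\leftrightarrow v_j$ (these commute with, but do not lie in, the $T^2$). The common fixed locus $X^{\inv}=\{p^2=z-1,\ q^2=z+1\}$ is a six-punctured sphere, a four-fold branched cover of $\bC^\ast$ over $\pm 1$; each $S_\gamma$ meets it cleanly in a circle $s_\gamma$ four-fold covering $\gamma$, and after a $G$-equivariant perturbation one arranges $S_a\cap S_b = s_a\cap s_b$ with exactly $4+4I(\gamma_a,\gamma_b)$ points (two per shared endpoint, four per interior crossing)---so the count $4+4I$ arises directly, with no tensor factor. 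For a $G$-equivariant \emph{regular} $J$, strips not contained in $X^{\inv}$ come in $G$-orbits of size $2$ or $4$ and cancel mod $2$; the existence of such a regular equivariant $J$ is the substantial technical step, handled via the stable-normal-trivialisation machinery of \cite{SS-localisation} applied iteratively to the two involutions. Finally, a separate topological argument (Lemma~\ref{lem:no-invariant-discs}) shows there are no non-constant strips inside $X^{\inv}$ at all: any such strip would project to a disc in $\bC^\ast$ containing a branch value, and the pulled-back four-fold branched cover over that disc admits no section. Hence $\HF=\CF$ on the nose with rank $4+4I$.
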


The proof of Proposition \ref{prop:generalised_Khovanov_Seidel} has two ingredients: the existence of a $\bZ/2 \times \bZ/2$ action on $X$, preserving the Lagrangians, and with the property that there are no holomorphic discs in the fixed point locus; and an equivariant transversality argument which allows us to compare Floer theory in $X$ with Floer theory in that fixed point locus. We describe the involutions and the reduction to existence of an equivariantly transversal $J$ in this section; the existence of such a $J$ will be broken down into Sections \ref{Subsec:equivariant-transversality} through \ref{sec:stable-normal-trivialisations}. 

Recall $X$ is given explicitly by $$X = \{ u_1 v_1 = z-1, \, u_2 v_2 = z+1 \} \subset \bC^4 \times \bC^\ast.$$ 

$X$ carries two commuting involutions: $\iota_1$, trading $u_1$ and $v_1$; and 
$\iota_2$, trading $u_2$ and $v_2$.  Let $G = \bZ/2\times\bZ/2$ be the group generated by the $\iota_j$.

The common fixed point locus of both $\iota_1$ and $\iota_2$, say $X^{\inv}$, has equation $\{ p^2 = z-1, \, q^2 = z+1 \} \subset \bC^2 \times \bC^{\ast}$. This is smooth, and Weinstein with the restriction of the form from $X$. (Topologically, $X^{\inv}$ is a sphere with six punctures.)

Projection $X^{\inv} \to \bC^*$ to the $z$ co-ordinate, i.e.~to the base of the Morse-Bott-Lefschetz fibration, defines 
a four-fold cover of $\bC^\ast$, branched over $\pm1$. 
Note that the branching pattern is as follows: say the sheets are labelled by $(\pm p, \pm q)$. Then above $+1$, the sheets with the same value of $q$ meet; and above $-1$, the sheets with the same value of $p$ meet.

For any matching path $\gamma$, $S_\gamma$ meets $X^{\inv}$ cleanly in an embedded $S^1$, say $s_\gamma$ (which is itself a matching sphere in $X^{\inv}$). The circle $s_{\gamma}$  is a 4-fold cover of $\gamma$ under projection to $z$.

\begin{lemma} \label{lem:intersection-points-invariant}
After a Hamiltonian isotopy of $X$, the matching spheres $S_a$ and $S_b$ meet exactly in the finite set of transverse intersection points of the curves $s_a$ and $s_b$. In particular, all intersection points lie in the common fixed locus of the involutions $\iota_1$ and $\iota_2$. 
\end{lemma}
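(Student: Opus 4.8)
The plan is to reduce the problem to a two-dimensional positioning statement in the base $\bC^*$ and then lift it, using the $S^1$-fibration structure of matching spheres over matching paths. First I would recall that for a matching path $\gamma$ the sphere $S_\gamma$ is built from a Morse-Bott thimble $S^1\times D^2$ over the half of $\gamma$ emanating from $+1$ and a thimble $D^2\times S^1$ over the half emanating from $-1$, glued along the $T^2$ vanishing cycle in the fibre over the midpoint; concretely $S_\gamma$ is an $S^1$-bundle over $\gamma$ with the circle fibre collapsing appropriately at the endpoints, and $S_\gamma$ meets $X^{\inv}$ in the circle $s_\gamma$ which is the $4$-fold cover of $\gamma$ described above. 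The key geometric observation is that $X^{\inv}$ contains all the fibrewise information needed to see the intersections: the Hamiltonian isotopy normalising $\gamma_a$ and $\gamma_b$ to minimal position can be taken to be the lift of a Hamiltonian isotopy of the base $\bC^*$ supported away from the punctures, and such base isotopies are realised by symplectic parallel transport, which by Lemma \ref{lem:parallel-transport} is globally well-defined here.

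The steps, in order, would be: (1) Put $\gamma_a$ and $\gamma_b$ in minimal position in $\bC^*\backslash\{-1,+1,0\}$ relative to endpoints, so that they meet transversely in exactly $I(\gamma_a,\gamma_b)$ interior points, with the two matching paths moreover agreeing near their common endpoints $\pm1$ in a standard model (one can normalise so that near each critical value the two paths approach along a common ray, so that the corresponding thimbles coincide near the critical fibre). (2) Realise the isotopy taking the original $\gamma_b$ to this minimal position by a compactly supported Hamiltonian isotopy of $\bC^*$ avoiding $\{-1,0,1\}$, then lift it to a Hamiltonian isotopy $\Phi_t$ of $X$ via parallel transport for $\pi: X\to\bC^*$ (this is Hamiltonian because the base isotopy is, and parallel transport preserves fibres; use that $\pi$ is equivariant for the Hamiltonian $T^2$-action so no flux issues arise). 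This moves $S_b$ to $S_{\gamma_b'}$ with $\gamma_b'$ in minimal position. (3) Observe that $\Phi_t$ commutes with $\iota_1$ and $\iota_2$ (parallel transport is built from the base geometry and the $T^2$-action, both $G$-equivariant), so it carries $s_b$ to $s_{\gamma_b'}$ inside $X^{\inv}$. (4) Finally, analyse $S_a\cap S_{\gamma_b'}$ fibrewise: over an interior transverse intersection point $p\in\gamma_a\cap\gamma_b'$, the two spheres restrict to Lagrangian $T^2$'s in the fibre $\pi^{-1}(p)\cong T^*T^2$ which are the vanishing cycles of $\gamma_a$ and $\gamma_b'$; these two $T^2$'s, after the normalisation in step (1) and a further fibrewise Hamiltonian perturbation, meet cleanly/transversely, and one computes the intersection locus lands on the circles $s_a, s_{\gamma_b'}$ — i.e. on $X^{\inv}$ — because both vanishing cycles are themselves invariant under $\iota_1,\iota_2$ acting on the fibre, and two invariant tori in $T^*T^2$ can be brought to intersect only along the common fixed locus (the fixed point set of $G$ acting on each fibre is exactly the $4$ points where $s_\gamma$ sits). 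Near the endpoints $\pm1$ the spheres coincide locally by construction so contribute no extra intersections. Counting: each of the $I(\gamma_a,\gamma_b)$ interior crossings and appropriate endpoint data will be bookkept in the dimension formula of Proposition \ref{prop:generalised_Khovanov_Seidel}, but for the present lemma we only need the \emph{location} of the intersection points, not their number.

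The main obstacle I expect is step (4): controlling the fibrewise intersection so that it is genuinely transverse and genuinely contained in the fixed locus, rather than merely "cleanly along $X^{\inv}$ up to perturbation". The subtlety is that a small Hamiltonian perturbation needed to make $S_a$ and $S_b$ transverse in $X$ (not just in $X^{\inv}$) could a priori push intersection points off the fixed locus; one must choose the perturbation $G$-equivariantly, using that $X^{\inv}$ is itself a symplectic (Weinstein) submanifold with the restricted form, so that an equivariant Hamiltonian isotopy realising minimal position of $s_a, s_b$ inside $X^{\inv}$ extends to a $G$-equivariant Hamiltonian isotopy of $X$ (e.g. by a tubular neighbourhood / Weinstein neighbourhood argument for $X^{\inv}\subset X$, averaging over $G$). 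Granting such an equivariant perturbation, transversality of $S_a\cap S_b$ in $X$ follows from transversality of $s_a\cap s_b$ in $X^{\inv}$ together with transversality in the normal directions (which holds because the two vanishing-cycle tori meet the fixed locus transversely within the fibre), and then the intersection points are forced into $\Fix(G) = X^{\inv}$, giving the lemma. This is precisely the kind of equivariant positioning that Sections \ref{Subsec:equivariant-transversality} onward are set up to handle, so I would defer the delicate transversality bookkeeping to that machinery and phrase the proof here as: normalise in the base, lift equivariantly, reduce to the fibrewise statement about invariant tori in $T^*T^2$.
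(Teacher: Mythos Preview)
Your steps (1)--(3) are reasonable preliminaries, and you correctly identify step (4) as the crux. But the resolution you sketch for (4) does not quite work, and it is here that your argument diverges from the paper.

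First, a small but telling error: near the endpoints $\pm 1$ the spheres do \emph{not} contribute ``no extra intersections''. If the paths (and hence thimbles) agree near a critical value, the spheres $S_a, S_b$ coincide on an open set; even after the paper's more careful set-up, $S_a$ and $S_b$ meet \emph{cleanly} in an $S^1$ over each endpoint, and this $S^1$ contributes $2$ transverse points after perturbation (hence the ``$4$'' in $4+4I(\gamma_a,\gamma_b)$). Over each interior crossing the clean intersection is a $T^2$, contributing $4$ points. So the starting picture is clean intersection along circles and tori, not transverse intersection.

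Second, your proposed fix --- extend an equivariant Hamiltonian isotopy from $X^{\inv}$ to $X$ --- does not address the actual problem. The curves $s_a, s_b$ are \emph{already} transverse in $X^{\inv}$ once the paths are; there is no isotopy inside $X^{\inv}$ to extend. What is needed is a perturbation of $S_a$ in $X$ that breaks each clean component $B$ (an $S^1$ or $T^2$) down to its $G$-fixed points, and for this the later Sections \ref{Subsec:equivariant-transversality}ff.\ are of no help: that machinery concerns regularity of almost complex structures for Floer discs, not the positioning of Lagrangians.

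The paper's proof is shorter and more direct: observe that $S_a \cap S_b$ is clean, invoke the symplectic neighbourhood theorem for clean intersections (so that near each component $B$ the pair is modelled on $(T^*S_a, \text{zero-section}, \nu_B^*)$ with $\nu_B^*$ the conormal bundle), and then adapt the equivariant perturbation argument of \cite[Proposition~5.15]{KS} --- originally for a single involution --- to the $(\iota_1,\iota_2)$ setting. In that local model one writes down a $G$-equivariant Morse-type Hamiltonian on $B$ whose graph-of-$df$ perturbation of the conormal achieves transversality, with critical points exactly at $B^G$. This is the missing idea in your step (4).
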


\begin{proof}
Assume that two matching paths $\gamma_a$ and $\gamma_b$ intersect transversally in $k$ points apart from the end points. Then by construction, the
 $S_a$ and $S_b$ intersect cleanly in two $S^1$s (above each end point) and $k$ copies of $T^2$ (above each interior intersection point); and their restrictions to $X^{\inv}$, i.e.~the 
 matching spheres $s_a$ and $s_b$ in $X^{\inv}$, intersect transversally in $4+4k$ points (with the end points contributing two each, and each `interior' intersection point contributing 4).  
  
There is a symplectic neighbourhood theorem for cleanly intersecting Lagrangian submanifolds, saying that near a component $B$ of the clean intersection locus they are modelled by the conormal bundle of $B$ inside the cotangent bundle of either one. Starting from this, and following the proof of \cite[Proposition 5.15]{KS}  (which treats the case of a single involution),  there is an $(\iota_1, \iota_2)$-equivariant small Hamiltonian perturbation of $S_a$, fixing the invariant locus, so that $S_a$ and $S_b$ intersect transversally in exactly the intersection points of $s_a \pitchfork s_b$. \end{proof}

Following \cite{FLP, KS}, we say that matching paths $\gamma_a$ and $\gamma_b$ intersect \emph{minimally} if they intersect transversally, and the following holds: assume we are given any two points $z_{-} \neq z_{+}$ in $\gamma_a \cap \gamma_b$ which are not both endpoints of the matching paths, and two arcs $\alpha \subset \gamma_a$, $\beta \subset \gamma_b$ ending on $z_{-}, z_{+}$  such that those arcs have no other intersection points. Then if $K$ is the connected component of $\bC \backslash (\gamma_a \cup \gamma_b)$ bounded by $\alpha \cup \beta$, and $K$ is topologically an open disc, then it must contain the puncture $\{ 0 \}$.

Using \cite[Expos\'e III]{FLP}, we can assume that after isotopy rel end points, the matching paths $\gamma_a$ and $\gamma_b$ intersect minimally, in $I(\gamma_a, \gamma_b)$ points. Any isotopy of a matching path $\gamma$ in the base (rel endpoints) is covered by a Hamiltonian isotopy of $S_\gamma$ in  $X$ and a Hamiltonian isotopy of $s_\gamma$ in $X^{\inv}$. Thus after Hamiltonian isotopy in $X$, we may assume $S_a$ and $S_b$ intersect transversally in $4+4I(\gamma_a, \gamma_b)$ points, all fixed by both $\iota_j$.

\begin{lemma} \label{lem:condition-S2-satisfied}
There is no continuous map $u: [0,1]^2 \to X^{\inv}$ with the following properties: $u (0,t) = x_{-}$, $u(1,t) = x_{+}$ for all $t$, where $x_{-} \neq x_{+}$ are points of $S_a \cap S_b$, and $u(s,0) \in S_a, \,  u(s,1) \in S_b$ for all $s$. 
\end{lemma}

In the terminology of Khovanov--Seidel, this is saying that $S_a$ and $S_b$ satisfy Condition (S2) of \cite[Lemma 5.14]{KS}: there are no bigons between $S_a$ and $S_b$. 

\begin{proof} 
By \cite[Lemma 3.1]{KS} (see also \cite[Expos\'e III]{FLP}), this is equivalent to saying that $S_a$ and $S_b$ intersect minimally on $X^{\inv}$. In other words, the claim is an analogue in our setting of \cite[Lemma 3.8]{KS}. We follow their proof closely. Using the same notation, let $U \subset \bC$ be a thickening of $\gamma_a \cup \gamma_b$ such that $\bC \backslash U$ is a surface with corners, and let $K$ be a compact connected component of $\bC \backslash U$. Then we have exactly the same case analysis as in the proof of \cite[Lemma 3.8]{KS}: at least one of the following must be true:
\begin{enumerate}
\item $K$ is not contractible;

\item $K$ has at least three corners;

\item $K$ has two corners and contains $\{ 0 \}$. 

\end{enumerate}
If $(1)$ holds, then $\pi^{-1}(K) \cap X^{\inv}$ may have multiple connected components, but none of them can be contractible, considering fundamental groups of covers. If $(2)$ holds, each connected component of $\pi^{-1}(K) \cap X^{\inv}$ has at least three corners (and so is not a bigon). Finally, if $(3)$ holds, as $K$ contains the puncture $\{ 0 \} \subset \bC$, $\pi^{-1}(K) \cap X^{\inv}$ must contain the preimages of the puncture, and so again is not a bigon. All in all, this shows that no connected component of $X^{\inv} \backslash (\pi^{-1} (U) \cap X^{\inv})$ can be a bigon. 
\end{proof}

\begin{lemma} \label{lem:equivariant-transversality-suffices}
Suppose that there is a compatible $J = \{J_t\}_{t \in [0,1]}$ on $X$ with the properties that 
\begin{enumerate}
\item $J$ is $G$-equivariant;
\item the moduli spaces of $J$-holomorphic curves $u: \bR\times [0,1] \to X$ with boundary conditions on $S_a$ and $S_b$ are regular. 
\end{enumerate}
Then 
\[
\HF(S_a,S_b;\bZ/2) \cong \HF(s_a,s_b;\bZ/2)
\]
\end{lemma}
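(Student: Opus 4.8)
The plan is to leverage the equivariant $J$ furnished by hypotheses (1)--(2) to identify the Floer complexes $\CF(S_a,S_b;\bZ/2)$ and $\CF(s_a,s_b;\bZ/2)$ chain-isomorphically. The generators already agree by Lemma \ref{lem:intersection-points-invariant}: after the Hamiltonian isotopy, all intersection points of $S_a$ and $S_b$ lie in $X^{\inv} = \Fix(\iota_1)\cap\Fix(\iota_2)$, and these are exactly the transverse intersection points of $s_a$ and $s_b$ in $X^{\inv}$. So there is a canonical bijection on generators, and it remains to match the differentials.

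First I would recall that for a $G$-equivariant almost complex structure, the fixed locus $X^{\inv}$ is an almost complex (indeed, since $G$ acts by biholomorphisms fixing the symplectic form, symplectic) submanifold, and any $J$-holomorphic strip $u:\bR\times[0,1]\to X$ with both endpoints on $X^{\inv}$ and boundary on $S_a\cup S_b$ has the property that $g\circ u$ is again such a strip for each $g\in G$, with the same asymptotics. The strips contributing to the differential are the rigid ones (index one modulo translation); by regularity (hypothesis (2)) these form a compact zero-dimensional manifold, so the $G$-action on this finite set has well-defined orbits. The key claim is that every rigid strip is in fact $G$-invariant, i.e. its image lies entirely in $X^{\inv}$. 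Granting this, such a strip is a $J|_{X^{\inv}}$-holomorphic strip in $X^{\inv}$ with boundary on $s_a$, $s_b$; conversely every rigid strip in $X^{\inv}$ includes into $X$ as a $J$-holomorphic strip, and it is automatically regular there — this is where one invokes the stable normal triviality / equivariant index-splitting input that the paper defers to Sections \ref{Subsec:equivariant-transversality}--\ref{sec:stable-normal-trivialisations}, namely that the normal deformation operator to $X^{\inv}$ has no cokernel so that $\operatorname{ind}_X = \operatorname{ind}_{X^{\inv}}$ for invariant strips. Thus the count of rigid strips in $X$ with $\bZ/2$-coefficients equals the count in $X^{\inv}$, and the two differentials coincide, giving the asserted isomorphism $\HF(S_a,S_b;\bZ/2)\cong\HF(s_a,s_b;\bZ/2)$.

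The main obstacle — and the step I expect to require the most care — is establishing that every \emph{rigid} $J$-holomorphic strip is $G$-invariant, rather than merely that $G$ permutes the rigid strips. The mechanism is that a non-invariant rigid strip $u$ would come in a $G$-orbit of size $2$ or $4$, all of the same index; but one argues via the (equivariant) regularity together with a Lagrangian boundary analogue of the "no holomorphic discs in the fixed locus forces all low-index curves into the fixed locus" phenomenon. Concretely, following the strategy of \cite[Section 5]{KS}, one uses that the normal bundle $N_{X^{\inv}/X}$ splits as a sum of $G$-eigenbundles on which $G$ acts by $-1$, and that a strip which is not contained in $X^{\inv}$ must have strictly positive normal contribution to its index; by the index splitting this is incompatible with the strip being rigid (index one). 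One also must check compatibility of the $\bZ/2$-grading data and of the boundary conditions: that the gradings on $S_a, S_b$ restrict to the chosen data on $s_a, s_b$ so that the identification of generators is grading-preserving — here I note that the lemma is only claimed over $\bZ/2$ and (in Proposition \ref{prop:generalised_Khovanov_Seidel}) only as a dimension count, so one does not need to track signs, only the mod-$2$ grading, which is pinned down by the intersection-point normal form of Lemma \ref{lem:intersection-points-invariant}. The orientation/sign subtleties that would arise over $\bZ$ are exactly what the hypothesis "$\bZ/2$-coefficients" lets us sidestep.
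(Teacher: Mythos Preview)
Your central claim --- that every rigid $J$-holomorphic strip is $G$-invariant --- is both unnecessary and not justified by the argument you sketch. The Fredholm index of a Floer strip is determined by its asymptotic intersection points, all of which lie in $X^{\inv}$; hence invariant and non-invariant strips between the same pair of endpoints have the \emph{same} index. A strip not contained in $X^{\inv}$ has no well-defined ``normal index contribution'' along its interior, and there is no mechanism forcing any positivity. (Khovanov--Seidel \cite{KS} do not argue this way either: their localisation argument, like the paper's, relies on cancellation rather than non-existence.) In the geometry at hand one should in fact expect non-invariant rigid strips to exist.

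The paper's proof is the simple one you almost wrote down and then abandoned. A non-invariant rigid strip $u$ lies in a $G$-orbit $\{u,\, \iota_1 u,\, \iota_2 u,\, \iota_1\iota_2 u\}$ of size $2$ or $4$, and hypothesis (2) ensures every orbit member is regular and hence genuinely counted; over $\bZ/2$ these contributions cancel. What remains of the differential on $\CF(S_a,S_b;\bZ/2)$ is the count of invariant rigid strips, which are exactly the $J|_{X^{\inv}}$-holomorphic strips in $X^{\inv}$ with boundary on $s_a, s_b$, i.e.\ the differential on $\CF(s_a,s_b;\bZ/2)$. No stable normal triviality is invoked inside this lemma: that machinery appears in Sections \ref{Subsec:equivariant-transversality}--\ref{sec:stable-normal-trivialisations} to \emph{construct} a $J$ satisfying hypotheses (1) and (2), not to compare indices once such a $J$ is in hand.
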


\begin{proof}
We have arranged that all intersection points lie in the fixed locus, so the underlying Floer cochain groups agree. 

Now suppose that $u: D \to X$ is a Floer holomorphic disc contributing to the differential and that $\Im (u) \nsubseteq X^{\inv}$. Then notice that each of $u, \iota_1 \circ u, \iota_2 \circ u$ and $ \iota_1 \circ \iota_2 \circ u$ give holomorphic discs; moreover, these are either a pair of distinct holomorphic discs (e.g.~if $\Im (u) \subset \Fix (\iota_1)$ but $\Im (u) \nsubseteq \Fix (\iota_2)$) or a quadruple (if $\Im (u) \nsubseteq \Fix (\iota_1)$ and $\Im (u) \nsubseteq \Fix (\iota_2)$). Equivariant transversality means that all these discs are regular, so as we are working with $\bZ/2$ coefficients, the contribution to the differential is zero.  

It follows that the differential on $\CF(S_a,S_b;\bZ/2)$ co-incides with that on $\CF(s_a,s_b;\bZ/2)$.  
\end{proof}

For computing Floer cohomology of curves on a surface, one can achieve regularity with a time-independent almost complex structure. The computation of $\HF(s_a,s_b;\bZ/2)$ is then 
completed by the following immediate consequence of Lemma \ref{lem:condition-S2-satisfied}. 

\begin{lemma}\label{lem:no-invariant-discs}
There are no non-constant holomorphic discs with boundary on $s_a$ and  $s_b$ contained wholly in the fixed locus $X^{\inv}$. 
\end{lemma}

Lemma \ref{lem:no-invariant-discs} in turn readily implies that for our choices,
\[
\HF(s_a,s_b;\bZ/2) \cong \CF(s_a,s_b;\bZ/2)
\]
which has rank $4+4I(\gamma_a, \gamma_b)$.  Subject to achieving the hypotheses of Lemma \ref{lem:equivariant-transversality-suffices}, this completes the proof of Proposition \ref{prop:generalised_Khovanov_Seidel}.

\subsection{Equivariant transversality}\label{Subsec:equivariant-transversality}

Let $(M,\omega)$ be an exact symplectic manifold with a symplectic involution $\iota: M \to M$ which setwise preserves a pair of exact Lagrangian submanifolds $L_0, L_1 \subset M$. We write $M^{\iota}$ for the fixed locus, and $L_j^{\iota}$ for the fixed set of $\iota|_{L_j}$, which is Lagrangian in $M^{\iota}$. We suppose that the $L_i$ intersect transversally, which implies that the $L_i^{\iota}$ intersect transversally  in the fixed point locus. Let $n=\dim_{\bC}(M)$, $n_{\iota} = \dim_{\bC}(M^{\iota})$ and let $n_{\anti} = n - n_{\iota}$ be the complex codimension of the fixed locus. The normal bundle $\nu_{M^{\iota}} \to M^{\iota}$ is a complex bundle of rank $n_{\anti}$;   this is the $(-1)$-eigenspace of the action of $\iota$ on $T_pM$ for $p\in M^{\iota}$.

We consider $\iota$-equivariant paths of compatible almost complex structures $J = \{J_t\}_{t\in [0,1]}$ on $M$. Khovanov and Seidel \cite[Proposition 5.13]{KS} explain how to choose $J_t$ so as to achieve regularity for those curves contributing to $\CF(L_0,L_1)$ which are not contained wholly in the fixed locus $M^{\iota}$ of the involution.  

Unfortunately, their work does not immediately apply to our setting: whilst Lemma \ref{lem:no-invariant-discs} implies that, for suitable data, we have no Floer solutions lying in $X^{\inv} \subset X$ for topological reasons, there may be curves which lie in the fixed loci $X^{\iota_j}$ of either one of the two involutions.  In general the existence of such curves can obstruct the existence of any $\iota_j$-equivariant regular $J$, and hence \emph{a fortiori} of a $G$-equivariant regular $J$.  More precisely, if $x_+, x_-$ are $\iota_j$-invariant intersections of $S_a$ and $S_b$ for which the difference in their Maslov indices, computed in the fixed locus $X^{\iota_j}$, is greater than the difference of the indices computed in the total space, it is impossible to achieve equivariant transversality (because the dimension of the space of solutions in the fixed set is greater than the dimension in the total space). 

A sufficient criterion for vanishing of this obstruction was given in \cite[Section 3.5]{SS-localisation}. 
 
 \begin{definition} \label{defn:stable-normal}
 A {\em stable normal trivialization} consists of two pieces of data:
\begin{enumerate}
 \item  a stable trivialization $\phi:  \nu_{M^{\iota}}\times \bC^r \rightarrow M^{\iota} \times \bC^{n_{\anti}+r}$ of the normal bundle to the fixed point set, which is unitary (respects both the complex and symplectic structures); 
\item   homotopies $h_k: [0;1] \times L_k^{\inv} \rightarrow
Gr_{\mathrm{Lag}}(n_{\anti}+r)$, for $k=0,1$, such that (i) $h_0(0,\cdot) = \phi_*(\nu_{L_0^{\inv}} \times \bR^r)$
and  $h_0(1,\cdot) = \bR^{n_{\anti}+r}$, (ii) $h_1(0,\cdot) = \phi_*(\nu_{L_1^{\inv}} \times i\bR^r)$
and  $h_1(1,\cdot) = i\bR^{n_{\anti}+r}$. 
\end{enumerate}
\end{definition}

Let  $\hat{M}=M\times \bC^r$ with $\hat{\iota}=\iota\times (-1)$, then $\hat{M}^{\iota} = M^{\iota}$, and if we set $\hat{L}_0 = L_0\times \bR^r$ and $\hat{L}_1 = L_1\times i\bR^r$, then $\hat{L}_i^{\iota} = L_i^{\iota}$. We fix the convention that the linear Lagrangian subspaces $\bR$ and $i\bR$ in $\bC$ are graded so the rank one Floer cohomology $HF(\bR,i\bR)\cong \bZ$ is concentrated in degree zero.  There is a canonical isomorphism $\CF(L_0,L_1) \cong \CF(L_0\times \bR^r, L_1\times i\bR^r)$ of Floer complexes, where the first complex lives inside $M$ and the second inside $\hat{M}$ equipped with a product almost complex structure.

\begin{lemma} \label{lem:reduce-to-product}
Suppose $(M,L_0,L_1)$ admits a stable normal trivialization $\Upsilon = (\phi, h_0, h_1)$. There is a Hamiltonian symplectomorphism $\Phi=\Phi_{\Upsilon}: \hat{M}\rightarrow \hat{M}$ satisfying (i) $\Phi|_{M^{\iota}} = \id$; (ii) $\nu_{\Phi(\hat{L}_0)} = \bR^{n_{\anti}+r}$; (iii) $\nu_{\Phi(\hat{L}_1)} = i\bR^{n_{\anti}+r}$.
\end{lemma}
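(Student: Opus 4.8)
The plan is to construct $\Phi$ in two stages, first using the stable trivialization $\phi$ to straighten the normal bundle of the fixed locus, and then using the homotopies $h_0,h_1$ to straighten the Lagrangians relative to that frame. First I would work in a tubular (Weinstein) neighbourhood of $M^\iota$ inside $\hat M$: by the equivariant symplectic neighbourhood theorem, such a neighbourhood is $\hat\iota$-equivariantly symplectomorphic to a neighbourhood of the zero section in the total space of the complex normal bundle $\nu_{M^\iota}$ (of rank $n_{\anti}+r$ after stabilization), with $\hat\iota$ acting by $-1$ on the fibres. The unitary stable trivialization $\phi$ identifies this, fibrewise and symplectically, with $M^\iota \times \bC^{n_{\anti}+r}$; choosing a compactly supported Hamiltonian isotopy that realizes this identification near the zero section and is the identity on $M^\iota$, we may assume the normal bundle of the fixed locus is literally the trivial symplectic bundle, and in particular that $\hat L_0$ and $\hat L_1$ have, near their fixed loci, normal data given by the subbundles $\phi_*(\nu_{L_0^{\inv}}\times\bR^r)$ and $\phi_*(\nu_{L_1^{\inv}}\times i\bR^r)$ of the trivial $\mathrm{Gr}_{\mathrm{Lag}}(n_{\anti}+r)$-bundle over $L_j^{\inv}$.

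Next I would use the homotopies $h_k$. Each $h_k$ is a path in $\mathrm{Gr}_{\mathrm{Lag}}(n_{\anti}+r)$ from the normal Gauss map of $\hat L_k$ along $L_k^{\inv}$ to the constant map $\bR^{n_{\anti}+r}$ (resp.\ $i\bR^{n_{\anti}+r}$). Since $L_0^{\inv}$ and $L_1^{\inv}$ intersect transversally inside $M^\iota$ in a finite set, and $\bR^{n_{\anti}+r}$ and $i\bR^{n_{\anti}+r}$ are transverse, these homotopies are compatible at the intersection points (both being the constant transverse pair there), so one can interpolate simultaneously. Concretely, working in the trivialized normal neighbourhood, a path of Lagrangian subspaces over $L_k^{\inv}$ can be realized by a fibrewise-linear Hamiltonian flow (a path in $U(n_{\anti}+r)$ covering the given path in $U/O$), which I then cut off to a compactly supported Hamiltonian isotopy of $\hat M$ that is the identity on $M^\iota = \hat M^\iota$ and rotates the normal plane field of $\hat L_k$ into $\bR^{n_{\anti}+r}$ (resp.\ $i\bR^{n_{\anti}+r}$). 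Performing this for $k=0$ and $k=1$ — the supports can be taken disjoint near the fixed loci away from the common intersection points, and agree (do nothing) at the intersection points — and composing all the Hamiltonian diffeomorphisms gives $\Phi$ with the three required properties.

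The main obstacle is the bookkeeping at the transverse intersection points $L_0^{\inv}\cap L_1^{\inv}$: one has to check that the two straightening isotopies (for $\hat L_0$ and for $\hat L_1$) can be chosen to be compatible there, i.e.\ that they do not fight each other, and that the grading conventions fixed in the statement (so that $HF(\bR,i\bR)$ sits in degree zero) are respected, so that the resulting identification $\CF(L_0,L_1)\cong\CF(\Phi(\hat L_0),\Phi(\hat L_1))$ is the canonical one. This is exactly the content of \cite[Section 3.5]{SS-localisation}, and I would follow that argument essentially verbatim, the only new feature being that here two commuting involutions are in play; but for this lemma only one involution $\iota$ is relevant at a time, so no genuinely new input is required beyond organizing the supports of the isotopies to be $G$-invariant (which is automatic since everything is built fibrewise in the $\hat\iota$-equivariant normal model).
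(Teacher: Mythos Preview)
Your approach is correct and matches the paper's: the paper's own proof is simply a citation to the discussion after \cite[Definition 19]{SS-localisation}, and your sketch unpacks essentially that argument (equivariant Weinstein neighbourhood, then straighten the Lagrangian normal data via the homotopies $h_k$) before deferring to the same reference. One small slip in your compatibility discussion: at intersection points the isotopies cannot ``do nothing'' (the source data $h_k(0,p)$ are a transverse pair but not yet the standard one $(\bR^{n_{\anti}+r}, i\bR^{n_{\anti}+r})$), so the actual point is that the pair of homotopies $(h_0,h_1)$ can be covered there by a single path in $U(n_{\anti}+r)$ --- but you rightly defer this to the cited reference anyway.
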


\begin{proof}
This is proved in the discussion after \cite[Definition 19]{SS-localisation}. 
\end{proof}

We have the Floer equation
\begin{equation} \label{eqn:Floer}
\begin{aligned}
 & \partial_s u + J_t(u) \big(\partial_t u - X_t(u)\big) = 0, \\
 & u(s,0) \in L_0, \;\; u(s,1) \in L_1.
\end{aligned}
\end{equation}
where $X_t$ denotes the time-dependent Hamiltonian flow of $H_t$. The next definition \ref{def:constrained-perturbation} and subsequent Lemma \ref{lem:generic-regular} are taken from \cite[Section 14c]{Seidel-FCPLT}. 

\begin{definition} \label{def:constrained-perturbation}
Suppose $(M,L_0,L_1)$ have stably trivial normal structure, and fix a stable normal trivialisation $\Upsilon$ and Hamiltonian symplectomorphism $\Phi=\Phi_{\Upsilon}$ as above.  We define \emph{$\Upsilon$-constrained Floer perturbation data}  for $(\Phi(\hat{L}_0), \Phi(\hat{L}_1))$  to comprise $(\{\hat{H}_t\}, \{\hat{J}_t\})_{0\leq t\leq 1}$, where 
\begin{enumerate} 
\item $\hat{H}_t$, $\hat{J}_t$ are $\hat{\iota}$-invariant; 
\item for some extension of the unitary trivialisation $\phi$ to a map $\bar\phi:\bC^r\times U(M^{\iota}) \rightarrow \bC^{n_{\anti}+r}$, the derivatives $\cdbar_{\hat{J}_t} \bar\phi$ vanish to first order along $M^{\iota}$; 
\item for some $h\in (0,\pi/2)$, the flow $X_t$ of $\hat{H}_t$ acts by rotation by $e^{iht}$ on $\nu_{L^{\iota}_0}$.
\end{enumerate}
\end{definition}

\begin{lemma} \label{lem:generic-regular}
$\Upsilon$-constrained Floer perturbation data are generically regular for all solutions $u:\bR\times[0,1] \rightarrow M^{\iota}$ to the equations \eqref{eqn:Floer} with image inside $M^{\iota}$.
\end{lemma}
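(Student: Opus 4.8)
The plan is to reduce \textbf{Lemma \ref{lem:generic-regular}} to a standard application of the Sard--Smale theorem applied to a universal moduli space of solutions constrained to lie in $M^{\iota}$, where the admissible perturbations are the $\Upsilon$-constrained Floer data of Definition \ref{def:constrained-perturbation}. The key point is that conditions (1)--(3) in that definition are exactly engineered so that the $\hat\iota$-invariant, $\bar\phi$-compatible data still form a space large enough to achieve transversality \emph{for curves whose image is entirely in the fixed locus $M^{\iota}$}, without ever having to perturb in normal directions. Concretely, a solution $u$ with image in $M^{\iota}$ only sees the restriction $(\hat{H}_t|_{M^{\iota}}, \hat{J}_t|_{M^{\iota}})$, and by Lemma \ref{lem:reduce-to-product} we have arranged that $\Phi(\hat L_0), \Phi(\hat L_1)$ meet $M^{\iota}$ in honest transverse Lagrangians with \emph{product} normal structure $\bR^{n_\anti + r}$, resp.\ $i\bR^{n_\anti+r}$; so there is no normal obstruction and the restricted data $(\hat H_t|_{M^\iota}, \hat J_t|_{M^\iota})$ range over \emph{all} Floer perturbation data on $M^\iota$ (the constraints (2)--(3) only pin down the $\infty$-jet transverse to $M^{\iota}$, which is irrelevant for curves inside $M^{\iota}$).

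First I would set up the universal moduli space $\mathcal{M}^{univ}(x_-,x_+)$ of pairs $(u, (\hat H, \hat J))$ where $(\hat H,\hat J)$ is $\Upsilon$-constrained, $u:\bR\times[0,1]\to M^\iota$ solves \eqref{eqn:Floer} for the restricted data with asymptotics $x_\pm$ at the two intersection points, working in a suitable Banach manifold / $C^{\varepsilon}_{loc}$ (Floer $\varepsilon$) completion as in \cite[Section 9]{Seidel-FCPLT}. Second, I would show the linearisation of the section whose zero set cuts out $\mathcal{M}^{univ}$ is surjective at every solution: this is the standard argument that the linearised Cauchy--Riemann operator $D_u$ has dense image after adding variations of $\hat J$ supported near a point of $M^\iota$ where $u$ is injective (such a point exists because $u$ is a non-constant somewhere-injective map in $M^\iota$, or the asymptotics already force regularity if $u$ is constant). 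The crucial compatibility check is that a variation $\delta\hat J$ of the restricted almost complex structure on $M^\iota$ can always be extended to a variation of the full $\Upsilon$-constrained data on $\hat M$: this follows because conditions (2) and (3) constrain only the transverse $\infty$-jet and the normal Hamiltonian rotation, while we are perturbing tangentially to $M^\iota$. Third, I would apply Sard--Smale to the projection $\mathcal{M}^{univ}\to \{\Upsilon\text{-constrained data}\}$ to conclude that a comeagre set of $\Upsilon$-constrained perturbation data is regular for all solutions contained in $M^\iota$, noting the usual index/compactness bookkeeping so that only finitely many moduli spaces (those of index $\le 1$, say) need be made regular.

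The main obstacle I anticipate is the verification that the space of $\Upsilon$-constrained perturbation data, restricted to $M^{\iota}$, is genuinely \emph{unconstrained} --- i.e.\ that imposing (2) (vanishing of $\cdbar_{\hat J_t}\bar\phi$ to first order along $M^\iota$) and (3) (normal rotation by $e^{iht}$) does not secretly restrict the jet of $\hat J_t$ \emph{along} $M^\iota$ in a way that kills the transversality-achieving perturbations. This is a jet-separation argument: one must check that the $1$-jet of $\hat J_t$ normal to $M^\iota$ (which is what (2) controls) can be prescribed independently of the value and tangential derivatives of $\hat J_t|_{M^\iota}$, and that the Hamiltonian rotation condition (3) only pins down $\hat H_t$ on $\nu_{L_0^\iota}$, leaving its restriction to $M^\iota$ free. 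Both facts are essentially linear-algebraic (splitting $T(\hat M)|_{M^\iota} = T M^\iota \oplus \nu_{M^\iota}$ and noting the constraints live in the second summand), so I expect this to be routine but slightly delicate to phrase; this is precisely the content of \cite[Section 14c]{Seidel-FCPLT}, which I would cite for the details rather than reproduce, and \cite[Section 3.5]{SS-localisation} for the fact that the ambient transversality (for curves \emph{not} in any fixed locus) is unaffected. The remaining steps --- exponential decay, Fredholm theory for $D_u$, elliptic regularity for the universal operator --- are entirely standard.
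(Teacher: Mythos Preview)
Your argument addresses the wrong half of the problem. The lemma asserts regularity of invariant curves $u$ \emph{in the total space} $\hat M$, not merely in $M^\iota$: the linearisation $D_u$ at such a curve splits as $D_u^{\inv} \oplus D_u^{\anti}$ acting on invariant and anti-invariant sections of $u^*T\hat M$, and both summands must be surjective. Your Sard--Smale argument, which varies the restriction $(\hat H_t|_{M^\iota}, \hat J_t|_{M^\iota})$, handles only $D_u^{\inv}$ (regularity within $M^\iota$), and this is the routine part. Your only treatment of $D_u^{\anti}$ is the bare assertion ``there is no normal obstruction'', justified by the Lagrangians having product normal structure $\bR^{n_\anti+r}$, $i\bR^{n_\anti+r}$. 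Those boundary conditions alone do not force $D_u^{\anti}$ to be surjective: the operator depends on the normal jet of $(\hat J_t, \hat H_t)$ along $M^\iota$ and could in principle have cokernel. No amount of tangential perturbation touches this, since $D_u^{\anti}$ is insensitive to variations of data along $M^\iota$.

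This is precisely where conditions (2) and (3) of Definition~\ref{def:constrained-perturbation} enter, and you have their role backwards. You treat them as nuisance constraints that fortunately leave the tangential data free; in fact they are the mechanism that kills the normal obstruction. Condition (2) forces the normal almost complex structure to agree with the standard one on $\bC^{n_\anti+r}$ to first order, and condition (3) makes the normal Hamiltonian flow a rotation by $e^{iht}$. Together (this is \cite[Lemma 14.6]{Seidel-FCPLT}) they put $D_u^{\anti}$ into the explicit form $\partial_s + i\partial_t + h$ with the standard boundary conditions, globally over the strip; a direct Fourier-coefficient computation (\cite[Lemma 11.5]{Seidel-FCPLT}) then shows this operator is injective. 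Only after this does regularity in $\hat M$ reduce to regularity in $M^\iota$, whereupon your Sard--Smale argument finishes the job. The jet-separation issue you flag as the ``main obstacle'' is a secondary check that tangential perturbations remain available; the actual content of the lemma is the explicit computation of $D_u^{\anti}$.
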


\begin{proof}
We briefly review the discussion from \cite[Section 14c]{Seidel-FCPLT}.  Suppose $u$ satisfies \eqref{eqn:Floer} for $\Upsilon$-constrained perturbation data.   Using the first order information on $\hat{J}_t$ and $\hat{H}_t$ along $M^{\iota}$, \cite[Lemma 14.6]{Seidel-FCPLT} shows that, globally over $\bR\times[0,1]$, the anti-invariant part $D_{u,\hat{J}_t}^{\anti}$ of the linearisation can be written explicitly as an operator $\partial_s + i\partial_t + h$.  By a direct consideration of Fourier coefficients, \cite[Lemma 11.5]{Seidel-FCPLT} shows  that this is injective.  It follows that invariant curves $u$ are regular in the total space if and only if they are regular in the fixed point set.  To prove Lemma \ref{lem:generic-regular} it then suffices to show that the class of $\Upsilon$-constrained data is sufficiently large.  The $\hat{H}_t$ may be prescribed arbitrarily both on $M^{\iota}$ and outside a small open neighbourhood of $M^{\iota}$, provided that each is $\hat{\iota}$-invariant.  In a unitary splitting of $T\hat{M}|_{M^{\iota}}$, the off-diagonal parts  of the almost complex structures $\hat{J}_t$ vanish and the assumptions constrain the component tangential to $M^{\iota}$, but the normal part may also be prescribed arbitrarily.  This gives sufficient flexibility to appeal to standard transversality arguments to see that regular data are generic in this class.
\end{proof}

\begin{corollary}
Suppose that $(M,L_0,L_1)$ have stably trivial normal structure.  A choice of stable normal trivialisation $\Upsilon$ defines a (non-empty, open) set of $\hat{\iota}$-invariant time-dependent almost complex structures on $\hat{M}$ which are simultaneously regular for all moduli spaces of solutions to \eqref{eqn:Floer} in both $\hat{M}$ and $\hat{M}^{\iota} = M^{\iota}$.
\end{corollary}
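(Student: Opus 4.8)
The plan is to combine Lemma~\ref{lem:reduce-to-product} with Lemma~\ref{lem:generic-regular} and a standard equivariant Sard--Smale argument. First I would use Lemma~\ref{lem:reduce-to-product} to replace the triple $(\hat M, \hat L_0, \hat L_1)$ by the normalised model $(\hat M, \Phi(\hat L_0), \Phi(\hat L_1))$, whose normal bundles along $M^\iota$ are the standard real and imaginary linear Lagrangians. Since $\Phi = \Phi_\Upsilon$ is a Hamiltonian symplectomorphism which is the identity on $M^\iota$, this changes neither the Floer complex nor the fixed-locus Floer complex, and it is exactly the setting in which $\Upsilon$-constrained Floer perturbation data (Definition~\ref{def:constrained-perturbation}) make sense. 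So it suffices to show that the set of $\Upsilon$-constrained data is non-empty, open, and contains a comeager subset of data that is regular simultaneously for all strips in $\hat M$ and all strips in $M^\iota$.

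Next I would separate the moduli problem according to whether a solution $u$ of \eqref{eqn:Floer} is contained in $M^\iota$ or not. For invariant solutions, Lemma~\ref{lem:generic-regular} already gives that generic $\Upsilon$-constrained data is regular for all such $u$ viewed as strips in $M^\iota$; and the proof of that lemma shows more, namely that for $\Upsilon$-constrained data the anti-invariant part $D^{\anti}_{u,\hat J_t}$ of the linearised operator is the constant-coefficient operator $\partial_s + i\partial_t + h$, which is injective by \cite[Lemma~11.5]{Seidel-FCPLT}. Hence an invariant solution is regular in $\hat M$ if and only if it is regular in $M^\iota$, so this case requires no further work. For non-invariant solutions one argues as in \cite[Proposition~5.13]{KS}: by Aronszajn unique continuation such a $u$ cannot be contained in $M^\iota$, so there is an open subset of $\bR\times[0,1]$ on which $u$ avoids $M^\iota$; on a neighbourhood of the corresponding region the $\Upsilon$-constrained condition imposes no restriction beyond $\hat\iota$-invariance and the data may be prescribed freely there, so the usual universal-moduli-space / Sard--Smale argument applies and produces a comeager set of $\Upsilon$-constrained data regular for all non-invariant strips.

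Finally I would assemble the conclusion. Since $L_0, L_1$ are exact and meet transversally in finitely many points, the energy of any strip contributing to the relevant moduli spaces is bounded in terms of the finitely many pairs of endpoints, so "regular for all strips of a given index" is an open condition, and the intersection over the finitely many endpoint pairs of the comeager sets furnished by the two cases above is again comeager, hence in particular non-empty; intersecting with the open locus where the invariant strips are regular gives the asserted non-empty open set. The main obstacle is not any individual one of these steps but the bookkeeping that makes them compatible: one must check that the constraints near $M^\iota$ forced by Definition~\ref{def:constrained-perturbation} --- which are exactly what puts $D^{\anti}$ into the model injective form --- still leave enough freedom, both in the normal directions along $M^\iota$ and away from $M^\iota$, to run the transversality argument for invariant strips in the fixed locus and for non-invariant strips in the total space at the same time. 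This compatibility is precisely what is set up in \cite[Section~14c]{Seidel-FCPLT} and \cite[Section~3.5]{SS-localisation}, and here one is simply invoking it in our situation.
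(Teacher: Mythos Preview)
Your proposal is correct and follows essentially the same approach as the paper: split into non-invariant curves (handled by the Khovanov--Seidel argument \cite[Proposition~5.13]{KS}) and invariant curves (handled by Lemma~\ref{lem:reduce-to-product} followed by Lemma~\ref{lem:generic-regular}), then assemble. Your write-up is more explicit about the bookkeeping --- the injectivity of $D^{\anti}$ making regularity in $\hat M$ equivalent to regularity in $M^\iota$, and the comeager/openness considerations --- but the skeleton and the cited inputs are the same as in the paper's proof.
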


\begin{proof}
For generic invariant time-dependent $J$, regularity holds at all curves $u: \bR\times[0,1] \rightarrow \hat{M}$ which are not entirely contained in $M^{\iota}$ (this is a direct adaptation of the usual regularity theory, and follows from \cite[Proposition 5.13]{KS}).    For curves $u$ inside the fixed locus, after stabilising and applying a Hamiltonian symplectomorphism, Lemma  \ref{lem:reduce-to-product} reduces us to a product situation, where we may invoke Lemma \ref{lem:generic-regular} to find an open set of $\Upsilon$-constrained perturbation data which are regular.
\end{proof}

The upshot of the discussion at this point is that, if a stable normal trivialisation exists, we can replace $(M,L_0,L_1, \iota)$ by $(\hat{M},\hat{L}_0, \hat{L}_1, \hat{\iota})$ in such a way that the Floer complexes $\CF(\hat{L}_0, \hat{L}_1) = \CF(L_0, L_1)$ are naturally identified (and similarly in the fixed point set $M^{\iota}$, which never changes); and the complex $\CF(\hat{L}_0, \hat{L}_1)$ admits a regular equivariant $J$. 

\subsection{Two involutions}

Now suppose that $M$ carries an action of $G = \bZ/2 \times \bZ/2$, generated by involutions $\iota_1$ and $\iota_2$, which again preserve the Lagrangians $L_j$ setwise.  We assume that the fixed submanifolds $M^{\iota_1}$ and $M^{\iota_2}$ meet cleanly in the global fixed locus $M^G$. 

\begin{lemma} \label{lem:iterate-localisation}
Suppose that 
\begin{enumerate}
\item $M$ admits an action of $\bZ/2 \times \bZ/2$ generated by $\iota_1$ and $\iota_2$;
\item the triple $(M, L_0, L_1)$ admits a stable normal polarisation for $\iota_1$;
\item the triple $(M^{\iota_1}, L_0^{\iota_1}, L_1^{\iota_1})$ admits a stable normal polarisation for $\iota_2|_{M^{\iota_1}}$;
\item the intersection points of $L_0$ and $L_1$ all lie in the common fixed locus $M^G$, so $L_0 \cap L_1 = L_0^G \cap L_1^G$.
\end{enumerate}
Then there is a cochain homotopy equivalence $\CF(L_0,L_1;\bZ/2) \simeq \CF(L_0^G, L_1^G;\bZ/2)$.
\end{lemma}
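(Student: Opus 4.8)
# Proof Proposal for Lemma \ref{lem:iterate-localisation}

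The plan is to apply the single-involution localisation result established in the previous subsection twice in succession, first for $\iota_1$ and then for the residual involution induced by $\iota_2$. Concretely, the corollary after Lemma \ref{lem:generic-regular} says that given hypothesis (2), after replacing $(M, L_0, L_1, \iota_1)$ by a stabilisation $(\hat M, \hat L_0, \hat L_1, \hat\iota_1)$ — which does not change the Floer complex $\CF(L_0,L_1;\bZ/2)$, nor the fixed locus $M^{\iota_1}$ and its Lagrangians — we can find a $\hat\iota_1$-invariant time-dependent almost complex structure $J^{(1)}$ on $\hat M$ which is simultaneously regular for all Floer solutions in $\hat M$ and in $M^{\iota_1}$. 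By the argument of Lemma \ref{lem:equivariant-transversality-suffices} (using hypothesis (4), so that all generators lie in the fixed locus and hence in $M^G \subseteq M^{\iota_1}$, and working over $\bZ/2$ so that free $\bZ/2$-orbits of discs cancel in pairs), this yields $\CF(L_0, L_1;\bZ/2) \simeq \CF(L_0^{\iota_1}, L_1^{\iota_1};\bZ/2)$ as cochain complexes, computed with the restricted data $J^{(1)}|_{M^{\iota_1}}$.

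First I would make precise that this is genuinely a reduction of the \emph{same} form of problem: the triple $(M^{\iota_1}, L_0^{\iota_1}, L_1^{\iota_1})$ carries the involution $\iota_2|_{M^{\iota_1}}$ (the two involutions commute, so $\iota_2$ preserves $M^{\iota_1}$ setwise and preserves each $L_j^{\iota_1}$), its fixed locus is exactly $M^G \cap M^{\iota_1} \cap \cdots = M^G$ by the clean-intersection hypothesis on $M^{\iota_1} \cap M^{\iota_2}$, and by hypothesis (4) the intersection points $L_0^{\iota_1} \cap L_1^{\iota_1}$ all lie in $M^G$. Hypothesis (3) provides a stable normal trivialisation for this inner problem. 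So I would apply the single-involution corollary a second time, to $(M^{\iota_1}, L_0^{\iota_1}, L_1^{\iota_1}, \iota_2|_{M^{\iota_1}})$: after a further stabilisation (again not changing the relevant Floer complexes) there is an $\iota_2$-invariant regular $J^{(2)}$ on the stabilised $M^{\iota_1}$, giving $\CF(L_0^{\iota_1}, L_1^{\iota_1};\bZ/2) \simeq \CF(L_0^G, L_1^G;\bZ/2)$. Composing the two cochain homotopy equivalences gives the claim.

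The one genuine subtlety — and the step I expect to be the main obstacle — is the \emph{compatibility of the two reductions}: after the first stabilisation $M \rightsquigarrow \hat M = M \times \bC^{r_1}$ with $\hat\iota_1 = \iota_1 \times (-1)$, one must check that $\hat\iota_2 := \iota_2 \times (+1)$ still acts on $\hat M$, still preserves $\hat L_0 = L_0 \times \bR^{r_1}$ and $\hat L_1 = L_1 \times i\bR^{r_1}$, commutes with $\hat\iota_1$, has $\hat M^{\hat\iota_1} = M^{\iota_1}$ with the induced $\hat\iota_2$-action equal to $\iota_2|_{M^{\iota_1}}$, and that the Hamiltonian symplectomorphism $\Phi_{\Upsilon_1}$ supplied by Lemma \ref{lem:reduce-to-product} can be taken $\hat\iota_2$-equivariant (it is supported near $M^{\iota_1}$ and is the identity there, and its construction in \cite{SS-localisation} from the trivialisation data can be carried out $\bZ/2$-equivariantly for a commuting involution). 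Granting this, the regular $J^{(1)}$ produced on $\hat M$ can be chosen \emph{additionally} $\hat\iota_2$-invariant — because the $\Upsilon_1$-constrained perturbation data of Definition \ref{def:constrained-perturbation} can be prescribed within the $\hat\iota_2$-invariant class, the transversality genericity argument of Lemma \ref{lem:generic-regular} being unaffected by imposing a further finite-group symmetry whose fixed locus we are not trying to make transverse at this stage. Then $J^{(1)}|_{M^{\iota_1}}$ is $\iota_2$-invariant, which is exactly the input needed to begin the second reduction, so the two steps chain together cleanly. I would remark that no new analytic input is required beyond what is already in Sections \ref{Subsec:equivariant-transversality} and the cited \cite{KS, SS-localisation, Seidel-FCPLT}; the content of the lemma is purely the bookkeeping that the two localisations are mutually compatible.
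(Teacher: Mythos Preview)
Your approach is correct and matches the paper's: apply the single-involution localisation first for $\iota_1$, then for $\iota_2|_{M^{\iota_1}}$, and compose. The paper's proof is shorter because it sidesteps the compatibility issue you raise in your final paragraph. The point is that the first stabilisation $(M,L_0,L_1,\iota_1) \rightsquigarrow (\hat M,\hat L_0,\hat L_1,\hat\iota_1)$ leaves $M^{\iota_1}$ and its Lagrangians $L_j^{\iota_1}$ literally unchanged; so once you have the cochain homotopy equivalence $\CF(L_0,L_1;\bZ/2)\simeq \CF(L_0^{\iota_1},L_1^{\iota_1};\bZ/2)$, the second step is a fresh, independent application of the single-involution corollary to the triple $(M^{\iota_1},L_0^{\iota_1},L_1^{\iota_1})$ with hypothesis (3), using whatever new auxiliary data that step produces. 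You do not need $J^{(1)}|_{M^{\iota_1}}$ to be $\iota_2$-invariant, because the Floer complex on $M^{\iota_1}$ is well-defined up to cochain homotopy equivalence and you may switch data via a continuation map before running the second reduction. Your more elaborate $G$-equivariant bookkeeping is not wrong---indeed the paper records exactly this in the Remark following the lemma---but it is not required for the statement as written.
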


\begin{proof}
We apply the involutions iteratively. Existence of a stable normal trivialisation for $\iota_1$ gives an equivariant $J$ which identifies the Floer complexes $\CF(L_0,L_1; \bZ/2) = \CF(L_0^{\iota_1}, L_1^{\iota_1};\bZ/2)$, since all intersection points are fixed and all non-fixed discs come in pairs (cf. the proof of Lemma \ref{lem:equivariant-transversality-suffices}).  Note that, even though this identification of complexes arises from stabilisation (and goes via the intermediate complex $\CF(\hat{L}_0, \hat{L}_1)$), the fixed locus $M^{\iota_1}$ for the first involution is never changed. Since the involution $\iota_2|_{M^{\iota_1}}$ admits a stable normal trivialisation, we repeat the argument to identify $\CF(L_0^{\iota_1}, L_1^{\iota_1};\bZ/2)$ with $\CF(L_0^G, L_1^G;\bZ/2)$. 
\end{proof}

\begin{remark}
More generally, there is a natural notion of compatibility of stable normal trivialisations for a pair of involutions. The normal bundle satisfies $\nu_{M^G} \cong \nu_{M^{\iota_1}} \oplus \nu_{M^{\iota_2}}$, and  $\iota_2$ acts on the normal space $\nu_{M^{\iota_1}}$. This gives an obvious notion of a stable normal trivialisation for $\iota_1$ being $\iota_2$-equivariant. Since the ordering of the generators $\iota_j$ for $G$ was arbitrary, we will say that $(M, L_0, L_1)$ admits \emph{compatible} stable normal trivialisations.   By an equivariant version of Moser's theorem, the Hamiltonian symplectomorphism constructed in Lemma \ref{lem:reduce-to-product} can be constructed $\iota_2$-equivariantly (extending $\iota_2$ from $M$ to $\hat{M}$ by the trivial involution on the second factor). 
\end{remark}

\subsection{Constructing stable normal trivialisations}\label{sec:stable-normal-trivialisations}

We return to $X$,  its action of $G = \bZ/2 \times \bZ/2$, and the $G$-invariant Lagrangian matching spheres $S_a$ and $S_b$. The following is an analogue of \cite[Lemma 31]{SS-localisation}.

\begin{lemma}
The triple $(X, S_a,S_b)$ admits compatible stable normal trivialisations.
\end{lemma}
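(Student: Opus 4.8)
The goal is to produce, for the involution $\iota_1$, a stable trivialisation of the normal bundle $\nu_{X^{\iota_1}}\to X^{\iota_1}$ together with the Lagrangian homotopies of Definition \ref{defn:stable-normal} for $S_a^{\iota_1}$ and $S_b^{\iota_1}$; and then, working inside the lower-dimensional manifold $X^{\iota_1}$ with the restricted involution $\iota_2|_{X^{\iota_1}}$, produce the analogous data compatibly (in the sense of the Remark following Lemma \ref{lem:iterate-localisation}). I will treat the two involutions symmetrically, so it suffices to explain the construction for one of them; the compatibility is automatic from the product structure below.

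\emph{Identifying the normal bundle.} First I would pin down $X^{\iota_1}=\{p^2=z-1,\ u_2v_2=z+1\}\subset\bC\times\bC^2\times\bC^\ast$ (setting $u_1=v_1=p$) and observe that the normal bundle $\nu_{X^{\iota_1}}$ is the $(-1)$-eigenbundle of $d\iota_1$, which is the line bundle spanned by $\partial_{u_1}-\partial_{v_1}$ along the fixed locus. Because $X$ is an affine complete intersection in $\bC^4\times\bC^\ast$ cut out by equations which are $\iota_1$-equivariant with $\iota_1$ acting on the first conic coordinate pair, this eigenbundle is the restriction to $X^{\iota_1}$ of a trivial line bundle on the ambient space: concretely the coordinate $w:=(u_1-v_1)/2$ trivialises it, since $u_1v_1=((u_1+v_1)/2)^2-w^2$ and on the fixed locus $u_1=v_1=p$, so $w$ is a global transverse coordinate. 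Hence $\nu_{X^{\iota_1}}$ is already unitarily trivial (after a small homotopy of the induced Hermitian metric to the standard one), and one may take $r=0$; this is the analogue of the first bullet of Definition \ref{defn:stable-normal}.

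\emph{The Lagrangian homotopies.} The remaining task is the homotopies $h_k\colon [0,1]\times S_k^{\iota_1}\to Gr_{\mathrm{Lag}}(n_{\anti})$ interpolating $\phi_\ast\nu_{S_k^{\iota_1}}$ to the constant $\bR$ (resp.\ $i\bR$) sub-Lagrangians. Here $S_k^{\iota_1}$ is a Lagrangian in $X^{\iota_1}$, and since $S_k$ is $\iota_1$-invariant it is swept out over its fixed circle by the $(\pm1)$-eigenline of $d\iota_1|_{TS_k}$, which is exactly (a real form of) $\nu_{S_k^{\iota_1}}$ inside $\nu_{X^{\iota_1}}|_{S_k^{\iota_1}}$. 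Concretely $S_k$ is a matching sphere built by parallel transport of the $T^2$ vanishing cycle, and near its fixed circle the local model is the conormal construction, so $\nu_{S_k^{\iota_1}}$ is the real line inside the trivialised $\bC$ given (in the $w$-coordinate) by $e^{i\theta(x)}\bR$ for some phase function $\theta\colon S_k^{\iota_1}\to\bR/\pi\bZ$. Existence of the homotopy $h_k$ is equivalent to this phase function being null-homotopic, i.e.\ to the vanishing of a Maslov-type obstruction class in $H^1(S_k^{\iota_1};\bZ)$. Since $S_k^{\iota_1}$ is a circle, the obstruction is a single integer, and I would compute it directly from the matching-path model (as in \cite[Lemma 31]{SS-localisation}): the gradings on $S_a$, $S_b$ were chosen precisely so that the Maslov data are compatible with mirror symmetry, and the same computation shows the induced phase winds trivially. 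This is the step I expect to be the main obstacle — everything else is soft (trivialising a line bundle over a Stein manifold with the right symmetry), but one genuinely has to check that the winding number of the normal Lagrangian line around the fixed circle vanishes, and this is where the explicit geometry of the matching spheres and the choice of gradings in Section \ref{Subsec:gradings} must be used; if it did not vanish one could stabilise (increase $r$) to kill it, but here the claim is that no stabilisation is needed, or at worst a controlled one.

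\emph{Assembling.} Having produced $(\phi,h_a,h_b)$ for $\iota_1$ on $(X,S_a,S_b)$, I would repeat the identical analysis for $\iota_2|_{X^{\iota_1}}$ on $(X^{\iota_1},S_a^{\iota_1},S_b^{\iota_1})$, using the coordinate $(u_2-v_2)/2$ to trivialise $\nu_{X^{\iota_1}\cap X^{\iota_2}}$ inside $X^{\iota_1}$ and the same circle-valued-phase computation. Because $\nu_{X^G}\cong\nu_{X^{\iota_1}}\oplus\nu_{X^{\iota_2}}$ splits $\iota_2$-equivariantly and the two trivialisations are built from independent ambient coordinates, the stable normal trivialisation for $\iota_1$ is $\iota_2$-equivariant, so the pair is compatible in the required sense; Lemma \ref{lem:iterate-localisation} (together with Lemmas \ref{lem:intersection-points-invariant} and \ref{lem:no-invariant-discs}) then applies, completing the proof of Proposition \ref{prop:generalised_Khovanov_Seidel}.
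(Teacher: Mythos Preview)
Your approach is different from the paper's and contains a genuine gap at the crucial step.

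First, a dimension slip: $S_k^{\iota_1}$ is a $2$-sphere, not a circle. The fixed locus $X^{\iota_1}=\{p^2=z-1,\ u_2v_2=z+1\}$ is complex $2$-dimensional, so Lagrangians in it are real surfaces; only the common fixed locus $S_k^G = s_k$ is a circle. This actually makes your first step easier than you suggest: any map $S^2 \to Gr_{\mathrm{Lag}}(1)\simeq S^1$ is null-homotopic, so for $\iota_1$ the homotopies $h_k$ exist automatically once you have trivialised the normal line (which your coordinate $w=(u_1-v_1)/2$ does correctly).

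The gap is in the second step, for $\iota_2|_{X^{\iota_1}}$ acting on $(X^{\iota_1},S_a^{\iota_1},S_b^{\iota_1})$. Now $(S_k^{\iota_1})^{\iota_2}=s_k$ really is a circle, and the obstruction to the Lagrangian homotopy is a winding number in $\pi_1(Gr_{\mathrm{Lag}}(1))\cong\bZ$. You do not show it vanishes. Your appeal to the gradings of Section~\ref{Subsec:gradings} is not valid: those gradings concern the total Lagrangian Gauss map of $S_k\subset X$, not the phase of the anti-invariant normal line along $s_k$; moreover $S_k\cong S^3$ is simply connected, so a grading always exists and carries no information here. Your fallback---stabilise to kill the obstruction---is also wrong: $\pi_1(U(n)/O(n))\cong\bZ$ for every $n\geq 1$ and the stabilisation map is an isomorphism on $\pi_1$, so increasing $r$ cannot remove a nonzero Maslov-type class.

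The paper avoids this computation entirely by working one level up. It produces a $G$-equivariant stable trivialisation of all of $TX$ (from the short exact sequence $0\to TX\to T(\bC^4\times\bC^\ast)|_X\to\bC^2\to 0$), and a compatible $G$-equivariant stable trivialisation of $TS_a$ by realising $S_a$ as the boundary of a Lagrangian thimble $\Delta_a$ in a one-parameter degeneration of $X$. Since $\Delta_a$ is contractible, the (equivariant) Lagrangian Gauss map of $S_a$ extends over $\Delta_a$, supplying the null-homotopy for free; restricting the resulting equivariant data to anti-invariant directions along the fixed loci then gives the compatible stable normal trivialisations with no winding calculation needed.
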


\begin{proof}
Consider the map $\bC^4 \times \bC^* \to \bC^2$ taking $(u_1,v_1,u_2,v_2,z) \mapsto (u_1v_1-z, u_2v_2-z)$. Then $X$ is the regular fibre over $(-1,1)$, so we have a short exact sequence of complex vector bundles
\begin{equation} \label{eqn:total_space_bundles}
0 \to TX \to T(\bC^4 \times \bC^*) \to T\bC^2 \to 0.
\end{equation}
There is a contractible space of splittings of such a sequence (from a Hermitian metric) which gives a canonical homotopy class of stable trivialisation of $TX$; thinking about the family of regular fibres $X_{c,d}$ over $\Conf_2(\bC^*) \subset \bC^2$ shows the canonical homotopy class of stable trivialisation is preserved by parallel transport and the $\PBr_3$-action. 

The Lagrangian $S_a$ arises as a vanishing cycle, which means we can find a disc $D \subset \bC^2$ normal to the diagonal and a one-parameter family $\scrX_D \to D$ with fibre $X \mapsto 1$ and such that $\scrX_D \supset \Delta_a$ contains a Lagrangian thimble fibred over an arc $[0,1] \subset D$ and with boundary $\Delta_a \cap X = S_a$.  

We have a commutative diagram of vector bundles over $S_a$,
\begin{equation} \label{eq:triv-triv}
\xymatrix{
0 \ar[r] & \ar[d]^{\cong} TS_a \otimes_\bR \bC \ar[r] & \ar[d]^{\cong} ((T\Delta_a)|_{S_a}) \otimes_\bR \bC \ar[r] & S_a \times \bC \ar[r] \ar[d]^{=} & 0 \\
0 \ar[r] & TX|_{S_a} \ar[r] & T(\scrX_D)|_{S_a}\ar[r] & S_a \times \bC \ar[r] & 0
}
\end{equation}
Since $\Delta_a$ is contractible, $T\Delta_a$ is trivial, and this induces a stable trivialization of $TS_a$; the nullhomotopy of the stabilised Lagrangian Gauss map required for Definition \ref{defn:stable-normal} thus comes from an extension of that map to $\Delta_a$.  The commutativity of \eqref{eq:triv-triv}  shows that the complexification of this  trivialization is (canonically) homotopic to the  trivialization of $TX|_{S_a}$ arising from the previous stable trivialisation of $TX$ from \eqref{eqn:total_space_bundles}. 

Because of our previous remarks about parallel transport, the same is true for any  matching sphere $S_b$, and furthermore the argument goes through equivariantly. More explicitly, $TX$ has a $G$-equivariant stable trivialization, compatible with a $G$-equivariant stable trivialisation of $TS_a$  in the sense that under the natural isomorphism $TS_a \otimes \bC \cong TX|_{S_a}$, there is an equivariant homotopy between these two trivializations. Restriction to fixed point sets of either generator of $G$ and to the corresponding anti-invariant directions along those fixed-point loci now yields the desired compatible stable normal trivializations.
\end{proof}

At this point we have satisfied all of the hypotheses of Lemma \ref{lem:iterate-localisation}, bearing in mind that in Lemma \ref{lem:intersection-points-invariant} we constructed Hamiltonian isotopies of the matching spheres after which $S_a \cap S_b = s_a \cap s_b$, i.e. all intersection points lie in the common fixed point locus. This completes the construction of the $J$ required for Lemma \ref{lem:equivariant-transversality-suffices}, and hence our discussion of the proof of Proposition \ref{prop:generalised_Khovanov_Seidel}.

 \subsection{Dynamics on surfaces}
 
 Having a formula for ranks of Floer cohomologies of matching spheres in terms of intersection numbers on the surface $X^{\inv}$ leads to growth rate bounds, using classical arguments from dynamics on surfaces.
 
\begin{proposition}\label{prop:growth}
Suppose that an autoequivalence $\phi \in \bZ^{\ast \infty}$ is such that for all $S_i, S_j \in \calS$, the rank of the Floer cohomology group $\HF(S_i, \phi^n S_j;\bZ/2)$ grows at most linearly with $n$. Then, up to shifts, $\phi$ is a power of a spherical twist in an element of $\calS$. 
\end{proposition}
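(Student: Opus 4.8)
The plan is to convert the growth hypothesis into a statement about geometric intersection numbers on a surface via Proposition~\ref{prop:generalised_Khovanov_Seidel}, and then run the Nielsen--Thurston classification.

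\emph{Reduction to surface dynamics.} The base $\bC^\ast$ of the Morse--Bott--Lefschetz fibration, with the two critical values $\pm 1$ regarded as punctures, is the four-punctured sphere $\Sigma := \bP^1\setminus\{0,\infty,-1,1\}$; matching paths are arcs joining the punctures $-1$ and $1$, and $I(\gamma_a,\gamma_b)$ is their geometric intersection number. The element $\phi\in\bZ^{\ast\infty}$ is a word in the twists $T_{S_i}$, hence is realised by the corresponding word in the Dehn twists $\tau_{S_i}\in\pi_0\Symp_c X$; by the discussion following Lemma~\ref{lem:PBr_3-action-ungraded}, $\tau_{S_i}$ acts on matching paths as the full twist $t_i$, i.e.\ the Dehn twist $\tau_{c_i}$ along the boundary $c_i$ of a regular neighbourhood of $\gamma_i\cup\{-1,1\}$. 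Thus $\phi$ acts on matching paths through a mapping class $\psi$ of $\Sigma$, lying in the image of the homomorphism $\bZ^{\ast\infty}=\PBr_3^c\to\mathrm{PMod}(\Sigma)$; since $\PBr_3^c\cap Z(\PBr_3)=1$ and $\mathrm{PMod}(\Sigma)\cong\PBr_3/Z(\PBr_3)$, this homomorphism is injective (alternatively one may invoke faithfulness of Khovanov--Seidel-type actions). As $\phi^n S_j\in\calS$ is the matching sphere of the arc $\psi^n(\gamma_j)$, Proposition~\ref{prop:generalised_Khovanov_Seidel} gives
\[
\dim\HF(S_i,\phi^n S_j;\bZ/2)=4+4\,I\bigl(\gamma_i,\psi^n(\gamma_j)\bigr),
\]
so the hypothesis says that $n\mapsto I(\gamma_i,\psi^n\gamma_j)$ grows at most linearly for all $i,j$; equivalently (the relevant quantities being comparable up to a bounded factor) that $n\mapsto i(c_i,\psi^n c_j)$ grows at most linearly.

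\emph{Nielsen--Thurston.} By the trichotomy for $\Sigma$ -- most transparently via the double cover $T^2\to\Sigma$ branched at the four punctures, under which matching arcs lift to simple closed curves and $\psi$ to an element of $\mathrm{SL}_2(\bZ)$, reducing the growth dichotomy to the elliptic/parabolic/hyperbolic classification -- the mapping class $\psi$ is periodic, reducible, or pseudo-Anosov. If $\psi$ is pseudo-Anosov then for any essential simple closed curve $c$ (say $c=c_0$) the intersection numbers $i(c,\psi^n c)$ grow like $\lambda^n$ with $\lambda>1$ the dilatation, since the invariant foliations fill; this contradicts linear growth. If $\psi$ is periodic it has finite order, and $\mathrm{PMod}(\Sigma)\cong F_2$ is torsion-free, so $\psi=1$, whence $\phi=1$ and the conclusion holds as the zeroth power of a twist. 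So we may assume $\psi$ is reducible and not periodic.

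\emph{The reducible case.} The canonical reduction system of $\psi$ is a non-empty $\psi$-invariant multicurve; as $\Sigma$ contains no two disjoint non-isotopic essential simple closed curves, it is a single curve $c$, which separates the four punctures into two pairs. The complement $\Sigma\setminus c$ is a union of two thrice-punctured spheres, with trivial mapping class groups, so $\psi=\tau_c^{\,k}$ for some $k\neq 0$. Since $\tau_c^{\,k}=\psi\in\PBr_3^c$ and the ambient group is torsion-free, $\tau_c$ itself lies in $\PBr_3^c$; by Proposition~\ref{prop:compact-braid-action}(i) this means $c$ has zero winding number about $0$ after forgetting either of $\pm 1$. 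Of the three topological types of essential simple closed curve on $\Sigma$ only the type separating $\{-1,1\}$ from $\{0,\infty\}$ has this property (for the other two types the corresponding point of $\{\pm 1\}$ winds once about $0$). Hence $c=c_\gamma$ for some matching path $\gamma$, so $\psi=t_\gamma^{\,k}$, the image of $\tau_{S_\gamma}^{\,k}$; by injectivity of $\bZ^{\ast\infty}\to\mathrm{PMod}(\Sigma)$ we conclude $\phi=\tau_{S_\gamma}^{\,k}$, which under the equivalence $\Upsilon$ of Theorem~\ref{thm:hms-equivalence} is the $k$-th power of the spherical twist $T_{S_\gamma}$ up to a shift, with $S_\gamma\in\calS$.

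\emph{Where the work is.} The substantive geometric input, Proposition~\ref{prop:generalised_Khovanov_Seidel}, is already available, so the remaining difficulty is of two bookkeeping flavours. First one must run the Nielsen--Thurston dichotomy cleanly for \emph{arcs} (handled via the boundary curves $c_\gamma$, or through the torus double cover) together with the standard exponential lower bound on intersection growth under pseudo-Anosov maps. Second, and more delicate, one must pass from ``$\phi$ has sub-exponential growth'' to ``$\phi$ is \emph{exactly} a twist power in an element of $\calS$'': this uses both the faithfulness of the $\bZ^{\ast\infty}$-action on $\Sigma$ and the winding-number description of $\PBr_3^c$ from Proposition~\ref{prop:compact-braid-action}, which pins down the unique topological type of reduction curve that can occur.
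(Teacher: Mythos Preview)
Your argument is correct and follows the same overall strategy as the paper: reduce to surface dynamics via Proposition~\ref{prop:generalised_Khovanov_Seidel}, apply the Nielsen--Thurston trichotomy, rule out the pseudo-Anosov case by exponential growth, and in the remaining cases identify $\phi$ as a twist power in a matching sphere.

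The execution differs in two places worth noting. For the pseudo-Anosov case, the paper lifts $\phi$ to the four-fold branched cover $X^{\inv}$ (the six-punctured sphere already appearing in the localisation argument), where the arcs $\gamma_i$ lift to the \emph{closed} curves $s_{\gamma_i}$, and then invokes the classical exponential growth of closed-curve intersection numbers directly; this avoids your auxiliary passage from arcs $\gamma$ to boundary curves $c_\gamma$ and the accompanying comparability assertion. Conversely, your treatment of the non-pseudo-Anosov cases is more explicit than the paper's: the paper simply asserts that reducible elements of $\PBr_3$ are powers of a full twist in a matching path, and that membership in $\bZ^{\ast\infty}$ forces that path to lie in $\calS$, whereas you carry out the reduction-system analysis on $\Sigma$ and use the winding-number description of $\PBr_3^c$ to exclude the other two curve types.

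Two small points of wording. The step ``$\tau_c^{\,k}\in\PBr_3^c$ and the ambient group is torsion-free $\Rightarrow$ $\tau_c\in\PBr_3^c$'' is not justified by torsion-freeness of $\mathrm{PMod}(\Sigma)$ itself; what you need (and what holds) is that the \emph{quotient} $\mathrm{PMod}(\Sigma)/\PBr_3^c\cong\bZ$ is torsion-free. And the phrase ``$c$ has zero winding number about $0$ after forgetting either of $\pm 1$'' conflates a property of the curve with the winding-number homomorphisms evaluated on the \emph{twist} $\tau_c$; the conclusion you draw is correct, but the intermediate sentence should be phrased in terms of $w_{\pm 1}(\tau_c)$.
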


\begin{proof}
The Nielsen-Thurston theorem (see \cite{FLP}) says that elements of mapping class groups of punctured surfaces are either periodic, reducible, or pseudo-Anosov.  Concretely for $\PBr_3$ these cases simplify as follows. The only periodic elements are powers of the full twist (i.e. elements in the centre of $\PBr_3$); the only reducible elements are powers of a full twist in a matching path between marked points; any other element is  pseudo-Anosov, which by definition means that it is pseudo-Anosov as a mapping of the four-punctured sphere given by collapsing and removing the boundary of the disc.  If $\phi$ is pseudo-Anosov, its lift $\hat\phi$ to the four-fold cover $X^{\inv}$ introduced in Proposition \ref{prop:generalised_Khovanov_Seidel}  is still pseudo-Anosov (as it preserves two transverse singular foliations, namely the preimages under the branched cover of the foliations preserved by $\phi$ downstairs). Then for any two simple closed curves $\ell, \ell'$ on $X^{\inv}$ the geometric intersection number $\iota(\ell, \hat\phi^k \ell')$ grows exponentially in $k$ \cite[Expos\'e 12, Th\'eor\`eme II]{FLP}. Taking $\ell, \ell'$ to be lifts of arcs $\gamma_i, \gamma_j$ downstairs, this geometric intersection number gives a lower bound for the rank of $\HF(S_i, \phi^k S_j;\bZ/2)$ by Proposition \ref{prop:generalised_Khovanov_Seidel}.

Suppose, then, that the growth of Floer cohomology is at most linear.  Since $\phi \in \bZ^{\ast\infty}$, if it is the image of a power of a full twist in a matching path, that matching path must be associated to some element of $\calS$. 
 If the growth rate is linear for some $S_i, S_j$, then $\phi$ is a shift of a non-trivial power of a Dehn twist in a matching sphere; if the growth rate is trivial for all $i,j$ then $\phi$ is the image of a central element, and acts by a shift. 
\end{proof}

\begin{lemma}\label{lem:fix-power-and-shift}
For a spherical twist $\phi = T_S$, the rank of $\HF(S_i, \phi^n S_j;\bZ/2)$ grows at most linearly with $n$.
\end{lemma}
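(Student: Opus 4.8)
The plan is to argue purely formally from the spherical twist exact triangle; no input from surface dynamics is needed for this direction. Throughout we work over the field $\bZ/2$, over which $S$ is still spherical, $\HF^*(S,S;\bZ/2)\cong H^*(S^3;\bZ/2)$ is one-dimensional in degrees $0$ and $3$, and $T_S$ is a well-defined $\bZ/2$-linear autoequivalence of $D\scrW(X)$. Recall the defining triangle
\begin{equation}
\HF^*(S,Z)\otimes S \xrightarrow{\ \mathrm{ev}\ } Z \longrightarrow T_S(Z) \xrightarrow{\ [1]\ }.
\end{equation}

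First I would record the standard fact that $T_S$ merely shifts the functor $\HF^*(S,-)$. Applying $\HF^*(S,-)$ to the triangle above gives a long exact sequence relating $\HF^*(S,Z)\otimes\HF^*(S,S)$, $\HF^*(S,Z)$ and $\HF^*(S,T_S Z)$; since the composite $\HF^*(S,Z)\cong\HF^*(S,Z)\otimes\HF^0(S,S)\hookrightarrow\HF^*(S,Z)\otimes\HF^*(S,S)\to\HF^*(S,Z)$ is the identity, the first map in the sequence is surjective, with kernel isomorphic as a graded vector space to $\HF^*(S,Z)\otimes\HF^3(S,S)\cong\HF^*(S,Z)[-3]$. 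Reading off the long exact sequence yields $\HF^*(S,T_S Z)\cong\HF^*(S,Z)[1-3]=\HF^*(S,Z)[-2]$ (the familiar statement $T_S(S)\cong S[-2]$, applied to a general $Z$), and inductively $\HF^*(S,T_S^{\,n}Z)\cong\HF^*(S,Z)[-2n]$. In particular $\dim\HF^*(S,T_S^{\,n}Z;\bZ/2)$ is independent of $n$; all these groups are finite because $S$ is spherical (Definition \ref{def:spherical}).

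Next I would feed this into the twist triangle for $Z=T_S^{\,n-1}S_j$. Applying $\HF^*(S_i,-)$ to
\begin{equation}
\HF^*(S,T_S^{\,n-1}S_j)\otimes S \longrightarrow T_S^{\,n-1}S_j \longrightarrow T_S^{\,n}S_j \xrightarrow{\ [1]\ }
\end{equation}
and passing to total ranks gives
\begin{equation}
\dim\HF(S_i,T_S^{\,n}S_j;\bZ/2)\ \le\ \dim\HF(S_i,T_S^{\,n-1}S_j;\bZ/2)\ +\ \dim\HF^*(S,T_S^{\,n-1}S_j;\bZ/2)\cdot\dim\HF^*(S_i,S;\bZ/2).
\end{equation}
By the previous paragraph the final product equals the constant $C:=\dim\HF^*(S,S_j;\bZ/2)\cdot\dim\HF^*(S_i,S;\bZ/2)$, independent of $n$ (and finite, again because $S$ is spherical). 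Summing the recursion from $1$ to $n$ gives $\dim\HF(S_i,T_S^{\,n}S_j;\bZ/2)\le\dim\HF(S_i,S_j;\bZ/2)+nC$, i.e. at most linear growth, as required; combined with Proposition \ref{prop:growth} this yields the use made of the lemma in the proof of Theorem \ref{thm:classification_sphericals}.

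There is no serious obstacle here: the argument is formal once one has the spherical twist triangle, the computation $\HF^*(S,S)\cong H^*(S^3)$ from Definition \ref{def:spherical}, and the finiteness built into the notion of spherical object. The only point requiring a line of care is to run everything over $\bZ/2$, so that the long exact sequences and the identification of kernels are immediate, which is harmless since $H^*(S^3;\bZ/2)$ has the same shape as $H^*(S^3;\bZ)$ and $T_S$ remains a well-defined $\bZ/2$-linear autoequivalence.
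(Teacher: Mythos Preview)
Your argument is correct and is precisely the standard unpacking of the paper's one-line proof (``This follows immediately from the exact triangle for a spherical twist''). The crucial step you make explicit---that $\dim\HF^*(S,T_S^{\,n}S_j;\bZ/2)$ is independent of $n$, equivalently that $T_S$ applied to $S$ is a shift---is exactly what is needed to turn the naive recursion from the twist triangle into a linear rather than exponential bound; the paper simply takes this as understood. One very minor remark: your recursion only treats $n\ge 0$, but the same argument with the inverse twist triangle (or Serre duality) handles negative $n$, and in any case only $n\to+\infty$ is used in Proposition~\ref{prop:growth}.
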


\begin{proof} This follows immediately from the exact triangle for a spherical twist \eqref{eqn:twist-triangle}. \end{proof}

\subsection{A spherical twist is a power of a Dehn twist}
The conclusion of the previous section is that, as an autoequivalence of $D \cW(X)$,  $T_S = \tau_{\gamma}^l [k]$, where $\tau_\gamma$ denotes the Dehn twist in the matching sphere $S_\gamma$ associated to a matching path $\gamma$, and $k, l \in \bZ$ are to be determined.

\begin{lemma}
We have $k=0$, so the spherical twist $T_S = \tau_{\gamma}^l$ is the image of some power of a Dehn twist in a matching path.
\end{lemma}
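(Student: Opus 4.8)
The plan is to detect the shift $k$ by tracking the action of both sides of the factorisation $T_S = \tau_\gamma^l[k]$ on the exact Lagrangian torus $(T,\zeta)$. Note first that the $K$-theory constraint of Lemma~\ref{lem:sphericals-act-trivially} only pins $k$ down modulo $2$ (since $\tau_\gamma$ acts trivially on $K(\scrD)$ while $[k]$ acts by $(-1)^k$), so a separate argument is needed to conclude $k=0$.

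The first step is to show that $T_S$ fixes $(T,\zeta)$ on the nose. Recall from the proof of Lemma~\ref{lem:sphericals-act-trivially} that $S$ lies in the subcategory $\langle S_0,S_1\rangle \simeq \scrD$ of $D\cW(X)$, while by Lemma~\ref{lem:mirror-to-point} the brane $(T,\zeta)$ is mirror to a skyscraper sheaf $\calO_x$ with $x\notin C$ and is therefore orthogonal to all of $\scrD$ (proof of Lemma~\ref{lem:mirror-to-point}); in particular $\Hom^\bullet_{D\cW(X)}(S,(T,\zeta))=0$. Feeding this into the defining exact triangle~\eqref{eqn:twist-triangle} of the spherical twist, $T_S(T,\zeta)$ is the cone of $0\to(T,\zeta)$, i.e.\ equals $(T,\zeta)$. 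Here we use that $S$ is spherical \emph{as an object of $D\cW(X)$} in the sense of Definition~\ref{def:spherical}, so that $T_S$ is an autoequivalence of all of $D\cW(X)$ and the twist triangle remains valid when evaluated on the non-compact object $(T,\zeta)$.

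The second step is to observe that $\tau_\gamma$ also fixes $(T,\zeta)$. The Dehn twist $\tau_{S_\gamma}$ in a matching sphere is compactly supported, hence gradeable with a preferred grading by Lemma~\ref{lem:compact-support-implies-graded}; since two gradings of $\tau_\gamma$ differ by a shift which can be absorbed into $k$, we may assume $\tau_\gamma$ carries this preferred grading. Then $\tau_\gamma\in\pi_0\Symp_c(X)$, and we showed earlier that any element of $\pi_0\Symp_c(X)$ with its preferred grading fixes $(T,\zeta)$ in $D\cW(X)$; hence $\tau_\gamma^l(T,\zeta)=(T,\zeta)$.

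Combining the two computations, $(T,\zeta)=T_S(T,\zeta)=\tau_\gamma^l[k](T,\zeta)=(T,\zeta)[k]$. Iterating gives $(T,\zeta)\cong(T,\zeta)[Nk]$ for all $N\in\bZ$; an isomorphism is in particular a nonzero element of $\Hom^0_{D\cW(X)}((T,\zeta),(T,\zeta)[Nk])$, so $\HF^\ast((T,\zeta),(T,\zeta))$ would be nonzero in degrees of arbitrarily large absolute value. Since $\HF^\ast((T,\zeta),(T,\zeta))\cong H^\ast(T^3;\bC)$ is finite-dimensional (Lemma~\ref{lem:mirror-to-point}), this forces $k=0$. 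I do not expect a genuine obstacle here: the only point requiring care is the grading bookkeeping around $\tau_\gamma$, i.e.\ making sure that the invariance of $(T,\zeta)$ under compactly supported symplectomorphisms is invoked with the preferred grading and that any remaining grading ambiguity has been folded into $k$; there is no serious analytic or algebraic difficulty.
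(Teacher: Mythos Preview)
Your argument is correct and is a genuinely different route from the paper's. The paper probes the identity $T_S=\tau_\gamma^l[k]$ with the non-compact thimble $L_0$: after conjugating so that $S_\gamma=S_1$ is disjoint from $L_0$, one has $T_S(L_0)=L_0[k]$, and then the paper invokes the external computation (from \cite{CPU}) that $HW^*(L_0,L_0)$ is concentrated in degree $0$ to force the connecting map in the twist triangle to vanish when $k\neq 0$, deriving a contradiction via an infinite-rank Ext group. You instead probe with the torus $(T,\zeta)$: since $S\in\scrD$ is orthogonal to $(T,\zeta)$, the twist triangle gives $T_S(T,\zeta)=(T,\zeta)$ directly, and the already-proved invariance of $(T,\zeta)$ under $\pi_0\Symp_c(X)$ handles the other side; equating yields $(T,\zeta)\cong(T,\zeta)[k]$ and the bounded self-Floer cohomology finishes. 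Your approach is more self-contained, avoiding the CPU degree computation entirely; the paper's approach has the incidental benefit that the conjugation to $S_1$ and the conclusion $T_S(L_0)=L_0$ are set up in the same breath, which is what the next lemma (showing $HF^*(S,L_0)=0$) uses as its starting point. Two minor remarks: your caveat about applying the twist triangle to ``the non-compact object $(T,\zeta)$'' is misdirected, since $T$ is a compact torus (it simply lies outside $\scrD$, which is the relevant point); and the grading bookkeeping you flag is indeed harmless, since $\tau_\gamma^l$ arises as an element of $\bZ^{*\infty}\subset\PBr_3^c$, which by construction lands in $\pi_0\Symp_c(X)$ with the preferred grading.
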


\begin{proof}
By conjugating with a suitable element of $\bZ^{\ast \infty}$, we may assume without loss of generality that $S_\gamma = S_1$, mirror to $\calO_C(-1)$. In particular, since $S_1$ is disjoint from the Lagrangian thimble-like object $L_0$ (see Figure \ref{fig:spherical-conventions}), we have that 
\[
T_S(L_0) = \tau_{\gamma}^l(L_0)[k] = L_0[k].
\]
Suppose for contradiction that $k\neq 0$. Then in the exact triangle
\[
\xymatrix{
HF^\ast(S,L_0) \otimes S \ar[r] &  L_0 \ar[r] &  T_S(L_0) = L_0[k] \ar@/^1.0pc/[ll]
}
\]
the second arrow is given by an element of $HW^0(L_0,L_0[k]) = HW^k(L_0,L_0)$. But \cite[Remark 6.1]{CPU} computes that the endomorphisms of $L_0$ are concentrated in degree zero, so this vanishes.  This implies
\[
HF^\ast(S,L_0) \otimes S \simeq L_0 \oplus L_0[k]
\]
Taking self-endomorphisms of this object, since $HW^\ast(L_0,L_0) = SH^0(X)$ is infinite-dimensional, whilst $HF^\ast(S,S)$ has finite rank, we see that $HF^\ast(S,L_0)$ must be an infinite-dimensional vector space, which contradicts that $S$ is spherical.  We conclude that the shift  satisfies $k=0$.
\end{proof}

\subsection{Reduction to the `small category' $\scrC$}

Continuing with the same notation, in the course of the previous proof we showed that  $T_S(L_0) = L_0$. Strengthening this:

\begin{lemma}
$HF^\ast(S,L_0) = 0 = HF^\ast(L_0,S)$.
\end{lemma}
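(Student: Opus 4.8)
The plan is to build on the previous lemma, where we showed $T_S(L_0) = L_0$ (with no shift), and exploit this rigidity inside the exact triangle defining the spherical twist. Recall we have
\[
\xymatrix{
\HF^\ast(S,L_0) \otimes S \ar[r] &  L_0 \ar[r] &  T_S(L_0) = L_0 \ar@/^1.0pc/[ll]
}
\]
so that $L_0$ is the cone on the evaluation morphism $\HF^\ast(S,L_0)\otimes S \to L_0$. The point is that the connecting map $L_0 \to \HF^\ast(S,L_0)\otimes S[1]$ is an element of $\HF^\ast(S,L_0)^\vee \otimes \HF^{\ast+1}(S,L_0)$ (using that $S$ is spherical, hence has the self-Floer cohomology of a $3$-sphere, and the non-degenerate pairing from Definition \ref{def:spherical} to dualise), and I want to argue it must be zero.

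First I would argue, exactly as in the proof of the preceding lemma, that since $\HW^\ast(L_0,L_0) = SH^0(X)$ is concentrated in degree zero and infinite-dimensional, whereas $\HF^\ast(S,S) \cong H^\ast(S^3)$ is finite rank, the object $\HF^\ast(S,L_0)\otimes S$ cannot be a direct summand of (a shift of) $L_0$ unless $\HF^\ast(S,L_0) = 0$; concretely, if the connecting map vanished then the triangle would split as $\HF^\ast(S,L_0)\otimes S \simeq L_0 \oplus L_0[1]$, forcing $\HF^\ast(S,L_0)$ to be infinite-dimensional, a contradiction. So in fact the connecting map is \emph{nonzero}. Then I would feed this nonzero connecting map back in: rotating the triangle, $\HF^\ast(S,L_0)\otimes S$ is the cone on a nonzero map $L_0 \to L_0[1]$ in $\HW^1(L_0,L_0)$. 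But by \cite[Remark 6.1]{CPU} (as already invoked in the previous lemma) $\HW^\ast(L_0,L_0)$ is concentrated in degree $0$, so $\HW^1(L_0,L_0) = 0$ and there is no nonzero such map. This contradiction forces $\HF^\ast(S,L_0) = 0$, and then the triangle degenerates to show $T_S(L_0) = L_0$ compatibly, with nothing left over.

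For the vanishing of $\HF^\ast(L_0,S)$ I would run the same argument on the other side, using the triangle for $T_S$ evaluated contravariantly, or more simply observe that by the non-degeneracy of the pairing in Definition \ref{def:spherical} (with $Z = L_0$), $\HF^{3-\ast}(L_0,S)$ is dual to $\HF^\ast(S,L_0)$, so the vanishing of one gives the vanishing of the other. (One should check $L_0$ is an admissible test object $Z$ in Definition \ref{def:spherical}; this is fine since $S$ lies in $\scrF(X)$, shown in the proof of Lemma \ref{lem:sphericals-act-trivially}, and Floer cohomology of a compact object with any object of $\cW(X)$ is defined, with the finiteness and duality built into sphericality.)

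The main obstacle I anticipate is purely bookkeeping: making precise the claim that the connecting morphism in the spherical-twist triangle is governed by $\HW^1(L_0,L_0)$, which requires knowing that the composition $L_0 \to T_S(L_0) = L_0 \xrightarrow{=} L_0[1]\text{-shift}$ — more precisely the boundary map of the rotated triangle — lands in $\Hom_{D\cW(X)}(L_0, L_0[1]) = \HW^1(L_0,L_0)$, together with the identification $T_S(L_0)\cong L_0$ being compatible with the triangle rather than merely an abstract isomorphism. Everything else is a short diagram chase combined with the two finiteness inputs ($SH^0$ infinite-dimensional versus $\HF^\ast(S,S)$ finite, and $\HW^\ast(L_0,L_0)$ concentrated in degree zero) that were already used in the previous lemma.
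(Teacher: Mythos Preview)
There is a genuine gap in your Step 2. In the triangle
\[
\HF^\ast(S,L_0)\otimes S \xrightarrow{\ f\ } L_0 \xrightarrow{\ g\ } T_S(L_0) \xrightarrow{\ h\ } \HF^\ast(S,L_0)\otimes S[1],
\]
rotation gives $\HF^\ast(S,L_0)\otimes S[1] \simeq \mathrm{Cone}(g)$. After identifying $T_S(L_0)\simeq L_0$, the map $g$ lands in $\HW^0(L_0,L_0)$, not $\HW^1(L_0,L_0)$: it is a degree-zero morphism $L_0\to L_0$. There is no way to rotate so that $\HF^\ast(S,L_0)\otimes S$ (or any shift of it) is the cone on a map in $\HW^1(L_0,L_0)$. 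Since $\HW^0(L_0,L_0)\cong SH^0(X)\cong R$ is large and full of non-units, there is no vanishing to invoke; the cone on multiplication by a non-unit $r\in R$ is a perfectly good nonzero object, and nothing you have said rules this out. (Your Step 1 also misstates the splitting: if $h=0$ one gets $L_0 \simeq \HF^\ast(S,L_0)\otimes S \,\oplus\, L_0$, not $\HF^\ast(S,L_0)\otimes S \simeq L_0\oplus L_0[1]$. That slip is repairable, but Step 2 is not.) You even flag the obstacle yourself at the end --- ``compatible with the triangle rather than merely an abstract isomorphism'' --- and it is real: knowing $T_S(L_0)\cong L_0$ abstractly does not force $g$ to be an isomorphism.

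The paper bypasses the cone analysis entirely. From $T_S(L_0)=L_0$ (previous lemma) together with the standard identity $T_S(S)=S[2]$ for a $3$-spherical object, one has for every $n$
\[
\HF^*(S,L_0) = \HF^*(S, T_S^n L_0) = \HF^*(T_S^{-n}S, L_0) = \HF^*(S[-2n],L_0) = \HF^{*+2n}(S,L_0),
\]
so $\HF^*(S,L_0)$ is $2$-periodic as a graded vector space; being of finite total rank by sphericality, it must vanish. Your duality argument for $\HF^*(L_0,S)=0$ is fine and matches the paper.
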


\begin{proof}
For any given $n \in \bZ$,

\[
HF^*(S,L_0) = HF^*(S, T_S^n(L_0)) = HF^*(T_S^{-n} S, L_0) = HF^*(S[-2n], L_0) = HF^{*+2n}(S,L_0)
\]
where we have used that for any 3-dimensional spherical object $\hat{S}$, the twist functor acts on the object itself by a non-trivial shift,  $T_{\hat{S}}(\hat{S}) = \hat{S}[2]$. 

Taking $n\neq 0$ shows that the vector space $HF^\ast(S,L_0)$ is periodic with some non-trivial grading shift (so infinite rank) or vanishes; again by the definition of $S$ being spherical, it must be the latter. Thus, $HF^\ast(S, L_0) = 0$.  By non-degeneracy of the pairing
\[
HF^\ast(S,L_0) \otimes HF^\ast(L_0,S) \to \bC
\]
(or just by repeating the argument with the order of the branes reversed)  we conclude that $HF^\ast(L_0,S) = 0$ also.
\end{proof}

In the course of the proof of Lemma \ref{lem:sphericals-act-trivially}, we showed that the mirror object $\calE_S \in D(Y)$ to $S$ belonged to the full subcategory $\scrD$ of complexes with cohomology supported on the curve $C$. There is a `small' category, cf. \cite{HW} for instance,
\[
\scrC = \left\{ F \in D(Y) \, | \, \supp(F) \subset C, \, R \pi_*(F) = 0\right\} \subset \scrD \subset D(Y)
\]
of objects of $D(Y)$ which are both cohomologically supported on $C$ and have trivial push-forward under the small resolution.  

\begin{lemma}
The mirror complex $\calE_S \in \scrD$ belongs to the subcategory $\scrC$.
\end{lemma}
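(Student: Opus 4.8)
The plan is to transport the defining condition $R\pi_\ast(\calE_S)=0$ of the subcategory $\scrC$ across the mirror equivalence $\Upsilon$ of Theorem~\ref{thm:hms-equivalence}, recalling that $\Upsilon(L_0)=\calO_Y$ and $\Upsilon(S)=\calE_S$.

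First I would note that, since $\Spec R$ is affine, a complex in $D(\mathrm{QCoh}(\Spec R))$ is acyclic precisely when its hypercohomology vanishes; applied to $R\pi_\ast(\calE_S)$ together with the Leray identity $R\Gamma(Y,-)\simeq R\Gamma(\Spec R,-)\circ R\pi_\ast$, this shows that $R\pi_\ast(\calE_S)=0$ is equivalent to $R\Gamma(Y,\calE_S)=0$, i.e.\ to the vanishing of the graded vector space $\Ext^\bullet_Y(\calO_Y,\calE_S)$.

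Next, since $\Upsilon$ is an equivalence of triangulated categories,
\[
\Ext^\bullet_Y(\calO_Y,\calE_S)\;\cong\;\Hom^\bullet_{D\cW(X)}(L_0,S)\;=\;HF^\ast(L_0,S),
\]
which vanishes by the preceding lemma. Hence $R\pi_\ast(\calE_S)=0$. Combined with the fact, established in the course of the proof of Lemma~\ref{lem:sphericals-act-trivially}, that $\calE_S$ already lies in $\scrD$ (its cohomology sheaves are set-theoretically supported on $C$), this yields $\calE_S\in\scrC$, as required.

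The argument is essentially formal; the one place deserving care is the first step, where one must check that the vanishing of $R\pi_\ast(\calE_S)$ is detected by the single group $\Ext^\bullet_Y(\calO_Y,\calE_S)$, and not, a priori, also by testing against $\calO_Y(1)$. This is exactly what affineness of the base buys us --- $R\pi_\ast(\calE_S)$ is recovered from the $R$-module $H^\bullet(Y,\calE_S)$ --- so I do not anticipate a genuine obstacle.
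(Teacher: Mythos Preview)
Your proof is correct and follows essentially the same route as the paper: both use the mirror equivalence $\Upsilon(L_0)=\calO_Y$ to convert $HF^\ast(L_0,S)=0$ into $\Ext^\ast_Y(\calO_Y,\calE_S)=0$, and then invoke affineness of $\Spec R$ (phrased in the paper as ``$\calO_A$ is a spanning object for $D(A)$'') to conclude $R\pi_\ast\calE_S=0$. Your final paragraph even anticipates and addresses exactly the point the paper leaves implicit, namely why testing against $\calO_Y$ alone suffices.
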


\begin{proof}
Recall that $L_0$ is mirror to $\calO_Y$, so 
\[
HF^\ast(L_0,S) = 0 \Rightarrow \Ext^*(\calO_Y, \calE_S) = 0.
\]
Pushing down under the contraction $\pi: Y \to \Spec(R) = A$, we obtain
\[
\Ext^*(\calO_A, R\pi_* \calE_S) = 0.
\]
However, $A$ is affine and $\calO_A$ is a spanning object for the derived category $D(A)$, so this forces $R\pi_* \calE_S = 0$.  We deduce that $\calE_S \in \scrC$.
\end{proof}

\subsection{Conclusion}

To recap, if $S \in D\cW(X)$ is spherical, it has a mirror $\calE_S$ which is a spherical object in $\scrD\subset D(Y)$. 
We showed that the twist $T_{\calE_S}$ is mirror to $\tau_{S_\gamma}$ for some $S_\gamma \in \calS$. Conjugating with a $\PBr_3$ element taking $\gamma$ to $\gamma_1$, wlog $T_{\calE_S}$ is mirror to $\tau_{S_1}$, and we saw above that moreover we get that $\calE_S$ is a spherical object in the small category $\scrC \subset D(Y)$. 
The classification of spherical objects in  $\scrC$ is standard (cf. \cite{SW,HW}; in the terminology of \cite{SW},  in our case $\scrC$ corresponds to the one-vertex no-loop quiver). There is a unique spherical object up to shift, given by $\mathcal{O}_C(-1)$. Translating back under the mirror equivalence $D\scrW(X) \simeq D(Y)$, it follows that $S$ is quasi-isomorphic to some shift of $S_{\gamma}$.

This concludes the proof of Theorem \ref{thm:classification_sphericals}. Passing back through the mirror, this readily implies  Theorem \ref{thm:main2}.

\begin{remark} \label{rmk:coeff_fields2}
Using Remark \ref{rmk:coeff_fields}, and the fact that Lemma \ref{lem:fix-power-and-shift} works over any field, the previous analysis simultaneously classifies spherical objects in $D\scrW(X;\bZ)$ or $D\scrW(X;\bC)$. For instance, we deduce that $X$ contains no Lagrangian Lens space $L(p,q)$ with non-trivial fundamental group.
\end{remark}

\begin{remark}\label{rmk:entropy}
The variety $Y$ is defined over $\bZ$, and it seems likely that one could lift the equivalence $D\scrW(X) \simeq D(Y)$ of $\bC$-linear categories in Theorem \ref{thm:hms-equivalence} to an equivalence of categories over $\bZ$ (we have not checked the details). Assuming this, 
Propositions \ref{prop:generalised_Khovanov_Seidel} and \ref{prop:growth} would together imply that if $\phi \in D(Y)$ is a Torelli autoequivalence (i.e.~one acting trivially on $K$-theory) which is not a power of a spherical twist, and $S, S’$ are any spherical objects in $D(Y)$, then the total rank of $\Ext^*(\phi^k(S),S’;\bZ/2)$ grows exponentially in $k$ (moreover the exponential growth rate is an algebraic integer and is at least log(2)/12 \cite{Penner}, etc).  This follows from the identification of the $\bZ/2$-ranks of these morphism groups with geometric intersection numbers of curves on a surface, and classical results in surface dynamics. This connects to ideas around categorical entropy, which is expressed in  Theorem \cite[Theorem 2.6]{DHKK}  in terms of a growth rate of $\Ext$-groups for smooth proper categories (this does not directly apply here since the compact category $\scrF(X)$ is not smooth). A related entropy computation, for the special case $\phi = T_S\circ T_{S’}$, is given in  \cite{BK}. 
\end{remark}

\section{Miscellania}\label{sec:other}

\subsection{Addition of subcritical Weinstein handles}\label{sec:adding_handles}

We will need the following generalisation of Lemma \ref{lem:trivial_on_homology}. 

\begin{lemma}\label{lem:add-handles-fix-homology}
Suppose $X'$ is a Weinstein 6-manifold given by iteratively adding any sequence of Weinstein one- and two-handles to $X$. Let $f \in \Homeo_c (X')$ be a compactly supported homeomorphism. Then $f_\ast$ acts trivially on $H_3 (X'; \bZ)$. 
\end{lemma}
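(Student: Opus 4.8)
The plan is to run the same argument as in Lemma \ref{lem:trivial_on_homology}, namely to compare the action of $f$ on $H_3$ of the open manifold with the action on the homology of its ideal boundary via the long exact sequence of the pair $(X', \partial_\infty X')$, and to show that the connecting map $H_3(X') \to H_3(X', \partial_\infty X')$ has zero image, so that $f_\ast$ on $H_3(X')$ is forced to agree with $f_\ast$ on $H_3(\partial_\infty X')$, which is the identity since $f$ is compactly supported. So the first step is to record that $f$ acts as the identity on $H_\ast(\partial_\infty X';\bZ)$ (it is supported in a compact set, hence is the identity near the conical end), and to write down the commuting ladder of long exact sequences exactly as in the proof of Lemma \ref{lem:trivial_on_homology}. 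Given that ladder, a diagram chase identical to the earlier one reduces the lemma to the single claim that $H_3(X';\bZ) \to H_3(X', \partial_\infty X';\bZ)$ is zero.

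The main work, therefore, is to understand how attaching subcritical ($1$- and $2$-) Weinstein handles to $X$ affects $H_3$ and the intersection form. Attaching a Weinstein $k$-handle changes the homotopy type by attaching a cell of dimension $k$, so adding $1$- and $2$-handles only changes $\pi_0,\pi_1$ and $H_1, H_2$; it does not change $H_3$ or $H_{\geq 4}$. Thus $H_3(X';\bZ) \cong H_3(X;\bZ)$, and likewise, since Weinstein handle attachment does not change the manifold near infinity up to the cobordism but does change $\partial_\infty$, I need to track $H_3(X', \partial_\infty X')$. Here the cleanest route is Poincaré–Lefschetz duality: $H_3(X', \partial_\infty X';\bZ) \cong H^3_c(X';\bZ) \cong H_3(X';\bZ)^\vee \oplus (\text{torsion})$ in the appropriate degree, but more usefully, the intersection pairing $H_3(X';\bZ) \times H_3(X', \partial_\infty X';\bZ) \to \bZ$ is the relevant object, and the composite $H_3(X') \to H_3(X',\partial_\infty X')$ is dual to the self-intersection form on $H_3(X')$. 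So it suffices to show the intersection form on $H_3(X';\bZ)$ vanishes identically. Since $H_3(X';\bZ) \cong H_3(X;\bZ) = \bZ^2 = \langle S_0, S_1\rangle$ and the classes $S_0, S_1$ can be represented by cycles supported in the original $X$ (the handles being attached away from a neighbourhood of the compact core), their mutual and self intersection numbers are computed inside $X$, where we already know (from the discussion preceding Corollary \ref{cor:K-thy-actions}) that all these intersection numbers vanish. Hence the intersection form on $H_3(X')$ is zero, the map $H_3(X') \to H_3(X', \partial_\infty X')$ is zero, and the diagram chase concludes.

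The step I expect to be the main obstacle is the bookkeeping needed to justify that $H_3$ and the relevant intersection pairing are genuinely unaffected by the handle attachments — in particular, making precise that a generator of $H_3(X';\bZ)$ is carried by a cycle living in (a copy of) $X \subset X'$, and that the perfect pairing $H_3(X';\bZ) \times H_3(X', \partial_\infty X';\bZ) \to \bZ$ restricts compatibly so that vanishing of the self-intersection form on $H_3(X') = \bZ^2$ really does imply that $H_3(X') \to H_3(X',\partial_\infty X')$ is the zero map. One clean way to handle this uniformly is to note that attaching subcritical handles of index $\le 2$ to a $6$-manifold produces $X'$ with $H_4(X';\bZ)$ and $H_5(X';\bZ)$ unchanged as well (indeed $H_{\ge 3}$ is unchanged), so by Poincaré–Lefschetz duality $H_3(X',\partial_\infty X';\bZ) \cong H^3_c(X';\bZ)$ and the relevant composite is the Lefschetz-dual of intersection; once intersection numbers are shown to be computed in $X$ and to vanish there, the rest is the same diagram chase as in Lemma \ref{lem:trivial_on_homology}. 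Everything else is routine.
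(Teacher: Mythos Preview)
Your proposal is correct and follows essentially the same route as the paper: reduce via the long exact sequence of the pair to showing that $H_3(X')\to H_3(X',\partial_\infty X')$ is zero, observe that subcritical handle attachment leaves $H_3$ unchanged so $H_3(X')\cong\bZ^2=\langle S_0,S_1\rangle$, and then use vanishing of the intersection numbers $S_i\cdot S_j$ together with nondegeneracy of the intersection pairing to conclude. The paper's proof is terser but identical in substance.
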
 

\begin{proof} Take all homology groups to have coefficient ring $\bZ$.
Comparing with the proof of Lemma \ref{lem:trivial_on_homology}, it's enough to show that the map $H_3(X') \to H_3(X', \partial X')$ has vanishing image. Consider the non-degenerate intersection pairing
$$
H_3 (X') \times H_3 (X', \partial X') \to \bZ. 
$$
As $X \subset X'$ is given by adding Weinstein one- and two-handles, the inclusion induces an isomorphism $H_3 (X) \cong H_3(X')$. In particular, $H_3(X') \cong \bZ^2$, generated by $S_0$ and $S_1$. As the intersection pairing of $S_i$ and $S_j$ vanishes for any $i,j$, we see that for any $i$ the image of $S_i$ in $H_3 (X', \partial X')$ vanishes. This completes the proof. 
\end{proof}

\begin{corollary}\label{cor:infinite-sphere-orbits}
Suppose $X'$ is any Weinstein 6-manifold given by iteratively adding Weinstein one- and two-handles to $X$. Then under the action of $\pi_0 \Symp_c (X')$, there are infinitely many orbit sets of Lagrangian spheres in $X'$. 
\end{corollary}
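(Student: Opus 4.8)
The plan is to produce infinitely many Lagrangian spheres in $X'$ lying in pairwise distinct orbits, distinguished by their homology classes in $H_3(X';\bZ)$.

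First I would take the Lagrangian matching spheres $S_i$, $i\in\bZ$; these live in $X$, hence in $X'$ since $X$ is a Weinstein subdomain of $X'$, and they remain exact Lagrangian spheres there. The first step is to check that the classes $[S_i]\in H_3(X;\bZ)$ are pairwise distinct. Under the identification $K_{num}(\scrF(X))\cong H_3(X;\bZ)$ and the mirror equivalence $\Upsilon$ of Theorem~\ref{thm:hms-equivalence}, the class $[S_i]$ corresponds to $[\calO_C(-i)]\in K(\scrD)$; since $[\calO_C(-i)]-[\calO_C(-j)]=(j-i)[\calO_{pt}]$ for a skyscraper $\calO_{pt}$ at a point of $C$, and $[\calO_{pt}]=[\calO_C]-[\calO_C(-1)]$ is a nonzero element of $K(\scrD)\cong\bZ^2$, the classes $[\calO_C(-i)]$, and hence the $[S_i]$, are pairwise distinct. (One could also see this directly on the symplectic side, since the matching paths $\gamma_i$ wind different numbers of times about the puncture.) By the proof of Lemma~\ref{lem:add-handles-fix-homology}, the inclusion $X\hookrightarrow X'$ induces an isomorphism $H_3(X;\bZ)\cong H_3(X';\bZ)$, so the $[S_i]$ remain pairwise distinct in $H_3(X';\bZ)$.

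The second --- and essentially only --- step is to apply Lemma~\ref{lem:add-handles-fix-homology} itself: any $\phi\in\Symp_c(X')$ is in particular a compactly supported homeomorphism, so $\phi_\ast$ is the identity on $H_3(X';\bZ)$. Hence if $\phi(S_i)$ lies in the $\pi_0\Symp_c(X')$-orbit of $S_j$ --- say $\phi(S_i)$ is Lagrangian isotopic to $S_j$ --- then $[S_j]=[\phi(S_i)]=\phi_\ast[S_i]=[S_i]$ in $H_3(X';\bZ)$, forcing $i=j$. Therefore the assignment taking an orbit of Lagrangian spheres in $X'$ to the common homology class of its members is a well-defined map with infinite image, and the corollary follows. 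I do not expect any genuine obstacle here: all the content is in Lemma~\ref{lem:add-handles-fix-homology}, which is already proved, together with the distinctness of the $[S_i]$, immediate from the $K$-theoretic bookkeeping above; the only point to state with care is that ``orbit'' may be taken on Lagrangian isotopy classes (or smooth isotopy classes, or the set of embedded Lagrangian spheres), for each of which the homology class is an orbit invariant because $\pi_0\Symp_c(X')$ acts trivially on $H_3$.
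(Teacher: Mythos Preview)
Your proof is correct and follows the same approach as the paper: show that the $S_i$ represent pairwise distinct classes in $H_3(X';\bZ)$ and then invoke Lemma~\ref{lem:add-handles-fix-homology} to conclude that compactly supported symplectomorphisms cannot move one $S_i$ to another. The paper's proof is essentially a one-liner asserting these two facts, while you supply extra detail (via $K$-theory and mirror symmetry) for the distinctness of the $[S_i]$; this is fine and matches what is established elsewhere in the paper.
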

This answers a folklore question often attributed to Fukaya.

\begin{proof}
The spheres $S_i$ belong to pairwise distinct classes in $H_3 (X'; \bZ)$, and any compactly supported symplectomorphism acts trivially on $H_3(X'; \bZ)$ by Lemma \ref{lem:add-handles-fix-homology}. 
\end{proof}

\begin{corollary}\label{cor:infinite-generation-with-handles}
Suppose $X'$ is a Weinstein 6-manifold given by iteratively adding any sequence of Weinstein one- and two-handles to $X$. Then we have a pair of group homomorphisms
$$
\bZ^{\ast \infty} \hookrightarrow \pi_0 \Symp_c (X') \twoheadrightarrow \bZ^{\ast \infty}
$$
which compose to the identity.
\end{corollary}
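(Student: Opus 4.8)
The plan is to reduce the statement for $X'$ to the statement for $X$ already established in Theorem \ref{thm:main_categorical}(ii), using the fact that adding subcritical Weinstein handles does not change the wrapped Fukaya category. First I would recall that attaching Weinstein one- and two-handles to a Weinstein $6$-manifold is a subcritical operation, so by the stabilisation/invariance results for wrapped Fukaya categories (e.g.\ the handle-attachment results of \cite{GPS2v3}, or directly the observation that such handles are subcritical and contribute no new generators), the restriction functor induces a quasi-equivalence $\cW(X') \simeq \cW(X)$ — or at least a fully faithful embedding $\cW(X) \hookrightarrow \cW(X')$ which is essentially surjective onto the relevant subcategory. In particular the spheres $S_i$ and the branes $(T,\zeta)$, $L_0$, $L_1$ all persist in $\cW(X')$, the mirror equivalence with $D(Y)$ carries over, and the $SH^0$-linear structure is unchanged since $SH^0(X') \cong SH^0(X)$ for subcritical modifications.

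The key steps, in order, are: (1) Build the map $\bZ^{\ast\infty} \hookrightarrow \pi_0\Symp_c(X')$ on the left: the matching spheres $S_i \subset X$ survive in $X'$ (one can choose the handles to be attached away from the compact core $S_0 \cup_{\mathrm{Hopf}} S_1$ and the relevant matching paths), so the Dehn twists $\tau_{S_i}$ define elements of $\pi_0\Symp_c(X')$, and Proposition \ref{prop:compact-braid-action}(ii) gives the group $\bZ^{\ast\infty}$ they generate. (2) Build the surjection $\pi_0\Symp_c(X') \twoheadrightarrow \bZ^{\ast\infty}$ on the right: any $\phi \in \Symp_c(X')$ acts on $\cW(X') \simeq \cW(X)$ by an $SH^0$-linear autoequivalence, hence (via Lemma \ref{lem:R-linear-FM-support} and HMS) defines an element of $\Auteq\scrD \cong \PBr_3$; Lemma \ref{lem:add-handles-fix-homology} shows $\phi$ acts trivially on $H_3(X';\bZ) \cong H_3(X;\bZ) = K_{num}\scrF(X)$, so by Corollary \ref{cor:trivial-K-theory} the image lies in $\bZ^{\ast\infty} \times 2\bZ$; projecting off the shift factor (using that $\phi$ is compactly supported, hence carries its preferred grading, so the argument of Theorem \ref{thm:main_categorical}(ii) pins the shift to be trivial — or, failing an immediate argument there, noting that the shift is detected by the grading on $(T,\zeta)$ and compactly supported maps fix this as in the lemma preceding Definition of $\Symp^\circ(X)$) lands us in $\bZ^{\ast\infty}$. (3) Check the composite is the identity: this is immediate from the corresponding statement in Theorem \ref{thm:main_categorical}(ii), since both the generators $\tau_{S_i}$ and their images $T_{\calO_C(-i)}$ are unchanged under the equivalence $\cW(X') \simeq \cW(X)$.

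The main obstacle I anticipate is step (1) together with the invariance statement $\cW(X') \simeq \cW(X)$: one must be careful that the handles can genuinely be attached in the cylindrical end, disjoint from (a skeleton containing) the Lagrangians $S_i$, $L_0$, $L_1$, $T$, so that all the mirror-symmetry identifications used in Section \ref{sec:main-theorem} transport verbatim; and one needs the precise form of the handle-attachment invariance for wrapped categories (that subcritical handles add no new objects to the wrapped category generated by the core, and more precisely induce an equivalence of the whole wrapped category since the new handles carry no closed exact Lagrangians and the co-cores of subcritical handles are already split-generated). Granting that invariance, everything else is a formal transcription of the proof of Theorem \ref{thm:main} with $X$ replaced by $X'$, using Lemma \ref{lem:add-handles-fix-homology} in place of Lemma \ref{lem:trivial_on_homology}.
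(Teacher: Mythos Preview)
Your proposal is correct and follows essentially the same route as the paper. The paper's proof invokes exactly the invariance $D\cW(X) \simeq D\cW(X')$ under subcritical handle attachment (citing \cite{CDGG,GPS1} and \cite{Ganatra-thesis} for the compatible $SH^0$-isomorphism), then applies Lemmas \ref{lem:R-linear-fixes-points}, \ref{lem:R-linear-FM-support}, \ref{lem:mirror-to-point}, Lemma \ref{lem:add-handles-fix-homology}, and Corollary \ref{cor:trivial-K-theory} just as you do. One small simplification: the paper does not attempt to argue that the shift in $\bZ^{\ast\infty}\times 2\bZ$ is actually trivial for compactly supported $\phi$ on $X'$ --- it simply projects off the $2\bZ$ factor and observes surjectivity via the Dehn twists, which is all that is needed; your parenthetical discussion of pinning down the shift is unnecessary (and the Liouville-flow argument for $X$ does not obviously transfer to $X'$).
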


In particular, by adding a two-handle, we can find such $X'$s which are simply connected.

\begin{proof}
We have a natural isomorphism of wrapped Fukaya categories $D \cW(X) \simeq D \cW(X')$, compatible with an isomorphism between $SH^0 (X)$ and $SH^0(X')$.  
(By \cite{CDGG, GPS1}, the categories $\cW(X)$ and $\cW(X')$ are non-degenerate and equivalent. The equivalence induces an isomorphism of Hochschild (co)homologies. On the other hand, by \cite{Ganatra-thesis}, the non-degeneracy implies that $HH_*(\cW(X)) \to SH^*(X)$ is an isomorphism. It's then immediate that we get a compatible isomorphism between $SH^0 (X)$ and $SH^0(X')$.)

 This implies that  a compactly supported symplectomorphism $\phi \in \pi_0 \Symp_c (X')$ induces an $SH^0(X)$-linear autoequivalence of $D \cW(X)$. 
As before, combining mirror symmetry for $X$ with Lemmas \ref{lem:R-linear-fixes-points}, \ref{lem:R-linear-FM-support} and \ref{lem:mirror-to-point}, we get that $\phi$ preserves the subcategory $\langle S_0, S_1 \rangle$ (mirror to $\scrD$), and fixes the torus $(T, \zeta)$, as an object of $D \cW(X)$, up to a shift. 

By Lemma \ref{lem:add-handles-fix-homology}, $\phi$ acts as the identity $H_3(X'; \bZ) \cong H_3 (X; \bZ)$, i.e.~on the numerical $K$-theory $K_\text{num} \scrF(X'))$.
As before, Corollary \ref{cor:trivial-K-theory} implies that we get a map $\pi_0 \Symp_c (X') \to \bZ^{\ast \infty} \times 2\bZ$. Projecting gives a map $\pi_0 \Symp_c (X') \to \bZ^{\ast \infty}$, which is surjective  by considering the image of the Dehn twists. The claim then follows.
\end{proof}

\subsection{Non-exact deformations}\label{sec:non-exact-deformations}

By open-ness of the symplectic condition, for a small neighbourhood $U$ of $(0,0) \in H^2(X; \bR) \cong \bR^2$, there is a non-exact symplectic deformation of $(X, d\theta)$ to $(X, \omega_{a,b})$ with $[\omega_{a,b} ] = (a,b) \in U$.

We can in fact exhibit non-exact deformations $(X, \omega_{a,b})$ for arbitrary $(a,b) \in \bR^2$, via the Morse-Bott-Lefschetz fibration.
Let $t_c: T^\ast S^1 \to T^\ast S^1$ be a symplectomorphism of flux $c$ (translating the cylinder). 
Start with the previously considered Morse-Bott-Lefschetz fibration $\pi: X \to \bC^\ast$, with Morse-Bott singular fibres at $\pm 1$. 
 On a thickening of $\pi^{-1}(\bR_{<0})$, the fibrewise map $t_c \times \id$ is locally a symplectomorphism. This means that if we cut open $X$ along $\pi^{-1}(\bR_{<0})$ and glue back using $t_a \times \id$, then the resulting manifold, which is manifestly still diffeomorphic to $X$, inherits a symplectic form. This is no longer exact; for suitable coordinates on $H^2(X; \bR)$, it has class $(a,0)$. Similarly, we could cut open $X$ along $\pi^{-1}(\bR_{>0})$ and glue back using $\id \times t_b$, and obtain a symplectic form in class $(0,b)$. Combining the two operations, we get $(X, \omega_{a,b})$ for arbitrary pairs $(a,b) \in \bR^2$.

In the original Morse-Bott-Lefschetz fibration on $(X, \omega)$, for any vanishing path to $-1$, the tori $(t_c \times \id )\cdot \mu^{-1}(0)$ are all vanishing cycles. Similarly,  the tori $(\id \times t_c) \cdot \mu^{-1}(0)$ are all vanishing cycles for arbitrary vanishing paths to $1$. It follows that in case of the $(X, \omega_{a,b})$, for each matching path $\gamma_i$ from Figure  \ref{fig:spherical-conventions}, there is a choice of vanishing torus (depending on the winding number of $\gamma_i$ about 0) which gives a Lagrangian matching sphere. 

 By inspection, whenever $a$ and $b$ are both non-zero, $S_i$ and $S_j$ are disjoint if $i \neq j$. Examples for small $i$ are given in Figure \ref{fig:non-exact}.

\begin{figure}[htb]
\begin{center}
\includegraphics[scale=0.30]{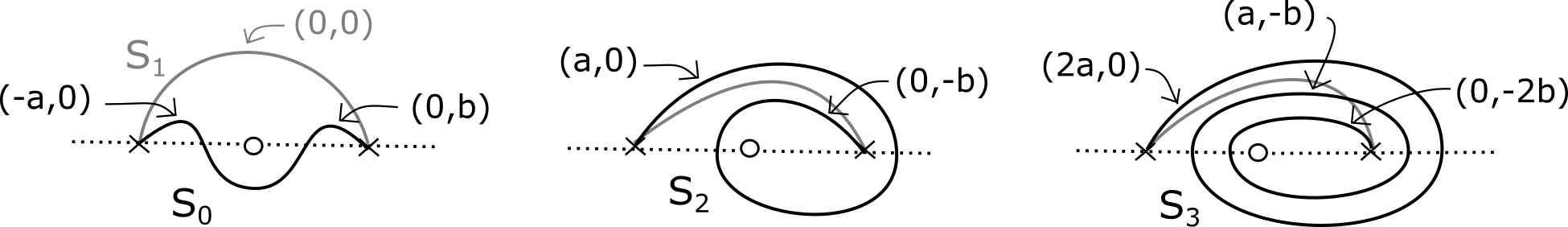}
\caption{Matching spheres in $(X, \omega_{a,b})$ and flux values for the vanishing tori (with respect to the upper half plane). $S_1$ is in grey in each diagram to visualise intersection points.}
\label{fig:non-exact}
\end{center}
\end{figure}

\begin{corollary}\label{cor:non-exact-twists}
 For $a \neq 0$ and $b \neq 0$,  there is an injection $\bZ^{\oplus \infty } \hookrightarrow \pi_0 \Symp_c (X, \omega_{a,b})$. 
\end{corollary}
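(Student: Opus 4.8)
The plan is to send the $i$-th standard generator of $\bZ^{\oplus\infty}$ to the Lagrangian Dehn twist $\tau_{S_i}$ and then detect non-triviality through the action of $\pi_0\Symp_c$ on a compact Fukaya category. First: as $a,b\neq 0$ the spheres $S_i$, $i\in\bZ$, are pairwise disjoint, so I can choose pairwise disjoint Weinstein neighbourhoods $N_i\cong D^*S^3$; the model twists $\tau_{S_i}$ are supported in the $N_i$, hence pairwise commute in $\pi_0\Symp_c(X,\omega_{a,b})$, and we obtain a homomorphism $\Phi\colon \bZ^{\oplus\infty}\to\pi_0\Symp_c(X,\omega_{a,b})$ with $\Phi(e_i)=\tau_{S_i}$. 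It remains to prove that a finite product $\psi=\prod_i\tau_{S_i}^{n_i}$ is Hamiltonian isotopic to the identity only if every $n_i=0$.

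Next I would reduce to small flux. Since $\Symp_c(X,\omega)=\Symp_c(X,c\,\omega)$ for $c>0$, and the Morse--Bott--Lefschetz construction manifestly identifies $(X,c\,\omega_{a,b})$ symplectically with some $(X,\omega_{a',b'})$ where $a'$, $b'$ have the same signs as $a$, $b$, I may rescale so that $(a,b)$ lies in an arbitrarily small punctured neighbourhood of $0$ in $H^2(X;\bR)$. For such $(a,b)$ the pair $(X,\omega_{a,b})$ is a small non-exact deformation of the exact conifold smoothing; since the spheres $S_i$ are Maslov zero and simply connected, a Gromov-compactness argument rules out new disc bubbling in the relevant moduli spaces, so the compact Fukaya category $\scrF(X,\omega_{a,b})$ is defined, deforms from $\scrF(X)$, carries the usual action of $\pi_0\Symp_c(X,\omega_{a,b})$ on graded Lagrangian branes (compactly supported symplectomorphisms being gradeable with a preferred grading, just as in Lemma~\ref{lem:compact-support-implies-graded}), and a Lagrangian Dehn twist induces the corresponding spherical twist.

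Given this, injectivity is formal. By disjointness $\HF^*(S_i,S_j)=0$ for $i\neq j$, so $\tau_{S_i}$ fixes $S_j$ when $i\neq j$, and only the factor $\tau_{S_j}^{n_j}$ acts on $S_j$; since the spherical twist of a Lagrangian $3$-sphere shifts the sphere by a nonzero even degree, $\psi$ carries the graded brane $(S_j,\sigma)$ to $(S_j,\sigma)$ with its grading shifted by $2n_j$, so $\HF^*(S_j,\psi(S_j))$ is $H^*(S^3;\bZ/2)$ shifted by $2n_j$. If $\psi$ were Hamiltonian isotopic to the identity, Hamiltonian invariance would force $\HF^*(S_j,\psi(S_j))\cong\HF^*(S_j,S_j)=H^*(S^3;\bZ/2)$ as graded groups; but for $n_j\neq 0$ the shifted group is supported in the degrees $2n_j$ and $3+2n_j$, a contradiction. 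Hence all $n_j=0$ and $\Phi$ is injective, proving the corollary. The only non-formal ingredient, and the main obstacle, is the second paragraph: one must make rigorous the compact Fukaya category of the non-exact deformation and the fact that a Lagrangian Dehn twist acts by a spherical twist there; rescaling to arbitrarily small flux and invoking Gromov compactness for the Maslov-zero, $\pi_1$-trivial spheres is what makes this routine. (To avoid gradings one could instead test $\psi$ against a non-compact Lagrangian that genuinely moves, at the cost of setting up Floer theory for non-compact Lagrangians in the non-exact setting.)
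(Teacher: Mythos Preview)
Your argument is correct and rests on the same underlying mechanism as the paper's: the Dehn twist $\tau_{S_j}$, with its preferred grading as a compactly supported map, carries the graded sphere $(S_j,\sigma)$ to $(S_j,\sigma[2])$, while fixing each $(S_i,\sigma')$ for $i\neq j$ by disjointness of supports. The paper, however, packages this far more lightly. It never invokes a Fukaya category or Floer groups in the non-exact manifold at all: it simply notes that the homomorphism lifts to $\pi_0\Symp_c^{\gr}(X,\omega_{a,b})$ via the canonical splitting of the forgetful map (exactly as in Lemma~\ref{lem:compact-support-implies-graded}), and then detects injectivity through the action of $\pi_0\Symp_c^{\gr}$ on the set of isotopy classes of \emph{graded} Lagrangian spheres. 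The grading-shift computation for a Dehn twist is purely local---it takes place in a Weinstein neighbourhood $D^*S^3$, where everything is exact---and that a compact graded Lagrangian sphere is not graded-Lagrangian-isotopic to a nontrivial shift of itself is the standard fact from \cite{Seidel_graded}.

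Consequently your entire second paragraph---rescaling the flux to reach a small deformation, deforming the compact Fukaya category from the exact model, and invoking Gromov compactness to rule out bubbling---can be deleted. You flagged that step as ``the main obstacle'', but it is an obstacle only because you chose to detect the grading shift via $\HF^*(S_j,\psi(S_j))$ rather than via the direct set-level action on graded Lagrangians; the latter needs none of that machinery. One small wording point: you test whether $\psi$ is ``Hamiltonian isotopic to the identity'', but the statement concerns $\pi_0\Symp_c$, i.e.\ compactly supported symplectic isotopy; these agree here since $H^1_c(X;\bR)=0$, and it would be cleaner to say so.
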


\begin{proof}
The Dehn twists in the $S_i$ define a homomorphism $\bZ^{\oplus \infty } \to \pi_0 \Symp_c^{\gr} (X, \omega_{a,b})$. We claim this is injective by considering the action on graded Lagrangian spheres: each $S_i$ admits a $\bZ$-torsor of gradings, and the twist $\tau_{S_i}$ shifts the grading on $S_i$ by $2$; as they are disjoint, it does not change the grading on any $S_j$ for $i \neq j$. Finally, as the forgetful map from $\Symp_c^{\gr} (X, \omega_{a,b})$ to gradeable compactly supported symplectomorphisms of $(X, \omega_{a,b})$ splits, we get an injection 
$\bZ^{\oplus \infty } \hookrightarrow \pi_0 \Symp_c (X, \omega_{a,b})$ as required. 
\end{proof}

\begin{remark} By inspection, for our description of the $S_i$ as matching spheres, there is no compact subset of $X$ containing all of the matching spheres $S_i$: as $|i|$ inscreases, the $S_i$ have arbitrarily large maximal radial coordinate in the conical end for (the deformation of) $X$. 
\end{remark}

\subsection{Homeomorphism \& diffeomorphism groups}

\begin{lemma}
 The composite $\bZ^{\ast \infty} \to \pi_0 \Symp_c (X, d \theta) \to \pi_0 \Diff_c (X)$ factors through $\bZ^{\oplus \infty}$. 
\end{lemma}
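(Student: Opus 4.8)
The plan is to pass to a non-exact deformation of the symplectic form in which the matching spheres $S_i$ become disjoint, and to use that the compactly supported smooth isotopy class of a generalized Dehn twist depends only on the underlying framed submanifold and not on the ambient symplectic form. First I would note that the smooth manifold underlying $(X,\omega_{a,b})$ from Section \ref{sec:non-exact-deformations} is diffeomorphic to $X$: it is assembled from the same Morse--Bott--Lefschetz fibration by the same smooth clutching data (the flux symplectomorphisms $t_a$ are smoothly isotopic to the identity), and one may choose a diffeomorphism $\Psi\colon X \to (X,\omega_{a,b})$ carrying each matching sphere $S_i$ for $d\theta$ to the matching sphere $S_i$ for $\omega_{a,b}$, respecting the canonical normal framings of these Lagrangian spheres. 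Since the generalized Dehn twist along a Lagrangian $3$-sphere is, as a compactly supported diffeomorphism, determined up to compactly supported smooth isotopy by the sphere together with its framing --- the model being built from the normalized geodesic flow on $D^*S^3$, cut off near the zero section, with different Weinstein charts and metrics yielding smoothly (indeed symplectically) isotopic maps --- the diffeomorphism $\Psi$ conjugates the class of $\tau_{S_i}$ in $\pi_0\Diff_c(X)$ to the class of the Dehn twist along $S_i \subset (X,\omega_{a,b})$ in $\pi_0\Diff_c(X,\omega_{a,b})$.

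Now, for $a\neq 0$ and $b\neq 0$ the spheres $S_i$ and $S_j$ are disjoint whenever $i\neq j$ (Section \ref{sec:non-exact-deformations}), so one may choose neighbourhoods supporting the Dehn twists $\tau_{S_i}$ in $(X,\omega_{a,b})$ to be pairwise disjoint; these diffeomorphisms then commute in $\Diff_c(X,\omega_{a,b})$, and \emph{a fortiori} in $\pi_0\Diff_c(X,\omega_{a,b})$. Transporting back via $\Psi$, the classes of $\tau_{S_i}$ and $\tau_{S_j}$ commute in $\pi_0\Diff_c(X)$ for all $i\neq j$. Since the composite $\bZ^{\ast\infty}\to \pi_0\Symp_c(X,d\theta)\to \pi_0\Diff_c(X)$ sends the $i$-th free generator to the class of $\tau_{S_i}$, its image is an abelian subgroup of $\pi_0\Diff_c(X)$; hence the composite kills the commutator subgroup of $\bZ^{\ast\infty}$ and factors through its abelianization $\bZ^{\oplus\infty}$, as claimed.

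The point that needs to be made rigorous is the assertion in the first paragraph that the compactly supported smooth isotopy class of the generalized Dehn twist along a Lagrangian $S^3$ depends only on the framed smooth submanifold, so that it is genuinely transported by the smooth diffeomorphism $\Psi$; this is the only real content, the rest being formal. Alternatively, one could bypass the deformation altogether: since $X$ is a simply connected $6$-manifold and $[S_i]\cdot[S_j]=0$ for all $i,j$, the Whitney trick lets one smoothly isotope $S_i$ and $S_j$ to be disjoint inside $X$ itself, after which commutativity of the corresponding Dehn twists in $\pi_0\Diff_c(X)$ is immediate from the same framed-isotopy-invariance of the twist.
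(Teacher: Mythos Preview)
Your overall strategy coincides with the paper's: pass to a non-exact deformation $(X,\omega_{a,b})$ with $a,b\neq 0$, where the matching spheres $S_i$ become pairwise disjoint, and conclude that the Dehn twists commute already in $\pi_0\Diff_c(X)$. However, there is a genuine error in your execution. You assert the existence of a single diffeomorphism $\Psi$ carrying \emph{every} matching sphere $S_i$ for $d\theta$ to the corresponding matching sphere for $\omega_{a,b}$. This is impossible: for $d\theta$ the spheres $S_i$ and $S_j$ intersect non-trivially (e.g.\ $S_0\cap S_1$ is a Hopf link), whereas for $\omega_{a,b}$ with $a,b\neq 0$ they are disjoint, and a bijection cannot change the intersection pattern. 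The paper avoids this by viewing $\omega_{a,b}$ (for small $a,b$) as a deformation of $d\theta$ on the \emph{fixed} smooth manifold $X$: along the deformation each individual $S_i$ moves by its own smooth isotopy, so each $\tau_{S_i}$ remains in a fixed class in $\pi_0\Diff_c(X)$, and the two composites $\bZ^{\ast\infty}\to\pi_0\Diff_c(X)$ agree. The isotopies for distinct $i$ are different, which is exactly what permits the intersection pattern to change. The invariance statement you flag (that the smooth isotopy class of the twist depends only on the framed sphere) is indeed the content here, but it should be applied to each $S_i$ separately, not packaged into a single $\Psi$.

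Your alternative route via the Whitney trick is also problematic: $X$ is not simply connected --- the paper records $\pi_1(X)\cong\bZ$ --- so the hypothesis you invoke does not hold.
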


\begin{proof}
This immediately follows by noting that this is the same map as the composite $\bZ^{\ast \infty} \to \pi_0 \Symp_c (X, \omega_{a,b}) \to \pi_0 \Diff_c (X)$ for any deformation $(X,\omega_{a,b})$ of $(X, d\theta)$, and recalling that the Dehn twists in the $S_i$ generate a $\bZ^{\oplus \infty}$ in $ \pi_0 \Symp_c (X, \omega_{a,b})$. 
\end{proof}

Recall that $\pi_1(X) = \bZ$. We view $X$ as the completion of a Weinstein domain $(X,\partial X)$ and let $(X', \partial X')$ denote the Weinstein domain obtained by adding one subcritical two-handle to $X$ so as to kill its fundamental group.

\begin{lemma} The boundary  $\partial X'$ is diffeomorphic to $(S^2\times S^3) \# (S^2\times S^3)$.
\end{lemma}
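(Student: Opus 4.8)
The plan is to recognise $\partial X'$ from the Smale--Barden classification of closed, simply connected, spin $5$-manifolds, once its homotopy type has been pinned down.

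\textbf{Step 1: $\partial X'$ is a closed, oriented, simply connected, spin $5$-manifold.} Closedness and orientability are automatic. For simple connectivity: $X'$ is simply connected by construction; its Weinstein core has real codimension $3$ in $X'$; and $\partial X'$ is a deformation retract of the complement of that core. Deleting a subset of codimension $\ge 3$ leaves $\pi_1$ unchanged, so $\pi_1(\partial X')\cong\pi_1(X')=1$ (this is the assertion made just after Corollary 1.2). For the spin condition, recall from the proof of Lemma \ref{lem:gradings-exist} that $X$ carries the nowhere-zero holomorphic volume form $d\log z\wedge d\log u_1\wedge d\log u_2$, so $c_1(X)=0$; attaching a subcritical handle preserves this, and in any case $H^2(X';\bZ)$ is torsion-free (since $H_1(X')=0$) while $2c_1(X')=0$, forcing $c_1(X')=0$. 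Hence $w_2(X')=0$ and $w_2(\partial X')=w_2(X')|_{\partial X'}=0$.

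\textbf{Step 2: homology of $\partial X'$.} By Lemma \ref{lem:hopf}, $X$ deformation retracts onto $S_0\cup S_1$, two copies of $S^3$ meeting along a Hopf link (two disjoint circles), and a Mayer--Vietoris computation determines $H_*(X)$, compatibly with $H^1(X)\cong\bZ$ and with $H_3(X)=\langle S_0,S_1\rangle\cong\bZ^2$. Attaching the single subcritical $2$-handle along a loop generating $\pi_1(X)=H_1(X)$ kills $H_1$ and changes nothing else, giving $H_*(X')$, all torsion-free, with $H_1(X')=0$. One then feeds $H_*(X')$ through Lefschetz duality $H_k(X',\partial X')\cong H^{6-k}(X')$ and the long exact sequence of the pair $(X',\partial X')$. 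The key input is that the map $H_3(X')\to H_3(X',\partial X')$ vanishes, which is exactly Lemma \ref{lem:add-handles-fix-homology} (cf.\ Lemma \ref{lem:trivial_on_homology}): $H_3(X')=\langle S_0,S_1\rangle$ and all intersection numbers $[S_i]\cdot[S_j]$ vanish. Chasing the sequence identifies $H_*(\partial X')$; in particular $H_1(\partial X')=0$, $H_2(\partial X')$ is free, and it comes out as $H_2(\partial X')\cong\bZ^2$.

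\textbf{Step 3: conclude via classification.} By the Smale--Barden theorem, a closed, simply connected, spin $5$-manifold is a connected sum of copies of $S^5$, of $S^2\times S^3$, and of the manifolds $X_m$ with torsion second homology $(\bZ/m)^2$; those with torsion-free $H_2\cong\bZ^k$ are exactly connected sums of $k$ copies of $S^2\times S^3$. Since $\partial X'$ is closed, simply connected, spin, with $H_2(\partial X')\cong\bZ^2$, it follows that $\partial X'\cong(S^2\times S^3)\#(S^2\times S^3)$. Alternatively one can argue by hand: $X'$ has a handle decomposition with handles only of index $0$, $2$ and $3$ (the index-$1$ handle of $X$ cancelling against the new $2$-handle), so $\partial X'$ is obtained from $S^5$ by framed surgeries along $2$- and $3$-spheres which, after handle slides, become standard and each contribute a summand $S^2\times S^3$. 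The main obstacle in either route is the same: confirming the spin/framing issue (equivalently $c_1=0$, so that one obtains the \emph{untwisted} rather than twisted $S^3$-bundles over $S^2$), together with getting the homology bookkeeping of Step 2 exactly right --- above all the vanishing of the self-intersection form on $H_3(X')$, which is precisely what Lemma \ref{lem:add-handles-fix-homology} provides.
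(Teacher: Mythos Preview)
Your overall strategy matches the paper's --- establish that $\partial X'$ is simply connected and spin, compute $H_2(\partial X')$, and invoke the Smale--Barden classification --- and your arguments in Steps~1 and~3 are sound (your codimension-three argument for $\pi_1(\partial X')=1$ is a pleasant alternative to the paper's surgery description).

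The homology chase in Step~2, however, does not give what you claim. One has $H_2(X)\cong\bZ^2$ from Mayer--Vietoris on $S_0\cup_{\mathrm{Hopf}}S_1$ (consistent with $H^2(X;\bR)\cong\bR^2$ as noted in Section~\ref{sec:non-exact-deformations}), and attaching the $2$-cell leaves this unchanged, so $H_2(X')\cong\bZ^2$. Lefschetz duality gives $H_3(X',\partial X')\cong H^3(X')\cong\bZ^2$ and $H_2(X',\partial X')\cong H^4(X')=0$, and the long exact sequence of the pair reads
\[
H_3(X')\xrightarrow{\ 0\ } H_3(X',\partial X')\longrightarrow H_2(\partial X')\longrightarrow H_2(X')\longrightarrow 0,
\]
the first map vanishing precisely by Lemma~\ref{lem:add-handles-fix-homology}, as you say. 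This forces a split extension $0\to\bZ^2\to H_2(\partial X')\to\bZ^2\to 0$, so $H_2(\partial X')\cong\bZ^4$, not $\bZ^2$, and the classification yields $\partial X'\cong(S^2\times S^3)^{\#4}$. Your alternative handle argument meets the same issue: after the two $2$-handles the boundary is $(S^2\times S^3)^{\#2}$, but since the intersection form on $H_3(X')$ vanishes the attaching $2$-spheres of the two $3$-handles are null-homologous, and each of those surgeries adds a further $S^2\times S^3$ summand. (The paper's own proof contains the same slip --- it asserts $H_2(\partial X)\cong H_2(X)$ from the long exact sequence, overlooking the contribution of the connecting map from $H_3(X,\partial X)$ --- so the statement as written appears to be off in the number of summands; fortunately this has no effect on the downstream application in Lemma~\ref{lem:finite_rank}, which only needs $\partial X'$ to be simply connected.)
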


\begin{proof}
The boundary $\partial X$ has $\pi_1(\partial X) = \bZ$. On the boundary, the subcritical handle addition acts by a surgery, removing $S^1\times D^4$ and regluing $D^2\times S^3$, and so $\pi_1(\partial X') = \{1\}$. The long exact sequence for $(X,\partial X)$ shows that $H_2(\partial X) \cong H_2(X) \cong \bZ^2$, and hence $H_2(\partial X';\bZ) = \bZ^2$ also. In particular, the homology of $X'$ is torsion-free. Since $H^2(X';\bZ) \to H^2(X;\bZ)$ is an isomorphism, $X'$ still has vanishing first Chern class, which shows that $\partial X'$ is spin. Simply-connected spin 5-manifolds are classified \cite{Barden}; when the homology is torsion-free they are given by $(S^2\times S^3)^{\# r}$ where $r$ is the second Betti number. \end{proof}

Recall that we previously constructed a diagram 
\[
\xymatrix{
\bZ^{\ast\infty} \ar[r] \ar[d] & \pi_0\Symp_c(X') \ar[d] \\
\bZ^{\oplus \infty} \ar[r] & \pi_0\Diff_c(X')
}
\]

\begin{lemma}\label{lem:finite_rank}
The map $\bZ^{\oplus \infty} \to \pi_0\Diff_c(X')$ factors through a finite rank abelian group.
\end{lemma}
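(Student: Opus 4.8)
The goal is to show that the compactly supported diffeomorphism group of a simply connected Weinstein 6-manifold (with simply connected boundary) has finitely generated $\pi_0$, so that the infinite-rank abelian image coming from Dehn twists must collapse. The strategy is to invoke known finiteness results for mapping class groups of manifolds in the "stable" or "high-dimensional" range. The cleanest route is to appeal to the work of Kupers (cited as \cite{Kupers} in the excerpt) and more classical surgery-theoretic inputs: for a simply connected compact smooth manifold $M$ of dimension $\geq 6$ with simply connected boundary, $\pi_0\Diff_\partial(M)$ is an arithmetic-type group, in particular finitely generated (indeed of "finite type" — it is commensurable up to finite kernel and cokernel with an arithmetic group, by Sullivan's theorem on the finiteness properties of mapping class groups of simply connected manifolds, together with the fact that the relevant block diffeomorphism groups have finitely generated homotopy groups). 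So the main body of the proof is essentially a citation, assembled correctly.

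First I would recall, as in the preceding lemmas, that $X'$ is simply connected and that $\partial X' \cong (S^2\times S^3)^{\#2}$ is simply connected and spin. Then I would set up the comparison between $\Diff_c(X')$ and $\Diff_\partial(\bar X')$ for a compact model $\bar X'$ (the Weinstein domain), noting that completing the cylindrical end does not change $\pi_0$ of the compactly supported diffeomorphism group. Next, the key input: by Sullivan's finiteness theorem (a simply connected compact manifold of dimension $\geq 5$ has mapping class group commensurable with an arithmetic group) together with the Weiss–Williams / Kupers machinery controlling the difference between $\Diff$ and the block diffeomorphism group $\widetilde{\Diff}$ (this difference is detected by Waldhausen $K$-theory / $A$-theory of $X'$, whose relevant homotopy is finitely generated since $X'$ has finitely generated homology), one concludes $\pi_0\Diff_\partial(\bar X')$ is of finite type, in particular finitely generated. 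Therefore any homomorphism from an abelian group into it has finitely generated image, so $\bZ^{\oplus\infty} \to \pi_0\Diff_c(X')$ factors through a finitely generated — hence finite rank — abelian quotient.

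\textbf{The main obstacle.} The subtle point is the passage from block diffeomorphisms to honest diffeomorphisms: a priori $\pi_0\widetilde{\Diff}_\partial$ being finitely generated does not formally give the same for $\pi_0\Diff_\partial$, and one needs the surgery/pseudoisotopy fibration sequence $\Diff_\partial \to \widetilde{\Diff}_\partial \to \widetilde{\Diff}_\partial/\Diff_\partial$, together with the identification (in a range, by Morlet and Burghelea–Lashof, extended to the relevant dimension by Weiss–Williams) of the homotopy fiber with a space built from $Wh$-type functors whose homotopy groups are finitely generated because $H_*(X';\bZ)$ is. Assembling this carefully — getting the connectivity/range statements right in dimension exactly $6$ and with boundary — is where the real work lies; it is essentially a matter of quoting \cite{Kupers} and the results it builds on, but one must check the hypotheses (simple connectivity, dimension, finite generation of homology) are all met, which we have arranged above. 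I would therefore phrase the lemma's proof as: reduce to $\pi_0\Diff_\partial$ of the compact domain; observe this group is of finite type by \cite{Kupers} (and the classical inputs it synthesises) using that $X'$ and $\partial X'$ are simply connected with finitely generated homology; deduce the abelian image is finite rank.

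Here is the proof.

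\begin{proof}
Write $(\bar X', \partial \bar X')$ for the compact Weinstein domain of which $X'$ is the completion; since any compactly supported diffeomorphism of $X'$ is supported in a compact set containing $\bar X'$ and the cylindrical collar retracts, the natural map $\pi_0\Diff_\partial(\bar X') \to \pi_0 \Diff_c(X')$ is an isomorphism, and it suffices to show $\pi_0\Diff_\partial(\bar X')$ is of finite type (in particular finitely generated). By the previous two lemmas, $\bar X'$ is a compact simply connected smooth $6$-manifold with simply connected boundary $\partial \bar X' \cong (S^2\times S^3)^{\#2}$, and $H_*(\bar X';\bZ)$ is finitely generated and torsion-free.

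For a compact simply connected manifold of dimension $\geq 6$ with simply connected boundary, the mapping class group $\pi_0\Diff_\partial$ is of finite type. This follows from the results surveyed and extended in \cite{Kupers}: Sullivan's theorem shows the block mapping class group $\pi_0\widetilde{\Diff}_\partial(\bar X')$ is commensurable, up to finite kernel, with an arithmetic group and is hence of finite type; and the surgery-theoretic comparison between $\Diff_\partial$ and $\widetilde{\Diff}_\partial$ — whose homotopy fiber is, in the relevant range, assembled from Waldhausen $A$-theory / Whitehead spaces of $\bar X'$, which have finitely generated homotopy groups because $H_*(\bar X';\bZ)$ is finitely generated — shows that the same holds for $\pi_0\Diff_\partial(\bar X')$ itself. (Here one uses simple connectivity of both $\bar X'$ and $\partial\bar X'$ to apply the relative version of these results in dimension $6$.)

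Since $\pi_0\Diff_c(X') \cong \pi_0\Diff_\partial(\bar X')$ is finitely generated, the image of the homomorphism $\bZ^{\oplus\infty} \to \pi_0\Diff_c(X')$ is a finitely generated abelian group, hence of finite rank. Therefore the map factors through a finite rank abelian group, as claimed.
\end{proof}
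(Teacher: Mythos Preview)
Your overall strategy—invoke Kupers to control $\pi_0\Diff_\partial(\bar X')$ and then deduce the abelian image is small—is the same as the paper's, and the reduction to the compact domain is fine. But the final deduction contains a genuine gap.

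You conclude: ``Since $\pi_0\Diff_c(X')$ is finitely generated, the image of the homomorphism $\bZ^{\oplus\infty} \to \pi_0\Diff_c(X')$ is a finitely generated abelian group.'' This does not follow. Subgroups of finitely generated groups need not be finitely generated: the free group $F_2$ is finitely generated (indeed of type $F_\infty$) yet contains $\bZ^{\ast\infty}$, whose abelianisation $\bZ^{\oplus\infty}$ embeds as an abelian subgroup of $F_2^{\mathrm{ab}}\times\cdots$—more to the point, $F_2$ itself contains free abelian subgroups only of rank $\leq 1$, but the commutator subgroup is infinitely generated, and in general there is no mechanism by which ``ambient group finitely generated'' forces ``abelian subgroup finitely generated''. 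So ``finite type'' or ``finitely generated'' for $\pi_0\Diff_\partial(\bar X')$ is, by itself, not enough.

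The paper closes this gap by quoting a sharper structural statement from \cite{Kupers} (their Theorem 4.2): there is an exact sequence
\[
1 \to \Gamma \to \pi_0\Diff_c(X') \to G \to 1
\]
with $\Gamma$ finitely generated abelian and $G$ arithmetic. Now if $A$ is any abelian subgroup, then $A\cap\Gamma$ is finitely generated (subgroup of a finitely generated abelian group), and the image of $A$ in $G$ is an abelian subgroup of an arithmetic group, hence also finitely generated; so $A$ is finitely generated. This is exactly the extra input you need, and it is not a consequence of ``finite type'' alone.
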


\begin{proof}
If $(M,\partial M)$ is a simply-connected six-manifold with simply-connected boundary, then Kupers shows \cite[Theorem A, Remark (iv)]{Kupers} that the mapping class group $\pi_0\Diff_c(M)$ has finite type in the sense that its classifying space admits a CW structure with only finitely many cells in each dimension.   The proof actually yields a stronger result, see \cite[Theorem 4.2]{Kupers}, which is that there is an exact sequence
\[
1 \to \Gamma \to \pi_0\Diff_c(M) \to G \to 1
\]
where $\Gamma$ is a finitely generated abelian group, and $G$ is an arithmetic group.  For any group fitting into such an exact sequence, any abelian subgroup is finitely generated. 
\end{proof}

\bibliography{bib}{}
\bibliographystyle{alpha}

\includepdf[pages={1-5}]{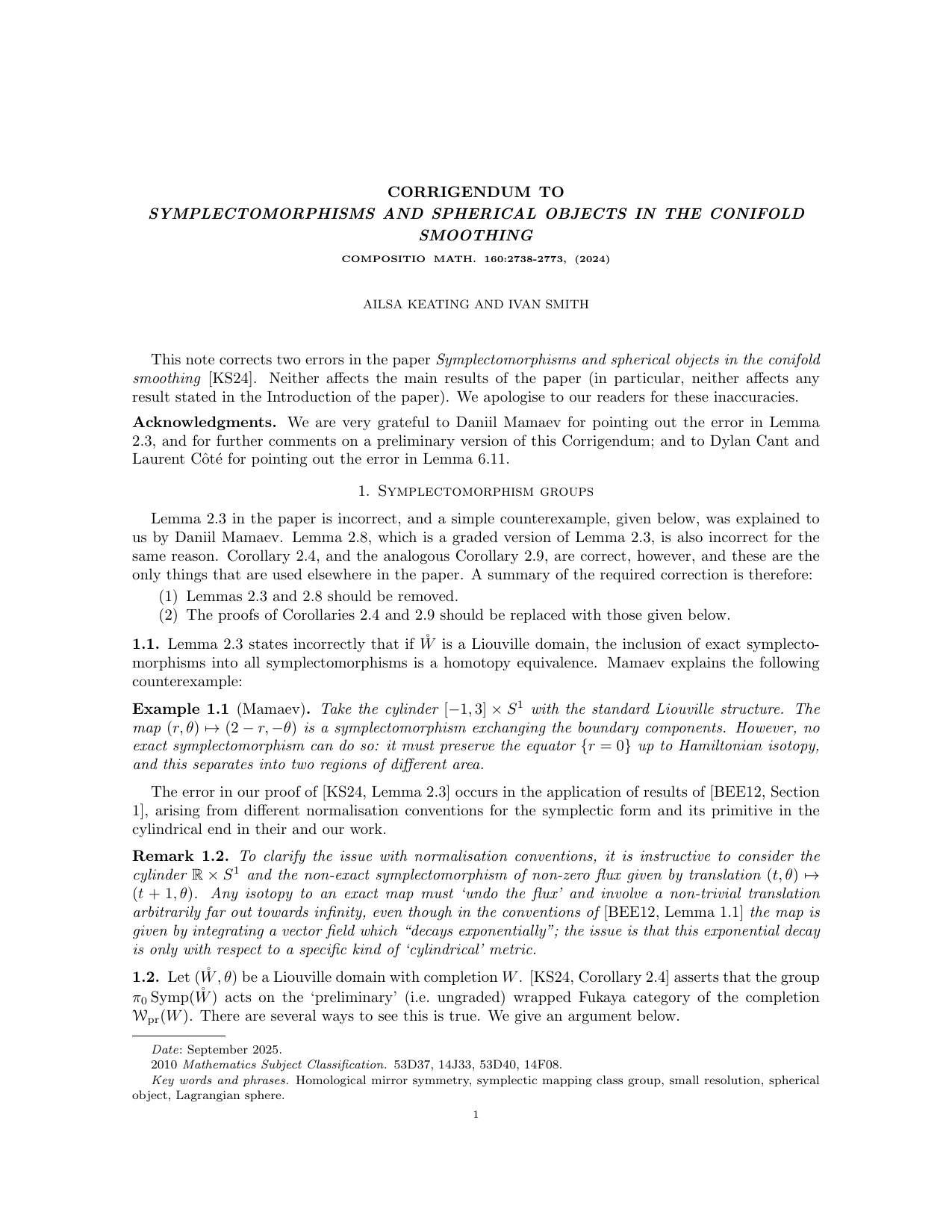}
\end{document}